\documentclass{amsart}
\usepackage[hyperref]{waynemath}
\usepackage{waynealgtop}
\usepackage[all, cmtip]{xy}
\usepackage{caption}

\makeatletter
	\let\c@equation\c@theorem
\makeatother
\numberwithin{equation}{section}

\makeatletter
	\renewcommand\subsection{\@startsection{subsection}{2}%
		\z@{.5\linespacing\@plus.7\linespacing}{.5\linespacing}%
		{\normalfont\bfseries}} 
	\newcommand{\myitem}[1]{%
		\item[#1]\protected@edef\@currentlabel{#1}%
		}
\makeatother

\newcommand{\makebibliography}{
	\newpage
	\bibliographystyle{plain}
	\bibliography{MaySS.bib}
}

\theoremstyle{definition}
\newtheorem{Algorithm}[theorem]{Algorithm}

\newcommand{\Ann}{\mathrm{Ann}}
\newcommand{\Groebner}{Gr\"obner}
\newcommand{\RN}[1]{\textup{\uppercase\expandafter{\romannumeral#1}}}

\title{On the May Spectral Sequence at the Prime 2}
\author{Weinan Lin}
\date{}

\begin{document}
\begin{abstract}
	We make a conjecture about the whole $E_2$ page of the May spectral sequence in terms of generators and relations and we prove it in a subalgebra which covers a large range of dimensions. 
	We show that the $E_2$ page plays a universal role in the study of Massey products in commutative DGAs.
	We conjecture that the $E_2$ page is nilpotent free and also prove it in this subalgebra. 
	We compute all the $d_2$ differentials of the generators in the conjecture and construct maps of spectral sequences which allow us to explore Adams vanishing line theorem to compute differentials in the May spectral sequence.
\end{abstract}
\maketitle
\setcounter{tocdepth}{2}
\tableofcontents

\section{Introduction}
In algebraic topology, one basic task is to compute the stable homotopy groups of objects of interest. 
Nearly all methods if not all for this task involve spectral sequences starting from some Ext groups. 
In the work of Hill, Hopkins and Ravenel \cite{HHR} on the Kervaire invariant one problem, one key ingredient of the proof is the detection theorem, which for degree reasons can be induced by an algebraic detection theorem relating the $E_2$ pages of three spectral sequences, all being $\Ext$ groups of some sort.

Despite the importance of the $\Ext$ groups, it is generally hard to compute them in all degrees, at least in the case of $\Ext_{\sA}(\bF_p, \bF_p)$ for the Adams spectral sequence. 
In modern days, the most extensive calculation of $\Ext_{\sA}(\bF_p, \bF_p)$ is done by computers based on minimal resolutions.
Bruner \cite{Bruner97}, Nassau \cite{Nassau} and Wang \cite{Wang} have implemented the  algorithms and calculated $\Ext_{\sA}(\bF_p, \bF_p)$ beyond dimension 200.
The computation of the stable homotopy groups of sphere relies on these machine outputs for the Adams $E_2$ page.
The interested reader can refer to Isaksen, Wang and Xu \cite{IWX20} for the most up to date calculation.

The spectral sequence of May \cite{May64} offers a generally useful method for calculating the $\Ext$ over a Hopf algebra (or Hopf algebroid).
The classical one for the Steenrod algebra
$$\Ext_{E^0\sA}^{***}(\bF_p, \bF_p)\Longrightarrow \Ext_{\sA}^{**}(\bF_p, \bF_p)$$
was the first effective way to compute $\Ext_{\sA}(\bF_p, \bF_p)$.
It is still probably the best way to compute $\Ext_{\sA}(\bF_p, \bF_p)$ by hand.
The author is currently working to realize it as a successful way to compute $\Ext_{\sA}(\bF_p, \bF_p)$ in very high dimensions, by implementing this method as a computer program.
This paper serves as the foundation for this project. Most methods we use are quite general and the project is intended to be a generic one for computations of spectral sequences by machines.

We are concerned with the Steenrod algebra $\sA$ over prime 2. 
Compared with the cohomology of the Steenrod algebra, the $E_2$ page of the May spectral sequence can be computed in a much larger range. 
The data provided by this paper can cover the $E_2$ page of May up to stem 285. 
It is already computed up to total degree $t=1024$. 
However, the data is too big to be put into a paper. 
The more important thing is that we have Conjecture \ref{conj:2cc43131} and Conjecture \ref{conj:5f7d758} which describe the whole $E_2$ page of May by generators and relations. 
The conjectures are shown to be true in the range of dimensions that have been computed (see Theorem \ref{thm:a26989b6}).

One highlight of this paper is that many of our computations of $E_2$ are based on the algorithms of \Groebner{} bases.
The (reduced) \Groebner{} basis gives a unique presentation of an algebra if we fix a choice of monomial ordering.
Hence we can easily check the conjectures against the computation in a range of dimensions if we present all in terms of \Groebner{} bases.

We make Conjecture \ref{conj:2a224128} claiming that the $E_2$ page of May is nilpotent free. 
This is in sharp contrast to the fact that all of the elements in positive dimensions
in the stable homotopy groups of spheres are nilpotent. 
It is also proved in the computed range by showing that there is an $\bF_2$-basis $B$ of a subalgebra such that $x\in B$ implies $x^2\in B$.

But there is weaker version of this conjecture which can be proved in all of the dimensions and the differentials in the May spectral sequence will gradually destroy this property:
\begin{theorem}[Theorem \ref{thm:512ce8e9}]
	For any $x$ in the $E_2$ page of the May spectral sequence and any $n\ge 0$, if $h_nx\neq 0$, then $h_n^ix\neq 0$ for all $i$.
\end{theorem}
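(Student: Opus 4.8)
The plan is to reduce the statement to a single divisibility assertion and then exploit the $h_n$-linearity of the May differential. Write $E_2=H(\bF_2[h_{i,j}],d_1)$ with $h_n=h_{1,n}$. The conclusion is equivalent to the single identity $\ker(\cdot h_n)=\ker(\cdot h_n^2)$ on $E_2$: if $h_n^i x=0$ with $i$ minimal and $i\ge 2$, then $h_n^{i-2}x\in\ker(h_n^2)=\ker(h_n)$ forces $h_n^{i-1}x=0$, contradicting minimality, so $i\le 1$; conversely, if $h_n^2x=0$ and $h_nx\neq0$ the theorem would give $h_n^ix\neq0$ for all $i$, a contradiction. Thus it suffices to show that all $h_n$-power torsion in $E_2$ is annihilated by $h_n$ itself.

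Since $d_1h_n=0$ and $d_1$ is a derivation, $d_1$ is $\bF_2[h_n]$-linear, so $(\bF_2[h_{i,j}],d_1)$ is a complex of free modules over the graded principal ideal domain $R=\bF_2[h_n]$; the claim becomes that the torsion part of the $R$-module $E_2$ is a sum of copies of $R/h_n$, with no summand $R/h_n^k$ for $k\ge 2$. I would prove this through the $h_n$-adic filtration, which is finite in each homological degree $s$ (a class of homological degree $s$ has at most $s$ factors of $h_n$). The key elementary observation is that $d_1$ of a generator never contains $h_n^2$: the only quadratic terms of $d_1h_{i,j}$ divisible by $h_n$ are $h_nh_{i-1,n+1}$ (when $j=n$) and $h_nh_{i-1,n-i+1}$ (when $j=n-i+1$), and these cannot coincide with $h_n^2$. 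Consequently $d_1$ splits along the filtration as $d_1=\bar d_1+h_n\theta$, where $\bar d_1$ is the differential ``modulo $h_n$'' on $P'=\bF_2[h_{i,j}:(i,j)\neq(1,n)]$ and $\theta$ is the explicit square-zero derivation with $\theta(h_{i,n})=h_{i-1,n+1}$, $\theta(h_{i,n-i+1})=h_{i-1,n-i+1}$, and $\theta=0$ on the remaining generators.

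This identifies the filtration spectral sequence with the $h_n$-Bockstein spectral sequence: $E_1=\bar E_2[h_n]$ for $\bar E_2:=H(P',\bar d_1)$, first differential $\beta_1=\theta_*$, and the theorem is equivalent to the vanishing of all higher Bocksteins $\beta_r$ for $r\ge 2$. To attack this I would study the commuting bicomplex $(P',\bar d_1,\theta)$. Here $\theta$ is a disjoint matching of generators (each listed source maps to a target which is itself $\theta$-closed), so $P'$ factors as a tensor product of Koszul blocks $\bF_2[a_\ell,b_\ell]$ with $\theta a_\ell=b_\ell$ times a polynomial algebra on the unmatched generators, giving $H(P',\theta)=\bigotimes_\ell\bF_2[a_\ell^2]\otimes\bF_2[\text{unmatched}]$, a polynomial algebra concentrated in the \emph{squares} of the matched sources. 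Comparing the two spectral sequences of this bicomplex — the $\theta$-first sequence, whose $E_1$ is this polynomial algebra, against the $\bar d_1$-first sequence, which is the Bockstein sequence — is the mechanism I would use to force degeneration.

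The step I expect to be the genuine obstacle is exactly this last one: proving $\beta_r=0$ for $r\ge 2$ in all degrees. Purely formal homological algebra only restates the claim (for instance $\ker(\cdot h_n)=\operatorname{im}\delta$ and $\operatorname{im}(\cdot h_n)\cap\ker(\cdot h_n)=0$ are each tautologically equivalent to it), so the argument must use the specific combinatorics of $\theta$ and $\bar d_1$. My concrete line of attack is to show that the polynomiality of the $\theta$-homology — every surviving class being a square — is incompatible with a nonzero higher Bockstein, which would have to carry such a square off the bottom of the filtration. Making this incompatibility precise, uniformly in $n$ (the algebra endomorphism $h_{i,j}\mapsto h_{i,j+1}$ commutes with $d_1$ and lets one treat all $n$ on the same footing), is where the real work lies.
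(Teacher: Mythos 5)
Your first half is correct, and it coincides exactly with the paper's starting point: the reduction of the theorem to $\Ker(\cdot h_n)=\Ker(\cdot h_n^2)$, the splitting $d_1=\bar d_1+h_n\theta$, and the Koszul computation of $H(P',\theta)$ all hold, and your triple $(P',\bar d_1,\theta)$ is precisely the paper's $(Y,\delta_{Y,1},\delta_{Y,2})$ from the proof of Theorem \ref{thm:512ce8e9}. But everything after that is a plan rather than a proof, and you say so yourself: the assertion $\beta_r=0$ for $r\ge 2$ is, as you note, equivalent to the theorem, and you give no argument for it. As written, the proposal is a reformulation, not a proof.

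Worse, the mechanism you propose for the missing step cannot work. The features you intend to exploit --- $\theta$ is a disjoint matching of generators, hence $H(P',\theta)$ is polynomial on squares; $\bar d_1,\theta$ are anticommuting square-zero derivations --- do not imply vanishing of higher Bocksteins. Take $P'=\bF_2[x,y,z,w]$ with $\bar d_1$ the derivation determined by $\bar d_1(y)=z$, $\bar d_1(x)=\bar d_1(z)=\bar d_1(w)=0$, and $\theta$ the matching $\theta(x)=z$, $\theta(y)=w$, $\theta(z)=\theta(w)=0$. Every property on your list holds, and $H(P',\theta)=\bF_2[x^2,y^2]$ is polynomial on squares. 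Yet in $P'[h]$ with $d=\bar d_1+h\theta$ one has $d(x)=hz$ and $d(y)=z+hw$, so $[w]\neq 0$ and $h[w]=[hw]=[z]\neq 0$, while
$$h^2[w]=[d(x+yh)]=0,$$
an $h$-torsion class of order exactly two: $\beta_1=0$ but $\beta_2\neq 0$. So comparing the two spectral sequences of the bicomplex can never, by itself, force degeneration; any completion of your route must use combinatorics specific to the May complex, e.g.\ control of configurations $\theta(a)=\bar d_1(b)$ with $\theta(b)$ a surviving cycle, which is exactly what this example exploits.

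For contrast, the paper does not use the $h_n$-adic filtration at all. It assigns weights to the generators --- $0$ on the $\theta$-sources $R_{i,n+1},R_{n,j}$ and on $R_{n,n+1}$, $2$ on the $\theta$-targets $R_{i,n},R_{n+1,j}$, $1$ on the rest --- so that $\bar d_1$ and $\theta$ become homogeneous of weights $1$ and $2$, filters by total weight, asserts that the resulting spectral sequence has nontrivial differentials only in lengths $1$ and $2$, and finishes with the explicit model of Proposition \ref{prop:dd4d11fa} plus an extension argument (Proposition \ref{prop:670687b9}). You should be aware, however, that the four-generator example above also satisfies every stated hypothesis of Proposition \ref{prop:670687b9} (weights $0,1,2,3$ on $x,y,z,w$, with $m=1$, $n=2$) and violates its conclusion: in the weight spectral sequence for $P'[h]$ there is a length-three differential $[x]\mapsto[wh^2]$, so the degeneration claimed in that proof is not formal either. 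In other words, your diagnosis of where the real difficulty lies is accurate, and it lies in the same place for the paper; at this point both arguments need, and neither supplies, input from the specific structure of $\bar d_1$ and $\theta$ for the May complex.
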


The $E_2$ page of May plays a certain universal role in the study of Massey products. 
We know that
$$\langle h_0,h_1,\dots,h_n\rangle=0, ~n=1,2,\dots$$
is strictly defined and the $E_2$ page of May can be identified as the homology of the commutative DGA generated by the formal defining system (see Theorem \ref{thm:e0783a1e}).
A theorem by Gugenheim and May \cite{GM74} indicates that $E_2$ can be generated by the $h_i$ under matric Massey product.
In Theorem \ref{thm:d55b92ee}, we will show how this can be done by explicit formulas. The author learned the explicit formulas for $h_i(S)$ from May and wrote down the general proof.

For further computations of the May spectral sequence beyond $E_2$, we prove Theorem \ref{thm:9f9e1790} which will determine all $d_2$ differentials if our conjectures describing $E_2$ are true. 
In section \ref{sec:a82fc999} we construct some maps of spectral sequences which systematically extend the techniques used by May and Tangora computing differentials in the May spectral sequence based on the Adams vanishing theorem.

The author came across Conjecture \ref{conj:2cc43131} from May's thesis \cite{May64}, tried very hard to prove it, failed, and then wrote this paper.
May tried hard to prove it too while writing his thesis back in 1960s.
Steenrod told him to finish his thesis to complete the PhD and the conjecture remains unsolved ever since. May did not have any conjecture defining what the relations ought to be and our Conjecture \ref{conj:5f7d758} complements his conjecture.
The formulation of Conjecture \ref{conj:5f7d758} was given after the author's interaction with computers. The machines have given several counter examples of some earlier versions of the conjecture.
The author believes that one may need to assume both conjecture and prove them inductively at the same time.

% \subsection{Organization}
% In Section \ref{sec:2f8ab9ed} we state our main conjectures and theorems about the $E_2$ page of the May spectral sequence. 
% We also show how to obtain the indecomposables $h_{S, T}$ from $h_i$ under matric Massey products. 
% In Section \ref{sec:b0f0886a} we give a formula for the $d_2$ differentials on the indecomposables $h_{S, T}$ of the $E_2$ page. 
% In Section \ref{sec:7c98bbbc} we set up some computational tools including the \Groebner{} bases in order to compute $HX_7$ which proves Theorem \ref{thm:a26989b6} in Section \ref{sec:2f8ab9ed}. 
% Some of the work is aided by computer. 
% We also construct some comparison maps of spectral sequences in this section. 
% Appendix \ref{app:667432e1} provides a list of charts of the computational results of Section \ref{sec:7c98bbbc}.

\subsection{Acknowledgement}
The author would like to thank his advisor Peter May for many helpful discussions on this topic. 
The author was very glad to come across this problem and to get the support from his advisor to choose this problem as his thesis topic. 
May also read many drafts of this paper and offered tremendous help on writing.

\section{The \mybm{$E_2$}{E2} Page of the May Spectral Sequence}\label{sec:2f8ab9ed}
The main goal of this section is to state the conjectures which fully describes the $E_2$ page of the May spectral sequence in terms of generators and relations. 
We will show the universal role that $E_2$ plays in the study of Massey products.
\subsection{The associated graded algebra of the Steenrod algebra}
Recall May's results in his thesis \cite{May64} that we can filter the Steenrod algebra as follows.

Let $I(\sA)\subset \sA$ be the augmentation ideal. 
Let
$$\Phi_n: I(\sA)\otimes\cdots\otimes I(\sA)\longrightarrow I(\sA)$$
be the $n$-fold multiplication.

Define a decreasing filtration
$$F_0\sA=\sA;~ F_n\sA=\Image \Phi_n, n>0.$$
Then the associated graded Hopf algebra $E^0\sA$ of $\sA$ is defined by
$$E^0_{n,p}\sA=(F_p\sA/F_{p+1}\sA)_{n}.$$

A theorem due to Milnor and Moore \cite{MM} states that any primitively generated Hopf algebra over a field of characteristic $p$ is isomorphic to the universal enveloping algebra of its restricted Lie algebra of primitive elements. 
The associated graded algebra $E^0\sA$ satisfies the conclusion as follows.

\begin{theorem}[May]\label{thm:c65df7f}
	The associated graded algebra $E^0\sA$ can be represented by the associative algebra generated by $P^i_j$, $i\ge 0, j>0$ with relations
	$$(P^i_j)^2=0;~ [P^i_j, P^k_l]=\delta_{i,k+\ell}P^k_{j+\ell},~ i\ge k.$$
	Here $P^i_j\in E^0\sA$ corresponds to the projection of the dual of $\xi_j^{2^i}$ in the dual Steenrod algebra $$\sA_*=\bF_2[\xi_1,\xi_2,\dots]$$ with monomial basis. The grading of $E^0\sA$ is given by
	$$|P^i_j|_{n,p}=(2^i(2^j-1), j).$$
\end{theorem}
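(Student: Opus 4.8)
The plan is to dualize and invoke the Milnor--Moore theorem quoted just above. First I would observe that the May filtration is exactly the augmentation-ideal filtration $F_n\sA = I(\sA)^n$, so that $E^0\sA = \bigoplus_n I^n/I^{n+1}$ is generated as an algebra by its filtration-one part $E^0_1\sA = I/I^2$. Because $\psi$ is an algebra map and each $F_n$ is a power of $I$, the coproduct satisfies $\psi(F_n)\subseteq\sum_{a+b=n}F_a\otimes F_b$; restricting to filtration one forces every element of $E^0_1\sA$ to be primitive. Hence $E^0\sA$ is primitively generated, and Milnor--Moore gives an isomorphism $E^0\sA\cong V(P)$ with the restricted enveloping algebra of its restricted Lie algebra of primitives $P=\mathrm{Prim}(E^0\sA)$. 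It then remains to identify $P$ together with its bracket and restriction.

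To compute $P$ I would pass to the dual. The dual $\sA_* = \bF_2[\xi_1,\xi_2,\dots]$ carries the filtration dual to May's, under which $\xi_j^{2^i}$ has weight $j$ and weights add on products. Since squaring doubles the exponent while leaving the weight unchanged (as $\xi_j^{2^i}\cdot\xi_j^{2^i}=\xi_j^{2^{i+1}}$ has weight $j<2j$), every such square drops filtration and therefore vanishes in the associated graded. Writing $\zeta_{ij}$ for the class of $\xi_j^{2^i}$, a binary-expansion count of monomials shows that $E^0\sA_*$ is the exterior algebra $\Lambda(\zeta_{ij}\colon i\ge 0,\ j>0)$, with the same Poincar\'e series as $\sA_*$. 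Dualizing, the indecomposables $\zeta_{ij}$ correspond to a basis $\{P^i_j\}$ of $P$, and the internal degree $|\xi_j^{2^i}|=2^i(2^j-1)$ together with weight $j$ yields the stated bidegree $|P^i_j|_{n,p}=(2^i(2^j-1),j)$.

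Finally I would read off the restricted Lie structure from the coproduct of $E^0\sA_*$, which is induced by $\psi(\xi_n)=\sum_{s=0}^n \xi_{n-s}^{2^s}\otimes\xi_s$. Raising to the $2^k$ power gives $\bar\psi(\zeta_{k,j+\ell})=\sum_{s}\zeta_{s+k,\,j+\ell-s}\otimes\zeta_{k,s}$ modulo the terms killed in the exterior algebra, whose $\zeta_{i,j}\otimes\zeta_{k,\ell}$ component is nonzero exactly when $s=\ell$ and $i=k+\ell$; this is the pairing that detects the bracket, yielding $[P^i_j,P^k_\ell]=\delta_{i,k+\ell}P^k_{j+\ell}$ for $i\ge k$. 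For the restriction, no squarefree monomial of $E^0\sA_*$ contains a diagonal term $\zeta_{ij}\otimes\zeta_{ij}$ in its coproduct, so $(P^i_j)^2=(P^i_j)^{[2]}=0$ and the restriction is trivial.

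The step I expect to be the main obstacle is not any single computation but establishing that the relations listed are \emph{complete}: one must check that the exterior-algebra identification of $E^0\sA_*$ is exact on the nose, so that the dimension of $V(P)$ in each bidegree, computed via the restricted Poincar\'e--Birkhoff--Witt theorem, matches that of $E^0\sA$. Keeping the weight bookkeeping consistent---especially distinguishing the honest vanishing $(P^i_j)^2=0$ in $E^0\sA$ from the relation $x^2=x^{[2]}$ defining $V(P)$---is where the argument has to be handled with care.
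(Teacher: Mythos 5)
First, a point of comparison that matters for your review: the paper does \emph{not} prove this theorem --- it is quoted as background from May's thesis \cite{May64} --- so there is no in-paper argument to measure against, and your proposal must stand on its own. Its architecture is the right one (and is essentially May's): identify the May filtration with the $I$-adic filtration, deduce primitive generation of $E^0\sA$ and invoke Milnor--Moore, then compute the primitives, the bracket, and the restriction by dualizing. Your bracket computation (nonzero component exactly when $s=\ell$ and $i=k+\ell$) and the vanishing of the restriction are correct \emph{given} the exterior-algebra description of $E^0\sA_*$.

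The genuine gap is that this description is asserted, not proved: the sentence ``$\sA_*$ carries the filtration dual to May's, under which $\xi_j^{2^i}$ has weight $j$'' is precisely the hard content of the theorem. Write $G_p=\Ann(I^{p+1})\subseteq\sA_*$ for the dual filtration and $G^w_p$ for the span of monomials of weight $\le p$ (weight $j$ for $\xi_j^{2^i}$, extended via binary expansions; note weights are only subadditive, not additive, on products). Only the inclusion $G^w_p\subseteq G_p$ is formal: the Milnor coproduct preserves weight on generators, so a $(p+1)$-fold reduced coproduct of a weight-$\le p$ monomial must vanish. The reverse inclusion --- equivalently, that the Milnor dual of each weight-$w$ monomial genuinely lies in $I^w$, so that nothing of weight $\ge p+1$ annihilates $I^{p+1}$ --- requires an explicit construction: ordered products of the $P^i_j$ realizing each Milnor basis element modulo higher weight, plus an induction/triangularity argument using the product formulas; this is where May's thesis does its real work, and your proposal never does it. Nor can your counting argument substitute for it: the degenerate weight $w'(\xi_j^{2^i})=2^i j$ (so $w'(\xi^R)=\sum_k k\,r_k$) satisfies $w'\ge w$, hence the span of monomials of $w'$-weight $\le p$ also lies in $G_p$, and its associated graded also has the Poincar\'e series of $\sA_*$; yet that associated graded is polynomial on the classes of the $\xi_j^{2^i}$, not exterior. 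So ``easy inclusion plus Poincar\'e series'' is consistent with non-exterior answers and cannot identify $E^0\sA_*$. A smaller point: your closing worry is misplaced --- once the exterior identification is actually established, completeness of the relations is automatic from Milnor--Moore together with restricted PBW, with no further dimension matching needed; the entire difficulty of the theorem is concentrated in the one step you assumed.
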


We can also filter the the cobar complex of $\sA$ based on this filtration. 
The resulting spectral sequence is the May spectral sequence.

\begin{theorem}[May]
	There exists a spectral sequence $(E_r, d_r)$ converging to the cohomology of the Steenrod algebra, and having its $E_2$ term $H^*(E^0\sA)$. 
	Each $E_r$ is a tri-graded algebra and each $d_r$ is a homomorphism
	$$d_r: E_r^{s,t,u}\to E_r^{s+1,t,u-r}$$
	which is a derivation with respect to the algebraic structure.
\end{theorem}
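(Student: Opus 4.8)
The plan is to obtain the whole statement as the spectral sequence associated to a filtered differential graded algebra, namely the (normalized) cobar complex that computes the cohomology of the Steenrod algebra, filtered by May's filtration $F_\bullet\sA$. First I would fix the cobar complex: passing to the dual, set $\bar{\sA}_* = \ker(\epsilon\colon \sA_* \to \bF_2)$ and let $C^{s,t}$ be the internal-degree-$t$ summand of $(\bar{\sA}_*)^{\otimes s}$, with the cobar differential $d\colon C^s \to C^{s+1}$ induced by the reduced coproduct of $\sA_*$; then $H^{s,t}(C,d) \cong \Ext_\sA^{s,t}(\bF_2,\bF_2)$. Concatenation of tensors makes $(C,d)$ a DGA, and $d$ preserves $t$ while raising $s$ by one.

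Next I would install the filtration. By Theorem \ref{thm:c65df7f}, the filtration $F_\bullet\sA$ (with $F_n\sA = \Image\,\Phi_n$) is a filtration of Hopf algebras whose associated graded is $E^0\sA$, so it is compatible with both the product and the coproduct. Dualizing and extending multiplicatively across tensor factors equips $C$ with an increasing filtration by total May weight $u$. Since the cobar differential strictly lowers $u$, it respects this filtration and the induced differential on the associated graded vanishes; hence $E_1$ is the associated graded itself, the polynomial algebra generated by the weight-homogeneous cobar classes, and the first nonzero differential $d_1$ is precisely the cobar differential of $E^0\sA$, so $E_2 = H(E_1,d_1) \cong H^*(E^0\sA)$. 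Reading the tridegrees off the construction — $s$ the cobar degree, $t$ the internal degree, $u$ the May weight — the cobar differential raises $s$ by one, fixes $t$, and the $r$-th differential lowers $u$ by $r$, which gives the asserted $d_r\colon E_r^{s,t,u}\to E_r^{s+1,t,u-r}$.

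For the algebraic structure I would observe that the filtration is by subalgebras of the DGA $C$, so the concatenation product descends to each page and the Leibniz rule for $d$ passes to the associated graded; hence every $E_r$ is a tri-graded algebra and every $d_r$ is a derivation.

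The step I expect to be the main obstacle is convergence, together with pinning down that $H^*(E^0\sA)$ sits on $E_2$ exactly. Convergence I would settle by finiteness: $\sA$ is finite-dimensional in each internal degree, so only finitely many May weights occur in each bidegree $(s,t)$, the induced filtration of $C^{s,t}$ is finite, and the spectral sequence therefore converges strongly to $H^*(C,d) = \Ext_\sA(\bF_2,\bF_2)$ with $u$ filtering the abutment. The one genuinely delicate verification beyond bookkeeping is that the leading-order behavior of the cobar differential reproduces the cobar differential of $E^0\sA$ on $E_1$ under the chosen weight, since this is what forces the identification of the $E_2$-term rather than $E_1$ or $E_3$; everything else is the formal calculus of a filtered DGA.
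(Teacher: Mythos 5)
Your overall route---filter the cobar complex of $\sA_*$ by the May filtration and run the spectral sequence of a filtered DGA---is exactly the one May takes and the paper cites, and your convergence-by-finiteness and multiplicativity remarks are fine in outline. But the central identification rests on a false claim. With the filtration by \emph{total} May weight (the weight of $\xi_{j}^{2^i}$, dual to $P^i_j$, being $j$), the cobar differential does \emph{not} strictly lower the filtration: the coproduct $\psi(\xi_j)=\sum_k \xi_{j-k}^{2^k}\otimes\xi_k$ splits a generator of weight $j$ into pairs of total weight $(j-k)+k=j$. Consequently the induced differential on the associated graded does not vanish; it is precisely the cobar differential of $E^0\sA_*$ (this is what it means for $F_\bullet\sA$ to be a Hopf algebra filtration, which you yourself invoke). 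That nonvanishing is not a technicality---it is the entire content of the theorem, since it is the homology of \emph{that} complex which puts $H^*(E^0\sA)$ on the next page. Your write-up is internally inconsistent on this point: you assert both that the differential on the associated graded vanishes and that ``$d_1$ is precisely the cobar differential of $E^0\sA$''; with your filtration the latter lives in $d_0$, and if it vanished, $H^*(E^0\sA)$ could never appear at all.

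Two further problems flow from this. First, the associated graded of the cobar complex is a \emph{tensor} algebra (the cobar complex of $E^0\sA_*$), not ``the polynomial algebra generated by the weight-homogeneous cobar classes''; the polynomial algebra $X=\bF_2[R_{ij}]$ computes $H^*(E^0\sA)$ only via May's separate co-Koszul theorem, or---as the paper does in Section \ref{sec:b0f0886a} with Ravenel's different weight $w(\xi_j^{2^i})=2j-1$---via an explicit homology computation of the associated graded (Proposition \ref{prop:1afacc52}); note that Remark \ref{rem:1} shows $d_0\neq 0$ there as well, because splitting a product $x=x_1x_2$ into $x_1\otimes x_2$ preserves weight. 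Second, the degree bookkeeping does not come out as stated: with your filtration the cobar differential of $E^0\sA_*$ preserves $u$, so it cannot realize the asserted bidegree $(s,t,u)\mapsto(s+1,t,u-r)$ for any $r\ge 1$, and the homology of the associated graded would be $E_1$, not $E_2$. To obtain May's indexing one must filter by (May weight) minus (cobar degree)---this is why $R^i_j$ carries $u=j-1$ rather than $j$ in the paper. With that corrected filtration your assertions ``$d$ strictly lowers $u$, hence $d_0=0$ and $E_1$ is the associated graded'' become true, $d_1$ becomes the cobar differential of $E^0\sA_*$, and $E_2\iso H^*(E^0\sA)$ with $d_r$ of the stated degree; but $E_1$ is still a tensor algebra, not a polynomial one, and your multiplicativity argument should invoke $F_pF_q\subseteq F_{p+q}$ rather than the claim that each $F_p$ is a subalgebra (for an increasing multiplicative filtration it is not).
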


\subsection{The cohomology of \mybm{$E^0\sA$}{E0A} and its relationship to the Massey product}
For any Hopf algebra $A$, May \cite{May64} found a reasonably small complex with which to calculate $H^*(E^0A)$. 
As an application, for the Steenrod algebra $\sA$ we get the following.

\begin{theorem}[May]
	The cohomology of the associated graded algebra $E^0\sA$ is isomorphic to the homology  of the differential graded algebra
	$$X=\bF_2[R^i_j: i\ge 0, j>0]$$
	$$|R^i_j|_{s,t,u}=(1, 2^i(2^j-1), j-1)$$
	with differentials $d: X_{s,t,u}\to X_{s+1,t,u-1}$ given by
	\begin{equation}\label{eq:3eab9b7f}
	d R^i_j=\sum_{k=1}^{j-1}R^{i+k}_{j-k}R^i_k.
	\end{equation}
\end{theorem}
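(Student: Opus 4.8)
The plan is to identify $E^0\sA$ with the universal enveloping algebra of its primitives and then feed it into May's general machine for the cohomology of cocommutative Hopf algebras. By Theorem~\ref{thm:c65df7f}, $E^0\sA \cong V(L)$, the restricted universal enveloping algebra of the restricted Lie algebra $L$ with $\bF_2$-basis $\{P^i_j\}$, bracket as stated, and restriction determined on the basis by $(P^i_j)^{[2]} = (P^i_j)^2 = 0$. Thus the restriction is \emph{trivial on the chosen basis}, and this is exactly what will force the answer to be a pure polynomial (Koszul) algebra with the single quadratic differential and no extra generators. After this reduction, the target is to show that $H^*(V(L)) = \Ext_{V(L)}(\bF_2,\bF_2)$ is computed by the small complex $X = \bF_2[R^i_j]$ with the Chevalley--Eilenberg differential dual to the Lie bracket.

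First I would build the complex and identify the differential. Filtering $V(L)$ by the PBW filtration, the associated graded is the tensor product $\bigotimes_{i,j} \bF_2[P^i_j]/((P^i_j)^2)$ of monogenic exterior Hopf algebras; on each factor the minimal resolution of $\bF_2$ is standard and gives $\Ext_{\bF_2[x]/(x^2)}(\bF_2,\bF_2) = \bF_2[R]$, polynomial on one class in homological degree $1$. Tensoring identifies the underlying algebra of $X$ as $\bF_2[R^i_j]$. The discrepancy between $V(L)$ and its associated graded is precisely the bracket, so the differential is dual to it: reading the bracket relation with $a=i+k$, $b=j-k$, $c=i$, $d=k$ gives $[P^{i+k}_{j-k}, P^i_k] = P^i_j$, and dualizing yields exactly $d R^i_j=\sum_{k=1}^{j-1}R^{i+k}_{j-k}R^i_k$. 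The tridegrees are then a bookkeeping check: both sides of \eqref{eq:3eab9b7f} have $t = 2^i(2^j-1)$, while $u$ drops from $j-1$ to $j-2$, matching $d: X_{s,t,u} \to X_{s+1,t,u-1}$ and the degree $|R^i_j|_{s,t,u}=(1, 2^i(2^j-1), j-1)$.

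The main obstacle is proving that $X$ genuinely computes $H^*(E^0\sA)$ rather than merely its associated graded. The clean route is to produce an explicit map from $X$ into a standard cobar complex computing $\Ext_{V(L)}(\bF_2,\bF_2)$, filter both by the PBW filtration, and show the map is an isomorphism on $E_1$-pages, hence a quasi-isomorphism; this reduces to the monogenic computation above on each factor. The delicate point is that in positive characteristic $H^*(V(L))$ generally acquires extra polynomial generators together with an additional restriction summand in the differential coming from the $p$-th power operation, and it is the vanishing $(P^i_j)^{[2]} = 0$ on the basis that removes both, leaving only the bracket-dual term \eqref{eq:3eab9b7f}. Verifying this vanishing, and that no higher filtration corrections survive, is the crux, and is where I would invoke May's comparison theorem for filtered cocommutative Hopf algebras.
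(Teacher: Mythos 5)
Your setup is right and your degree and bracket bookkeeping are correct, but the step that carries all the weight --- ``produce an explicit map from $X$ into a standard cobar complex computing $\Ext_{V(L)}(\bF_2,\bF_2)$, filter both, and show it is an isomorphism on $E_1$-pages'' --- does not go through as stated, and it is exactly the crux you yourself flagged. The cobar complex of $V(L)^*$ is noncommutative, so the assignment $R^i_j\mapsto[(P^i_j)^*]$ admits no multiplicative extension to the polynomial algebra $X$; and the natural fix, sending a monomial to the sum of tensor words over all orderings of its factors, is not a chain map: already for the classes $R_{S,T}$ the symmetrized chain $\alpha$ has $d\alpha\neq 0$ in the cobar complex precisely because of the noncommutativity, and producing the lower-filtration corrections is genuine work (this is the difficulty the paper fights through, for special classes only, in the proof of Theorem \ref{thm:9f9e1790}, where the correction term $\beta$ must be built by hand). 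Without an actual chain map, ``isomorphism on $E_1$ implies quasi-isomorphism'' has nothing to apply to. If you instead drop the map and run the PBW-filtration spectral sequence of the cobar complex alone, you do get $E_1\cong X$ with $d_1$ the bracket-dual differential, hence $E_2\cong HX$; but then the theorem additionally requires that this spectral sequence collapse at $E_2$ and that all multiplicative extensions be trivial, and neither is addressed in your sketch (nor is either formal: the analogous filtration spectral sequence for $\sA$ itself, namely the May spectral sequence, emphatically does not collapse).

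The way to close the gap is to argue on the resolution side rather than the cochain side, which is the proof the paper attributes to May in the remark following the theorem: write down the explicit small complex $E^0\sA\otimes X^*$, check $d^2=0$ against the bracket relations, and prove it is an $E^0\sA$-free resolution of $\bF_2$. Acyclicity is exactly where your PBW-filtration reduction belongs and works: the associated graded of this complex is the tensor product of the standard minimal resolutions over the monogenic factors $\bF_2[P^i_j]/((P^i_j)^2)$, and a comparison along the (locally bounded) filtration lifts acyclicity back. Applying $\mathrm{Hom}_{E^0\sA}(-,\bF_2)$ then yields $X$ with the differential (\ref{eq:3eab9b7f}) on the nose, with no collapse or extension issues; this is Priddy's Koszul resolution. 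One conceptual correction as well: over $\bF_2$ the complex computing the cohomology of a restricted enveloping algebra is the polynomial algebra $\bF_2[y_\alpha]$ whether or not the restriction vanishes; a nontrivial restriction contributes an extra summand $\sum_\alpha b^\gamma_\alpha y_\alpha^2$ to $dy_\gamma$ but never extra generators. So the vanishing $(P^i_j)^{[2]}=0$ buys you only the absence of square terms in the differential --- it is not what makes $X$ polynomial, and it had better not remove the squares, since the classes $b_{ij}=[R_{ij}^2]$ and $h_i^2$ are nonzero and indispensable in $HX$.
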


\begin{remark}
	May proved this theorem by showing that $E^0\sA\otimes X^*$ is an $E^0\sA$-free resolution of $\bF_2$ which is much smaller than the bar construction. 
	In 1970 after May's thesis, Priddy \cite{Priddy70}\cite{Priddy70-2} conceptualized this method into Koszul resolutions which apply to a more general kind of algebras called Koszul algebras. 
	The complex $X$ can be interpreted as the co-Koszul complex of $E^0\sA$ in terms of Priddy's setting.
\end{remark}

\begin{definition}\label{def:8ac2f389}
	We reindex the generators of $X$ by
	$$R_{ij}=\begin{cases}
	R^i_{j-i}, & \text{ if } 0\le i<j,\\
	0,  & \text{ if } 0\le j\le i.
	\end{cases}$$
\end{definition}

With a little rewriting, (\ref{eq:3eab9b7f}) now becomes
\begin{equation}\label{eq:99758f58}
	d R_{ij}=\sum_{k=i+1}^{j-1} R_{ik}R_{kj}.
\end{equation}
If we regard $R$ as the strictly upper triangular matrix $(R_{ij})$, then $d R=R^2$.

\begin{remark}
	The symbol $R^i_j$ is written as $R_{ij}$ by Tangora \cite{Tangora} but as $R_{i, i+j}$ in this paper.
\end{remark}

The homology $HX$ has some universal properties in the study of Massey products. 
Recall from \cite{May69} that if $A$ is a commutative differential graded algebra and $a_1,\dots,a_{n}\in HA$, then the $n$-fold Massey product $\Massey{a_1,\dots,a_n}$ is defined if and only if we can find $a_{ij}\in A$ for $0\le i<j\le n$ and $(i,j)\neq (0,n)$ such that $a_i\in HA$ is represented by $a_{i-1,i}\in A$ and
$$da_{ij}=\sum_{k=i+1}^{j-1}a_{ik}a_{kj}, \hspace{0.5cm} 0\le i<j\le n \text{ and } (i,j)\neq (0,n).$$
In this case $\Massey{a_1,\dots,a_n}$ is defined by the homology class of
$$a_{0,1}a_{1,n}+\cdots+a_{0,n-1}a_{n-1,n}.$$
Note that the formulas resemble (\ref{eq:99758f58}), which directly implies the following two theorems.

\begin{definition}
We define $X_n=\bF_2[R_{ij}: 0\le i<j\le n]$. It is a sub-differential graded algebra of $X$.
\end{definition}

\begin{theorem}\label{thm:e0783a1e}
    If $A$ is a commutative differential algebra, then the decompositions of zero in $HA$ as an $n$-ary Massey product (together with a defining system)
    $$0\in \langle a_1,\dots,a_n\rangle,\hspace{1cm} a_i\in HA$$
    are in one-to-one correspondence with maps of differential algebras:
    $$f: X_n\to A$$
    where $f$ induces the algebraic map
    $$f_*: HX_n\to HA$$
    with $f_*(h_{i-1})=a_i$, $1\le i\le n$, where $h_{i-1}$ is the homology class of $R_{i-1,i}$.
\end{theorem}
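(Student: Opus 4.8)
The plan is to make the bijection explicit at the level of generators and then simply read off what the differential imposes. Since $X_n=\bF_2[R_{ij}:0\le i<j\le n]$ is free as a commutative algebra on the symbols $R_{ij}$, the universal property says that an algebra homomorphism $f\colon X_n\to A$ is the same datum as an arbitrary choice of elements $a_{ij}:=f(R_{ij})\in A$, one for each pair $0\le i<j\le n$, the pair $(0,n)$ included. Conversely any such family $(a_{ij})$ extends uniquely to an algebra map $R_{ij}\mapsto a_{ij}$. So the first step is merely to record that, forgetting the differential, the algebra maps $X_n\to A$ are in bijection with families $(a_{ij})_{0\le i<j\le n}$ of elements of $A$.

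Next I would determine exactly when such an $f$ is a chain map. Because $f$ is an algebra homomorphism, both $d\circ f$ and $f\circ d$ satisfy the Leibniz rule over $f$ (over $\bF_2$ no signs intervene), and two such $f$-derivations agree on all of $X_n$ as soon as they agree on the algebra generators. Hence it suffices to require $d f(R_{ij})=f(dR_{ij})$ for each generator, and by (\ref{eq:99758f58}) this says
$$da_{ij}=\sum_{k=i+1}^{j-1}a_{ik}a_{kj},\qquad 0\le i<j\le n.$$
Thus chain maps $X_n\to A$ correspond precisely to families $(a_{ij})$ satisfying all of these equations at once.

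The third step is to match these equations against the notion of a defining system recalled before the theorem. For a pair $(i-1,i)$ the sum is empty, so $a_{i-1,i}$ is a cycle; setting $a_i:=[a_{i-1,i}]\in HA$ realizes exactly $f_*(h_{i-1})=a_i$, since $h_{i-1}=[R_{i-1,i}]$. The equations indexed by $(i,j)\neq(0,n)$ are verbatim the defining-system relations for $\langle a_1,\dots,a_n\rangle$, and the single remaining equation, indexed by $(0,n)$, reads $da_{0,n}=\sum_{k=1}^{n-1}a_{0,k}a_{k,n}$, which says precisely that the Massey-product representative $\sum_k a_{0,k}a_{k,n}$ is a boundary, i.e. that the defining system realizes $0\in\langle a_1,\dots,a_n\rangle$ with $a_{0,n}$ as the nullhomotopy. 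Reading this in both directions yields the asserted bijection, the two constructions being mutually inverse since each only renames $f(R_{ij})$ as $a_{ij}$ and back.

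The computations here are routine; the only real care is bookkeeping, and that is where the one genuine subtlety lies. The phrase ``a decomposition of zero together with a defining system'' must be understood to package the witnessing element $a_{0,n}$ with $da_{0,n}=\sum_k a_{0,k}a_{k,n}$ as part of the data, since otherwise the family would be short exactly the generator $R_{0,n}$ and $f$ would be undefined there; with this convention the correspondence is an honest bijection. I would also flag, so as not to mislead, that $f$ is a map of differential algebras for the homological grading only and does not preserve the internal $(t,u)$-grading: the degree of $a_{ij}$ is forced to be $|a_{i+1}|+\cdots+|a_j|-(j-i-1)$, which is exactly what the relations above require and so imposes no further condition.
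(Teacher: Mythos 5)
Your proof is correct and is essentially the argument the paper intends: the paper justifies this theorem by the single observation that the defining-system equations coincide with the images of the relations $dR_{ij}=\sum_{k}R_{ik}R_{kj}$, which is exactly what you spell out via the universal property of the free commutative algebra together with the generators-determine-derivations argument. Your explicit convention that the nullhomotopy $a_{0,n}$ is part of the data of a ``decomposition of zero together with a defining system'' is the intended reading, and flagging it (along with the fact that $f$ need not preserve the internal grading) is exactly the bookkeeping that makes the correspondence an honest bijection.
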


\begin{theorem}
    A nontrivial element $a\in HA$ and a defining system for the Massey product
    $$a\in\langle a_1,\dots,a_n\rangle$$
    corresponds to the obstruction to obtaining the dashed map
    $$\xymatrix{
        X_n\ar@{-->}[rd] & X_{n-1}\ar[d]^{f_0}\ar@{_(->}[l]_{i_0}\\
        X_{n-1}\ar[r]_{f_1}\ar@{^(->}[u]^{i_1} & A
    }$$
    where $f_0$ corresponds to the sub-defining system for
    $0\in \langle a_1,\dots,a_{n-1}\rangle$ and $f_1$ for $0\in \langle a_2,\dots,a_n\rangle$. 
	The embeddings $i_0$ and $i_1$ are given by $i_0(R_{ij})=R_{ij}$ and $i_1(R_{ij})=R_{i+1,j+1}$.
\end{theorem}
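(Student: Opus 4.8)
The plan is to read the square as the statement that $X_n$ is assembled from two overlapping copies of $X_{n-1}$ together with one extra generator, and to show that filling in the dashed arrow is obstructed by exactly the homology class that the definition of the $n$-fold product computes.

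First I would record the combinatorics of the two embeddings. The subalgebra $i_0(X_{n-1})$ is generated by $\{R_{ij}:0\le i<j\le n-1\}$ and $i_1(X_{n-1})$ by $\{R_{ij}:1\le i<j\le n\}$; because the sum in (\ref{eq:99758f58}) only involves intermediate indices, each index block is closed under $d$, so both images are sub-DGAs of $X_n$. Their union contains every generator $R_{ij}$ with $0\le i<j\le n$ save the single generator $R_{0,n}$, and their intersection is the sub-DGA on $\{R_{ij}:1\le i<j\le n-1\}$. Thus $X_n$ is obtained from the amalgamated sub-DGA $S=i_0(X_{n-1})\cup i_1(X_{n-1})$ by freely adjoining $R_{0,n}$ subject to $dR_{0,n}=\sum_{k=1}^{n-1}R_{0,k}R_{k,n}$. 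This decomposition is the structural core of the argument.

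Next I would translate the maps into defining systems using Theorem \ref{thm:e0783a1e}. Writing a defining system for $\langle a_1,\dots,a_n\rangle$ as $\{a_{ij}:0\le i<j\le n,\ (i,j)\ne(0,n)\}$, the map $f_0$ is the one attached to the sub-system $\{a_{ij}:0\le i<j\le n-1\}$ witnessing $0\in\langle a_1,\dots,a_{n-1}\rangle$, while $f_1$ composed with $i_1(R_{ij})=R_{i+1,j+1}$ sends $R_{ij}\mapsto a_{i+1,j+1}$, recording the shifted sub-system $\{a_{kl}:1\le k<l\le n\}$ for $0\in\langle a_2,\dots,a_n\rangle$. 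Commutativity of the square forces $f_0$ and $f_1$ to agree on the common subalgebra $i_0(X_{n-1})\cap i_1(X_{n-1})$, i.e. on the generators $R_{ij}$ with $1\le i<j\le n-1$; concretely this says the two sub-systems share their middle entries $a_{ij}$, which is precisely the compatibility needed to glue $f_0$ and $f_1$ into a single DGA map $g\colon S\to A$.

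Finally I would analyze the extension problem. Filling in the dashed arrow amounts to choosing $b=g(R_{0,n})\in A$ with $db=g(dR_{0,n})=\sum_{k=1}^{n-1}a_{0,k}a_{k,n}$, where each factor is defined since $R_{0,k}$ lies in $i_0(X_{n-1})$ and $R_{k,n}$ in $i_1(X_{n-1})$. By the definition of the Massey product recalled above, the right-hand side is a cocycle whose homology class is the value $a$ of $\langle a_1,\dots,a_n\rangle$ computed from this defining system. Such a $b$ exists if and only if that cocycle bounds, i.e. if and only if $a=0$; hence a nonzero $a$ is exactly the obstruction to the dashed map, which is the asserted correspondence. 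The hard part will be upgrading these implications to the bijection the statement claims: I must check that every compatible pair $(f_0,f_1)$ with non-extendable dashed arrow arises from a genuine defining system for the full $n$-fold product, so that the two shifted $(n-1)$-fold systems really knit together into a consistent family $\{a_{ij}:(i,j)\ne(0,n)\}$, and conversely that the assignment is injective. Because $X_n$ is free as a commutative algebra on the $R_{ij}$, the only gluing constraint is agreement on the middle block, so this reduces to careful bookkeeping of the index ranges rather than any new input; that bookkeeping is where I expect the proof to require the most care.
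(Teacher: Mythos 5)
Your proposal is correct and is essentially the argument the paper intends: the paper deduces this theorem directly from the observation that the defining-system equations coincide with the differential formula (\ref{eq:99758f58}), so that a compatible pair $(f_0,f_1)$ is exactly a DGA map out of the sub-DGA of $X_n$ generated by all $R_{ij}$ with $(i,j)\neq(0,n)$ (i.e.\ a defining system), and the obstruction to extending over the free adjunction of $R_{0,n}$ is the homology class of $\sum_{k}a_{0,k}a_{k,n}$, which is the Massey product value. Your explicit amalgamation of the two copies of $X_{n-1}$, the gluing over the middle block, and the freeness argument for the extension are precisely the bookkeeping the paper leaves implicit when it says the formulas ``directly imply'' the result.
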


\subsection{The indecomposables of \mybm{$H^*(E^0\sA)$}{H*(E0A)}} 

\begin{definition}
	For two strictly increasing sequences of distinct numbers $S=\{s_1,\dots,s_n\}$, $T=\{t_1,\dots,t_n\}$, we define
	$$R_{S,T}=\det(R_{s_it_j})=\sum_{\sigma\in \Sigma_n} R_{s_1t_{\sigma(1)}}\cdots R_{s_nt_{\sigma(n)}}.$$
	Note that the value of $R_{S, T}$ does not depend on the ordering of numbers in $S$ or $T$. 
	However we prefer to put them in order, and in the rest of the paper, we assume all sequences $S$ and $T$ are ordered.
\end{definition}

\begin{definition}
	For two sequences $S$ and $T$, we write $S<T$ if $\max(S)<\min(T)$ and $S\le T$ if $\max(S)\le\min(T)$.
\end{definition}

\begin{proposition}\label{prop:af5d3cd9}
	The determinants $R_{S, T}$ have the following properties
	\begin{enumerate}
		\item $R_{S, T}$ is nonzero if and only if $s_i<t_i$ for $1\le i\le n$.\label{item:83431c8f}
		\item If $T_1\le S_2$ or $T_2\le S_1$, then
		$$R_{S_1\cup S_2,T_1\cup T_2}=R_{S_1,T_1}R_{S_2,T_2}$$\label{item:19a3a4b}
		\item $d R_{S, T}=\sum\limits_{k\in \bZ_{\ge 0}\backslash (S\cup T)}R_{S\cup\{k\},T\cup\{k\}}$.\vspace{2pt} Note that the summand of the summation is zero when $k<\min(S\cup T)$ or $k>\max(S\cup T)$ because of (1).\label{item:374301ae}\vspace{4pt}
		\item For any fixed subset $I$ of $S$, 
		$$R_{S,T}=\sum\limits_{|J|=|I|}R_{I,J}R_{S-I,T-J},$$
		where $|J|$ means the cardinality of the set $J$.
		Similarly, for any fixed subset $J$ of $T$, 
		$$R_{S,T}=\sum\limits_{|I|=|J|}R_{I,J}R_{S-I,T-J}.$$ \label{item:cea34d63}
	\end{enumerate}
\end{proposition}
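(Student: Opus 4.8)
The plan is to treat $R_{S,T}$ as an honest $n\times n$ determinant over the commutative polynomial ring $X=\bF_2[R_{ij}]$ and to exploit two features of working over $\bF_2$: the determinant equals the permanent (so signs never intervene), and $R_{ab}=0$ precisely when $b\le a$. I would take the four parts in the order (1), (4), (2), (3), since the last is by far the most substantial.

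For (1), I would first observe that the $n!$ monomials $\prod_i R_{s_i t_{\sigma(i)}}$, one for each $\sigma\in\Sigma_n$, are pairwise distinct: each is squarefree because the first indices $s_1,\dots,s_n$ are distinct, and the set of variables occurring determines $\sigma$ (read off the second index paired with each $s_i$). Hence no cancellation can occur, and $R_{S,T}\ne 0$ iff some $\sigma$ makes every factor nonzero, i.e.\ $s_i<t_{\sigma(i)}$ for all $i$. It then remains to prove the matching lemma that, for increasingly sorted $S,T$, such a $\sigma$ exists iff $s_i<t_i$ for all $i$. The implication $(\Leftarrow)$ uses $\sigma=\mathrm{id}$; the reverse is a one-line pigeonhole argument: given $\sigma$ and a fixed $i$, among the $n-i+1$ positions $\{i,\dots,n\}$ some $j$ must have $\sigma(j)\le i$, whence $s_i\le s_j<t_{\sigma(j)}\le t_i$.

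Part (4) I would dispose of by citing the generalized Laplace expansion of a determinant along a chosen set of rows (or columns); over $\bF_2$ all the Laplace signs vanish, leaving exactly the stated formula. Part (2) I would then prove directly rather than through (4): when $T_2\le S_1$, every entry $R_{s,t}$ with $s\in S_1,\ t\in T_2$ vanishes because $t\le\max T_2\le\min S_1\le s$, so ordering the rows $(S_1,S_2)$ and columns $(T_1,T_2)$ exhibits the matrix as block triangular with a zero off-diagonal block; its determinant therefore factors as $R_{S_1,T_1}R_{S_2,T_2}$, the diagonal blocks being square because the notation forces $|S_1|=|T_1|$. The case $T_1\le S_2$ is symmetric.

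The substance of the proposition is (3), and this is where I expect the real work. I would compute $dR_{S,T}$ using that $d$ is a derivation and that the determinant is multilinear in its rows: differentiating the $i$-th row and substituting $dR_{s_i t_j}=\sum_{k}R_{s_i k}R_{k t_j}$ from (\ref{eq:99758f58}) (with $k$ over all of $\bZ_{\ge0}$, the summand vanishing unless $s_i<k<t_j$) gives
$$dR_{S,T}=\sum_{i=1}^n\sum_{k\ge 0}R_{s_i k}\,\det\!\big(M^{(i,k)}\big),$$
where $M^{(i,k)}$ is the matrix $(R_{s_\bullet t_\bullet})$ with the $i$-th row's first index changed from $s_i$ to $k$. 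The main obstacle is to show that only the indices $k\notin S\cup T$ survive and that they reassemble into $(n+1)\times(n+1)$ determinants. For $k\in S$ each summand vanishes (either $R_{s_i s_i}=0$, or $M^{(i,k)}$ repeats a row). The key trick handles all $k\notin S$ uniformly: border $M^{(i,k)}$ back up to the square matrix $\widetilde M_k$ on rows $S\cup\{k\}$ and columns $T\cup\{k\}$, and expand $\det\widetilde M_k$ along the new column $k$, whose diagonal entry $R_{kk}=0$ drops out; this yields $\det\widetilde M_k=\sum_i R_{s_i k}\det(M^{(i,k)})$. If $k\in T$ this matrix has two equal columns, so the inner sum is $0$; if $k\notin S\cup T$ it is precisely $R_{S\cup\{k\},T\cup\{k\}}$. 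Summing over $k$ gives the claimed formula, and the final remark of (3) about the range of nonzero summands is then immediate from (1).
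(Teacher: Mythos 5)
Your proposal is correct and takes essentially the same route as the paper: every part is handled as a determinant identity over $\bF_2$ — block triangularity for (2), generalized Laplace expansion (with signs vanishing mod $2$) for (4), and the derivation-plus-multilinearity expansion of the rows for (3). The only difference is completeness: the paper proves only the vanishing direction of (1) (via the zero block on rows $j\ge i$, columns $k\le i$) and asserts the final regrouping in (3) in a single line, whereas your squarefree/no-cancellation observation, the pigeonhole matching lemma, and the bordered-matrix identity $\det\widetilde M_k=\sum_i R_{s_ik}\det M^{(i,k)}$ supply exactly the bookkeeping the paper leaves implicit.
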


\begin{proof}
	We keep using the fact that $R_{S, T}$ is the determinant of $(R_{s_it_j})$.
	\begin{enumerate}
		\item If $s_i\ge t_i$ for some $i$, then $R_{s_jt_k}=0$ if $j\ge i\ge k$ which yields zero determinant. 
		Thus $s_i<t_i$ for all $i$.
		\item If $T_1\le S_2$ or $T_2\le S_1$ we have either an upper or lower triangular block matrix associated to $R_{S_1\cup S_2,T_1\cup T_2}$ with determinants of the diagonal blocks being $R_{S_1,T_1}$ and $R_{S_2,T_2}$.
		\item By the definition of $R_{S, T}$ and property (1), we have
		\begin{align*}
		d R_{S, T} &=\sum_{\sigma\in \Sigma_n} d(R_{s_1t_{\sigma(1)}}\cdots R_{s_nt_{\sigma(n)}})\\
		&= \sum_{\sigma\in \Sigma_n}\sum_i\sum_k R_{s_1t_{\sigma(1)}}\cdots \hat R_{s_it_{\sigma(i)}}\cdots R_{s_nt_{\sigma(n)}}\cdot R_{s_ik}R_{k\sigma(i)}\\
		&= \sum\limits_{k\notin S\cup T} R_{S\cup\{k\},T\cup\{k\}}.
		\end{align*}
		Here $\hat R_{s_it_{\sigma(i)}}$ means that we skip the factor in the product.
		\item This is the expansion of the determinant of $(R_{s_it_j})$ by the rows corresponding to $I$.
	\end{enumerate}
\end{proof}

\begin{definition}
	Assume we have two sequences $S=\{s_1,\dots,s_n\}$ and $T=\{t_1,\dots,t_n\}$ such that $s_k<t_k$ for $1\le k\le n$ and 
	$$S\cup T=\{i,i+1,\dots,i+2n-1\}$$
	for some integer $i$. 
	Then $d R_{S,T}=0$ by (\ref{item:374301ae}) of the above proposition. 
	Let $\sH^\prime$ be the set of homology classes of all such $R_{S, T}$. 
	Let $\sH$ be the set of homology classes of all such $R_{S, T}$ with one extra condition that $s_k<t_{k-1}$ for $2\le k\le n$. 
	For convenience we use $h_{S, T}$ or $h_i(S^\prime)$ to denote the homology class of $R_{S, T}$, where $i=s_1$ and $S^\prime=\{s_2-s_1,\dots,s_n-s_1\}$. 
	The simplest examples are $h_{i,i+1}=h_i=[R_{i,i+1}]$.
\end{definition}

\begin{remark}
	By Proposition \ref{prop:af5d3cd9}.(\ref{item:19a3a4b}) we can see that every element in $\sH^\prime$ can be decomposed as a product of elements in $\sH$.
\end{remark}

\begin{theorem}[May]
	All elements in $\sH$ are indecomposable in $HX$.
\end{theorem}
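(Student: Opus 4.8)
The plan is to detect indecomposability by a linear functional on $HX$ that is manifestly homology invariant and kills all products. The starting observation is that the differential is \emph{decomposable}: since $dR_{ij}=\sum_k R_{ik}R_{kj}$ lies in the square of the augmentation ideal $I\subset X$, the Leibniz rule gives $d(I)\subseteq I^2$. Consequently two homologous cycles are congruent modulo $I^2$, so reduction mod $I^2$ defines a map $\rho\colon HX\to I/I^2$ that vanishes on every product of positive-degree classes. As $I/I^2$ is spanned by the generators $R_{ij}$ in homological degree $1$, this already shows each $h_i=[R_{i,i+1}]$ is indecomposable, but it is blind to the classes of homological degree $n\ge 2$, which all map to $0$ for degree reasons. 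The whole difficulty is to refine $\rho$ so that it sees the higher determinants $R_{S,T}$ while still annihilating boundaries and products.

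The refinement I would use is the \emph{multilinear} part. Call a monomial $R_{a_1b_1}\cdots R_{a_mb_m}$ \emph{simple} if the $2m$ indices $a_1,b_1,\dots,a_m,b_m$ are pairwise distinct, and let $\pi\colon X\to W$ be the projection onto the span $W$ of simple monomials. Because $dR_{ab}=\sum_{a<c<b}R_{ac}R_{cb}$ splits an edge at an interior vertex $c$, every monomial occurring in any boundary repeats the index $c$; hence $\pi$ kills boundaries and descends to a homology invariant $\Pi\colon HX\to W$. Since $S\cup T$ is a block of $2n$ consecutive integers, each term of $R_{S,T}$ uses every vertex exactly once, so every monomial of $R_{S,T}$ is simple and $\Pi(h_{S,T})=R_{S,T}\ne 0$. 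Moreover the repetition number $r(m)=2\deg m-\#\{\text{distinct indices of }m\}$ defines a multiplicative filtration that $d$ strictly increases, so $\Pi$ is a leading-term map: for cycles $y,w$ of positive degree one has $\Pi(yw)=\pi\bigl(\Pi(y)\,\Pi(w)\bigr)$, a sum of products of simple monomials \emph{with disjoint index sets}. Thus if $h_{S,T}$ were decomposable we would obtain an identity $R_{S,T}=\sum_j \pi\bigl(u_j v_j\bigr)$ with each $u_j,v_j$ a nonzero leading term $\Pi(\text{cycle})$ of positive degree.

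To reach a contradiction I would first establish the key lemma that the realizable leading terms, i.e. the elements of the form $\Pi(\text{cycle})$, are spanned by the determinants $R_{S',T'}$ attached to \emph{consecutive} blocks $S'\cup T'$; these are exactly the simple cycles by Proposition~\ref{prop:af5d3cd9}.(\ref{item:374301ae}), and the claim is that no further simple classes survive. I expect to prove this by running the spectral sequence of the repetition filtration, whose leading-term complex is governed by the same edge-splitting differential, and identifying its $r=0$ line. Granting the lemma, each factor $u_j,v_j$ is supported on consecutive-block determinants, so a nonzero disjoint-support product $\pi(u_jv_j)$ uses a vertex set that is a union of at least two disjoint consecutive blocks; to equal the single consecutive block $V=\{i,\dots,i+2n-1\}$ this must be a cut $V=\{i,\dots,c\}\sqcup\{c+1,\dots,i+2n-1\}$. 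Hence every monomial on the right-hand side is separable at some gap $c$, meaning no edge of it crosses from $\{i,\dots,c\}$ to $\{c+1,\dots,i+2n-1\}$.

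Finally I would exhibit a monomial of $R_{S,T}$ that is separable at no gap. The admissibility condition $s_k<t_{k-1}$ says precisely that the intervals $[s_k,t_k]$ overlap in a chain and therefore cover every gap of $V$; in particular the diagonal term $M_0=R_{s_1t_1}\cdots R_{s_nt_n}$ has, for each $c$, an edge with $s_k\le c<t_k$, so $M_0$ crosses every gap and is separable nowhere. Therefore $M_0$ cannot occur in any $\pi(u_jv_j)$, yet it occurs with coefficient $1$ in $R_{S,T}$, a contradiction. This forces $R_{S,T}$ to be indecomposable in the leading-term sense, and hence $h_{S,T}$ to be indecomposable in $HX$. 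The main obstacle is the key lemma characterizing the realizable leading terms; the non-factorization at the end is the combinatorial shadow of Proposition~\ref{prop:af5d3cd9}.(\ref{item:19a3a4b}), and one should be able to organize the whole argument as an induction on $n$ using the Laplace expansion of Proposition~\ref{prop:af5d3cd9}.(\ref{item:cea34d63}) if the spectral sequence proves unwieldy.
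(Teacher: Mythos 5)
Your peripheral steps are all correct: $\pi$ does kill boundaries (every monomial of $d(x)$ has a repeated index), $\Pi(h_{S,T})=R_{S,T}\neq 0$, the chain-level identity $\pi(yw)=\pi(\pi(y)\pi(w))$ holds, and the endgame is sound — admissibility $s_k<t_{k-1}$ forces the diagonal monomial $M_0=R_{s_1t_1}\cdots R_{s_nt_n}$ to cross every gap of $\{i,\dots,i+2n-1\}$, so it cannot occur in anything supported on two or more disjoint consecutive blocks. The genuine gap is the key lemma, which carries the entire weight of the proof and is not established. (A small correction first: as literally stated it cannot be right, since $\Pi(h_0h_2)=R_{01}R_{23}$; you need the span of \emph{products} of determinants on pairwise disjoint consecutive blocks, which is what your separability step actually uses.) The serious problem is that your proposed route to the lemma stalls at its first step. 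In the repetition-filtration spectral sequence every boundary has $r\ge 1$, so the $r=0$ line of $E_\infty$ is exactly $\pi(\mathrm{cycles})$, and classes on that line die only by \emph{supporting} higher differentials; identifying the line therefore means controlling all of those differentials. But the first differential $d_1$ can only split an edge at a vertex not already present in the monomial, so for example $d_1(R_{02}R_{13})=0$ (the only insertable vertices, $1$ and $2$, already occur); thus $R_{02}R_{13}$ alone survives to $E_2^{r=0}$, yet it is not in your span (which in that multidegree is spanned by $R_{02}R_{13}+R_{03}R_{12}$) and is not the simple part of any cycle — in its multidegree the only monomials are $R_{02}R_{13}$ and $R_{03}R_{12}$, both simple, so no non-simple correction $w$ with $dw=d(R_{02}R_{13})$ exists. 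So the lemma only becomes visible after higher differentials, for which you give no mechanism; and since it classifies the leading terms of \emph{all} cycles of $X$, it is a global structure theorem at least as strong as the indecomposability statement it is meant to prove.

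It is worth contrasting this with the paper, whose proof is local and dual rather than global: it passes to the divided power algebra $\Gamma=X^*$, pairs $R_{S,T}$ with the explicit cycle $\alpha=P_{s_1t_n}P_{s_2t_1}\cdots P_{s_nt_{n-1}}$ (this monomial of $R_{S,T}$ exists precisely because of admissibility, just as your $M_0$ does), and proves $[\alpha]$ is primitive in $H\Gamma$ by writing down, for each proper tensor factor $\alpha^\prime$ occurring in $\psi(\alpha)$, an explicit element of $\Gamma$ whose boundary is $\alpha^\prime$. Observe that what your contradiction actually needs is only that the coefficient of one fixed gap-crossing monomial of $R_{S,T}$ vanishes on all boundaries and on all products of positive-degree cycles — and that statement is precisely the dual of the paper's primitivity claim, provable by exhibiting specific boundaries rather than by classifying cycles. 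If you replace your key lemma by this weaker claim and prove it with such explicit bounding chains, your argument closes; at that point it essentially becomes the paper's proof read in the dual.
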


\begin{proof}
	Here we provide the proof from \cite[II.5]{May64} adapted to our notation.

	First we consider $X$ as the dual of the divided power algebra
	$$\Gamma=\Gamma[\gamma_n(P_{ij}):0\le i<j, ~n\ge 0]$$
	whose additive basis is given by
	$$\gamma_{r_1}(P_{i_1j_1})\cdots \gamma_{r_k}(P_{i_kj_k}), \hspace{0.5cm}k\ge 0, ~r_\ell\ge 0, ~P_{i_1j_1}<\cdots<P_{i_kj_k}$$
    where $P_{i_aj_a}<P_{i_bj_b}$ if and only if $i_a<i_b$ or $i_a=i_b$ and $j_a<j_b$. 
	The dual basis for $X$ is given by
    $$R_{i_1j_1}^{r_1}\cdots R_{i_kj_k}^{r_k}=\left(\gamma_{r_1}(P_{i_1j_1})\cdots \gamma_{r_k}(P_{i_kj_k})\right)^*.$$

    The multiplication in $X$ corresponds to the comultiplication
    $$\psi:\Gamma\to \Gamma\otimes \Gamma$$
    given by
    \begin{align*}
        & \psi(\gamma_{r_1}(P_{i_1j_1})\cdots \gamma_{r_k}(P_{i_kj_k}))\\
        =& \sum_{\substack{r_\ell^\prime+r_\ell^\pprime=r_\ell\\1\le\ell\le k}} \left(\gamma_{r_1^\prime}(P_{i_1j_1})\cdots \gamma_{r_k^\prime}(P_{i_kj_k})\right)\otimes \left(\gamma_{r_1^\pprime}(P_{i_1j_1})\cdots \gamma_{r_k^\pprime}(P_{i_kj_k})\right).
    \end{align*}

    The differential $d:X\to X$ corresponds to
    $$\delta:\Gamma\to \Gamma$$
    given by
    \begin{align*}
        & \delta(\gamma_{r_1}(P_{i_1j_1})\cdots \gamma_{r_k}(P_{i_kj_k}))\\
        =& \sum_{\substack{s<t\\r_s,r_t\text{ odd}}} \gamma_1([P_{i_sj_s},P_{i_tj_t}])\gamma_{r_1}(P_{i_1j_1})\cdots \gamma_{r_s-1}(P_{i_sj_s})\cdots\gamma_{r_t-1}(P_{i_tj_t})\cdots \gamma_{r_k}(P_{i_kj_k}).
    \end{align*}
    where
    $$[P_{i_sj_s},P_{i_tj_t}]=\begin{cases}
        P_{i_sj_t} & \text{ if }j_s=i_t,\\
        0 & \text{ otherwise}.
    \end{cases}$$

    Next we consider $h_{S, T}\in \sH$ where
    $$S=\{s_1,\dots,s_n\},~T=\{t_1,\dots,t_n\},$$
    and
    $$S\cup T=\{i,i+1,\dots,i+2n-1\}.$$
    The definition of $\sH$ requires that $s_k<t_{k-1}$ for $2\le k\le n$.

    For convenience we write $P_{ij}=\gamma_1(P_{ij})$. 
	Consider $\alpha\in \Gamma$ given by
    $$\alpha=P_{s_1t_n}P_{s_2t_1}\cdots P_{s_nt_{n-1}}.$$
    The element $\alpha$ is a cycle in $\Gamma$ by the definition of $\delta$ since $\{s_k,t_k,1\le k\le n\}$ are all distinct integers. 
	Note that $h_{S,T}$ is represented by $R_{S,T}$ and the pairing $\langle R_{S,T},\alpha\rangle=1$ is nontrivial. 
	Hence it suffices to prove that the homology class of $\alpha$ is primitive in $H\Gamma$.

    In fact, we have
    $$\psi(\alpha)=\sum \alpha^\prime\otimes\alpha^\pprime$$
    where the summation is indexed on all $\alpha^\prime$ that are products of a subset of 
    $$\{P_{s_1t_n},P_{s_2t_1},\cdots,P_{s_nt_{n-1}}\}$$
    and $\alpha^\pprime$ are the products of the complementary set. 
	Consider a summand $\alpha^\prime\otimes\alpha^\pprime$ such that $\alpha^\prime\neq 1$ and $\alpha^\pprime\neq 1$.
    Note that both $\alpha^\prime\neq 1$ and $\alpha^\pprime\neq 1$ are cycles in $\Gamma$ and either $\alpha^\prime$ or $\alpha^\pprime$ contains the factor $P_{s_1t_n}$. 
	Without loss of generality, we assume that $\alpha^\prime$ contains $P_{s_1t_n}$ and it suffices to show that $\alpha^\prime$ is a boundary.

    Write
    $$\alpha^\prime=P_{s_1t_1}P_{i_1j_1}\cdots P_{i_\ell,j_\ell}$$
    where $\{P_{i_1j_1}\cdots P_{i_\ell,j_\ell}\}$ is a proper set of
    $$\{P_{s_2t_1},\cdots,P_{s_nt_{n-1}}\}.$$
    Then there exists $u$ such that $s_1=i<u<i+2n-1=t_n$ and
    $$u\notin\{i_1,j_1,\dots,i_\ell,j_\ell\}.$$
    By the definition of $\delta$ we have
    $$\delta(P_{s_1u}P_{ut_1}P_{i_1j_1}\cdots P_{i_\ell,j_\ell})=P_{s_1t_1}P_{i_1j_1}\cdots P_{i_\ell,j_\ell}=\alpha^\prime.$$
    Therefore $\alpha^\prime$ is a boundary in $\Gamma$.
\end{proof}

Beside elements of $\sH$, we can also see that the homology classes of $R_{ij}^2$ for $j-i\ge 2$ are also indecomposables of $HX$. 
Let $b_{S,T}$ denote the homology class of $R_{S, T}^2$. 
Especially, $b_{ij}=[R_{ij}^2]$ and $b_{i,i+1}=h_i^2$.

The following conjecture suggests that it is possible that these are all the indecomposables we need in $HX$.

\begin{conjecture}[May, {\cite[Conjecture II.5.7]{May64}}]\label{conj:2cc43131}
	The elements of $\sH$ and $b_{ij}$ ($j-i\ge 2$) form a basis of indecomposables of $HX$.
\end{conjecture}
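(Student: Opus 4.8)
The plan is to dualize and turn the statement about indecomposables into one about primitives. Since $X$ is the graded dual of the divided power coalgebra $\Gamma$, the homology $HX$ is the graded dual of the coalgebra $H\Gamma$, and under this duality the module of indecomposables of $HX$ is dual to the module of primitives of $H\Gamma$. The two proposed families are visible on the primitive side: the cycle $\alpha=P_{s_1t_n}P_{s_2t_1}\cdots P_{s_nt_{n-1}}$ built in the proof above is a primitive representing the dual of $h_{S,T}$, and $\gamma_2(P_{ij})$, dual to $R_{ij}^2$, is a cycle whose reduced coproduct is $P_{ij}\otimes P_{ij}$; this is primitive in $H\Gamma$ exactly when $j-i\ge 2$, because then $R_{ij}$ fails to be a $d$-cycle and so $[P_{ij}]=0$ in $H\Gamma$. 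Linear independence of the resulting classes and the fact that they are genuinely indecomposable is the direction already settled. Thus the entire problem collapses to the single assertion that these primitives \emph{span} all of $P(H\Gamma)$, i.e.\ to an upper bound on the primitives.

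To attack that upper bound I would build a secondary filtration spectral sequence on $\Gamma$. Filter by the number of divided power generators $\gamma_r(P_{ij})$ occurring with $r$ odd. By the formula for $\delta$, the leading part of the differential replaces a commuting pair $\gamma_1(P_{i_sj_s})\gamma_1(P_{i_tj_t})$ with $j_s=i_t$ by the single bracket $\gamma_1(P_{i_sj_t})$, so the associated graded differential is purely ``quadratic-to-linear'' and is governed by the restricted Lie structure of the $P_{ij}$. One then computes the resulting $E_1$ page and, since the filtration and $\delta$ are both compatible with $\psi$, obtains a spectral sequence of coalgebras in which primitives can be tracked from page to page. The aim is to show that every primitive surviving to $E_\infty$ lifts to one of the two advertised families and that no new primitives are created by higher differentials: the classes $b_{ij}$ should account precisely for the ``$\gamma_2$ of a decomposable boundary'' phenomenon, while the $h_{S,T}$ should account for the genuinely Lie-theoretic primitives.

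An inductive alternative is to use the sub-differential-graded-algebras $X_n$. Because the indecomposables of $HX$ are the union of those of the $HX_n$, it suffices to control, at each stage, the primitives of $H\Gamma$ supported on a full interval $\{i,\dots,i+2n-1\}$ that do not already come from a proper subinterval. Here the block-multiplicativity of Proposition~\ref{prop:af5d3cd9}.(\ref{item:19a3a4b}) and the Laplace expansion (\ref{item:cea34d63}) are the natural tools: they let one split off any primitive factoring through a proper subinterval, reducing to ``irreducible'' classes genuinely spread across the whole interval, which one hopes to match against $\sH$ and the squares.

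The hard part is exactly the upper bound, namely ruling out unexpected indecomposables by showing $P(H\Gamma)$ is no larger than the two families. Controlling the higher differentials and the exotic candidate primitives requires knowing the relations among the generators, which is the content of Conjecture~\ref{conj:5f7d758}; the module of indecomposables and the ideal of relations are entangled, so the realistic route is a simultaneous induction establishing both conjectures at once, as anticipated in the introduction. The present paper carries this bound through only inside a subalgebra (Theorem~\ref{thm:a26989b6}), and it is precisely the extension of the upper bound to all of $HX$ that constitutes the genuine obstacle and keeps the full statement at the level of a conjecture.
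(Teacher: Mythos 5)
The statement you are addressing is an open conjecture: the paper contains no proof of it, and says so explicitly (May could not prove it in his thesis, and the author of this paper also tried and failed). What the paper does establish is, first, the lower bound --- that the listed classes really are indecomposable --- by exactly the dualization you describe: $h_{S,T}$ pairs nontrivially with the class $\alpha=P_{s_1t_n}P_{s_2t_1}\cdots P_{s_nt_{n-1}}$, which is shown to be primitive in $H\Gamma$, while the indecomposability of the $b_{ij}$ is asserted without detail; and second, a verification that the full conjecture (together with Conjecture \ref{conj:5f7d758}) holds in the subalgebra $HX_7$, by an inductive Gr\"obner-basis computation (Theorem \ref{thm:a26989b6}). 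Your proposal reproduces the lower bound faithfully --- your observation that $\gamma_2(P_{ij})$ is a cycle whose reduced coproduct $P_{ij}\otimes P_{ij}$ becomes trivial in homology precisely when $j-i\ge 2$ is a correct dual form of the paper's remark on the $b_{ij}$ --- and you correctly identify the spanning statement for primitives of $H\Gamma$ as the genuinely open problem.

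The gap is that neither of your two strategies for that upper bound is carried out, so the proposal is a research plan rather than a proof. Concretely: for the filtration by the number of odd divided powers, you compute no $E_1$ page, exhibit no control over higher differentials, and --- more seriously --- you assert that ``primitives can be tracked from page to page,'' but primitives are not exact in spectral sequences of coalgebras: a primitive on $E_\infty$ only gives associated-graded information, the filtration itself can create primitives, and coalgebra extension problems must be solved before one can conclude anything about $P(H\Gamma)$. For the inductive alternative over the $X_n$, splitting off classes supported on proper subintervals via Proposition \ref{prop:af5d3cd9} is plausible but is precisely where the relations of Conjecture \ref{conj:5f7d758} enter, as you yourself concede; the paper's stated position is that the two conjectures must likely be proved by a simultaneous induction, and no such induction has been found. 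So your argument lands exactly where the paper does --- the statement remains a conjecture --- except that the paper's one piece of concrete progress on the upper bound, the complete machine computation of $HX_7$ in Section \ref{sec:11e59387}, is mentioned in your last paragraph but not replicated by anything in your proposal.
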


\subsection{The relations in \mybm{$H^*(E^0\sA)$}{H*(E0A)}}
In addition to Conjecture \ref{conj:2cc43131}, we will state a conjecture to describe all the relations in $H^*(E^0\sA)\iso HX$.

\begin{definition}
	For $0\le m<n$, we define
	$$\sH_{mn}=\{h_{S,T}\in \sH: \min(S)=m,~\max(T)=n\}$$
	and
	$$\sH^\prime_{mn}=\{h_{S,T}\in \sH^\prime: \min(S)=m,~\max(T)=n\}.$$
	Note that $\sH_{mn}\subset \sH^\prime_{mn}$ and $\sH_{mn}$, $\sH^\prime_{mn}$ are empty if $n-m$ is even.
\end{definition}

\begin{definition}
	For a sequence $S=\{s_1,\dots,s_n\}$, we define $|S|$ to be the length $n$ of $S$. 
	For $a<b$, let $N_{a, b}$ be the sequence $\{a, a+1,\dots,b\}$.
\end{definition}

\begin{conjecture}\label{conj:5f7d758}
	The algebra $HX$ is generated by $h_{S, T}\in \sH$ and $b_{ij}$ ($j-i\ge 2$) with the following relations.

	\begin{enumerate}
		\myitem{(1)} For all $0\le i<j$, \label{rel:8d29c79f}
		$$\sum_k b_{ik}b_{kj}=0.$$
		\myitem{(2)} Assume $h_{S_1,T_1}\in \sH^\prime_{a_1,b_1}$, $h_{S_2,T_2}\in \sH^\prime_{a_2,b_2}$, $a_1<a_2<b_1<b_2$ and $b_1-a_2$ is even. 
		Then \label{rel:2762c538}
		$$h_{S_1, T_1}h_{S_2, T_2}=0.$$
		\myitem{(3A)} Assume that $S\subset N=\{a,a+1,\dots,a+2k-1\}$ and $|S|=k+1$. 
		Let $T$ be the complement of $S$ in $N$. 
		Then\label{rel:b0a9481d}
		$$\sum_{s\in S} b_{sj}h_{S-\{s\}, T+\{s\}}=0$$
		for any $j\le a+2k$.
		\myitem{(3B)}\label{rel:c4698d4e} Assume that $T\subset N=\{a,a+1,\dots,a+2k-1\}$ and $|T|=k+1$. 
		Let $S$ be the complement of $T$ in $N$.
		Then 
		$$\sum_{t\in T} b_{it}h_{S+\{t\}, T-\{t\}}=0$$ for any $i\ge a-1$.
		\myitem{(3C)}\label{rel:1c2da7ed} Assume that $$S_1\subset N_1=\{a_1,a_1+1,\dots,a_1+2k_1-1\}, ~|S_1|=k+1,$$ $$T_2\subset N_2=\{a_2,a_2+1,\dots,a_2+2k_2-1\}, ~|T_2|=k+1$$ and $\max(N_1)<\min(N_2)$. 
		Let $T_1=N_1\backslash S_1$ and $S_2=N_2\backslash T_2$.
		Then 
		$$\sum_{s\in S_1, t\in T_2} b_{st}h_{S_1-\{s\}, T_1+\{s\}}h_{S_2+\{t\}, T_2-\{t\}}=0.$$
		\myitem{(4A)}\label{rel:a078c1d9} Assume $h_{S_1,T_1}\in \sH^\prime_{a_1,b_1}$, $h_{S_2,T_2}\in \sH^\prime_{a_2,b_2}$, $a_1\le a_2<b_1\le b_2$ and $b_1-a_2$ is odd. 
		Then
		\begin{equation*}
		h_{S_1,T_1}h_{S_2,T_2}=\sum_{\substack{I\subset T_1^\pprime\cap S_2\\2|I|=|T_1^\pprime|-|S_1^\pprime|}} h_{S_1^\pprime+I,T_1^\pprime-I}h_{S_1^\prime+S_2-I, T_1^\prime+T_2+I}
		\end{equation*}
		Where $S_1^\prime=S_1\backslash N_{a_2,b_1}$, $S_1^\pprime=S_1\cap N_{a_2,b_1}$, $T_1^\prime=T_1\backslash N_{a_2,b_1}$, $T_1^\pprime=T_1\cap N_{a_2,b_1}$.
		\myitem{(4B)}\label{rel:51b194a6} Assume $h_{S_1,T_1}\in \sH^\prime_{a_1,b_1}$, $h_{S_2,T_2}\in \sH^\prime_{a_2,b_2}$, $a_1\le a_2<b_1\le b_2$ and $b_1-a_2$ is odd. 
		Then
		\begin{equation*}
		h_{S_1,T_1}h_{S_2,T_2} = \sum_{\substack{I\subset T_1\cap S_2^\pprime\\2|I|=|S_2^\pprime|-|T_2^\pprime|}} h_{S_2^\pprime-I,T_2^\pprime+I}h_{S_1+S_2^\prime+I, T_1+T_2^\prime-I}
		\end{equation*}
		Where $S_2^\prime=S_2\backslash N_{a_2,b_1}$, $S_2^\pprime=S_2\cap N_{a_2,b_1}$, $T_2^\prime=T_2\backslash N_{a_2,b_1}$, $T_2^\pprime=T_2\cap N_{a_2,b_1}$.
		\myitem{(5)}\label{rel:e7460c84} Assume $h_{S_1,T_1}\in \sH^\prime_{a_1,b_1}$, $h_{S_2,T_2}\in \sH^\prime_{a_2,b_2}$, $a_1\le a_2<b_1\le b_2$ and $b_1-a_2$ is odd. 
		Then
		\begin{equation*}
		h_{S_1,T_1}h_{S_2,T_2}=\sum_{\substack{I\subset S_1^\prime\\J\subset T_2^\prime}} h_{S_1^\prime-I,T_1^\prime+I}h_{S_2^\prime+J,T_1^\prime-J}b_{S_1^\pprime+I, T_2^\pprime+J}
		\end{equation*}
		Where $S_i^\prime=S_i\backslash N_{a_2,b_1}$, $S_i^\pprime=S_i\cap N_{a_2,b_1}$, $T_i^\prime=T_i\backslash N_{a_2,b_1}$, $T_i^\pprime=T_i\cap N_{a_2,b_1}$, $i=1, 2$.
		\myitem{(6)}\label{rel:80bb77fd}
		Assume $h_{S_i, T_i}\in \sH_{a, b}$, $i=1,\dots,n$, and 
		$$\sum_i x_ih_{S_i-\{a\}, T_i-\{b\}}=0$$
		where $x_i$ is a product of elements in
		$$\bigcup_{\substack{a<a^\prime<b^\prime<b\\a^\prime-a\text{ is odd}}} \sH_{a^\prime, b^\prime}$$
		Then 
		$$\sum_i x_ih_{S_i, T_i}=0$$
	\end{enumerate}
\end{conjecture}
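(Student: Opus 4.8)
The plan is to prove the conjecture in two stages: first that each of the relations \ref{rel:8d29c79f}--\ref{rel:80bb77fd} genuinely holds in $HX$, and second (the substantive part) that these relations, together with the generators supplied by Conjecture \ref{conj:2cc43131}, constitute a \emph{complete} presentation. For the relations, the strategy is to realize each discrepancy as an explicit boundary at the chain level, exploiting the determinant structure of the $R_{S,T}$. As a template, relation \ref{rel:8d29c79f} follows because $\sum_k R_{ik}^2 R_{kj}^2 = \bigl(\sum_k R_{ik}R_{kj}\bigr)^2 = (dR_{ij})^2 = d(R_{ij}\,dR_{ij})$ over $\bF_2$, so $\sum_k b_{ik}b_{kj}=0$. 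For the mixed and straightening relations \ref{rel:2762c538}--\ref{rel:e7460c84} I would combine the Laplace/Cauchy--Binet expansion of Proposition \ref{prop:af5d3cd9}.(\ref{item:cea34d63}), the block multiplicativity of (\ref{item:19a3a4b}), and the coboundary formula (\ref{item:374301ae}): expanding the minors $R_{S,T}$ by the rows or columns lying in the overlap $N_{a_2,b_1}$ reproduces the sums on the right-hand sides, and the even-overlap products of \ref{rel:2762c538} vanish because after expansion every surviving monomial carries a repeated factor, hence is a square that is visibly exact.

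\textbf{Completeness.} Conjecture \ref{conj:2cc43131} (which I would prove simultaneously, see below) asserts that $\sH$ together with the $b_{ij}$ are a basis of indecomposables, so these elements generate $HX$ as an algebra. To show no further relations are needed, I would fix a monomial ordering on the free commutative algebra on the symbols $\{h_{S,T}, b_{ij}\}$ and argue that the relations form a \Groebner{} basis for the kernel of the presentation map. Concretely, relations \ref{rel:a078c1d9}, \ref{rel:51b194a6}, \ref{rel:e7460c84} are \emph{straightening laws}: they rewrite the product of two oddly overlapping generators as a sum of products whose index sets are in a more nested or separated configuration, in the spirit of the Plücker relations and standard-monomial theory for minors, while \ref{rel:2762c538} and \ref{rel:b0a9481d}--\ref{rel:1c2da7ed} delete the remaining non-standard products. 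I would then define a set of \emph{admissible} monomials, namely products $h_{S_1,T_1}\cdots h_{S_m,T_m}$ times powers of the $b_{ij}$ whose index intervals are arranged so that none of \ref{rel:2762c538}--\ref{rel:e7460c84} applies, and show that every monomial straightens into an $\bF_2$-combination of admissible ones, and that the admissible monomials are linearly independent in $HX$ by pairing each against an explicit cycle $\alpha=\prod P_{\cdots}$ in the divided power algebra $\Gamma$, exactly as in the indecomposability argument where $\langle R_{S,T},\alpha\rangle=1$.

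\textbf{Induction.} Relation \ref{rel:80bb77fd} is the engine of an induction on the width $b-a$ of the index range: it lifts a relation among the shifted-down classes $h_{S_i-\{a\},T_i-\{b\}}$ to one among the full classes via the shift $R_{ij}\mapsto R_{i+1,j+1}$ and the inclusions $i_0,i_1$ from the Massey-product theorem. I would therefore run a simultaneous induction on $n$, equivalently along the subalgebras $HX_n$, proving Conjecture \ref{conj:2cc43131} and Conjecture \ref{conj:5f7d758} together, since the completeness argument needs the full indecomposable basis while the count of indecomposables in turn relies on already knowing that the straightening laws hold with no surplus.

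\textbf{Main obstacle.} The hard step is completeness, and specifically the confluence of the straightening laws combined with the linear independence of admissible monomials. Showing that repeated application of \ref{rel:a078c1d9}--\ref{rel:e7460c84} terminates at a well-defined normal form, i.e. a \Groebner{}-basis or diamond-lemma confluence, is delicate because the $b_{ij}$ are squares and the rewriting is homological rather than a formal minor straightening, so the classical standard-monomial arguments do not transfer verbatim; and proving that no admissible monomial is a boundary requires controlling the pairing with $\Gamma$ in all degrees at once. This is precisely the entanglement the author flags: one cannot decouple the enumeration of indecomposables from the completeness of the relations, which is why the two conjectures must be attacked jointly and why, outside the subalgebra where the dimensions can be checked by machine, the argument remains open.
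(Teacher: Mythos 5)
Your proposal cannot be assessed as a proof because it does not close the statement, and the paper itself never claims to: Conjecture \ref{conj:5f7d758} is open, and what the paper actually establishes is (a) Theorem \ref{thm:bc7c165}, that relations \ref{rel:8d29c79f}, \ref{rel:2762c538}, \ref{rel:b0a9481d}, \ref{rel:c4698d4e}, \ref{rel:a078c1d9}, \ref{rel:51b194a6} hold in all of $HX$ (with \ref{rel:1c2da7ed}, \ref{rel:e7460c84}, \ref{rel:80bb77fd} only in a range), via explicit chain-level bounding elements, and (b) Theorem \ref{thm:a26989b6}, that the full presentation is correct inside $HX_7$, proved not by a structural argument but by machine computation: an induction over $HX_{7,0},\dots,HX_{7,7}$ using Proposition \ref{prop:07de5358}, hand-resolved extension problems, and a comparison of reduced \Groebner{} bases (Appendices \ref{app:7b101ee1} and \ref{app:35b02630}). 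Your first stage partially matches (a): your treatment of relation \ref{rel:8d29c79f} via $d(R_{ij}\,dR_{ij})=(dR_{ij})^2$ is word-for-word the paper's proof. But your sketch for relation \ref{rel:2762c538} contains a step that fails: you claim the even-overlap products vanish because ``every surviving monomial carries a repeated factor, hence is a square that is visibly exact.'' Squares of cycles in $X$ are emphatically \emph{not} exact in general --- the classes $b_{ij}=[R_{ij}^2]$ are nonzero and indeed indecomposable, which is the whole point of Conjecture \ref{conj:2cc43131}. The paper instead constructs, for \ref{rel:2762c538}, an explicit element $y=\sum R_{S_1-\{s\},T_1-\{i\}}R_{S_2-\{i\}+\{s\},T_2}$ and verifies $dy=R_{S_1,T_1}R_{S_2,T_2}$ through a delicate index computation resting on Lemma \ref{lem:13393410}; nothing in your Laplace-expansion heuristic substitutes for this.

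The completeness half of your proposal (admissible monomials, confluence of straightening laws, pairing against cycles in $\Gamma$) is a plan, not an argument: you explicitly concede in your ``Main obstacle'' paragraph that confluence and the linear independence of admissible monomials remain unproven, so the proposal terminates exactly where the conjecture is hard. That is consistent with the paper's own assessment (``The author believes that one may need to assume both conjecture[s] and prove them inductively at the same time''), and your simultaneous-induction suggestion via relation \ref{rel:80bb77fd} mirrors that remark. But be aware that the paper's only completeness result is obtained by a fundamentally different mechanism than anything you describe: Proposition \ref{prop:07de5358} computes $HB$ for $B=A[x]$, $dx=c$, in terms of $\Ann_{HA}([c])$, and the annihilator ideals are found algorithmically (Algorithm \ref{algorithm:6e571137}); the resulting presentation of $HX_7$ is then checked against the conjectural one by \Groebner{}-basis equality. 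If you want your outline to become a proof even in the range the paper covers, you would need either to execute your confluence/normal-form program in full, or to reproduce this inductive annihilator computation --- including the extension problems, which Proposition \ref{prop:07de5358} leaves unresolved and which the paper settles ad hoc.
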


In order to prove Conjecture \ref{conj:5f7d758}, we have to prove that all the relations in the conjecture hold and they imply all the other relations.
We are not there yet although we have a great deal of evidence for the conjecture.
In the rest of the section we will describe the results we already have, including evidence for Conjecture \ref{conj:2a224128}.

\begin{theorem}\label{thm:bc7c165}
	The relations (1), (2), (3A), (3B), (4A) and (4B) in Conjecture \ref{conj:5f7d758} hold in $HX$. 
	The relations (3C), (5) and (6) hold in a large range of dimensions.
\end{theorem}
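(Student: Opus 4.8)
The plan is to treat the two assertions of the theorem separately. For the six relations claimed to hold in all of $HX$, I would use a single uniform strategy: represent each side by an explicit cycle built from the minors $R_{S,T}$ and the squares $R_{ij}^2$, and then exhibit the difference as $dW$ for an explicitly constructed $W\in X$. The only inputs needed are the four parts of Proposition~\ref{prop:af5d3cd9} --- the nonvanishing criterion, the block multiplicativity, the differential formula, and the Laplace expansion --- together with the fact that we work over $\bF_2$, so that squares of cycles are cycles and all cross terms double to zero.

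The model case is relation (1), where the squaring trick makes everything transparent. Since $dR_{ij}=\sum_{i<k<j}R_{ik}R_{kj}$ and we are in characteristic $2$,
$$\sum_{k}R_{ik}^2R_{kj}^2=\Bigl(\sum_{k}R_{ik}R_{kj}\Bigr)^2=(dR_{ij})^2=d\bigl(R_{ij}\,dR_{ij}\bigr),$$
the last equality holding because $d^2=0$. Hence $\sum_k b_{ik}b_{kj}=0$ in $HX$. For relations (3A) and (3B) I would combine this same $\bF_2$-squaring trick for the factor $b_{sj}=R_{sj}^2$ with a Laplace expansion of a minor over the interval $N$; the subtlety is that each summand promotes an index $s$ from a row to a column, so the bounding chain is not a plain cofactor expansion and must be assembled by hand.

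For relations (2), (4A) and (4B) I would build the bounding chain $W$ as an $\bF_2$-combination of products $R_{sj}R_{A,B}$ (and of single larger minors) and compute $dW$ by the Leibniz rule. The guiding principle is that the differential formula of Proposition~\ref{prop:af5d3cd9} inserts a new index into a minor, and the bookkeeping of these insertions is designed to match the $S,T,I,J$ combinatorics of each relation; the overlap hypothesis $a_1\le a_2<b_1\le b_2$ and the parity of $b_1-a_2$ are precisely what force the unwanted summands either to cancel in pairs over $\bF_2$ or to vanish by the nonvanishing criterion. In this light (4A) and (4B) are homological lifts of a Sylvester/Laplace-type expansion of a product of two overlapping minors along the overlap $N_{a_2,b_1}$, with all correction terms absorbed as boundaries; the parity of $b_1-a_2$ selects the regime, so that when it is odd the product straightens into the stated sum, while when it is even the analogous expansion leaves no surviving terms and the product is itself a boundary, which is relation (2). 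By contrast, I would not expect a clean all-dimensions argument for (3C), (5) and (6): these appear to require Conjecture~\ref{conj:2cc43131} and Conjecture~\ref{conj:5f7d758} together in a single induction, so I would only verify them within the computed range using the reduced \Groebner{} basis presentation, which --- once a monomial order is fixed --- gives a canonical normal form, letting me reduce both sides of each relation and check agreement up to the top computed total degree.

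The main obstacle is twofold. Among the relations proved in all dimensions, the genuinely hard ones are the quadratic product relations (4A) and (4B): one must pin down the exact cancellation pattern of the insertion/expansion terms so that precisely the claimed sum survives, and the difficulty is the parity-and-overlap bookkeeping rather than any single slick identity. More seriously, the absence of a general boundary argument for (3C), (5) and (6) --- forcing a mere range check instead of a proof --- is the real bottleneck, and it is exactly the inductive entanglement of the two conjectures flagged in the introduction.
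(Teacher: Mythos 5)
Your proposal follows essentially the same route as the paper: relation (1) is proved by the identical squaring identity $d(R_{ij}\,dR_{ij})=(dR_{ij})^2=\sum_k R_{ik}^2R_{kj}^2$, relations (2), (3A), (4A) are realized as boundaries of explicitly constructed chains built from products of generators with minors $R_{S,T}$ using the calculus of Proposition \ref{prop:af5d3cd9} (the paper's chains $y$, with the cancellations organized by Lemma \ref{lem:13393410}), and (3C), (5), (6) are only verified in the computed range via the \Groebner{} basis presentation of $HX_7$. The one economy you miss is that the paper does not prove (3B) and (4B) by separate constructions but deduces them from (3A) and (4A) via the reflection isomorphism $R_{ij}\mapsto R_{n-j,n-i}$ of Proposition \ref{prop:8ac1afe}, which halves the bookkeeping you identify as the main obstacle.
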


The proof of this theorem contains a lot of technical calculations. 
Readers who are not interested in the details of this proof may skip to Theorem \ref{thm:a26989b6}.

The following proposition for all $n$ shows that the statement (3A) is symmetric to (3B) and (4A) is symmetric to (4B). 
Hence we only have to prove one for each pair.

\begin{proposition}\label{prop:8ac1afe}
	The reflection map
	$$X_n\to X_n$$
	$$R_{ij}\mapsto R_{n-j, n-i}$$
	is an isomorphism between differential algebras. 
	Therefore $HX_n$ is isomorphic to itself via this reflection map.
\end{proposition}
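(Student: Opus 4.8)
The plan is to define $\phi\colon X_n\to X_n$ as the algebra homomorphism determined by $\phi(R_{ij})=R_{n-j,\,n-i}$ and to verify directly that it is a bijective chain map. First I would check that $\phi$ is well defined on generators: if $0\le i<j\le n$, then $0\le n-j<n-i\le n$, so $R_{n-j,\,n-i}$ is again a generator of $X_n$. The assignment $(i,j)\mapsto(n-j,n-i)$ is an involution of the index set $\{(i,j):0\le i<j\le n\}$, since applying it twice recovers $(i,j)$. Hence $\phi$ is an algebra homomorphism that is its own inverse, and in particular an isomorphism of $\bF_2$-algebras.

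The main content is to show that $\phi$ commutes with the differential, which I would do by computing both sides on a generator $R_{ij}$ using (\ref{eq:99758f58}). On one hand,
\begin{equation*}
\phi(dR_{ij})=\phi\Big(\sum_{k=i+1}^{j-1}R_{ik}R_{kj}\Big)=\sum_{k=i+1}^{j-1}R_{n-k,\,n-i}\,R_{n-j,\,n-k}.
\end{equation*}
Because $X_n$ is a polynomial algebra over $\bF_2$ the two factors commute, so this equals $\sum_{k=i+1}^{j-1}R_{n-j,\,n-k}R_{n-k,\,n-i}$. Substituting $\ell=n-k$, the index $k$ running from $i+1$ to $j-1$ corresponds to $\ell$ running from $(n-j)+1$ to $(n-i)-1$, whence
\begin{equation*}
\phi(dR_{ij})=\sum_{\ell=(n-j)+1}^{(n-i)-1}R_{n-j,\,\ell}\,R_{\ell,\,n-i}=dR_{n-j,\,n-i}=d\phi(R_{ij}).
\end{equation*}

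It then remains to promote this identity on generators to all of $X_n$. Both $\phi\circ d$ and $d\circ\phi$ are $\phi$-twisted derivations, i.e.\ each sends a product $xy$ to $(\text{its value on }x)\,\phi(y)+\phi(x)\,(\text{its value on }y)$, with no signs since we work over $\bF_2$; two such maps that agree on algebra generators agree everywhere, so $\phi d=d\phi$ on $X_n$. Thus $\phi$ is an isomorphism of differential graded algebras and therefore induces an isomorphism $HX_n\to HX_n$. I do not anticipate a genuine obstacle: the single point requiring care is that reflection reverses the order of the two factors appearing in $dR_{ij}$, and it is precisely the $\bF_2$-commutativity of $X_n$ that lets the reindexed sum match $dR_{n-j,\,n-i}$.
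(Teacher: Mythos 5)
Your proof is correct and is exactly the direct verification the paper intends: the paper itself gives no argument for this proposition, stating only that ``the proof is straightforward,'' and your check (well-definedness of the index involution, commutation with $d$ on generators via the reindexing $\ell=n-k$ together with $\bF_2$-commutativity, and the extension to all of $X_n$ by the twisted-derivation argument) is precisely that straightforward proof. One minor caveat: the reflection does not preserve the internal degree $t$ (it sends a generator of degree $2^i(2^{j-i}-1)$ to one of degree $2^{n-j}(2^{j-i}-1)$), which is why the proposition says ``differential algebras'' rather than ``differential graded algebras''; your closing phrase overstates this slightly, but it does not affect the validity of the argument.
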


The proof is straightforward. 
Before we prove Theorem \ref{thm:bc7c165} we need the following lemma.
\begin{lemma}\label{lem:13393410}
	Assume that $S_1,T_1,S_2,T_2$ are four sequences such that $|S_1|=|T_1|-1$, $|S_2|=|T_2|+1$, $S_1\cap T_1=\emptyset=S_2\cap T_2$ and 
	$$(S_1\cup T_1)\backslash (S_2\cup T_2)<(S_1\cup T_1)\cap(S_2\cup T_2)<(S_2\cup T_2)\backslash (S_1\cup T_1).$$ 
	Then
	\begin{equation*}
	\sum_{\substack{s\in S_1\cap S_2\\i\in T_1\cap S_2}} R_{S_1,T_1-\{i\}}R_{S_2-\{s\},T_2}R_{s,i} =
	\sum_{\substack{i\in T_1\cap S_2\\t\in T_1\cap T_2}}R_{S_1,T_1-\{t\}}R_{S_2-\{i\},T_2}R_{i,t}.
	\end{equation*}
\end{lemma}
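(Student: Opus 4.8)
\emph{Setup.} Since $X=\bF_2[R^i_j]$ is a polynomial algebra, both sides are honest elements of $X$ and it suffices to prove an equality of polynomials in the commuting variables $R_{ij}$; in particular all Laplace signs disappear. Throughout I would use that $R_{ab}=0$ unless $a<b$, together with the cofactor/Laplace expansion recorded in Proposition~\ref{prop:af5d3cd9}. Write
\[
A=(S_1\cup T_1)\setminus(S_2\cup T_2),\quad B=(S_1\cup T_1)\cap(S_2\cup T_2),\quad C=(S_2\cup T_2)\setminus(S_1\cup T_1),
\]
so the hypothesis reads $A<B<C$, and observe that every summation index ($s\in S_1\cap S_2$, $i\in T_1\cap S_2$, $t\in T_1\cap T_2$) lies in the middle block $B$.

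\emph{Reduction.} First I would read each lonely factor $R_{s,i}$ (resp.\ $R_{i,t}$) as the entry promoting an adjacent minor to a larger one: over $\bF_2$ the cofactor expansions
\[
\sum_{s\in S_2}R_{S_2-\{s\},T_2}R_{s,i}=R_{S_2,T_2\cup\{i\}},\qquad \sum_{t\in T_1}R_{S_1,T_1-\{t\}}R_{i,t}=R_{S_1\cup\{i\},T_1}
\]
hold. The Lemma only sums over $s\in S_1\cap S_2$ and $t\in T_1\cap T_2$, and the role of $A<B<C$ is to control the missing terms: for $i\in B$ one has $S_2\cap C>i$ and $T_1\cap A<i$, so the factors $R_{s,i}$ and $R_{i,t}$ vanish there, and the only terms I genuinely add when passing to the full expansions are those in which the \emph{second} index also runs over $T_1\cap S_2$. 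These extra ``diagonal'' contributions are $\sum_{i,s\in T_1\cap S_2}R_{S_1,T_1-\{i\}}R_{S_2-\{s\},T_2}R_{s,i}$ on the left and $\sum_{i,t\in T_1\cap S_2}R_{S_1,T_1-\{t\}}R_{S_2-\{i\},T_2}R_{i,t}$ on the right, which become the \emph{same} polynomial after exchanging the names of the two dummy indices, hence cancel over $\bF_2$. Checking finally that the outer index $i\in T_1\setminus S_2$ (resp.\ $i\in S_2\setminus T_1$) gives a determinant with a zero or repeated column (resp.\ row) lets me extend the outer sum over all of $T_1$ (resp.\ $S_2$). This reduces the Lemma to the exchange identity
\[
\sum_{i\in T_1}R_{S_1,T_1-\{i\}}R_{S_2,T_2\cup\{i\}}=\sum_{i\in S_2}R_{S_1\cup\{i\},T_1}R_{S_2-\{i\},T_2},
\]
which I denote $(\star)$.

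\emph{The core identity.} To prove $(\star)$ I would introduce one auxiliary determinant and expand it two ways. Since $|S_1|+|S_2|=|T_1|+|T_2|$, the block matrix
\[
\Phi=\begin{pmatrix}(R_{st})_{s\in S_1,\,t\in T_1} & 0\\(R_{st})_{s\in S_2,\,t\in T_1} & (R_{st})_{s\in S_2,\,t\in T_2}\end{pmatrix}
\]
is square, with rows indexed by the disjoint union $S_1\sqcup S_2$ and columns by $T_1\sqcup T_2$, so no difficulty arises from integers shared between the four sets. Expanding $\det\Phi$ along the $S_1$-rows, the vanishing upper-right block forces the chosen columns to lie in $T_1$, i.e.\ to be $T_1-\{i\}$, whose complementary factor is $R_{S_2,T_2\cup\{i\}}$; this recovers the left side of $(\star)$. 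Expanding the same $\det\Phi$ along the $T_2$-columns, the zero block now forces the chosen rows to lie in $S_2$, i.e.\ to be $S_2-\{i\}$, with complementary factor $R_{S_1\cup\{i\},T_1}$; this gives the right side of $(\star)$. As signs are irrelevant over $\bF_2$, the two expansions are literally the two sides of $(\star)$, so $(\star)$ follows.

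\emph{Main obstacle.} The cofactor identities and the sign-free Laplace expansions are routine instances of Proposition~\ref{prop:af5d3cd9}, and once the matrix $\Phi$ is written down $(\star)$ is immediate. I expect the real work to be the bookkeeping in the reduction: one must verify, using only $A<B<C$ and the triangular vanishing of $R_{ab}$, that precisely the out-of-range terms vanish and that the two leftover diagonal corrections coincide, so that the Lemma collapses cleanly onto $(\star)$. The one genuinely non-mechanical idea is recognizing the statement as a Grassmann--Plücker-type exchange of a single index between two minors and producing the auxiliary matrix $\Phi$ that realizes both sides as a single determinant.
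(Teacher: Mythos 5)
Your argument is correct, but it takes a genuinely different route from the paper. The paper's proof is a one-step symmetrization: applying the cofactor expansion of Proposition \ref{prop:af5d3cd9}.(\ref{item:cea34d63}) (together with the vanishing of $R_{ab}$ for $a\ge b$ and of determinants with a repeated row or column, exactly the ingredients of your reduction step), it shows that \emph{each} side equals the single mixed sum
$$\sum_{\substack{s\in S_1\cap S_2\\ t\in T_1\cap T_2}}R_{S_1,T_1-\{t\}}R_{S_2-\{s\},T_2}R_{s,t},$$
so the lemma follows with no auxiliary identity: on the left one trades, for each fixed $s\in S_1\cap S_2$, the index $i\in T_1\cap S_2$ for $t\in T_1\cap T_2$ using $\sum_{j\in T_1}R_{S_1,T_1-\{j\}}R_{s,j}=0$ (repeated row) and $R_{s,j}=0$ for $j\in T_1\cap A$, and dually on the right. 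Your proof replaces this common middle expression by a global pivot: you extend both sides to full cofactor expansions, observe that the two ``diagonal'' corrections indexed by $T_1\cap S_2$ coincide after renaming dummies and hence cancel over $\bF_2$, and reduce to the exchange identity $(\star)$, which you establish by two Laplace expansions of the block matrix $\Phi$. The trade-off: the paper's argument is shorter and never leaves the given minors, while yours isolates a clean, self-contained Pl\"ucker-type identity $(\star)$ that holds for arbitrary commuting entries with no ordering hypothesis, thereby making explicit that the assumption $A<B<C$ enters only to kill out-of-range terms and the matching corrections. Both are complete proofs; yours costs a little more bookkeeping (all of which checks out, including the multiset reading of $R_{S_2,T_2\cup\{i\}}$ for $i\in T_2$ and of $R_{S_1\cup\{i\},T_1}$ for $i\in S_1$) in exchange for the more structural core identity.
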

\begin{proof}
	By Proposition \ref{prop:af5d3cd9}.(\ref{item:cea34d63}), these are both equal to $$\sum_{\substack{s\in S_1\cap S_2\\t\in T_1\cap T_2}}R_{S_1,T_1-\{t\}}R_{S_2-\{s\},T_2}R_{s,t}.$$
\end{proof}

\newcommand{\mysum}{\sum_{\substack{S_1\ni s<a_2\\i\in T_1\cap S_2}}}
We now prove Theorem \ref{thm:bc7c165} by realizing the relations as boundaries via explicit constructions.
\begin{proof}[Proof of Theorem \ref{thm:bc7c165}]
	\ref{rel:8d29c79f}. 
	The relation follows from $$d(R_{ij}d R_{ij})=(d R_{ij})^2=\sum_k R_{ik}^2R_{kj}^2.$$

	\ref{rel:2762c538}. 
	Let $$y=\mysum R_{S_1-\{s\},T_1-\{i\}}R_{S_2-\{i\}+\{s\},T_2}.$$
	It suffices to show that $d y=R_{S_1,T_1}R_{S_2,T_2}$. 
	In fact,
	\begin{align*}
	d y =& \mysum \Big( R_{S_1,T_1-\{i\}+\{s\}}R_{S_2-\{i\}+\{s\},T_2} + R_{S_1-\{s\}+\{i\},T_1}R_{S_2-\{i\}+\{s\},T_2} + \\
	&R_{S_1-\{s\},T_1-\{i\}}R_{S_2+\{s\},T_2+\{i\}} + \sum_{j<a_2}R_{S_1-\{s\},T_1-\{i\}}R_{S_2-\{i\}+\{j\},T_2}R_{s,j}\Big).
	\end{align*}
	
	We apply \ref{prop:af5d3cd9}.(\ref{item:cea34d63}) and get
	\begin{equation*}
	\mysum\sum_{j<a_2}R_{S_1-\{s\},T_1-\{i\}}R_{S_2-\{i\}+\{j\},T_2}R_{s,j} = 
	\mysum R_{S_1,T_1-\{i\}+\{s\}}R_{S_2-\{i\}+\{s\},T_2}.
	\end{equation*}
	
	Therefore
	\begin{align*}
	d y =& \mysum\left(R_{S_1-\{s\},T_1-\{i\}}R_{S_2+\{s\},T_2+\{i\}} + R_{S_1-\{s\}+\{i\},T_1}R_{S_2-\{i\}+\{s\},T_2}\right)\\
	=& \mysum R_{S_1-\{s\},T_1-\{i\}}R_{S_2,T_2}R_{s,i} + 
	\sum_{\substack{S_1\ni s<a_2\\i\in T_1\cap S_2\\s_2\in S_2\cap S_1}} R_{S_1-\{s\},T_1-\{i\}}R_{S_2-\{s_2\}+\{s\},T_2}R_{s_2,i} \\
	+& \sum_{\substack{S_1\ni s<a_2\\i\in T_1\cap S_2\\t_1\in T_1\cap T_2}}R_{S_1-\{s\},T_1-\{t_1\}}R_{S_2-\{i\}+\{s\},T_2}R_{i,t_1}.
	\end{align*}
	
	By Lemma \ref{lem:13393410} we have
	\begin{align*}
	& \sum_{\substack{S_1\ni s<a_2\\i\in T_1\cap S_2\\s_2\in S_2\cap S_1}} R_{S_1-\{s\},T_1-\{i\}}R_{S_2-\{s_2\}+\{s\},T_2}R_{s_2,i} +
	\sum_{\substack{S_1\ni s<a_2\\i\in T_1\cap S_2\\t_1\in T_1\cap T_2}}R_{S_1-\{s\},T_1-\{t_1\}}R_{S_2-\{i\}+\{s\},T_2}R_{i,t_1}\\
	=& \sum_{\substack{S_1\ni s<a_2\\t_1\in T_1\cap T_2}}R_{S_1-\{s\},T_1-\{t_1\}}R_{S_2,T_2}R_{s,t_1}
	\end{align*}
	
	Therefore
	\begin{align*}
	d y =& \mysum R_{S_1-\{s\},T_1-\{i\}}R_{S_2,T_2}R_{s,i} + \sum_{\substack{S_1\ni s<a_2\\t_1\in T_1\cap T_2}}R_{S_1-\{s\},T_1-\{t_1\}}R_{S_2,T_2}R_{s,t_1}\\
	=& \sum_{\substack{S_1\ni s<a_2\\T_1\ni t\ge a_2}} R_{S_1-\{s\},T_1-\{t\}}R_{S_2,T_2}R_{s,t}\\
	=& ~R_{S_1,T_1}R_{S_2,T_2}.
	\end{align*}
	The last equality holds because for every monomial $\alpha$ in $R_{S_1,T_1}$ there is an odd number of factors $R_{s,t}$ in $\alpha$ such that $S_1\ni s<a_2, T_1\ni t\ge a_2$.\vspace{5pt}
	
	\ref{rel:b0a9481d}. 
	Let $$y=\sum_{\{s_1<s_2\}\subset S} R_{s_1j}R_{s_2j}R_{S-\{s_1,s_2\},T}$$
	\begin{align*}
	dy =& \sum_{\{s_1<s_2\}\subset S}\sum_i \left(R_{s_1i}R_{ij}R_{s_2j} + R_{s_2i}  R_{ij}R_{s_1j}\right)R_{S-\{s_1,s_2\},T}\\
	+& \sum_{\{s_1<s_2\}\subset S} R_{s_1j}R_{s_2j}\left(R_{S-\{s_1\}, T+\{s_2\}} + R_{S-\{s_2\}, T+\{s_1\}}\right)\\
	=& \sum_{\substack{s_1,s_2\in S\\s_1\neq s_2}}\sum_i R_{s_2i}R_{ij}R_{s_1j}R_{S-\{s_1,s_2\},T} + \sum_{\substack{s_1,s_2\in S\\s_1\neq s_2}} R_{s_1j}R_{s_2j}R_{S-\{s_1\}, T+\{s_2\}}\\
	=& \RN{1}+\RN{2}
	\end{align*}
	where
	$$\RN{1}=\sum_{\substack{s_1,s_2\in S\\s_1\neq s_2}}\sum_i R_{s_2i}R_{ij}R_{s_1j}R_{S-\{s_1,s_2\},T}$$
	and
	\begin{align*}
	\RN{2}=& \sum_{\substack{s_1,s_2\in S\\s_1\neq s_2}} R_{s_1j}R_{s_2j}R_{S-\{s_1\}, T+\{s_2\}}\\
	=& \sum_{\substack{s_1,s_2,i\in S\\s_1\neq s_2\\s_1\neq i}} R_{s_1j}R_{s_2j}R_{is_2}R_{S-\{s_1,i\}, T}\\
	=& \sum_{\substack{s_1,i,s_2\in S\\s_1\neq i\\s_1\neq s_2}} R_{s_1j}R_{ij}R_{s_2i}R_{S-\{s_1,s_2\}, T}\\
	\end{align*}
	
	The only difference between summations $\RN{1}$ and $\RN{2}$ is that $i$ can be equal to $s_1$ or $i\in T$ in summation $\RN{1}$. 
	Therefore
	\begin{align*}
	dy =& \sum_{\substack{s_1,s_2\in S\\s_1\neq s_2\\i\in T\cup\{s_1\}}} R_{s_2i}R_{ij}R_{s_1j}R_{S-\{s_1,s_2\},T}\\
	=& \sum_{\substack{s_1\in S\\i\in T\cup\{s_1\}}} R_{ij}R_{s_1j}R_{S-\{s_1\},T+\{i\}}\\
	=& \sum_{\substack{s_1\in S}} R_{s_1j}^2R_{S-\{s_1\},T+\{s_1\}}\\
	\end{align*}
	where the right-hand side represents our relation. 
	Hence our relation holds.\vspace{6pt}

	\ref{rel:c4698d4e}. 
	This follows from \ref{rel:b0a9481d} because of the symmetry given by Proposition \ref{prop:8ac1afe}.\vspace{5pt}

	\ref{rel:a078c1d9}.	Let
	$$y = \sum_{\substack{I,J\subset T_1^\pprime\cap S_2\\j_0=\max (J\backslash I)>I\backslash J}} R_{S_1^\pprime+I,T_1^\pprime-J}R_{S_1^\prime+S_2-I-\{j_0\}, T_1^\prime+T_2+J^\prime}$$
	Then
	$$dy=\RN{1}+\RN{2}+\RN{3}+\RN{4}$$
	where
	\begin{align*}
		\RN{1} =& \sum_{\substack{I,J\subset T_1^\pprime\cap S_2\\j_0=\max (J\backslash I)>I\backslash J}} R_{S_1^\pprime+I,T_1^\pprime-J}R_{S_1^\prime+S_2-I, T_1^\prime+T_2+J}\\
		\RN{2} =& \sum_{\substack{I,J\subset T_1^\pprime\cap S_2\\j_0=\max (J\backslash I)>I\backslash J}} R_{S_1^\pprime+I+\{j_0\},T_1^\pprime-J+\{j_0\}}R_{S_1^\prime+S_2-I-\{j_0\}, T_1^\prime+T_2+J^\prime}\\
		\RN{3} =& \sum_{\substack{I,J\subset T_1^\pprime\cap S_2\\j_0=\max (J\backslash I)>I\backslash J\\j\in J^\prime\backslash I}} R_{S_1^\pprime+I+\{j\},T_1^\pprime-J+\{j\}}R_{S_1^\prime+S_2-I-\{j_0\}, T_1^\prime+T_2+J^\prime}\\
	\end{align*}
	\begin{align*}
		\RN{4} =& \sum_{\substack{I,J\subset T_1^\pprime\cap S_2\\j_0=\max (J\backslash I)>I\backslash J\\i\in I\backslash J}} R_{S_1^\pprime+I,T_1^\pprime-J}R_{S_1^\prime+S_2-I-\{j_0\} + \{i\}, T_1^\prime+T_2+J^\prime+\{i\}}.\\
	\end{align*}
	In summation $\RN{3}$, change index by $I_1=I+\{j\}$, $J_1=J-\{j\}$. 
	We have
	\begin{align*}
	\RN{3} =& \sum_{\substack{I_1,J_1\subset T_1^\pprime\cap S_2\\j_0=\max (J_1\backslash I_1)>I_1\backslash J_1\\j\in I_1\backslash J_1}} R_{S_1^\pprime+I_1,T_1^\pprime-J_1}R_{S_1^\prime+S_2-I_1+\{j\}-\{j_0\}, T_1^\prime+T_2+J_1^\prime+\{j\}}\\
	=& \sum_{\substack{I,J\subset T_1^\pprime\cap S_2\\j_0=\max (J\backslash I)>I\backslash J\\i\in I\backslash J}} R_{S_1^\pprime+I,T_1^\pprime-J}R_{S_1^\prime+S_2-I-\{j_0\} + \{i\}, T_1^\prime+T_2+J^\prime+\{i\}}\\
	=& \RN{4}
	\end{align*}
	In summation $\RN{2}$, change index by $I_1=I+\{j_0\}$, $J_1=J-\{j_0\}$. 
	We have
	\begin{align*}
	\RN{2} =& \sum_{\substack{I_1,J_1\subset T_1^\pprime\cap S_2\\j_0=\max(I_1\backslash J_1)>J_1\backslash I_1}} R_{S_1^\pprime+I_1,T_1^\pprime-J_1}R_{S_1^\prime+S_2-I_1, T_1^\prime+T_2+J_1}
	\end{align*}
	Therefore
	\begin{align*}
	dy = \RN{1} + \RN{2} =& \sum_{\substack{I,J\subset T_1^\pprime\cap S_2\\I\neq J}} R_{S_1^\pprime+I,T_1^\pprime-J}R_{S_1^\prime+S_2-I, T_1^\prime+T_2+J}
	\end{align*}
	Note that if we instead require $I=J$ in the above summation, we get the right hand side of Relation \ref{rel:a078c1d9}. 
	Hence in order to prove Relation \ref{rel:a078c1d9} it suffices to show that
	$$\sum_{\substack{I,J\subset T_1^\pprime\cap S_2}} R_{S_1^\pprime+I,T_1^\pprime-J}R_{S_1^\prime+S_2-I, T_1^\prime+T_2+J}=R_{S_1,T_1}R_{S_2,T_2}.$$
	In fact, if we denote the summation on the left hand side above by $\RN{5}$, then
	$$\RN{5}=\sum_{\substack{I,J\subset T_1^\pprime\cap S_2\\M\subset S_1^\prime\\L\subset S_1^\prime-M+S_2-I}} R_{S_1^\pprime+I,T_1^\pprime-J}R_{S_1^\prime-M+S_2-I-L, J}R_{M, T_1^\prime}R_{L, T_2}$$
	Fix $I$, $M$ and $L$. 
	If
	\begin{equation}\label{eq:e4826ad1}
		(S_1^\pprime+I)\cap(S_1^\prime-M+S_2-I-L)=\emptyset%??
	\end{equation}
	which is equivalent to
	$$(S_1^\pprime+I)\cap((S_2\backslash L)-I)=\emptyset
	\text{ \hspace{1pt}and to\hspace{1pt} } S_1^\pprime\cap S_2=S_1\cap S_2\subset L,$$%??
	then by \ref{prop:af5d3cd9}.\ref{item:cea34d63} we have
	\begin{align*}
	& \sum_{J\subset T_1^\pprime\cap S_2} R_{S_1^\pprime+I,T_1^\pprime-J}R_{S_1^\prime-M+S_2-L-I, J}\\
	=& R_{(S_1^\pprime+I)+(S_1^\prime-M+S_2-L-I),T_1^\pprime}\\
	=& R_{S_1-M+S_2-L,T_1^\pprime}.
	\end{align*}
	Otherwise if (\ref{eq:e4826ad1}) does not hold, then
	$$\sum_{J\subset T_1^\pprime\cap S_2} R_{S_1^\pprime+I,T_1^\pprime-J}R_{S_1^\prime-M+S_2-L-I, J}=0.$$
	Therefore
	\begin{align*}
	\RN{5} =& \sum_{\substack{I\subset T_1^\pprime\cap S_2\\M\subset S_1^\prime\\S_1\cap S_2\subset L\subset S_1^\prime-M+S_2-I}} R_{S_1-M+S_2-L,T_1^\pprime}R_{M, T_1^\prime}R_{L, T_2}\\
	=& \sum_{\substack{S_1^\pprime\cap S_2\subset L\subset S_1^\prime+S_2\\M\subset S_1^\prime\backslash L\\I\subset (T_1^\pprime\cap S_2)\backslash L}} R_{S_1-M+S_2-L,T_1^\pprime}R_{M, T_1^\prime}R_{L, T_2}\\
	=& \sum_{\substack{S_1^\pprime \cap S_2\subset L\subset S_1^\prime+S_2\\M\subset S_1^\prime\backslash L}} 2^{(T_1^\pprime\cap S_2)\backslash L}R_{S_1-M+S_2-L,T_1^\pprime}R_{M, T_1^\prime}R_{L, T_2}\\
	\end{align*}
	The summand is nontrivial only if 
	$$(T_1^\pprime\cap S_2)\backslash L=\emptyset \text{ \hspace{5pt}and\hspace{5pt} } S_1-M+S_2-L<\max(T_1^\pprime)=b_1,$$
	which is equivalent to
	$$(T_1^\pprime\cap S_2)\subset L \text{ \hspace{5pt}and\hspace{5pt} } N_{b_1+1, b_2}\cap S_2\subset L.$$
	Note that in the summation we also require $(S_1^\pprime\cap S_2)\subset L$. 
	Hence 
	$$(S_1^\pprime+T_1^\pprime+N_{b_1+1,b_2})\cap S_2=S_2\subset L$$
	which implies $S_2=L$. 
	Therefore 
	\begin{align*}
	\RN{5} =& \sum_{M\subset S_1^\prime} R_{S_1-M,T_1^\pprime}R_{M, T_1^\prime}R_{S_2, T_2}\\
	=& R_{S_1,T_1}R_{S_2, T_2}
	\end{align*}

	\ref{rel:51b194a6}. 
	This follows from \ref{rel:a078c1d9} because of the symmetry given by Proposition \ref{prop:8ac1afe}.
\end{proof}

\begin{theorem}\label{thm:a26989b6}
	Conjecture \ref{conj:5f7d758} holds in $HX_7$.
\end{theorem}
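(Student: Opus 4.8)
The plan is to promote the conjectural presentation to an honest isomorphism by comparing two tri-graded algebras of finite type degree by degree, using a reduced \Groebner{} basis to control the abstract side. Let $\mathcal{A}_7$ denote the tri-graded $\bF_2$-algebra presented by the generators $h_{S,T}\in\sH$ and $b_{ij}$ ($j-i\ge 2$) supported on $\{0,1,\dots,7\}$, subject to the relations \ref{rel:8d29c79f}--\ref{rel:80bb77fd} of Conjecture \ref{conj:5f7d758} restricted to this index range. By Theorem \ref{thm:bc7c165} the relations \ref{rel:8d29c79f}, \ref{rel:2762c538}, \ref{rel:b0a9481d}, \ref{rel:c4698d4e}, \ref{rel:a078c1d9} and \ref{rel:51b194a6} hold throughout $HX$, hence in $HX_7$; so the first step is to check that the three remaining relations \ref{rel:1c2da7ed}, \ref{rel:e7460c84} and \ref{rel:80bb77fd}, which Theorem \ref{thm:bc7c165} guarantees only in a large range, already have all of their instances inside the bounded range of dimensions spanned by the finitely many generators $R_{ij}$ ($0\le i<j\le 7$) of $X_7$. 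Granting this, sending each generator to the homology class it names yields a degree-preserving algebra homomorphism $\phi\colon \mathcal{A}_7\to HX_7$.

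Next I would prove that $\phi$ is surjective, which amounts to the assertion that $\sH$ together with the classes $b_{ij}$ generate $HX_7$ as an algebra. Since $HX_7$ is connected graded, it is generated by any lift of a basis of its module of indecomposables, so surjectivity follows once Conjecture \ref{conj:2cc43131} is verified for $HX_7$: that conjecture says precisely that the elements of $\sH$ and the $b_{ij}$ exhaust the indecomposables. I would therefore compute $HX_7$ as the homology of the explicit differential (\ref{eq:99758f58}) on the finite-type complex $X_7$, extract its indecomposables in each tridegree, and confirm that they are exactly the listed generators in this range; this is a finite check because $X_7$ is of finite type.

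The heart of the proof is injectivity, that is, that the relations \ref{rel:8d29c79f}--\ref{rel:80bb77fd} generate the entire relation ideal. Here I would invoke the \Groebner{}-basis machinery emphasized in the introduction. Fix a monomial ordering and compute the reduced \Groebner{} basis of the defining ideal of $\mathcal{A}_7$; since for the fixed bound the families \ref{rel:8d29c79f}--\ref{rel:80bb77fd} contribute only finitely many instances, this terminates and produces an explicit set of standard monomials, hence the tri-graded Poincar\'e series of $\mathcal{A}_7$. Independently, the Poincar\'e series of $HX_7$ is read off from the homology computation of the previous step. As $\phi$ is a degree-preserving surjection, it is an isomorphism if and only if these two series coincide in every tridegree, and matching them on the computed range finishes the argument.

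The main obstacle I anticipate is exactly this completeness of the relations, i.e. the injectivity step. The validity of the individual relations and the surjectivity are essentially bookkeeping, but ruling out any further relation requires the exact dimension count, and this is delicate for two reasons. First, $HX_7$ is infinite-dimensional, for instance every power of $h_0$ survives by Theorem \ref{thm:512ce8e9}, so one must argue that agreement of the Poincar\'e series over a bounded range genuinely forces equality in all tridegrees; this is where rationality of the \Groebner{}-theoretic series, together with the finite-type structure of $X_7$, does the essential work. Second, the family \ref{rel:80bb77fd} is itself indexed by pre-existing relations, so some care is needed to ensure that the finite presentation fed to the \Groebner{} computation faithfully encodes it. I expect that once these two points are controlled, the comparison of series closes the proof.
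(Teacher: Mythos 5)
Your overall criterion is sound: a tri-degree-preserving surjection between locally finite graded algebras is an isomorphism exactly when the graded dimensions agree. The genuine gap is in how you propose to establish that agreement. Both your surjectivity step (extracting the indecomposables of $HX_7$ ``in each tridegree'') and your injectivity step (matching Poincar\'e series) are assertions about infinitely many tridegrees, while a direct homology computation on $X_7$ only ever covers a bounded range. Your appeal to rationality does not close this: two distinct rational power series can agree to arbitrarily high order, so agreement on any computed range proves nothing unless you also have effective bounds on the degrees of numerator and denominator --- equivalently, on the degrees in which generators and relations of $HX_7$ (or the reduced \Groebner{} basis of its relation ideal) can occur. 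No such bound comes out of a raw degree-by-degree computation; the kernel of your map $\phi$ is finitely generated because your algebra is Noetherian, but the degrees of its generators are exactly what you cannot control a priori. So the step you yourself flag as doing ``the essential work'' is, as written, circular.

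The paper avoids this by never computing $HX_7$ degree by degree. It produces a complete finite presentation of $HX_7$, valid in all degrees at once, by an induction that adjoins the polynomial generators $R_{01},R_{02},\dots,R_{07}$ one at a time: Proposition \ref{prop:07de5358} describes the associated graded of the homology after adjoining $x$ with $dx=c$ in terms of $HA$ and the annihilator ideal $\Ann_{HA}([c])$; those annihilators are computed by the \Groebner{}-basis Algorithm \ref{algorithm:6e571137}; and the remaining extension problems are resolved by exhibiting explicit bounding chains, checked by computer. The induction is seeded by $HX_1$ and uses the translation isomorphism $HX[1,7]\iso HX_6$ of Proposition \ref{prop:c143b281}, which is why the paper reduces everything to the single implication (\ref{eq:dcca769e}). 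With a finite presentation of $HX_7$ in hand, verifying Conjecture \ref{conj:5f7d758} becomes a finite question about equality of two ideals given by finite generating sets, settled by uniqueness of the reduced \Groebner{} basis: Appendix \ref{app:7b101ee1} records the \Groebner{} basis of the actual relation ideal, Appendix \ref{app:35b02630} the conjectural relations, and the computer checks that they generate the same \Groebner{} basis. That finite ideal-equality comparison --- not a Poincar\'e-series comparison over a range --- is what makes the theorem provable. To salvage your route you would in effect have to prove the paper's structural induction first, at which point the Hilbert-series detour becomes unnecessary.
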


We will prove this theorem by computing $HX_7$ completely in Section \ref{sec:11e59387}.

\subsection{Nilpotent freeness}
Conjecture \ref{conj:5f7d758} and computations in a large range of dimensions let the computer admit the following conjecture.

\begin{conjecture}\label{conj:2a224128}
	$HX$ is nilpotent free. 
\end{conjecture}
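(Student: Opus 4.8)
The plan is to deduce nilpotent-freeness from a structural feature of an explicit monomial basis. Since $HX$ is a commutative $\bF_2$-algebra, the Frobenius $F\colon HX\to HX$, $F(y)=y^2$, is an algebra endomorphism, and over $\bF_2$ a commutative algebra is nilpotent free precisely when $F$ is injective: a nilpotent $y$ satisfies $y^{2^k}=F^{k}(y)=0$ for $k\gg0$, so injectivity of $F$ forces $y=0$, and the converse is clear. It therefore suffices to show that squaring is injective on $HX$, and the route I would take is to produce a monomial $\bF_2$-basis $B$ that is closed under squaring, $x\in B\Rightarrow x^2\in B$, with the squaring map injective on $B$. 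Given such a $B$, additivity of Frobenius gives $\bigl(\sum_{x\in B'}x\bigr)^2=\sum_{x\in B'}x^2$ for any finite nonempty $B'\subseteq B$, and the right-hand side is a sum of distinct basis elements, hence nonzero; this is exactly the injectivity of $F$. (This refines the all-dimensions statement of Theorem \ref{thm:512ce8e9}, which already secures non-nilpotence of $h_n$-multiplication.)

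First I would fix a monomial order and, granting Conjecture \ref{conj:2cc43131} and Conjecture \ref{conj:5f7d758}, take $B$ to be the standard monomials of the reduced \Groebner{} basis of the relation ideal, so that a general basis element is a product $\prod h_{S,T}^{a_{S,T}}\prod b_{ij}^{c_{ij}}$. The decisive point is then a single combinatorial claim: \emph{every leading term of the reduced \Groebner{} basis is squarefree}, i.e.\ involves each generator to exponent at most one. This is plausible from the shape of the relations in Conjecture \ref{conj:5f7d758}: the quadratic relations \ref{rel:2762c538}, \ref{rel:a078c1d9}, \ref{rel:51b194a6} and \ref{rel:e7460c84} reduce products of two \emph{distinct} generators $h_{S_1,T_1}h_{S_2,T_2}$, relation \ref{rel:8d29c79f} reduces distinct products $b_{ik}b_{kj}$, and the mixed relations \ref{rel:b0a9481d}--\ref{rel:1c2da7ed} reduce squarefree products of $b$'s and $h$'s, while no relation reduces a pure square $h_{S,T}^2=b_{S,T}$ or $b_{ij}^2$. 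Once this claim holds, squaring a standard monomial simply doubles every exponent; divisibility by a squarefree monomial depends only on the set of generators occurring, which is unchanged by squaring, so $x^2$ is again standard, giving closure; and exponent-doubling $a\mapsto 2a$ is injective, giving injectivity of squaring on $B$.

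The hard part is establishing the squarefree-leading-term claim in all degrees. Two obstacles compound. First, the construction presumes the full Conjecture \ref{conj:5f7d758}, which is open; in the computed range (Theorem \ref{thm:a26989b6}) the existence of a squaring-closed basis is a finite machine verification, but an all-degrees argument must control the reduced \Groebner{} basis abstractly. Second, and more seriously, the defining system of relations includes the nonlinear closure relation \ref{rel:80bb77fd}, which asserts that certain syzygies among lower products of $h$'s lift to syzygies among the $h_{S_i,T_i}$; the new relations it forces need not have squarefree leading terms a priori, and ruling this out seems to require simultaneous control of the module of relations in each filtration. I would therefore attack the squarefree-leading-term property by induction on $n$, in tandem with Conjecture \ref{conj:5f7d758} itself, exactly as the author anticipates that the two conjectures must be proved together; the principal obstacle is showing that relation \ref{rel:80bb77fd} never introduces a reduction of a square, equivalently that the Frobenius-doubled standard monomials remain reduced.
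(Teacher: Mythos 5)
Your proposal takes essentially the same approach as the paper: the reduction of nilpotent-freeness to an $\bF_2$-monomial basis closed under (and injective under) squaring, obtained from square-free leading terms of a \Groebner{} basis, is precisely the paper's Theorem \ref{thm:c329a57a}, and checking that square-free property against the relations is exactly how the paper establishes the conjecture in the computed range $HX_7$ (Theorem \ref{thm:7392b792}, via the explicit \Groebner{} basis of Appendix \ref{app:7b101ee1}). You also correctly identify the same open gap the paper acknowledges — the square-free-leading-term claim in all degrees hinges on the unproven Conjecture \ref{conj:5f7d758}, in particular on controlling relation \ref{rel:80bb77fd}, so neither your argument nor the paper's closes the conjecture beyond the computed subalgebra.
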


This means that if $x\in HX$ is nonzero, then no power of $x$ is zero. We shall explore a way to study this conjecture using \Groebner{} bases in [Theorem 4.16] below.

\begin{theorem}\label{thm:7392b792}
	Conjecture \ref{conj:2a224128} holds in $HX_7$.
\end{theorem}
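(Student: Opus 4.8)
The plan is to recast nilpotent-freeness as the injectivity of the Frobenius and then certify that injectivity on an explicit monomial basis. Since $HX_7$ is a commutative $\bF_2$-algebra, the Frobenius $F\colon HX_7\to HX_7$, $F(x)=x^2$, is an $\bF_2$-algebra endomorphism, because $(x+y)^2=x^2+y^2$ in characteristic $2$. The assertion that $HX_7$ is nilpotent free is the assertion that it is reduced, and for a commutative $\bF_2$-algebra this is equivalent to injectivity of $F$: if $F$ is injective then so is each iterate $F^k=(-)^{2^k}$, whence $x^{2^k}\neq 0$ for every $k$ and every $x\neq 0$; since $x^{2^k}=x^n\cdot x^{2^k-n}$ for $2^k\ge n$, no power of a nonzero element vanishes. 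Conversely a nonzero element with $x^2=0$ would defeat injectivity. So I would reduce the theorem to proving that $F$ is injective.

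To make $F$ computable I would use the explicit presentation. By Theorem \ref{thm:a26989b6}, $HX_7$ is presented by the finite generating set $\{h_{S,T}\in\sH,\ b_{ij}:0\le i<j\le 7\}$ subject to the finite list of relations \ref{rel:8d29c79f}--\ref{rel:80bb77fd} of Conjecture \ref{conj:5f7d758}. Fixing a monomial order on the free commutative $\bF_2$-algebra on these generators, I would compute the reduced \Groebner{} basis $G$ of the relation ideal $I$; the standard monomials, those divisible by no leading term of $G$, then form an $\bF_2$-basis $B$ of $HX_7$. This is precisely the situation to which the \Groebner{}-basis criterion for nilpotent-freeness applies.

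The key point is to verify that $B$ is closed under squaring, i.e. $x\in B\Rightarrow x^2\in B$, where $x^2$ is the monomial with doubled exponents. Granting this, the exponent-doubling map is injective on exponent vectors, so $F$ carries $B$ injectively into $B$; since $F(\sum_m c_m m)=\sum_m c_m\,m^2$ by additivity in characteristic $2$, it follows that $F$ is injective, which finishes the proof. The closure is a purely combinatorial property of the leading-term ideal $L=\langle \mathrm{lt}(g):g\in G\rangle$: a standard monomial $m$ has $m^2$ standard as soon as $\ell\mid m^2$ forces $\ell\mid m$ for some generator $\ell$ of $L$. This holds automatically when the leading terms of $G$ are square-free, and in general reduces to checking that $L$ is stable under taking the ceiling square roots of its generators. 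With the explicit \Groebner{} basis of $HX_7$ in hand this is a finite verification, in which one also uses the squaring formulas $h_{S,T}^2=b_{S,T}$ and $b_{ij}^2$ to interpret the squares of generators.

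The main obstacle is exactly this closure step. A priori, reducing the square of a standard monomial could rewrite it into a sum of several basis monomials or into zero; relation \ref{rel:8d29c79f}, $\sum_k b_{ik}b_{kj}=0$, which also governs $h_i^2=b_{i,i+1}$, together with the product relations \ref{rel:a078c1d9}, \ref{rel:51b194a6} and \ref{rel:e7460c84}, are the places where such a collapse might occur. The substance of the proof is that, for the chosen monomial order, the leading-term structure of the \Groebner{} basis of $HX_7$ is compatible with Frobenius, so these collapses never happen; establishing this compatibility, rather than running the \Groebner{} computation itself, is where the genuine work lies.
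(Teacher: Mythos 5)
Your proposal is correct and follows essentially the paper's own route: the paper's Theorem \ref{thm:c329a57a} is precisely your criterion (square-free leading monomials of a \Groebner{} basis give a monomial basis closed under squaring, hence an injective Frobenius and nilpotent-freeness), and the paper applies it to the explicit \Groebner{} basis of $HX_7$ in Appendix \ref{app:7b101ee1}, all of whose leading monomials are square free. The only cosmetic difference is that the paper obtains that \Groebner{} basis from its inductive computation of the algebras $HX_{7,i}$ via Proposition \ref{prop:07de5358}, rather than recomputing it from the presentation of Theorem \ref{thm:a26989b6}, but these yield the same reduced \Groebner{} basis.
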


This will be proved together with Theorem \ref{thm:a26989b6} by computing $HX_7$ in Section \ref{sec:11e59387}.
It is strong evidence for the two conjectures since the subalgebra $HX_7\subset HX$ together with $h_7$ generates a subalgebra isomorphic to $HX$ in dimensions $t-s\le 285$.

The following theorem is a weaker form of Conjecture \ref{conj:2a224128}, but we are able to prove it in $HX$ in all dimensions.
	
\begin{theorem}\label{thm:512ce8e9}
	For any $x\in HX$ and $n\ge 0$, if $h_nx\neq 0$, then $h_n^ix\neq 0$ for all $i$.
\end{theorem}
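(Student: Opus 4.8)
The plan is to use that $c:=R_{n,n+1}$ is a cycle ($dR_{n,n+1}=0$), is central, and is a non-zero-divisor in the polynomial ring $X$, so that multiplication by $h_n=[c]$ makes $HX$ a module over $\bF_2[h_n]$; the assertion is exactly that this module has no $h_n$-torsion of order $\ge 2$. First I would reduce to one step: it suffices to prove that $h_n^2z=0$ implies $h_nz=0$ for every $z\in HX$. This says $h_n$ is injective on its own image, and granting it, if $h_nx\ne 0$ then $h_nx$ is a nonzero element of $\operatorname{im}(h_n)$, so $h_n(h_nx)=h_n^2x\ne 0$, and iterating gives $h_n^ix\ne 0$ for all $i$.

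Next I would filter $X$ by powers of $c$. Writing $X=\bF_2[c]\otimes X'$ with $X'=\bF_2[R_{ij}:(i,j)\ne(n,n+1)]$, the differential decomposes as $d=d_0+c\,\partial$, where $d_0$ preserves the $c$-degree and $c\partial$ raises it by one. From $d^2=0$ one reads off $d_0^2=0$, $\partial^2=0$, and $d_0\partial=\partial d_0$, and both $d_0,\partial$ are derivations of $X'$. Explicitly $\partial$ is the derivation with $\partial R_{n,j}=R_{n+1,j}$ for $j\ge n+2$, with $\partial R_{i,n+1}=R_{i,n}$ for $i\le n-1$, and with $\partial=0$ on all other generators; it simply shifts the level $n$ by one.

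I would then unwind the reduced statement on the chain level. Represent $z$ by a $d$-cycle $\omega=\sum_r c^r\omega_r$; if $c^2\omega=d\mu$ with $\mu=\sum_r c^r\mu_r$, comparing $c$-degrees gives $d_0\mu_0=0$ and $d_0\mu_1=\partial\mu_0$, and I want $\nu$ with $c\omega=d\nu$. A direct bookkeeping shows one may take $\nu_r=\mu_{r+1}$ for $r\ge 1$, so the entire problem collapses to finding $\nu_0\in X'$ with
\[
d_0\nu_0=0\qquad\text{and}\qquad \partial\nu_0=\partial\mu_1 .
\]
Since $\partial\mu_1$ is automatically a $d_0$-cycle lying in $\ker\partial$, this is equivalent to the following Key Lemma: for every $d_0$-cycle $\mu_0$ the class $[\partial\mu_0]$ vanishes in $H(\ker\partial,\,d_0)$; equivalently $\mu_1\in\ker d_0+\ker\partial$.

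The main obstacle is precisely this Key Lemma: it is the statement that the $c$-adic (Bockstein) filtration carries only order-one torsion, i.e.\ that no higher differential builds up an $h_n$-tower. The tool I would use is the explicit contracting homotopy $\sigma$ for $\partial$, namely the derivation with $\sigma R_{n+1,j}=R_{n,j}$, $\sigma R_{i,n}=R_{i,n+1}$, and $\sigma=0$ on the remaining generators, which satisfies $\partial\sigma+\sigma\partial=N$, where $N$ multiplies a monomial by the parity of its number of shifted (level-$n$ or level-$(n+1)$) factors. Thus $\partial$ is contractible on the odd part, while $H(X',\partial)=\bF_2[R_{n,j}^2,\,R_{i,n+1}^2,\ \text{untouched generators}]$ is a polynomial algebra concentrated in the even part. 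I expect to establish the Key Lemma by inducting on the number of shifted factors, using $\sigma$ to strip off the odd part and the explicit description of $H(X',\partial)$ to handle the even (square) part. This char-$2$ square phenomenon is exactly what opens the door to towers and is the heart of the difficulty; it is also why the full nilpotent-freeness of Conjecture~\ref{conj:2a224128} is harder, whereas the present single-generator statement can be pushed through in all dimensions.
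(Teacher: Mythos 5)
Your reduction to ``$h_n^2z=0$ implies $h_nz=0$'', your decomposition $X=X'[c]$ with $d=d_0+c\,\partial$, and your bookkeeping down to the single condition $d_0\nu_0=0$, $\partial\nu_0=\partial\mu_1$ are all correct; in fact your $(X',d_0,\partial)$ is literally the paper's $(Y,\delta_{Y,1},\delta_{Y,2})$ with $X=Y[R_{n,n+1}]$, so up to that point you are on the paper's track. The first problem is that your Key Lemma is false as stated. Take $\mu_0=R_{n,n+2}$: it is a $d_0$-cycle (the sum defining $d_0R_{n,n+2}$ is empty), and $\partial\mu_0=R_{n+1,n+2}$ is not even a $d_0$-boundary in $X'$ (for degree reasons: $d_0$ raises polynomial degree by one and kills constants), let alone a $d_0$-boundary of an element of $\ker\partial$; so $[\partial\mu_0]\neq 0$ in $H(\ker\partial,d_0)$. (This example encodes the relation $h_nh_{n+1}=0$, i.e.\ a nonzero $d_1$ in your $c$-adic spectral sequence.) What your argument actually needs is the version carrying the hypothesis you silently have in context: $\mu_0\in\ker d_0$ \emph{and} $\partial\mu_0=d_0\mu_1$ for some $\mu_1$, conclude $\partial\mu_0\in d_0(\ker\partial)$.

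The second and more serious problem is that this corrected lemma is nowhere proved, and it is exactly where the content of Theorem \ref{thm:512ce8e9} sits: it asserts that every filtration-zero class killed by $d_1$ of the $c$-adic spectral sequence is represented by an honest cycle $\mu_0+c\eta$ with $\partial\eta=0$, a chain-level degeneration statement at least as strong as the theorem itself (your own reduction shows it implies the theorem), so it cannot be expected to come cheaply. Your sketch handles only the easy half: on the odd part $\sigma$ does the job (one checks that $\sigma$ commutes with $d_0$, and then $\eta=\partial\sigma\mu_1$ works because $\partial\sigma\partial=N\partial=\partial$ there). But on the even part $N=0$, the homotopy gives nothing, and ``induct on the number of shifted factors'' does not close up because $d_0$ does not preserve that number --- it can raise it by $2$ (e.g.\ $d_0R_{ij}$ contains the term $R_{in}R_{nj}$ when $i<n<n+1<j$) --- so your induction is really another filtered/spectral argument with its own extension problems, i.e.\ the original difficulty in a new guise. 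The paper's proof supplies precisely the ingredient your plan lacks: the weight function with $w(R_{n,j})=w(R_{i,n+1})=0$, $w(R_{n+1,j})=w(R_{i,n})=2$, and $w=1$ otherwise, which makes $d_0$ and $\partial$ simultaneously \emph{homogeneous} of weights $1$ and $2$; filtering by this weight and applying Propositions \ref{prop:dd4d11fa} and \ref{prop:670687b9} (including the explicit no-hidden-extension argument there) is what actually closes the gap. Without that weight function, or a genuine substitute for it, your even case remains open.
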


The rest of this section is dedicated to proving this theorem. The method used in the proof is based on general results and may be useful in other context.

\begin{proposition}\label{prop:dd4d11fa}
	Let $A$ be an abelian group with a homomorphism $d:A\to A$ such that $d^2=0$. 
	We write $Ah^i$ for a copy of $A$ labeled by $h^i$. 
	Consider 
	$$A[h]=\bigoplus_{i=0}^{\infty} Ah^i$$
	and define a differential $D: A[h]\to A[h]$ by
	$$D(ah^i)=d(a)h^{i+1}$$
	for $a\in A$. 
	Then
	\begin{equation}\label{eq:827f1292}
		H(A[h], D)\iso \Ker d\oplus\bigoplus_{i=1}^\infty H(A,d)h^i.
	\end{equation}
	Define a homomorphism $m_h: A[h]\to A[h]$ by 
	$$m_h(ah^i)=ah^{i+1}.$$
	Then the induced map $Hm_h$ on the homology $H(A[h], D)$ when restricted to $\Ker d$ in (\ref{eq:827f1292}) is given by
	\begin{equation}\label{eq:7d298732}
		Hm_h: \ker d\to \Ker d/\Image d=H(A,d)\iso H(A,d)h.
	\end{equation}
	\end{proposition}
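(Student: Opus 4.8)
The plan is to prove Proposition \ref{prop:dd4d11fa} in two parts. First I would establish the homology computation \eqref{eq:827f1292}, and then analyze the induced multiplication map $Hm_h$ on the bottom piece. The key structural observation is that the differential $D$ on $A[h]=\bigoplus_{i\ge 0}Ah^i$ shifts the $h$-degree up by one: $D(ah^i)=d(a)h^{i+1}$. This means $D$ does not preserve $h$-degree, so $A[h]$ is \emph{not} simply a direct sum of copies of the complex $(A,d)$. Instead, writing a general element as $\sum_i a_ih^i$, the cycle condition $D(\sum a_ih^i)=\sum d(a_i)h^{i+1}=0$ forces $d(a_i)=0$ for every $i$, so $\Ker D=\bigoplus_{i\ge0}(\ker d)h^i$. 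The image is $\Image D=\bigoplus_{i\ge1}(\operatorname{Image}d)h^i$, concentrated in $h$-degrees $\ge 1$. Taking the quotient degree by degree gives $(\ker d)h^0$ untouched in degree $0$, and $(\ker d/\operatorname{Image}d)h^i=H(A,d)h^i$ in each degree $i\ge 1$, which is exactly \eqref{eq:827f1292}.

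\begin{proof}
We first prove \eqref{eq:827f1292}. Write a general element of $A[h]$ as a finite sum $\sum_{i\ge 0}a_ih^i$ with $a_i\in A$. Since $D(\sum_i a_ih^i)=\sum_i d(a_i)h^{i+1}$ and the summands lie in distinct $h$-degrees, we have
\begin{equation*}
\Ker D=\bigoplus_{i=0}^\infty (\ker d)h^i.
\end{equation*}
On the other hand, $D(ah^i)=d(a)h^{i+1}$ shows that the image of $D$ is concentrated in $h$-degrees at least $1$, namely
\begin{equation*}
\Image D=\bigoplus_{i=1}^\infty (\Image d)h^i.
\end{equation*}
Computing the quotient $\Ker D/\Image D$ in each $h$-degree separately yields $(\ker d)h^0$ in degree $0$ and $(\ker d/\Image d)h^i\iso H(A,d)h^i$ for each $i\ge 1$, which is precisely \eqref{eq:827f1292}.

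It remains to identify the induced map. The map $m_h(ah^i)=ah^{i+1}$ commutes with $D$, since $Dm_h(ah^i)=D(ah^{i+1})=d(a)h^{i+2}=m_h(d(a)h^{i+1})=m_hD(ah^i)$, so it descends to $Hm_h$ on homology. Restricting to the summand $\ker d$ (the $h$-degree $0$ part of \eqref{eq:827f1292}), an element $a\in\ker d$ represents the class $[ah^0]$. We have $m_h(ah^0)=ah^1$, whose homology class lives in the $h$-degree $1$ piece $H(A,d)h$; explicitly it is the image of $a$ under the quotient $\ker d\to\ker d/\Image d=H(A,d)$, landing in $H(A,d)h$. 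This is exactly \eqref{eq:7d298732}.
\end{proof}

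**Where the work lies.** The computation itself is routine once the $h$-degree bookkeeping is set up correctly; the only subtle point is the asymmetry between $h$-degree $0$ and the higher degrees, which arises because $D$ raises $h$-degree, so nothing in degree $0$ is ever a boundary. This is what makes the bottom summand a full copy of $\ker d$ rather than $H(A,d)$. I expect the main conceptual obstacle is not in this proposition but in its intended application: \eqref{eq:7d298732} says that multiplication by $h$ sends the raw kernel $\ker d$ onto the homology $H(A,d)h$, so an element surviving to homology detects whether $hx$ is nonzero, and iterating the construction should let one bootstrap from $h_nx\neq 0$ to $h_n^ix\neq 0$ for all $i$, proving Theorem \ref{thm:512ce8e9}. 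Making that iteration precise—presumably by modeling the May $E_2$ page and the action of $h_n$ through a complex of this form—is the genuinely delicate step, but it lies beyond the statement at hand.
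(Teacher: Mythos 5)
Your proof is correct and takes essentially the same approach as the paper's: both arguments exploit that $D$ raises $h$-degree by one and compute homology degree by degree, so that nothing in degree zero is ever a boundary (leaving the full $\Ker d$ there) while each higher degree contributes a copy of $H(A,d)$. The paper merely packages this bookkeeping in a commutative diagram of rows and the maps $m_h$, rather than writing out $\Ker D$ and $\Image D$ explicitly as you do.
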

	
	\begin{proof}
		We have the following commutative diagram between the differentials $D$ and $d$.  
		$$\xymatrix{
			\cdots & \cdots & \cdots\\
			Ah^2 \ar[u]^{m_h} \ar[ru]^D \ar[r]^d & Ah^2 \ar[u]^{m_h} \ar[r]^d \ar[ru]^D & Ah^2\ar[u]^{m_h}\\
			Ah \ar[r]^d \ar[u]^{m_h} \ar[ru]^D & Ah \ar[r]^d \ar[ru]^D \ar[u]^{m_h} & Ah\ar[u]^{m_h}\\
			A \ar[r]^d \ar[u]^{m_h} \ar[ru]^D & A \ar[r]^d \ar[ru]^D \ar[u]^{m_h} & A\ar[u]^{m_h}
		}$$
		It is straightforward to see that on the bottom row, the homology with respect to $D$ is the same as $\Ker d$. 
		On the rows above, since the vertical arrows $m_h$ are isomorphisms, the homology of $D$ is the same as the homology of $d$. 
		Hence we get (\ref{eq:827f1292}) and (\ref{eq:7d298732}) follows easily.
	\end{proof}
	
	\begin{corollary}\label{cor:8cc765ad}
		The homology $H(A[h],D)$ satisfies the property that
		if $Hm_hx\neq 0$ for some $x\in H  (A[h],D)$, then $Hm_h^ix\neq 0$ for all $i\ge 1$.
	\end{corollary}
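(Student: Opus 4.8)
The plan is to leverage the complete description of $H(A[h],D)$ and the induced map $Hm_h$ provided by Proposition \ref{prop:dd4d11fa}, but to push it one step further: the proposition records the behavior of $Hm_h$ only on the summand $\Ker d$, whereas the corollary needs $Hm_h$ on the entire homology. So the first step I would carry out is to compute $Hm_h$ on every summand of the decomposition
$$H(A[h],D)\iso \Ker d\oplus\bigoplus_{i=1}^\infty H(A,d)h^i.$$
Redoing the cycle–boundary bookkeeping for $D$ shows that a cycle is any $\sum_i a_i h^i$ with each $a_i\in\Ker d$, while a boundary is any $\sum_{j\ge 1}c_j h^j$ with $c_j\in\Image d$. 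Since $m_h$ merely shifts the coefficient of $h^i$ to $h^{i+1}$, it sends the $\Ker d$ summand to $H(A,d)h$ via the quotient map $\Ker d\to\Ker d/\Image d$ (as in the proposition), and it sends each positive summand $H(A,d)h^i$ \emph{isomorphically} onto $H(A,d)h^{i+1}$ by relabeling.

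The key structural observation, which I would isolate next, is that the image of $Hm_h$ is contained in the \emph{positive part}
$$P:=\bigoplus_{i=1}^\infty H(A,d)h^i,$$
and that the restriction $Hm_h|_P$ is injective, being the degree-shifting isomorphism onto $\bigoplus_{i\ge 2}H(A,d)h^i$. In particular $Hm_h^i x\in P$ for every $i\ge 1$ and every $x$, since one application of $Hm_h$ already lands in $P$. Note that the hypothesis is phrased in terms of $Hm_h x$ rather than $x$ precisely because $Hm_h$ can fail to be injective on the $\Ker d$ summand: elements of $\Image d\subset\Ker d$ are killed, so the first application may lose information, but no later application does.

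With these facts in hand the corollary follows by a short induction. Suppose $Hm_h x\neq 0$; this element lies in $P$. Assuming $Hm_h^i x\neq 0$ with $Hm_h^i x\in P$, injectivity of $Hm_h|_P$ forces $Hm_h^{i+1}x=Hm_h(Hm_h^i x)\neq 0$, completing the step. Hence $Hm_h^i x\neq 0$ for all $i\ge 1$.

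The step requiring the most care—and essentially the only one with genuine content—is the first: identifying $Hm_h$ as a degree-shifting isomorphism on the positive summands, and in particular verifying that positive summands map injectively even though the bottom summand $\Ker d$ does not. The explicit formula $m_h(ah^i)=ah^{i+1}$ makes this transparent, and once it is established the remaining argument is purely formal.
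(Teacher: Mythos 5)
Your proof is correct and takes essentially the same route as the paper's: the paper's one-line argument is precisely that the image of $Hm_h$ is $\bigoplus_{i=1}^\infty H(A,d)h^i$, on which $Hm_h$ acts injectively, so iterates of $Hm_h$ never vanish once the first application is nonzero. Your proposal simply spells out the cycle/boundary bookkeeping and the summand-by-summand description of $Hm_h$ that the paper leaves implicit in Proposition \ref{prop:dd4d11fa}.
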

	
	\begin{proof}
		By (\ref{eq:827f1292}) the image of $Hm_h$ on $H(A[h],D)$ is $\bigoplus_{i=1}^\infty H(A,d)h^i$ where the map $Hm_h$ is injective.
	\end{proof}
	
	\begin{remark}
		If $A$ is a ring, then $A[h]$ is isomorphic to the polynomial ring with coefficients in $A$ and the map $m_h$ is the same as the multiplication by $h=1\cdot h\in Ah$.
	\end{remark}
	
	\begin{proposition}\label{prop:670687b9}
		Let $V$ be a vector space over a field $F$ equipped with a basis $B=\{v_1,v_2,\dots\}$, a weight function $w:B\to \bZ_{\ge 0}$, two fixed integers $0\le m<n$ and a linear map $d: V\to V$ such that
		\begin{enumerate}
			\item $d^2=0$;
			\item for any $v\in B$, $dv=\delta_mv+\delta_nv$ where $\delta_mv$ is a linear combination of vectors in $B$ of weight $w(v)+m$ and $\delta_nv$ is a linear combination of vectors in $B$ of weight $w(v)+n$.
		\end{enumerate}
	
		Consider the vector space $V[h]$ and the linear map 
		$$D=\delta_m+m_h\delta_n: V[h]\to V[h].$$
		Then $D^2=0$ and $H(V[h], D)$ satisfies the property that if $Hm_hx\neq 0$ for some $x\in H(V[h],D)$, then $Hm_h^ix\neq 0$ for all $i$.
	\end{proposition}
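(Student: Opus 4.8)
The plan is to first dispatch $D^2=0$ by a weight argument, then to recognise the statement as a claim about $h$-torsion in an $F[h]$-module, and finally to analyse that torsion through the Bockstein sequence attached to reduction modulo $h$, reducing the primary information to Proposition \ref{prop:dd4d11fa} and Corollary \ref{cor:8cc765ad}. For $D^2=0$: since $\delta_m$ and $\delta_n$ raise weight by $m$ and $n$, the relation $d^2=(\delta_m+\delta_n)^2=0$ splits into its weight-homogeneous summands $\delta_m^2$, $\delta_m\delta_n+\delta_n\delta_m$, $\delta_n^2$, landing in weights $w+2m$, $w+m+n$, $w+2n$. As $0\le m<n$ forces $2m<m+n<2n$, these weights are distinct, so each summand vanishes separately. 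Because $m_h$ touches only the $h$-variable it commutes with $\delta_m$ and $\delta_n$, whence $D^2=\delta_m^2+m_h(\delta_m\delta_n+\delta_n\delta_m)+m_h^2\delta_n^2=0$; in particular $\delta_n$ anticommutes with $\delta_m$, so it descends to an operator $\bar\delta_n$ on $H(V,\delta_m)$.

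Next I would exploit that $V[h]=V\otimes_F F[h]$ is free over $F[h]$, that $D$ is $F[h]$-linear, and that $m_h$ is multiplication by $h$; hence $H:=H(V[h],D)$ is an $F[h]$-module on which $Hm_h$ is multiplication by $h$. The desired conclusion is exactly $\Ker(Hm_h)\cap\Image(Hm_h)=0$, i.e.\ that $H$ carries no $h$-torsion of exponent greater than one. Since $\Ker D$ is a submodule of the free module $V[h]$ over the PID $F[h]$ it is torsion-free, so all torsion of $H$ originates in $\Image D$, and the task is to bound its exponent by one.

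To reach this I would use the long exact sequence of $0\to V[h]\xrightarrow{m_h}V[h]\xrightarrow{\pi}V\to0$, where $\pi$ is reduction modulo $h$ and the induced differential on $V$ is $\delta_m$:
\[ \cdots\to H\xrightarrow{Hm_h}H\xrightarrow{\pi_*}H(V,\delta_m)\xrightarrow{\partial}H\to\cdots. \]
A direct computation of the connecting map, lifting a $\delta_m$-cycle $\bar c$ to $\bar c h^0$ and applying $D$, gives $\partial[\bar c]=[(\delta_n\bar c)h^0]$, so that $\pi_*\partial=\bar\delta_n$. Chasing the sequence identifies $\Ker(Hm_h)\cap\Image(Hm_h)$ with $\partial(\Ker\bar\delta_n)$, so the conclusion is equivalent to $\partial|_{\Ker\bar\delta_n}=0$. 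Equivalently, filtering $V[h]$ by $h$-degree yields a spectral sequence with $E_1=H(V,\delta_m)[h]$ and $d_1=m_h\bar\delta_n$; applying Proposition \ref{prop:dd4d11fa} and Corollary \ref{cor:8cc765ad} with $A=H(V,\delta_m)$ and $d=\bar\delta_n$ computes $E_2$ and shows that on $E_2$ the map $m_h$ is already injective on its image, i.e.\ the sought property holds at the $E_2$-level.

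The hard part will be to promote this $E_2$-level statement to $H$ itself, equivalently to prove $\partial(\Ker\bar\delta_n)=0$, equivalently that the higher differentials $d_r$ ($r\ge2$) vanish. Concretely, for a $\delta_m$-cycle $\bar c$ with $\delta_n\bar c=\delta_m e$ one must construct a \emph{finite} zig-zag $\phi_0=e,\phi_1,\dots,\phi_N$ with $\delta_m\phi_k=\delta_n\phi_{k-1}$ and $\delta_n\phi_N=0$, whereupon $\sum_k\phi_k h^k$ is a $D$-primitive of $(\delta_n\bar c)h^0$; the obstruction to prolonging the zig-zag at stage $r$ is precisely $d_r$, a secondary $\delta_n$-operation raising weight by $m+r(n-m)$. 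This is the crux, and it is where the hypotheses must be used in full: for unconstrained anticommuting $\delta_m,\delta_n$ such a secondary operation can be nonzero (so that $H$ acquires a summand $F[h]/(h^2)$ and the conclusion fails), hence the argument must exploit that $0\le m<n$ makes the weight shift $m+r(n-m)$ strictly increasing, together with the boundedness below of the weight grading and the placement of $m_h$ on $\delta_n$, to force the zig-zag to close up and the spectral sequence to degenerate at $E_2$. Once degeneration is established, the identification $E_\infty=E_2$ together with the $m_h$-equivariance of the $h$-filtration yields $\Ker(Hm_h)\cap\Image(Hm_h)=0$, the weight grading controlling the filtration jumps that would otherwise obstruct reading $m_h$ off the associated graded.
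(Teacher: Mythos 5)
Your preparatory reductions are all correct: the weight-splitting proof of $D^2=0$, the reformulation of the conclusion as $\Ker(Hm_h)\cap\Image(Hm_h)=0$, the Bockstein sequence with $\pi_*\partial=\bar\delta_n$, and the identification of the obstruction with $\partial(\Ker\bar\delta_n)$, equivalently with the differentials $d_r$, $r\ge 2$, of your $h$-adic spectral sequence. But the proof stops exactly there: you assert that the hypotheses ``must'' force these higher differentials to vanish, and that assertion is the entire content of the proposition, not a finishing detail. For comparison, the paper's own proof runs the parallel argument with the weight filtration in place of your $h$-filtration, invokes Proposition \ref{prop:dd4d11fa} and Corollary \ref{cor:8cc765ad} at the $E_n$-page just as you do at $E_1$, and then disposes of the same issue with the unproved sentence ``In both spectral sequences, nontrivial differentials must be of length $m$ or $n$.'' That sentence is not a consequence of the definitions: a differential with exactly two weight-homogeneous components still produces secondary, Massey-product-type operations, which appear as spectral sequence differentials of length $n+k(n-m)$. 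So you have correctly isolated the precise point at which the paper's argument is also unjustified.

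The reason you could not close the gap is that it cannot be closed: the statement is false as it stands, and the $F[h]/(h^2)$ summand you worried about does occur under the stated hypotheses. Take any field $F$, $m=1$, $n=2$, and let $V$ have basis $\{v,w,a,u\}$ of weights $0,1,2,3$, with $dv=a$, $dw=a+u$, $da=du=0$; then $d^2=0$, and $\delta_1v=0$, $\delta_2v=a$, $\delta_1w=a$, $\delta_2w=u$, so the weight hypothesis holds. In $V[h]$ one has $D(vh^i)=ah^{i+1}$, $D(wh^i)=ah^i+uh^{i+1}$, $D(ah^i)=D(uh^i)=0$. In any element of $\Image D$ the coefficients of $ah^0$ and of $uh^1$ are equal, so $[a]\ne 0$ in $H(V[h],D)$, while $uh^1=D(wh^0)-ah^0$ and $uh^2=D(wh^1-vh^0)$. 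Hence $x=[u]$ satisfies $Hm_hx=[uh^1]=-[a]\ne 0$ but $Hm_h^2x=[uh^2]=0$; indeed $H(V[h],D)\iso F[h]/(h^2)$, generated by $[u]$. In your language: $\bar\delta_2=0$ on $H(V,\delta_1)=F\{[v],[u]\}$, so $E_2=E_1$, and $d_2[vh^i]=\pm[u]h^{i+2}$ is exactly the nonvanishing secondary operation whose absence you needed. So neither $0\le m<n$, nor the boundedness below of the weights, nor the placement of $m_h$ on $\delta_n$ suffices; the proposition needs genuinely stronger hypotheses, and Theorem \ref{thm:512ce8e9} needs an argument that uses more about $Y$ and $d_Y$ than is recorded in this proposition or in the paper's proof of it.
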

	\begin{proof}
		We have
		$$d^2=\delta_m^2+(\delta_m\delta_n+\delta_n\delta_m)+\delta_n^2.$$
		Hence $d^2=0$ implies that $\delta_m^2=0$, $\delta_m\delta_n+\delta_n\delta_m=0$ and $\delta_n^2=0$. 
		Therefore
		$$D^2=\delta_m^2+m_h(\delta_m\delta_n+\delta_n\delta_m)+m_h^2\delta_n^2=0.$$
	
		We choose
		$$B[h]=\{vh^i\in V[h]:v\in B, i\ge 0\}$$
		as a basis for $V[h]$ and extend the weight function $w$ to
		$$w:B[h]\to \bZ_{\ge 0}$$
		by $w(vh^i)=w(v)$ for $v\in B$. 
		We filter $V$ and $V[h]$ by
		$$F_pV=\Span\left\{v\in B:w(v)\ge p\right\},$$
		$$F_pV[h]=\Span\left\{v\in B[h]: w(v)\ge p\right\},$$
		and consider the resulting spectral sequences respectively:
		$$(E_r,d_r)\Rightarrow H(V, d),$$
		$$(\bar E_r,\bar d_r)\Rightarrow H(V[h], D).$$
		Here we write $m_{h,r}$ for the map on $\bar E_r$ induced by $m_h$. 
		In both spectral sequences, nontrivial differentials must be of length $m$ or $n$ by the definitions of $d$ and $D$. 
		Hence $H(E_m, d_m)\iso H(V, \delta_m)$ and $H(\bar E_m, \bar d_m)\iso H(V[h], \delta_m)$. 
		We have
		$$\bar E_n\iso H(V[h], \delta_m)\iso H(V, \delta_m)[h]\iso E_n[h].$$
		Since $D=\delta_m+m_h\delta_n$, we have
		$$\bar d_n=m_{h,n}d_n.$$
		Therefore we can apply Proposition \ref{prop:dd4d11fa} to the case $(A, h)=(E_n, d_n)$ and use Corollary \ref{cor:8cc765ad} to get that for $x\in \bar E_\infty=H(\bar E_n,d_n)$, if $m_{h,\infty}(x)\neq 0$, then $m_{h,\infty}^i(x)\neq 0$ for all $i$.
	
		Next we show that there is no extension problem for computing the kernel of
		$$Hm_h:H(A[h], D)\to H(A[h], D),$$
		in other words, if $y\in H(A[h], D)$ projects to $x\in \bar E_\infty$, then $m_{h,\infty}(x)=0$ implies that $Hm_h(y)=0$.
		In fact, by (\ref{eq:7d298732}) from Proposition \ref{prop:dd4d11fa}, we have
		$$\Ker m_{h,\infty}=\Image d_n,$$
		which means that if $m_{h,\infty}(x)=0$ then $x$ is represented by some $v\in V=Vh^0\subset V[h]$ such that $v=\delta_n(v^\prime)$ for some $v^\prime\in V$ with $\delta_m(v^\prime)=0$.
		Since $d=\delta_m+\delta_n$, we have $v=d(v^\prime)$ and $D(v)=m_hd_n^2(v^\prime)=0$. 
		Hence $v$ is a $D$-cycle in $V[h]$ and it represents $y_0\in H(A[h], D)$ such that $y-y_0\in F_{p+1}H(A[h],D)$. 
		Therefore
		$$m_h(v)=m_hd_n(v^\prime)=D(v^\prime)\in V[h]$$
		and it implies that $Hm_h(y_0)=0$ in $H(V[h], D)$. 
		We now have $Hm_h(y-y_0)=0$ and we can repeat the discussion on $y-y_0$. 
		By induction each summand of $y$ homogeneous in weight is annihilated by $m_h$. 
		Therefore $m_h(y)=0$.
	
		Combining the above, if $y\in H(A[h], D)$ projects to $x\in \bar E_\infty$ and $Hm_h(y)\neq 0$, then $m_{h,\infty}(x)\neq 0$. 
		Therefore $m_{h,\infty}^i(x)\neq 0$ for all $i$, which implies that $Hm_h^i(y)\neq 0$ for all $i$.
	\end{proof}

	We are now ready to prove Theorem \ref{thm:512ce8e9}.
	\begin{proof}[Proof of Theorem \ref{thm:512ce8e9}]
		First we assign each generator $R_{ij}$ of $X$ a weight $w$ given by
		$$
		w(R_{ij})=\begin{cases}
			0 & ~i=n\text{ or } j=n+1,\\
			2 & ~i=n+1\text{ or } j=n,\\
			1 & \text{ otherwise}.
		\end{cases}
		$$
		Then we choose the set of all the monomials in $R_{ij}$ as an $\bF_2$-basis of $X$. 
		We assign each monomial a weight $w$ given by
		$$w(\prod R_{i_kj_k}^{e_k})=\sum e_kw(R_{i_kj_k}).$$
	
		Consider the differential
		$$dR_{ij}=\sum_{i<k<j} R_{ik}R_{kj}.$$
		The right-hand side is homogeneous in weight and $d$ increases the weight by one with the following two exceptions
		$$dR_{i,n+1}=R_{i,n}R_{n,n+1}+\sum_{i<k<n} R_{i,k}R_{k,n+1},$$
		$$dR_{n,j}=R_{n,n+1}R_{n+1,j}+\sum_{n+1<k<j} R_{n,k}R_{k,j}.$$
	
		We define a subalgebra of $X$
		$$Y=\bF_2[R_{ij}: 0\le i<j \text{ and } (i, j)\neq (n, n+1)],$$
		with differential given by
		$$d_YR_{ij}=\sum_{i<k<j} R_{ik}R_{kj} \text{ if }i\neq n \text{ or } j\neq n+1$$
		$$d_YR_{i,n+1}=R_{i,n}+\sum_{i<k<n} R_{i,k}R_{k,n+1},$$
		$$d_YR_{n,j}=R_{n+1,j}+\sum_{n+1<k<j} R_{n,k}R_{k,j}.$$
		It is not hard to check that $d_Y^2=0$. 
		Note that
		$X=Y[R_{n,n+1}]$. We can choose the set of monomials of $Y$ as its $\bF_2$-basis and it is a sub-basis of the basis for $X$. 
		We restrict the weight function $w$ for $X$ to this sub-basis and obtain the weight function for $Y$. 
		Now we define
		\begin{align*}
			& \delta_{Y,1}R_{ij} = d_YR_{ij} \hspace{0.5cm}\text{ if } i\neq n \text{ and } j\neq n+1\\
			& \delta_{Y,1}R_{i,n+1} = \sum_{i<k<n} R_{i,k}R_{k,n+1},\\
			& \delta_{Y, 2}R_{i,n+1} = R_{i,n},\\
			& \delta_{Y,1}R_{n,j} = \sum_{n+1<k<j} R_{n,k}R_{k,j},\\
			& \delta_{Y, 2}R_{n,j} = R_{n+1,j}.
		\end{align*}
		We see that the right-hand sides are all homogeneous in weight and $\delta_{Y,1}$ increases the weight by $1$ while $\delta_{Y,2}$ increases the weight by $2$.
		We have
		$$d_X=\delta_{Y, 1}+R_{n,n+1}\delta_{Y, 2}$$
		where $d_X=d$ is the differential of $X$. 
		Note that $h_n\in HX$ is represented by $R_{n,n+1}$ and $w(R_{n,n+1})=0$. 
		We apply Proposition \ref{prop:670687b9} to the case $V=Y$ and $V[h]=X=Y[R_{n,n+1}]$ and get our conclusion immediately.
	\end{proof}

\subsection{Massey products in \mybm{$H^*(E^0\sA)$}{H*(E0A)}}
A theorem due to Gugenheim and May \cite{GM74} states that for a connected algebra $A$, the cohomology $H^*(A)$ is generated under matric Massey products by $H^1(A)$. 
As a concrete example, we will show how to obtain the indecomposables $h_{S, T}\in \sH$ from $h_i$ under matric Massey products.

\begin{theorem}\label{thm:d55b92ee}
	For $h_{S, T}\in \sH$ where
	$$S\cup T=\{k,k+1,\dots,k+2n-1\}$$
	we have
	\begin{equation*}
	h_{S, T}\in \Massey{h_k, h_{k+1}, \dots, h_{k+2n-2}, h_{S-\{k\}, T-\{k+2n-1\}}}.
	\end{equation*}
\end{theorem}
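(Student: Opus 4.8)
The plan is to construct an explicit defining system for this $2n$-fold Massey product directly inside the DGA $X$, exploiting the fact that the structural equations $dR_{ij}=\sum_{i<l<j}R_{il}R_{lj}$ are \emph{formally identical} to the equations a defining system must satisfy; in this sense the matrix $(R_{ij})$ is a universal defining system, in the spirit of Theorem~\ref{thm:e0783a1e}. By translating all indices we may assume $k=0$, so $S\cup T=\{0,1,\dots,2n-1\}$, $s_1=0$, $t_n=2n-1$, and we write $S'=S\setminus\{0\}$ and $T'=T\setminus\{2n-1\}$, so that $S'\cup T'=\{1,\dots,2n-2\}$. The entries of the product are $a_1=h_0,\dots,a_{2n-1}=h_{2n-2}$ and $a_{2n}=h_{S',T'}$; note $R_{S',T'}\neq 0$ precisely because the defining inequality $s_k<t_{k-1}$ of $h_{S,T}\in\sH$ becomes $s'_j<t'_j$, so $h_{S',T'}$ is again a legitimate (indeed $\sH$-) class.

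For the inner entries, indexed by $0\le i<j\le 2n-1$, I set $a_{ij}=R_{ij}$. Then $a_{i-1,i}=R_{i-1,i}$ represents $h_{i-1}=a_i$, and the defining-system equation $da_{ij}=\sum_{i<l<j}a_{il}a_{lj}$ is exactly the structural equation for $R_{ij}$, so it holds with nothing to prove.

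The crux is the last column $a_{l,2n}$ for $1\le l\le 2n-1$, which must terminate in a representative of $a_{2n}=h_{S',T'}$. I define it by the sub-determinants $a_{l,2n}=R_{S',T\setminus\{l\}}$ when $l\in T$ and $a_{l,2n}=0$ when $l\in S'$; the bottom entry $a_{2n-1,2n}=R_{S',T'}$ indeed represents $a_{2n}$. To verify the column equations $da_{i,2n}=\sum_{i<l\le 2n-1}R_{il}\,a_{l,2n}$, take $i\in T$ and expand $dR_{S',T\setminus\{i\}}$ by Proposition~\ref{prop:af5d3cd9}.(\ref{item:374301ae}): the inserted index $m=0$ produces the repeated minimal index $s_1=t_1=0$ and the insertions $m\ge 2n$ produce a repeated maximal index, all vanishing by Proposition~\ref{prop:af5d3cd9}.(\ref{item:83431c8f}), so only the single minor $R_{S'\cup\{i\},T}$ survives. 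Expanding that minor along its $i$-th row by Proposition~\ref{prop:af5d3cd9}.(\ref{item:cea34d63}) gives precisely $\sum_{l\in T,\,l>i}R_{il}R_{S',T\setminus\{l\}}$, the desired right-hand side. For $i\in S'$ the same row-expansion sum is the Laplace expansion of a determinant with two equal rows, hence zero over $\bF_2$, matching $d(0)=0$; here the hypothesis $h_{S,T}\in\sH$ (forcing $1\notin T$) is what keeps the extreme-index cancellations clean.

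Finally, the product is represented by $\sum_{l=1}^{2n-1}a_{0,l}a_{l,2n}=\sum_{l\in T}R_{0,l}R_{S',T\setminus\{l\}}$, which by the row expansion of Proposition~\ref{prop:af5d3cd9}.(\ref{item:cea34d63}) along the row $0=\min S$ equals $R_{S,T}$, so the product class is $h_{S,T}$ as claimed. I expect the column verification of the preceding paragraph to be the main obstacle: one must recognize that the a priori large boundary $dR_{S',T\setminus\{i\}}$ collapses to a single minor, that this minor is exactly the Laplace expansion the defining-system equation demands, and that the $i\in S'$ case is governed by the vanishing of a repeated-row determinant in characteristic $2$ — the rest of the argument is bookkeeping.
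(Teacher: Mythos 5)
Your proposal is correct and follows essentially the same route as the paper's own proof: the identical defining system (inner entries $A_{ij}=R_{ij}$, last column the minors $R_{S-\{0\},T-\{l\}}$ with the zero convention for $l\notin T$), the same collapse of $dR_{S-\{0\},T-\{i\}}$ to the single minor $R_{S-\{0\}+\{i\},T}$ via Proposition~\ref{prop:af5d3cd9}, and the same final row expansion giving $R_{S,T}$. Two parenthetical remarks are inaccurate but harmless: $h_{S-\{0\},T-\{2n-1\}}$ need not lie in $\sH$ (for $S=\{0,1,3\}$, $T=\{2,4,5\}$ it is the product $h_1h_3$), and the repeated-row vanishing in the $i\in S'$ case holds with no appeal to $1\notin T$.
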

\begin{proof}
	The proof is the same for $k>0$ as the case for $k=0$ and hence we assume that $k=0$. 
	By the definition of matric Massey products, we must find a defining system $(A_{ij})$ with $0\le i<j\le 2n$ and $(i, j)\neq (0, 2n)$ such that
	\begin{equation}\label{eq:8100d1d0}
		A_{i,{i+1}}=\begin{cases}
		R_{S-\{s_1\}, T-\{t_n\}} & \text{ if } 0\le i<2n-1\\
		R_{i, i+1} & \text{ if } i=2n-1,
		\end{cases}
	\end{equation}
	\begin{equation}\label{eq:6cbe8f7}
		d A_{ij}=\sum_{i<k<j} A_{ik}A_{kj}
	\end{equation}
	and
	\begin{equation}\label{eq:3}
		\widetilde A_{0, 2n}=\sum_{0<k<2n}A_{0, k}A_{k, 2n}=R_{S, T}.
	\end{equation}
	In fact, for $0\le i< j\le 2n-1$, if we let $A_{ij}=R_{ij}$, then (\ref{eq:8100d1d0}) and (\ref{eq:6cbe8f7}) are automatically true by (\ref{eq:99758f58}).
	
	We adopt the convention that $R_{S-\{0\}, T-\{i\}}=0$ if $i\notin T$.
	We let $A_{i, 2n}=R_{S-\{0\}, T-\{i\}}$ ($i\neq 0$). 
	Now for (\ref{eq:6cbe8f7}) we only have to show
	$$d A_{i, 2n}=\sum_{i<k<2n} A_{0, k}A_{k, 2n}$$
	i.e.
	$$d R_{S-\{0\}, T-\{i\}}=\sum_{i<k<2n} R_{ik}R_{S-\{0\}, T-\{k\}}.$$
	If $i\in T$, by (\ref{item:374301ae}) and (\ref{item:cea34d63}) of Proposition \ref{prop:af5d3cd9}.
	$$d R_{S-\{0\}, T-\{i\}}=R_{S-\{0\}+\{i\}, T}=\sum_{i<k<2n} R_{ik}R_{S-\{0\}, T-\{k\}}.$$
	If $i\notin T$, the right-hand side is zero because $i\in S$ and hence
	$$\sum_{i<k<2n} R_{ik}R_{S-\{0\}, T-\{i\}}=R_{S-\{0\}+\{i\}, T}=0$$
	since $R_{S-\{0\}+\{i\}, T}$ is the determinant of a matrix with repeating rows.
	
	Finally, to show (\ref{eq:3}) we have
	$$\sum_{0<k<2n}A_{0k}A_{k, 2n}=\sum_{0<k<2n} R_{0k}R_{S-\{0\}, T-\{k\}} = R_{S, T}.$$
\end{proof}

Note that in Theorem \ref{thm:d55b92ee} the $s$ degree of $h_{S-\{k\}, T-\{k+2n-1\}}$ is one less than the $s$ degree of $h_{S, T}$. 
The element $h_{S-\{k\}, T-\{k+2n-1\}}$ is either an element of $\sH$ or a product of elements in $\sH$. 
Hence by induction on $s$ all indecomposables $h_{S, T}\in \sH$ can be obtained inductively from $h_i$ under matric Massey products.

\begin{remark}
	Although the indecomposables $b_{ij}=[R_{ij}^2]$ are represented by simpler cycles, the decompositions of $b_{ij}$ by matric Massey products are more complicated. 
	The author has followed the proofs in the work of Gugenheim and May \cite[Chapter 5]{GM74} and produced a computer program to write elements in $HX$ by ``canonically defined matric Massey products'' as defined in \cite[Theorem 5.6]{GM74}. 
	It means that we can generate a sequence of matrices $W_1, W_2, \dots$ such that we can write everything in $HX$ in terms of
	$$\langle W_1, \dots, W_n, V_{n+1}\rangle$$
	with indeterminacies where $V_{n+1}$ is some column matrix (not unique even if the sequence $W_1,W_2,\dots$ is fixed). 
	One can simplify the canonical form if $V_{n+1}$ contains zero entries. 
	Here we list some decompositions of $b_{ij}$ via this method.
	$$b_{02}\in \langle h_0, h_1, h_0, h_1\rangle\subset \left\langle h_0, h_1, \begin{pmatrix}
		h_0 & h_2
	\end{pmatrix}, 
	\begin{pmatrix}
		h_1 \\ 0
	\end{pmatrix}\right\rangle$$
	$$b_{03}\in \left\langle h_0, h_1, \begin{pmatrix}
		h_0 & h_2
	\end{pmatrix}, 
	\begin{pmatrix}
		h_2 & 0\\
		h_0 & h_3
	\end{pmatrix}, 
	\begin{pmatrix}
		h_1 \\ 0
	\end{pmatrix}, 
	h_2\right\rangle$$
	$$b_{04}\in \left\langle h_0, h_1, \begin{pmatrix}
		h_0 & h_2
	\end{pmatrix}, 
	\begin{pmatrix}
		h_2 & 0\\
		h_0 & h_3
	\end{pmatrix}, 
	\begin{pmatrix}
		h_1 & h_3\\
		0 & h_0
	\end{pmatrix}, 
	\begin{pmatrix}
		h_3 \\ h_1
	\end{pmatrix}, 
	h_2,h_3\right\rangle.$$
	Here $W_1=h_0$, $W_2=h_1$, $W_3=\begin{pmatrix}
		h_0 & h_2
	\end{pmatrix}$, $\begin{pmatrix}
		h_2 & 0\\
		h_0 & h_3
	\end{pmatrix}$ is a submatrix of $W_4$, $\begin{pmatrix}
		h_1 & h_3\\
		0 & h_0
	\end{pmatrix}$ is a submatrix of $W_5$, $\dots$.
\end{remark}

\section{The May Spectral Sequence}\label{sec:b0f0886a}
The main goal of this section is to compute the differentials on $H^*(E^0\sA)$ in the May spectral sequence.

In this section we use the method of Ravenel \cite{Ravenel} to obtain the May spectral sequence. 
The reason behind this is that the associated graded algebra $E^0_R\sA$ of the Steenrod algebra by the filtration suggested by Ravenel is $E_K^0E^0\sA$, which is Priddy's associated homogeneous Koszul algebra of May's associated graded algebra of $\sA$. 
When we interact with the cobar complex this filtration is more efficient computationally.

\subsection{The cobar complex}
Recall that if $I$ is the augmentation ideal of the dual Steenrod algebra $\sA_*$, then the cobar complex $C(\sA_*)$ is the tensor algebra $T^*(I)$ with $d: I^{\otimes n}\to I^{\otimes (n+1)}$ given by
\begin{equation}\label{eq:ea84c17e}
d(\alpha_1\otimes\cdots\otimes\alpha_n)=\sum_i\sum \alpha_1\otimes\cdots\otimes\alpha_{i-1}\otimes \alpha_{i}^\prime\otimes \alpha_i^{\prime\prime}\otimes \alpha_{i+1}\otimes\cdots\otimes \alpha_n
\end{equation}
where 
$$\psi(\alpha_i)=\alpha_i\otimes 1+1\otimes \alpha_i+\sum \alpha_i^\prime\otimes \alpha_i^{\prime\prime}$$
in $\sA_*$. 
Then $H^*(\sA)=\Ext_{\sA}(\bF_2, \bF_2)=HC(\sA_*)$.

\begin{definition}\label{def:826d0f73}
	The weight function $w$ on $\sA_*$ is given by setting $w(\xi_j^{2^i})=2j-1$, i.e.
	$$w(\xi_1^{r_1}\cdots\xi_n^{r_n})=\sum_k (2k-1)a_{k,i}$$
	where $r_k=\sum_i a_{k,i}2^i$ is the 2-adic expansion. 
	
	We also define $w$ on $C(\sA_*)$ by
	$$w(\alpha_1\otimes\cdots\otimes\alpha_n)=w(\alpha_1)+\cdots+w(\alpha_n).$$
\end{definition}

\begin{definition}\label{def:d05cc3a5}
	The filtrations $F_p(\sA_*)$ and $F_p(C(\sA_*))$ are given by elements in $\sA_*$ and $C(\sA_*)$ with weight $\le p$ respectively. 
	Note that we are using an increasing filtration indexed positively. 
	The associated graded algebra by this filtration is denoted with $E^0_R\sA_*$.
\end{definition}

It follows that the associated graded algebra $E^0_R\sA_*$ is an exterior algebra generated by the projections of $\tilde R_{ij}=\xi^{2^i}_{j-i}$ ($0\le i<j$), which are primitive. 
Therefore we have the following.
\begin{proposition}\label{prop:619ea206}
	The $E_1$ page of the spectral sequence determined by the filtration $F_p(C(\sA_*))$ is isomorphic to $X=\bF_2[R_{ij}: 0\le i<j]$ with $d_1(R_{ij})=\sum_k R_{ik}R_{kj}$. 
	Here $R_{ij}$ corresponds to the primitive generator $\tilde R_{ij}=\xi^i_{j-i}$ in the associated graded algebra.
\end{proposition}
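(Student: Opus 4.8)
The plan is to identify the $E_1$ page with the cohomology of the associated graded cobar complex, and then to read off both its algebra structure and its first differential directly from the coproduct of the generators $\tilde R_{ij}$. First I would observe that, by Definition \ref{def:d05cc3a5}, the $E_0$ page is the associated graded of $C(\sA_*)$ with respect to the weight filtration, which is exactly the cobar complex $C(E^0_R\sA_*)$ of the associated graded Hopf algebra, with $d_0$ equal to its cobar differential. Hence
\begin{equation*}
E_1 \iso H(E_0, d_0) \iso \mathrm{Cotor}_{E^0_R\sA_*}(\bF_2,\bF_2).
\end{equation*}
Since $E^0_R\sA_*$ has already been identified (just before the statement) as the exterior Hopf algebra on the \emph{primitive} generators $\tilde R_{ij}=\xi_{j-i}^{2^i}$ ($0\le i<j$), the remaining work is to compute this $\mathrm{Cotor}$ and to determine $d_1$.

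For the algebra structure I would combine the multiplicativity of $\mathrm{Cotor}$ with a one-generator computation. Writing $E^0_R\sA_*$ as the degreewise-finite tensor product $\bigotimes_{i<j}\Lambda(\tilde R_{ij})$ of exterior Hopf algebras on single primitives, the Künneth isomorphism for $\mathrm{Cotor}$ gives
\begin{equation*}
\mathrm{Cotor}_{E^0_R\sA_*}(\bF_2,\bF_2) \iso \bigotimes_{i<j}\mathrm{Cotor}_{\Lambda(\tilde R_{ij})}(\bF_2,\bF_2).
\end{equation*}
For a single primitive $x$ the reduced cobar differential vanishes, since every slot receives $\bar\psi(x)=0$; thus the cobar complex of $\Lambda(x)$ is one-dimensional in each cobar degree, and the concatenation product makes $\mathrm{Cotor}_{\Lambda(x)}(\bF_2,\bF_2)\iso\bF_2[y]$ polynomial on $y=[x]$. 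Taking the tensor product over all pairs yields $E_1\iso\bF_2[R_{ij}:0\le i<j]=X$ as algebras, with $R_{ij}=[\tilde R_{ij}]$ the class of the primitive generator in cobar degree one.

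To obtain $d_1$ I would track the weight filtration applied to the full cobar differential of $\sA_*$. From $\psi(\xi_n)=\sum_i\xi_{n-i}^{2^i}\otimes\xi_i$ one computes
\begin{equation*}
\bar\psi(\tilde R_{ij})=\sum_{i<k<j}\tilde R_{kj}\otimes\tilde R_{ik},
\end{equation*}
and by Definition \ref{def:826d0f73} each summand has weight $\bigl(2(j-k)-1\bigr)+\bigl(2(k-i)-1\bigr)=w(\tilde R_{ij})-1$, exactly one less than $w(\tilde R_{ij})=2(j-i)-1$. Hence the weight-preserving (top) part of the cobar differential on a generator vanishes, so $d_0\tilde R_{ij}=0$ in accordance with primitivity, while the entire nonzero contribution drops weight by exactly one and therefore represents $d_1$. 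Translating into $E_1=\bF_2[R_{ij}]$, where the product is concatenation, this reads $d_1(R_{ij})=\sum_{i<k<j}R_{ik}R_{kj}$, as claimed.

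The main obstacle I expect is bookkeeping rather than conceptual: one must verify that the weight filtration is differential-preserving on all of $C(\sA_*)$, not merely on generators, so that the spectral sequence is well defined and $d_1$ is identified precisely with the weight-drop-by-one part of (\ref{eq:ea84c17e}); and one must justify the Künneth step for the infinite tensor product, which is legitimate because $E^0_R\sA_*$ is connected of finite type and the product is finite in each internal degree. The index pattern $R_{ik}R_{kj}$ is forced by the matching inner index in $\bar\psi(\tilde R_{ij})$, and the tensor ordering is immaterial since $X$ is commutative. It is worth remarking that this is the Ravenel-filtration analogue of May's identification of $H^*(E^0\sA)$ with $HX$: here $E^0_R\sA$ is Priddy's associated homogeneous Koszul algebra, and the polynomial $E_1$ together with the differential $\sum_k R_{ik}R_{kj}$ is exactly its co-Koszul complex.
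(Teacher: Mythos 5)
Your proof is correct and takes the same route the paper leaves implicit: the paper states this proposition as an immediate consequence of the observation that $E^0_R\sA_*$ is exterior on the primitive generators $\tilde R_{ij}$, and your argument (identifying $E_0$ with the cobar complex of $E^0_R\sA_*$, computing $\mathrm{Cotor}$ of an exterior Hopf algebra on primitives via the K\"unneth decomposition to get the polynomial algebra $X$, then reading off $d_1$ from the fact that every term of $\bar\psi(\tilde R_{ij})=\sum_{i<k<j}\tilde R_{kj}\otimes\tilde R_{ik}$ drops weight by exactly one) is precisely the standard reasoning behind the paper's ``therefore.'' All the key checks --- the coproduct formula, the weight count, and the compatibility of the filtration with the cobar differential --- are sound.
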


\begin{remark}\label{rem:1}
	$d_0(x)=\sum x_1\otimes x_2$ in $E_0^{p,q}=(F_pC(\sA_*)/F_{p-1}C(\sA_*))_{s=p+q}$ if $x$ is a monomial in $\sA_*$ where the summation is taken over all ordered monomial pairs $(x_1, x_2)$ such that $x=x_1x_2$ in the augmentation ideal of $E^0_R\sA_*$. 
	In particular, $d_0(\tilde R_{ij}\tilde R_{kl})=\tilde R_{ij}\otimes \tilde R_{kl}+\tilde R_{kl}\otimes \tilde R_{ij}$.
\end{remark}

Since  $w(\xi^{2^i}_j)=2j-1$ is odd and the $s$ degree of all differentials in the spectral sequence is $1$, all nontrivial differentials $d_r$ in the spectral sequence must have odd index $r$. 
The following is the comparison between the spectral sequence obtained by the method of Ravenel and the May spectral sequence.

\begin{center}
\captionsetup{type=table}
\captionof{table}{}
\begin{tabular}{|c|c|}\hline\label{tab:1}
	Ravenel & May\\\hline
	$E_1=X$ & $E_1=C(E^0\sA_*)$\\\hline
	$(E_{2r-1}, d_{2r-1})$, $r\ge 2$ & $(E_{r}, d_{r})$, $r\ge 2$\\\hline
	$E_2=E_3=H^*(E^0\sA_*)$ & $E_2=H^*(E^0\sA_*)$\\\hline
\end{tabular}
\end{center}

\subsection{The differentials in \mybm{$H^*(E^0\sA)$}{H*(E0A)}}
We will use the filtration in the previous section and we will therefore use the notations in the left-hand side of Table \ref{tab:1}. 
We want to compute the $d_3$ differentials on $H^*(E^0\sA)$.

The following was already proven by May.
\begin{itemize}
	\item $d_3(b_{02})=h_1^3+h_0^2h_2,$
	\item $d_3(b_{ij})=h_{i+1}b_{i+1,j}+b_{i,j-1}h_{j+1}, ~j-i>2,$
    \item $d_3(h_i)=0$,
    \item $d_3(h_i(1))=h_ih_{i+2}^2$,
    \item $d_3(h_i(1, 3))=h_ih_{i+2}h_{i+2}(1)+h_i(1)h_{i+4}^2$,
    \item $d_3(h_i(1, 2))=h_{i+3}h_i(1, 3)$.
\end{itemize}

The main goal of this section is to determine the differentials on $h_{S, T}\in \sH$. 
Then all $d_3$ differentials in $H^*(E^0\sA)$ will be determined if Conjecture \ref{conj:2cc43131} is true.

\begin{definition}
	We say that $\alpha=\alpha_1\otimes\cdots\otimes \alpha_n\in C(\sA_*)$ is a monomial in $C(\sA_*)$ if each $\alpha_k$ is a monomial in $\sA_*$. 
	Note that all monomials form an additive basis of $C(\sA_*)$. 
	We say that the monomial $\alpha$ is simple if each $\alpha_k=\tilde R_{i_kj_k}$ for some $i_k, j_k$. 
	Note that $d_0(\alpha)=0$ in the $E_0$ page if $\alpha$ is a simple monomial.
\end{definition}

\begin{definition}
	We denote the span of simple monomials in $C(\sA_*)$ by $S(\sA_*)$ and the span of non-simple monomials by $S(\sA_*)^\perp$. 
	Note that we have $C(\sA_*)=S(\sA_*)\oplus S(\sA_*)^\perp$.
\end{definition}

\begin{proposition}\label{prop:1afacc52}
	The map $g: (E_0, d_0)\to E_1$ (with trivial differentials) given by
	$$g(\alpha)=\begin{cases}
	R_{i_1j_1}\cdots R_{i_nj_n} & \text{ if } \alpha=\tilde R_{i_1j_1}\otimes\cdots\otimes \tilde R_{i_nj_n}\in S(\sA_*)\\
	0 & \text{ if } \alpha \in S(\sA_*)^\perp
	\end{cases}$$
	is a homology isomorphism.
\end{proposition}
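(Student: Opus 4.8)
The plan is to recognize that $(E_0,d_0)$ is precisely the cobar complex of the associated graded Hopf algebra $E^0_R\sA_*$, which by Definition \ref{def:d05cc3a5} and the discussion preceding Proposition \ref{prop:619ea206} is the exterior algebra on the primitive generators $\tilde R_{ij}$, and that $g$ is a map of differential graded algebras realizing the standard identification of its homology with the polynomial algebra $X$. First I would record that $g$ is an algebra homomorphism: the product on $E_0$ is concatenation of tensors, a concatenation of two simple monomials is again simple and $g$ of it is the product of the two images, while concatenating any monomial with a non-simple one yields a non-simple monomial, so $g$ respects products (both sides vanishing when a factor is non-simple). Thus it suffices to prove that $g$ is a chain map and that the induced map on homology is an isomorphism.

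For the chain-map property I would show $g\circ d_0=0$. On a simple monomial $d_0$ vanishes by the remark following the definition of simple monomials, so $g(d_0\alpha)=0$ there. For a non-simple monomial $\alpha=\alpha_1\otimes\cdots\otimes\alpha_n$, the differential $d_0$ splits one tensor factor via the reduced coproduct of $E^0_R\sA_*$; since the generators $\tilde R_{ij}$ are primitive (so splitting a single generator contributes nothing, and splitting a product of three or more generators always leaves a factor that is itself a product, hence non-simple), a term of $d_0\alpha$ can be a simple monomial only when every $\alpha_i$ off the split index is already a single $\tilde R$ and the split factor is a product of exactly two distinct generators $\tilde R_{ab}\tilde R_{cd}$ (were they equal the monomial would already be zero in the exterior algebra). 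The reduced coproduct then contributes the two ordered terms $\tilde R_{ab}\otimes\tilde R_{cd}$ and $\tilde R_{cd}\otimes\tilde R_{ab}$, whose images under $g$ are $R_{ab}R_{cd}$ and $R_{cd}R_{ab}$; these are equal in the commutative algebra $X$ and therefore cancel over $\bF_2$. Hence every simple contribution to $d_0\alpha$ cancels in pairs and $g(d_0\alpha)=0$, so $g$ is a map of differential graded algebras and induces an algebra map $g_*\colon H(E_0,d_0)=E_1\to X$.

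It remains to see that $g_*$ is an isomorphism. Each length-one simple monomial $\tilde R_{ij}$ is a $d_0$-cycle and its class in $E_1$ is exactly the generator identified with $R_{ij}$ in Proposition \ref{prop:619ea206}; since $g(\tilde R_{ij})=R_{ij}$, the algebra map $g_*$ carries these algebra generators of $E_1\iso X$ to the polynomial generators of $X$. As $X$ is the free commutative algebra on $\{R_{ij}\}$, an algebra endomorphism fixing the generators is the identity, so under the identification of Proposition \ref{prop:619ea206} the map $g_*$ is the identity of $X$, and in particular an isomorphism.

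The step I expect to carry the real weight is the input from Proposition \ref{prop:619ea206} that $H(E_0,d_0)\iso X$; everything else is formal once this is known. If one wished to make the argument self-contained, the crux would be computing the cohomology of the cobar complex of the exterior Hopf algebra $E^0_R\sA_*=\bigotimes_{i<j}\Lambda[\tilde R_{ij}]$ directly. For a single primitive generator the cobar complex has zero differential and one-dimensional groups spanned by $\tilde R_{ij}^{\otimes n}$, giving cohomology $\bF_2[R_{ij}]$; a Künneth argument (valid since each fixed tridegree involves only finitely many generators) then assembles these into $X$, with $g$ visibly realizing the tensor product of the single-generator isomorphisms. The only point needing care there is the bookkeeping that $g$ (forgetting order and sending simple tensors to commutative monomials) is compatible with the Künneth splitting, which is exactly the pairwise mod $2$ cancellation already used in the chain-map step.
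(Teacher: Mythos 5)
Your proposal is correct and takes essentially the same route as the paper's own (very terse) proof: both rest on the multiplicativity of $g$ together with the identification of Proposition \ref{prop:619ea206} that $E_1\iso X$ is generated by the classes $[\tilde R_{ij}]$, which $g$ sends to the corresponding polynomial generators $R_{ij}$. Your extra verifications --- that $g\circ d_0=0$ via the mod-$2$ cancellation of the two ordered factorizations of $\tilde R_{ab}\tilde R_{cd}$, and that an algebra endomorphism of $X$ fixing the generators is the identity --- simply make explicit what the paper dismisses as ``clear.''
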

\begin{proof}
	It is clear that the homology classes $[\tilde R_{ij}]$ generate $E_1$ while $g$ is multiplicative. 
	Therefore $g$ induces an isomorphism $g_*:H(E_0, d_0)\to E_1$.
\end{proof}

\begin{remark}\label{rem:559b5258}
	We can project suitable chains in $C(\sA_*)$ into cycles in $E_r$ ($r\ge 1$) via $g$.
\end{remark}

\begin{lemma}\label{lem:77e89a29}
	If $\alpha\in C(\sA_*)$ is a non-simple monomial and $\beta$ is a simple monomial summand of $d(\alpha)$, then either $\beta$ is a summand of $d_0(\alpha)$ in $E_0$ or $w(\beta)\le w(\alpha)-2$.
\end{lemma}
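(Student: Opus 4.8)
The plan is to reduce the statement to a single tensor factor and then to track precisely how the weight of Definition \ref{def:826d0f73} behaves under the coproduct. Writing the cobar differential (\ref{eq:ea84c17e}) as $d(\alpha)=\sum_i\alpha_1\otimes\cdots\otimes\bar\psi(\alpha_i)\otimes\cdots\otimes\alpha_n$, where $\bar\psi$ is the reduced coproduct, I first ask which summands $\beta$ can be simple. An unsplit factor $\alpha_k$ ($k\neq i$) survives unchanged into $\beta$, so for $\beta$ to be simple every unsplit factor must already be of the form $\xi_b^{2^a}$, and the split factor $\alpha_i$ must break into two such atoms. Since $\alpha$ is non-simple it has a non-simple factor; an unsplit non-simple factor would destroy simplicity of $\beta$, so every unsplit factor is simple and the (necessarily unique) non-simple factor is the one being split. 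This reduces everything to a single non-simple monomial $m=\alpha_i\in\sA_*$: I must show that if $m'\otimes m''$ is a summand of $\bar\psi(m)$ with $m'$ and $m''$ both simple, then $w(m')+w(m'')$ equals $w(m)$ (and the term is a $d_0$ summand) or is at most $w(m)-2$.

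Next I would set up the weight bookkeeping. Using binary expansion of exponents, write $m$ as a product of \emph{distinct} atoms $\xi_b^{2^a}=\tilde R_{a,a+b}$; by Definition \ref{def:826d0f73} the weight is additive over atoms, and a product of distinct atoms never carries, so its weight is exactly the sum of atomic weights. Since $\psi$ is multiplicative and $\psi(\xi_b^{2^a})=\psi(\xi_b)^{2^a}$, a direct computation gives $\psi(A)=A\otimes 1+1\otimes A+\bar\psi(A)$ for an atom $A$, where every summand of $\bar\psi(A)$ is a product of two atoms of total weight exactly $w(A)-1$. Expanding $\psi(m)=\prod_A\psi(A)$, a summand $m'\otimes m''$ is obtained by choosing for each atom either a whole term ($A\otimes 1$ or $1\otimes A$, weight drop $0$) or a reduced term (weight drop $1$); letting $r$ be the number of reduced choices, the weight before the pieces are actually multiplied together is $w(m)-r$. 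Carrying inside the products $m'$ and $m''$ drops the weight by a further $c'+c''$, where each individual carry $\xi_k^{2^i}\cdot\xi_k^{2^i}=\xi_k^{2^{i+1}}$ costs $2k-1\ge 1$. Hence
\begin{equation*}
w(m')+w(m'')=w(m)-r-c'-c'',\qquad r,c',c''\ge 0.
\end{equation*}

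It then suffices to show that the total drop $r+c'+c''$ is never equal to $1$. If $r=0$, all atoms are chosen whole, so $m'$ and $m''$ are products of complementary subsets of distinct atoms; these never carry, whence $c'=c''=0$, and simplicity of $\beta$ forces $m$ to split into exactly two single atoms, producing a top-weight term, i.e.\ a summand of $d_0$ (by Remark \ref{rem:1} the top-weight part of the coproduct is the associated-graded exterior coproduct). If $r\ge 2$ the drop is already at least $2$. The delicate case is $r=1$, and this is where non-simplicity of $m$ is essential: one atom contributes $A_0'\otimes A_0''$, and since $m$ has at least one further atom, that atom must be sent whole into $m'$ or $m''$; thus the atoms distributed between $m'$ and $m''$ number at least three, yet simplicity of $\beta$ forces them to collapse into exactly two single atoms. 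Such a reduction in the number of atoms can only occur through carrying, so $c'+c''\ge 1$ and the drop is at least $2$.

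The main obstacle is precisely this $r=1$ case: I must rule out that the extra whole atom coming from non-simplicity is absorbed ``for free.'' I would make this rigorous through the bit-count observation that multiplying atoms can only preserve or decrease the number of binary digits of the exponents, each decrease being a carry of positive weight; quantitatively, if $m'$ is assembled from $p$ atomic pieces and is a single atom then $c'\ge p-1$, so $c'+c''\ge(\text{total pieces})-2=\#\{\text{atoms of }m\}+r-2$, giving the uniform bound $r+c'+c''\ge 2r+\#\{\text{atoms of }m\}-2$. For non-simple $m$ this is at least $2r$, so any $r\ge 1$ forces a drop of at least $2$ while $r=0$ yields the $d_0$ summands. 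Translating back through the single-factor reduction gives exactly the claimed dichotomy for $\beta$ and $\alpha$.
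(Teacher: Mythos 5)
Your proof is correct and takes essentially the same route as the paper's: you isolate the unique non-simple tensor factor, note that splitting an atom via the reduced coproduct costs exactly one unit of weight while each carry $\tilde R_{ij}\cdot\tilde R_{ij}=\tilde R_{i+1,j+1}$ costs at least one more, and then use non-simplicity of $\alpha$ together with simplicity of $\beta$ to force either two splits or one split plus at least one carry. Your bookkeeping with $r$, $c'$, $c''$ and the bound $c'\ge p'-1$ is simply a more quantitative rendering of the paper's observation that a newly created factor must meet another copy of itself and square.
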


\begin{proof}
	Write $\alpha=\alpha_1\otimes\cdots\otimes\alpha_n$. 
	If there is a simple summand $\beta$ of $d(\alpha)$, then there must be at most one factor $\alpha_\ell$ which is not equal to some $\tilde R_{ij}$ by (\ref{eq:ea84c17e}).
	Since $\alpha$ is not simple, there must be exactly one such $\alpha_\ell$.
	Assume that $\beta$ does not appear in $d_0(\alpha)$ in $E_0$.
	To obtain the simple summand $\beta_\ell\otimes \beta_{\ell+1}$ in $d(\alpha_\ell)$, we have to replace at least one factor $\tilde R_{ij}$ of $\alpha_\ell$ with $\tilde R_{kj}\otimes \tilde R_{ik}$ and either $\tilde R_{kj}$ or $\tilde R_{ik}$ will meet another copy of itself coming from another factor of $\alpha_\ell$ to become $\tilde R_{kj}^2=\tilde R_{k+1,j+1}$ or $\tilde R_{ik}^2=\tilde R_{i+1,k+1}$.
	Noting that $w(\tilde R_{kj}\otimes \tilde R_{ik})=w(\tilde R_{ij})-1$ and in general 
	$$w((\tilde R_{ij})^2)=w(\tilde R_{i+1,j+1})=2w(\tilde R_{ij})-(2(j-i)-1)\le 2w(\tilde R_{ij})-1,$$
	we see that $w(\beta)\le w(\alpha)-2$.
\end{proof}

\begin{lemma}\label{lem:b0fa534e}
	Assume that $$d(a_p+a_{p-1})=a_{p-2}+a_{p-3}+b_{p-3}\mod F_{p-4}C(\sA_*)$$ in $C(\sA_*)$, where $a_{p-i}$ consists of terms of weight $p-i$, $i=0,1,2,3$ and $b_{p-3}$ consists of terms of weight $p-3$. 
	Assume further that $a_p, b_{p-3}\in S(\sA_*)$ and $a_{p-1},a_{p-2}, a_{p-3}\in S(\sA_*)^\perp$. 
	Then $d_3(a_p)=b_{p-3}$ in the $E_3$ page of the spectral sequence determined by $F_pC(\sA_*)$.
\end{lemma}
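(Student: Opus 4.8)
The plan is to compute $d_3(a_p)$ directly from the definition of the differential in the spectral sequence of the weight-filtered complex $F_\bullet C(\sA_*)$, translating chains into elements of $E_1=X$ by means of the homology isomorphism $g$ of Proposition \ref{prop:1afacc52}. By Table \ref{tab:1} the first possibly nonzero differential on $E_2=E_3=H^*(E^0\sA)$ is $d_3$, which lowers weight by $3$. So for the class represented by a top-weight-$p$ cycle I must exhibit a chain-level representative $\tilde x$ with $d\tilde x\in F_{p-3}C(\sA_*)$ and then read off $d_3$ as the $E_3$-class of the weight-$(p-3)$ part of $d\tilde x$, applying $g$ to land in $X$ and then taking $d_1$-homology. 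The recurring tool is that $g$ annihilates $d_0$-images, i.e. $g\circ d_0=0$: by Remark \ref{rem:1} the simple monomials appearing in any $d_0$-image occur in pairs $x_1\otimes x_2$, $x_2\otimes x_1$ mapping to the same product in the commutative ring $X$, hence cancelling over $\bF_2$.

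First I would verify that $g(a_p)$ represents a class surviving to $E_3$. Since $a_p$ is simple, $d_0a_p=0$, and the hypothesis records that the weight-$(p-1)$ part of $d(a_p+a_{p-1})$ vanishes. That part is the weight-lowering-by-one component of $da_p$ plus $d_0a_{p-1}$; applying $g$, using that $g$ intertwines this component with $d_1$ on $X$ (the very definition of $d_1$ in Proposition \ref{prop:619ea206}) together with $g\circ d_0=0$, gives $d_1g(a_p)=0$. As $d_2=0$, the class $x=[g(a_p)]$ lies on $E_3$.

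Next I would manufacture a representative whose differential drops into $F_{p-3}$. Applying $d$ to the hypothesis and collecting the weight-$(p-2)$ terms yields $d_0a_{p-2}=0$, so $a_{p-2}$ is a $d_0$-cycle; since $a_{p-2}\in S(\sA_*)^\perp$ we have $g(a_{p-2})=0$, and injectivity of $g_*$ (Proposition \ref{prop:1afacc52}) forces $a_{p-2}=d_0e$ for some chain $e$ of weight $p-2$. Because simple monomials are $d_0$-closed, I may discard the simple part of $e$ and assume $e\in S(\sA_*)^\perp$ without disturbing $d_0e=a_{p-2}$. Setting $\tilde x=a_p+a_{p-1}+e$, the weight-$(p-2)$ part of $d\tilde x$ becomes $a_{p-2}+d_0e=0$, so $d\tilde x\in F_{p-3}C(\sA_*)$ and $\tilde x$ is a legitimate $d_3$-representative of $x$.

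Finally I would evaluate the differential. The weight-$(p-3)$ part of $d\tilde x$ is $a_{p-3}+b_{p-3}+(\text{weight-lowering-by-one part of }de)$, and $d_3(x)$ is the $E_3$-class of its image under $g$. Here $g(a_{p-3})=0$ and $g(b_{p-3})=b_{p-3}$, since $a_{p-3}\in S(\sA_*)^\perp$ while $b_{p-3}\in S(\sA_*)$. The crux, which I expect to be the main obstacle, is showing that the correction from $e$ contributes nothing, and this is exactly where Lemma \ref{lem:77e89a29} enters: because $e$ is a non-simple chain of weight $p-2$, every simple summand of $de$ either already appears in $d_0e=a_{p-2}\in S(\sA_*)^\perp$ at weight $p-2$ or has weight at most $p-4$, so no simple monomial of weight exactly $p-3$ survives. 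Hence $g$ kills the weight-$(p-3)$ part of $de$, and $g$ sends the weight-$(p-3)$ part of $d\tilde x$ to $b_{p-3}$ on the nose in $X$, giving $d_3(a_p)=b_{p-3}$ in $E_3$. The one bookkeeping point demanding care in the write-up is the compatibility of the chain-level weight-lowering operators with $g$ and $d_1$, which I would settle using that $g$ is a chain map inducing the stated $d_1$ on $X$.
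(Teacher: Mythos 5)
Your proof is correct and takes essentially the same route as the paper's: both use $d^2=0$ to see that $a_{p-2}$ is a $d_0$-cycle, invoke Proposition \ref{prop:1afacc52} to produce a non-simple correction chain (your $e$, the paper's $a_{p-2}^\prime$) cancelling the weight-$(p-2)$ term, and then apply Lemma \ref{lem:77e89a29} to rule out simple contributions of weight $p-3$ coming from that correction, so that $g$ reads off $b_{p-3}$ as the value of $d_3$. Your added verification that $[g(a_p)]$ survives to $E_3$ and your explicit bookkeeping of the $a_{p-3}$ term (which the paper silently absorbs into the non-simple error term) are minor refinements of the same argument.
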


\begin{proof}
	Note that $d(a_{p-2}+a_{p-3}+b_{p-3})=d^2(a_p+a_{p-1})=0$. 
	Hence we have $d_0(a_{p-2})=0$ in the $E_0$ page. 
	By Proposition \ref{prop:1afacc52}, $g(a_{p-2})=0$ in $E_1$ implies that $a_{p-2}$ is a boundary in $E_0$. 
	Therefore we can find $a_{p-2}^\prime\in F_{p-2}C(\sA_*)\cap S(\sA_*)^\perp$ such that $d_0(a_{p-2}^\prime)=a_{p-2}$ in $E_0$. 
	By Lemma \ref{lem:77e89a29}, we have $$d(a_{p-2}^\prime)=a_{p-2}+c_{p-3}\mod F_{p-4}C(\sA_*)$$ where $c_{p-3}\in F_{p-3}C(\sA_*)\cap S(\sA_*)^\perp$. 
	Now consider
	$$d(a_p+a_{p-1}+a_{p-2}^\prime)=b_{p-3}+c_{p-3}\mod F_{p-4}C(\sA_*).$$
	By Remark \ref{rem:559b5258} we have $d_3(a_p)=b_{p-3}$ in $E_3$.
\end{proof}

\newcommand{\otimesop}{\mathop{\bar\otimes}}
\newcommand{\bigotimesop}{\mathop{\overline\bigotimes}}

Now we are ready to prove the main theorem of this section. The formula for $d_3h_{S,T}$ below is actually inspired by some computations (with possible indeterminacies) using the Massey product decomposition of $h_{S,T}$ in Theorem \ref{thm:d55b92ee}.

\begin{theorem}\label{thm:9f9e1790}
	The differentials on $h_{S, T}\in \sH$ are given by the following
	$$d_3h_{S, T}=\sum_{s\in S,~ s+1\in T} h_{s+1, s+2}h_{S-\{s\}+\{s+1\}, T-\{s+1\}+\{s\}}.$$
\end{theorem}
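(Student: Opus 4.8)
The plan is to compute $d_3 h_{S,T}$ directly in the cobar complex $C(\sA_*)$ via the criterion of Lemma \ref{lem:b0fa534e}. First I would lift $h_{S,T}$ to a simple cochain. Since $R_{S,T}=\sum_\sigma R_{s_1 t_{\sigma(1)}}\cdots R_{s_n t_{\sigma(n)}}$ is the determinant, read over $\bF_2$ as a permanent, I set $a_p=\sum_\sigma \tilde R_{s_1 t_{\sigma(1)}}\otimes\cdots\otimes\tilde R_{s_n t_{\sigma(n)}}\in S(\sA_*)$, where $p$ is the common weight of these monomials; by construction $g(a_p)=R_{S,T}$, so $a_p$ represents $h_{S,T}$. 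Because splitting a single generator $\tilde R_{ij}$ via the reduced coproduct lowers weight by exactly one and keeps the result simple, $d(a_p)$ is a simple cochain of weight $p-1$ with $g(d(a_p))=d_1 R_{S,T}=0$. By Proposition \ref{prop:1afacc52} this forces $d(a_p)$ to be a $d_0$-boundary, so I can choose a non-simple $a_{p-1}\in S(\sA_*)^\perp$ of weight $p-1$ with $d_0(a_{p-1})=d(a_p)$.

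With this choice, $d(a_p+a_{p-1})$ has weight at most $p-2$, since its weight $p-1$ part is $d(a_p)+d_0(a_{p-1})=0$ in characteristic two. Lemma \ref{lem:77e89a29} guarantees that the weight $p-2$ part $a_{p-2}$ carries no simple summands, because a simple summand of $d(a_{p-1})$ must either lie in $d_0(a_{p-1})$, which has weight $p-1$, or have weight at most $p-3$; hence $a_{p-2}\in S(\sA_*)^\perp$, and the weight $p-3$ part splits as $a_{p-3}+b_{p-3}$ with $a_{p-3}\in S(\sA_*)^\perp$ and $b_{p-3}\in S(\sA_*)$. Lemma \ref{lem:b0fa534e} then gives $d_3 h_{S,T}=[g(b_{p-3})]$, so the whole problem reduces to isolating the simple weight $p-3$ part of $d(a_{p-1})$.

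The heart of the computation is to identify these summands. By the proof of Lemma \ref{lem:77e89a29}, a simple summand of $d(a_{p-1})$ of weight exactly $(p-1)-2$ can only be produced when, inside one product slot, a reduced-coproduct cross-term collides with an equal generator and their product collapses via $\tilde R_{ij}^2=\tilde R_{i+1,j+1}$; moreover the weight drops by exactly two only when the colliding generator has width one, that is equals some $\tilde R_{s,s+1}$ and collapses to $\tilde R_{s+1,s+2}$. Such a collision can only be fed by a matched factor $R_{s,s+1}$ of the determinant $R_{S,T}$, i.e.\ by an index $s\in S$ with $s+1\in T$; tracking the surviving factors, the collision at $s$ contributes $R_{s+1,s+2}^2$ times a determinant on the index sets $S-\{s\}+\{s+1\}$ and $T-\{s+1\}+\{s\}$, whose $g$-image is $h_{s+1,s+2}\,h_{S-\{s\}+\{s+1\},T-\{s+1\}+\{s\}}$. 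Summing over admissible $s$ yields the asserted formula; as a sanity check this reproduces May's $d_3(h_i(1))=h_ih_{i+2}^2$ and his formula for $h_i(1,3)$.

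The main obstacle is that $a_{p-1}$ is highly non-canonical: already for $|S|=2$ it is forced to contain several product slots whose $d_0$-images cancel pairwise, and for general $n$ writing down an admissible $a_{p-1}$ is a genuine combinatorial task. The delicate point is then to prove that every spurious collision, namely one not indexed by an $s\in S$ with $s+1\in T$, or one coming from the auxiliary slots introduced to solve $d_0(a_{p-1})=d(a_p)$, cancels, while the admissible collisions reassemble cleanly into the determinants $R_{S-\{s\}+\{s+1\},T-\{s+1\}+\{s\}}$. I expect to control this by using the row/column expansion and block-triangular factorization of Proposition \ref{prop:af5d3cd9} to organize both the construction of $a_{p-1}$ and the bookkeeping of the surviving terms, reducing the final verification to determinant identities of the same type already exploited in the proof of Theorem \ref{thm:bc7c165}.
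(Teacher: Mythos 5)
Your reduction is sound, and it is in fact exactly the strategy of the paper's proof: lift $h_{S,T}$ to the simple chain $a_p=\sum_\sigma \tilde R_{s_1t_{\sigma(1)}}\otimes\cdots\otimes\tilde R_{s_nt_{\sigma(n)}}$, observe that $g(d(a_p))=0$ so that Proposition \ref{prop:1afacc52} supplies some $a_{p-1}\in S(\sA_*)^\perp$ with $d_0(a_{p-1})=d(a_p)$, and then run Lemmas \ref{lem:77e89a29} and \ref{lem:b0fa534e}, using the fact that a simple summand can drop the weight by exactly $2$ only through a collision in which a width-one generator squares, $\tilde R_{s,s+1}^2=\tilde R_{s+1,s+2}$. (One notational slip: the collision contributes the single tensor factor $\tilde R_{s,s+1}^2=\tilde R_{s+1,s+2}$, hence $h_{s+1,s+2}$ to the first power, not ``$R_{s+1,s+2}^2$''; your next clause states the correct $g$-image, so this does not propagate.)

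The genuine gap is precisely the point you defer in your final paragraph. By Lemma \ref{lem:b0fa534e}, $d_3h_{S,T}$ is read off from the weight-$(p-3)$ simple summands of $d(a_{p-1})$, and these are determined by which non-simple monomials actually occur in $a_{p-1}$; until you construct a specific $a_{p-1}$ you cannot enumerate the collisions, let alone prove that the spurious ones cancel. ``I expect to control this by Proposition \ref{prop:af5d3cd9}'' is a plan, not a proof, and it is exactly here that all the content of the theorem lies. The paper does this work explicitly: it first organizes the summands of $d(a_p)$ into canceling pairs (pairs equal in $E_1$, indexed by transpositions $\sigma\mapsto\sigma\circ(ij)$ with $\sigma(i)>\sigma(j)$), then writes down a chain $\beta$ with three explicit families of terms $\gamma_{\sigma,ijk}$, $\gamma_{\sigma,ijj}$, $\delta_{\sigma,ijk}$, one family per type of canceling pair, and verifies $d_0(\beta)=d(a_p)$. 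Only with this explicit $\beta$ does the bookkeeping become finite: one checks that a weight-drop-two simple summand can arise only from the terms $\gamma_{\sigma,ijj}$, whose distinguished slot is the product $\tilde R_{s_jt_{\sigma(i)}}\tilde R_{s_jt_{\sigma(j)}}$, and only when $t_{\sigma(j)}-s_j=1$, i.e.\ when $s_j\in S$ and $s_j+1\in T$; summing the survivors over $\sigma$ then reassembles into $\sum_{s\in S,\,s+1\in T}h_{s+1,s+2}h_{S-\{s\}+\{s+1\},T-\{s+1\}+\{s\}}$. Until you either produce such an explicit null-homotopy or give an argument that every admissible choice of $a_{p-1}$ leads to the same surviving terms, your third paragraph is a statement of the expected answer rather than a computation of it.
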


\begin{proof}
	We are going to compute the differentials via the cobar complex $C(\sA_*)$. 
	Note that in $C(\sA_*)$, the differentials are given by $$d(\tilde R_{ij})=\sum_{k=i+1}^{j-1} \tilde R_{kj}\otimes \tilde R_{ik}.$$
	To make the right-hand side look more like matrix multiplications, in this proof we are going to write
	$$d(\tilde R_{ij})=\sum_{k=i+1}^{j-1} \tilde R_{ik}\otimesop \tilde R_{kj}$$
	where $x\otimesop y=y\otimes x$. 
	We also write $$\bigotimesop_{i=1}^n \alpha_i=\alpha_n\otimes \alpha_{n-1}\otimes\cdots \otimes \alpha_1.$$

	The homology class $h_{S, T}\in E_3=H^*(E^0\sA)$ can be represented in $E_0$ by $$\alpha=\sum_{\sigma\in \Sigma_n}\alpha_\sigma=\sum_{\sigma\in \Sigma_n} \tilde R_{s_1t_{\sigma(1)}}\otimesop\cdots\otimesop \tilde R_{s_nt_{\sigma(n)}}.$$
	
	Note that $d_1(\alpha)=0$ in $E_1$ but $d(\alpha)\neq 0$ in $C(\sA_*)$ because $C(\sA_*)$ is not commutative. 
	In fact, every monomial summand of $d(\alpha)$ can be paired with another summand the two being equal in the $E_1$ page. 
	Two typical examples are pairs $(d_{is_j}\alpha_\sigma, d_{is_j}\alpha_{\sigma^\prime})$ and $(d_{it_{\sigma(j)}}\alpha_\sigma,d_{jt_{\sigma(j)}}\alpha_{\sigma^\prime})$ where
	$$d_{is_j}\alpha_\sigma=\tilde R_{s_1t_{\sigma(1)}}\otimesop\cdots\otimesop \tilde R_{s_is_j}\otimesop\tilde  R_{s_jt_{\sigma(i)}}\otimesop\cdots\otimesop \tilde R_{s_jt_{\sigma(j)}}\otimesop\cdots\otimesop\tilde  R_{s_nt_{\sigma(n)}}$$
	$$d_{js_j}\alpha_{\sigma^\prime}=\tilde R_{s_1t_{\sigma(1)}}\otimesop\cdots\otimesop \tilde R_{s_is_j}\otimesop \tilde R_{s_jt_{\sigma(j)}}\otimesop\cdots\otimesop \tilde R_{s_jt_{\sigma(i)}}\otimesop\cdots\otimesop \tilde R_{s_nt_{\sigma(n)}}$$
	and
	$$d_{it_{\sigma(j)}}\alpha_\sigma=\tilde R_{s_1t_{\sigma(1)}}\otimesop\cdots\otimesop \tilde R_{s_it_{\sigma(j)}}\otimesop \tilde R_{t_{\sigma(j)}t_{\sigma(i)}}\otimesop\cdots\otimesop \tilde R_{s_jt_{\sigma(j)}}\otimesop\cdots\otimesop \tilde R_{s_nt_{\sigma(n)}}$$
	$$d_{jt_{\sigma(j)}}\alpha_{\sigma^\prime}=\tilde R_{s_1t_{\sigma(1)}}\otimesop\cdots\otimesop\tilde  R_{s_it_{\sigma(j)}}\otimesop\cdots\otimesop \tilde R_{s_jt_{\sigma(j)}}\otimesop \tilde R_{t_{\sigma(j)}t_{\sigma(i)}}\otimesop\cdots\otimesop \tilde R_{s_nt_{\sigma(n)}}.$$
	Here the permutation $\sigma^\prime$ is the same as $\sigma$ but with values $\sigma(i)$ and $\sigma(j)$ swapped and $d_{ik}\alpha_\sigma$ is the summand of $d(\alpha_\sigma)$ which replaces $\tilde R_{s_it_{\sigma(i)}}$ in $\alpha_\sigma$ with $\tilde R_{s_ik}\otimes \tilde R_{kt_{\sigma(i)}}$.
	
	Observe the typical example $$d_0(ab\otimesop c\otimesop d+b\otimesop ac\otimesop d+b\otimesop c\otimesop ad)=a\otimesop b\otimesop c\otimesop d+b\otimesop c\otimesop d\otimesop a$$
	where each $a,b,c,d$ is equal to some $\tilde R_{st}$. 
	We can find a chain in $C(\sA_*)$ whose $d_0$-boundary is the sum of either typical pair above. 
	In fact, we define
	$$\beta=\sum_\sigma\sum_{i<k<j}\gamma_{\sigma,ijk}+\sum_\sigma \sum_{\substack{i<j\\ \sigma(i)>\sigma(j)}}\gamma_{\sigma,ijj}+\sum_\sigma\sum_{\substack{i<k\le j\\\sigma(i)>\sigma(j)}}\delta_{\sigma,ijk}$$
	where
	$$\gamma_{\sigma,ijk}=\bigotimesop_{l=1}^n \gamma_{\sigma,ijkl},\hspace{10pt} \delta_{\sigma,ijk}=\bigotimesop_{l=1}^n \delta_{\sigma,ijkl}$$
	and
	$$\gamma_{\sigma,ijkl}=\begin{cases}
	\tilde R_{s_is_j} & \text{ if } l=i\\
	\tilde R_{s_jt_{\sigma(i)}} \tilde R_{s_lt_{\sigma(l)}} & \text{ if } l=k\\
	\tilde R_{s_lt_{\sigma(l)}} & \text{ otherwise }
	\end{cases}$$
	$$\delta_{\sigma,ijkl}=\begin{cases}
	\tilde R_{s_it_{\sigma(j)}} & \text{ if } l=i\\
	\tilde R_{t_{\sigma(j)}t_{\sigma(i)}} \tilde R_{s_lt_{\sigma(l)}} & \text{ if } l=k\\
	\tilde R_{s_lt_{\sigma(l)}} & \text{ otherwise }
	\end{cases}.$$
	The careful reader can check that for every $(\sigma, \sigma^\prime=\sigma\circ (ij), i, j)$ with $\sigma(i)>\sigma(j)$,
	$$d_0\left(\sum_{k=i+1}^{j-1}(\gamma_{\sigma,ijk}+\gamma_{\sigma^\prime,ijk})+\gamma_{\sigma,ijj} \right)=d_{is_j}\alpha_\sigma+ d_{is_j}\alpha_{\sigma^\prime}$$
	and
	$$d_0\left(\sum_{k=i+1}^j\delta_{\sigma,ijk} \right)=d_{it_{\sigma(j)}}\alpha_\sigma + d_{jt_{\sigma(j)}}\alpha_{\sigma^\prime}.$$
	Therefore $d_0(\beta)$ agrees with $d(\alpha)$. 
	Here if $\alpha$ is in weight $p$, $\beta$ and $d(\alpha)$ are all in weight $p-1$. 
	Noting that $\beta\in S(\sA_*)^\perp$, by Lemma \ref{lem:77e89a29}, all simple summands of $d(\alpha+\beta)$ live in weight $\le p-3$ since $d_0(\beta)$ is the same as $d(\alpha)$. 
	Therefore, by Lemma \ref{lem:b0fa534e}, in order to compute $d_3(h_i(S^\prime))$ we only have to compute all simple summands of $d(\beta)$ in weight $p-3=w(\beta)-2$. 
	By the proof of Lemma \ref{lem:b0fa534e} such summands can only occur in the $d$-boundary of 
	$$\sum_\sigma\sum_{\substack{i<j\\ \sigma(i)>\sigma(j)}}\gamma_{\sigma,ijj}$$
	because to get a simple summand of $d(\beta)$ in weight $\le w(\beta)-2$, we can only replace the tensor factor 
	$$\gamma_{\sigma,ijjj}=\tilde R_{s_jt_{\sigma(i)}}\tilde R_{s_jt_{\sigma(j)}}$$
	of $\gamma_{\sigma,ijj}$ with
	$$\tilde R_{s_jt_{\sigma(j)}}^2\otimesop \tilde R_{t_{\sigma(j)}t_{\sigma(i)}} = \tilde R_{s_j+1,t_{\sigma(j)}+1}\otimesop \tilde R_{t_{\sigma(j)}t_{\sigma(i)}}$$
	in $d(\gamma_{\sigma,ijjj})$ which has weight $\le w(\gamma_{\sigma,ijjj})-2$. 
	In this typical example,
	$$w(\tilde R_{s_j+1,t_{\sigma(j)}+1}\otimesop \tilde R_{t_{\sigma(j)}t_{\sigma(i)}}) = w(\gamma_{\sigma,ijjj})-1-(t_{\sigma(j)}-s_j).$$
	To reach the equality 
	$$w(\gamma_{\sigma,ijjj})-1-(t_{\sigma(j)}-s_j)=w(\gamma_{\sigma,ijjj})-2$$
	 we can further restrict our attention to the terms where $t_{\sigma(j)}-s_j=1$. 
	 Hence the simple part in $d(\beta)$ of weight $p-3$ is
	$$\gamma=\sum_{\sigma}\sum_{\substack{i<j\\\sigma(i)>\sigma(j)\\t_{\sigma(j)}-s_j=1}} \gamma_{\sigma,ijj}^\prime$$
	where
	$$\gamma_{\sigma,ijj}^\prime=\bigotimesop_{l=1}^n \gamma_{\sigma,ijjl}^\prime$$
	and
	$$\gamma_{\sigma,ijjl}^\prime=\begin{cases}
	\tilde R_{s_is_j} & \text{ if } l=i\\
	\tilde R_{s_j+1,t_{\sigma(j)}+1}\otimesop \tilde R_{t_{\sigma(j)}t_{\sigma(i)}}=\tilde R_{s_j+1,s_j+2}\otimesop \tilde R_{t_{\sigma(j)},t_{\sigma(i)}} & \text{ if } l=j\\
	\tilde R_{s_lt_{\sigma(l)}} & \text{ otherwise }
	\end{cases}$$
	If we pass $\gamma$ to the $E_3$ page, we get
	$$\gamma=\sum_{j=n \text{ or } s_j<s_{j+1}-1} R_{s_j+1,s_j+2}R_{S-\{s_j\}+\{s_j+1\}, T-\{s_j+1\}+\{s_j\}}$$
	which is exactly
	$$\sum_{s\in S,~ s+1\in T} h_{s+1, s+2}h_{S-\{s\}+\{s+1\}, T-\{s+1\}+\{s\}}.$$
	By Lemma \ref{lem:b0fa534e} this is $d_3(h_{S, T})$.
\end{proof}

\begin{remark}
	If we use the notation $h_i(S^\prime)$ instead of $h_{S, T}$, the differential can be written in the following form
	$$d_3h_i(s_1,\dots,s_{n-1})=\sum_{\substack{j=n-1 \text{ or }\\ s_j + 1 < s_{j+1}}}h_{i+s_j+1}h_i(s_1,\dots,s_{j-1}, s_j+1, s_{j+1},\dots, s_{n-1}).$$
	Keep in mind that this is $d_2$ in May's grading.
\end{remark}

\section{\Groebner{} Bases and Computations}\label{sec:7c98bbbc}
In order to do computations in $HX$, we need the help of \Groebner{} bases, to which we will give a brief introduction. \Groebner{} bases are usually used in computer algebra and computational algebraic geometry, where the algebras are usually ungraded. 
But in algebraic topology most algebras are graded. 
Therefore we will introduce \Groebner{} bases in this context. 
We only consider algebras over $\bF_2$.

We also prove a general result on polynomial differential graded algebras. 
We will use this result to compute the algebra $HX_7$ via an inductive method. 
The computational results show that Conjectures \ref{conj:5f7d758} and \ref{conj:2a224128} are both true in $HX_7\subset HX$. In particular, the relations (3C), (5) and (6) of Conjecture \ref{conj:5f7d758} hold in $HX_7$ and no powers of nonzero elements are zero in this range.

\subsection{\Groebner{} basis}
In this section we always assume that $P=\bF_2[x_1,\dots,x_n]$ is a connected graded polynomial algebra over $\bF_2$.

\begin{definition}
	All operations related to \Groebner{} bases require the choice of a total order on the monomials in each degree, with the following property of compatibility with multiplication. 
	For all monomials $M,N,P$ where $M,N$ are in the same degree,
	$$M\le N\Longleftrightarrow MP\le NP.$$
	A total order (in each degree) satisfying this condition is called an \emph{admissible ordering}.	
\end{definition}

\begin{example}
	Lexicographical ordering is an obvious example of admissible ordering. 
	In this article we are primarily interested in the reversed lexicographical ordering, where if $M=x_1^{e_1}\cdots x_n^{e_n}$ and $N=x_1^{e_1^\prime}\cdots x_n^{e_n^\prime}$ are in the same degree, then $M<N$ if and only if
	$$e_1=e_1^\prime, \dots, e_{k-1}=e_{k-1}^\prime, e_k>e_k^\prime$$
	for some $k$.
\end{example}

\newcommand{\LM}{\mathrm{LM}}
\begin{definition}
	Once a total ordering is fixed, we let $\LM(f)$ denote the largest monomial in $f\in P$. 
	It is called the leading monomial of $f$.
\end{definition}

\begin{remark}
	If we use the reversed lexicographical ordering, then the leading monomial of $f\in P$ is the least monomial of $f$ in the lexicographical ordering.
\end{remark}

From now on we assume $P$ is alway equipped with an admissible ordering.

\newcommand{\red}{\mathrm{red}}
\begin{definition}
	Given two polynomials $f$ and $g$ in $P$, one says that $f$ is \emph{reducible} by $g$ if some monomial $M$ in $f$ is divisible by $\LM(g)$. 
	In this case we define the \emph{one-step} reduction of $f$ by $g$ by
	$$\red_1(f, g)=f+\frac{M}{\LM(g)}g.$$ 
	Note that compared with $f$, $\red_1(f, g)$ replaces $M$ in $f$ with other monomials less than $M$.
\end{definition}

\begin{definition}
	For $f\in P$ and a finite subset $S\subset P$, we say that $f$ is \emph{reducible} by $S$ if $f$ is reducible by some $g\in S$. 
	In order to define $\red(f, S)$, if $f$ is reducible by some $g\in S$, we replace $f$ by $\red_1(f, g)$, and we iterate this until $f$ is not reducible by any $g\in S$. 
	The iteration always terminates because there are only finitely many monomials in each degree since $P$ is a connected algebra. 
	The final result depends on the ordering of choices of $g$, and we define $\red(f, S)$ to be the set of all possible outcomes.
\end{definition}

\begin{definition}
	A \Groebner{} basis $G$ of an ideal $I$ in $P$ is a generating set of $I$ such that the set of images of all monomials \emph{not} divisible by $\LM(g)$ for any $g\in G$ under the canonical map $P\to P/I$ form an additive basis for $P/I$.
\end{definition}

\begin{remark}
	If $G$ is a \Groebner{} basis, then $\red(f, G)$ is exactly the standard representation of $f$ in $P/I$ as a linear combination of the additive basis mentioned above. 
	Hence $\red(f, G)$ consists of a single element of $P$.
\end{remark}

\begin{Algorithm}[Buchberger]
	Given a finite generating set $G$ of an ideal $I$ in $P$, we can change $G$ into a \Groebner{} basis of $I$ by doing the following
	\begin{enumerate}
		\item For $f, g\in G$, let $$L=\lcm(\LM(f), \LM(g)).$$
		Find monomials $m, n$ such that $\LM(mf)=\LM(ng)=L$. 
		If $\red(mf+ng, G)$ contains a nonzero polynomial, then add it to $G$.
		\item Repeat (1) until $\red(mf+ng, G)$ is zero for every pair $f, g$ in $G$.
	\end{enumerate}
\end{Algorithm}

\begin{remark}
	In Step (1), each time we add a new element to $G$ the ideal generated by all leading monomials of $G$ will strictly increase. 
	Therefore the algorithm always terminates in finitely many steps, because $P$ is a Noetherian ring.
\end{remark}

\begin{definition}
	Let $R=P/I$ for an ideal $I$ of $P$. 
	For $(a_1,a_2,\dots, a_n)\in R^n$ we define
	$$\Ann(a_1,\cdots, a_n)=\{(b_1,\dots, b_n)\in R^n~|~a_1b_1+\cdots+a_nb_n=0\}.$$
	This is an $R$-submodule of $R^n$. 
	Note that for $1\le i<j\le n$,
	$$(0,\dots,0,\overset{i}{a_j},0,\dots,0,\overset{j}{a_i},0,\dots,0)\in \Ann(a_1,\cdots, a_n).$$
	These are called the \emph{commutators} of $a_1,a_2,\dots, a_n$.
\end{definition}

\begin{lemma}\label{lem:738cb7cb}
	Assume $I$ is trivial and $R=P$. 
	Then $\Ann(x_1,\dots,x_n)$ is generated by commutators of $x_1,\dots,x_n$.
\end{lemma}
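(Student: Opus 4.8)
The plan is to argue by induction on $n$, at each stage peeling off one variable by subtracting from a given syzygy a suitable $P$-linear combination of commutators. The essential input is that $x_1,\dots,x_n$ is a regular sequence in $P=\bF_2[x_1,\dots,x_n]$; concretely, for each $k$ the variable $x_k$ acts as a nonzerodivisor on the quotient $P/(x_1,\dots,x_{k-1})\iso\bF_2[x_k,\dots,x_n]$, which is an integral domain. Throughout I write $e_1,\dots,e_n$ for the standard basis of $R^n=P^n$, so that the commutator pairing $x_i$ with $x_j$ is the tuple $x_je_i+x_ie_j$, there being no sign over $\bF_2$.

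For the base case $n=1$, the only relation $b_1x_1=0$ forces $b_1=0$ since $P$ is a domain, so $\Ann(x_1)=0$ is generated by the (empty) set of commutators.

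For the inductive step, suppose $(b_1,\dots,b_n)\in\Ann(x_1,\dots,x_n)$; since the annihilator submodule is graded, we may split into graded pieces and assume the tuple is homogeneous. The relation $\sum_{i=1}^n b_ix_i=0$ shows $b_nx_n\in(x_1,\dots,x_{n-1})$, and as $x_n$ is a nonzerodivisor modulo $(x_1,\dots,x_{n-1})$ we may write $b_n=\sum_{i<n}c_ix_i$ with $c_i\in P$. Adding $\sum_{i<n}c_i(x_ne_i+x_ie_n)$ to $(b_1,\dots,b_n)$ annihilates the final coordinate and produces $(b_1',\dots,b_{n-1}',0)$ with $b_i'=b_i+c_ix_n$. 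One checks in characteristic $2$ that $\sum_{i<n}b_i'x_i=\sum_{i<n}b_ix_i+x_nb_n=b_nx_n+x_nb_n=0$, so this is a syzygy of $x_1,\dots,x_{n-1}$. By the inductive hypothesis it lies in the span of the commutators of $x_1,\dots,x_{n-1}$, which are among the commutators of $x_1,\dots,x_n$, and hence so does the original tuple.

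The step I expect to be the crux is the divisibility $b_n\in(x_1,\dots,x_{n-1})$, i.e. the regular-sequence property of the variables; once that is in hand the commutator subtraction is purely formal, and working over $\bF_2$ makes the signs disappear entirely.
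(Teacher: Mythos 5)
Your proof is correct, but it takes a genuinely different route from the paper's. The paper disposes of the lemma in one line of homological algebra: since $\Tor_P(\bF_2,\bF_2)\iso E[\sigma x_1,\dots,\sigma x_n]$ with $\sigma x_i\sma\sigma x_j$ an additive basis of $\Tor^2_P(\bF_2,\bF_2)$, the Koszul complex is a free resolution whose degree-two part maps onto the syzygy module, so the first syzygies of $(x_1,\dots,x_n)$ are generated by the Koszul relations, which are precisely the commutators. Your induction on $n$ via the regular-sequence property is in effect a self-contained, elementary proof of Koszul exactness in this homological degree: peeling off $x_n$ using that it is a nonzerodivisor modulo $(x_1,\dots,x_{n-1})$ is exactly the inductive mechanism behind that exactness. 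What your argument buys is elementarity (no $\Tor$, no Koszul complex); what the paper's buys is brevity and coherence with the Koszul-resolution viewpoint it uses elsewhere. One wrinkle you should patch in the write-up: after the commutator subtraction, the tuple $(b_1',\dots,b_{n-1}')$ has entries in $P=\bF_2[x_1,\dots,x_n]$, whereas your inductive hypothesis, read literally as the lemma for $n-1$ variables, concerns syzygies over $\bF_2[x_1,\dots,x_{n-1}]$. This is easily repaired: either fix the ambient ring $P$ once and for all and induct on the length $k\le n$ of the sequence $x_1,\dots,x_k$ (your regular-sequence step goes through verbatim), or expand $b_i'=\sum_j c_{ij}x_n^j$ with $c_{ij}\in\bF_2[x_1,\dots,x_{n-1}]$ and observe that each coefficient tuple $(c_{1j},\dots,c_{n-1,j})$ is itself a syzygy over the smaller ring, since $P$ is free over it on the powers of $x_n$; either way the commutator generation passes up to $P$.
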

\begin{proof}
	This is a consequence of the fact that $\Tor_P(\bF_2, \bF_2)\iso E[\sigma x_1,\dots,\sigma x_n]$, so that $\sigma x_i\sma \sigma x_j$ is an additive basis of $\Tor^2_P(\bF_2, \bF_2)$. 
	In the Koszul complex this means that all $P$-linear relations among $x_k$ are generated by $x_ix_j+x_jx_i=0$
\end{proof}

\begin{definition}
	For $f\in P$, $\bar f$ denotes the image of $f$ in $P/I$.
\end{definition}

\begin{theorem}
	Assume $P$ is equipped with the reversed lexicographical ordering and $G$ is the \Groebner{} basis of an ideal $I$ in $P$. 
	For the images $\bar x_1,\dots,\bar x_k$ of the first $k$ generators $x_1,\dots,x_k$ of $P$ in $R=P/I$, $\Ann(\bar x_1,\dots, \bar x_k)$ is generated as a $R$-submodule of $R^k$ by commutators of $\bar x_1,\dots, \bar x_k$ and all $(\bar f_1,\dots,\bar f_k)\in R^k$ such that $f_i\in P$ and $x_1f_1+\cdots+ x_kf_k\in G$.
\end{theorem}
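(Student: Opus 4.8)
The plan is to compute the module of syzygies upstairs in $P$ and then push it down to $R$. Introduce the $P$-linear map $\phi\colon P^k\to P$, $\phi(f_1,\dots,f_k)=x_1f_1+\cdots+x_kf_k$, set $J=(x_1,\dots,x_k)$, and let $N=\phi^{-1}(I)$. Since $\phi(P^k)=J$ we have $\phi(N)=I\cap J$, and since $I\cdot P^k\subseteq N$ while $R^k=P^k/I\cdot P^k$, the module $\Ann(\bar x_1,\dots,\bar x_k)$ is exactly the image of $N$ in $R^k$. Hence it suffices to generate $N$ as a $P$-module \emph{modulo} $I\cdot P^k$: any family $\{v_j\}\subset N$ with $N=\sum_j Pv_j+I\cdot P^k$ has images generating $\Ann(\bar x_1,\dots,\bar x_k)$ over $R$, because the image of $I\cdot P^k$ in $R^k$ is zero. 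I will show the commutator tuples $x_je_i+x_ie_j$ ($i<j\le k$) together with the lift tuples of the theorem form such a family.

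First I would record the two facts that the hypotheses provide. Write $G_J=\{g\in G: g\in J\}$. Since $J$ is a monomial ideal, a homogeneous $f$ lies in $J$ iff each of its monomials does. Using the reversed lexicographical ordering, a monomial \emph{not} in $J$ (one involving only $x_{k+1},\dots,x_n$) is lexicographically smaller, hence revlex-\emph{larger}, than any $J$-monomial of the same degree; therefore the revlex-largest monomial $\LM(f)$ lies in $J$ \emph{if and only if} $f\in J$, and in particular $G_J=\{g\in G:\LM(g)\in J\}$. This equivalence is the only place the revlex ordering is genuinely used. The second fact is the base case: when $\phi(f_1,\dots,f_k)=0$ the tuple lies in $\ker\phi=\Ann(x_1,\dots,x_k)$ computed in $P$, which is generated by the Koszul/commutator relations because $x_1,\dots,x_k$ is a regular sequence in $P$; this is the argument of Lemma~\ref{lem:738cb7cb}.

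The core is an induction on $\LM(g)$ in the revlex order (a well-order on the monomials of each fixed degree), applied to $(f_1,\dots,f_k)\in N$ with $g:=\phi(f_1,\dots,f_k)\in I\cap J$; homogeneity lets me assume $g$ homogeneous. If $g\neq 0$ then, $G$ being a \Groebner{} basis, some $\LM(g_\alpha)$ divides $\LM(g)$; set $c=\LM(g)/\LM(g_\alpha)$. If $g_\alpha\in G_J$, choose a lift $(f_i^{(\alpha)})$ with $\phi(f^{(\alpha)})=g_\alpha$ and pass to $(f_i)+c\,(f_i^{(\alpha)})$: this stays in $N$, its $\phi$-image is $\red_1(g,g_\alpha)$ with strictly smaller leading monomial, and we have changed the tuple only by a $P$-multiple of an allowed lift, so induction applies. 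The delicate case is $g_\alpha\notin G_J$, where $\LM(g_\alpha)$ is pure in the late variables; here $\LM(g)\in J$ forces the cofactor $c\in J$, so write $c=\sum_{i\le k}x_ic_i$ and form $w=(c_1g_\alpha,\dots,c_kg_\alpha,0,\dots,0)$. Then $\phi(w)=c\,g_\alpha$, and crucially each entry $c_ig_\alpha$ lies in $I$, so $w\in I\cdot P^k$. Replacing $(f_i)$ by $(f_i)+w$ again lands in $N$ with $\phi$-image $\red_1(g,g_\alpha)$ of smaller leading monomial, while $w$ is invisible modulo $I\cdot P^k$. Either way $\LM(g)$ strictly descends, the induction terminates at $g=0$, and the commutator base case closes it. Reducing the resulting identity $N=\langle\text{commutators}\rangle_P+\langle\text{lifts}\rangle_P+I\cdot P^k$ modulo $I\cdot P^k$ gives the theorem.

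I expect the main obstacle to be exactly the reduction steps that are forced to use \Groebner{} basis elements $g_\alpha\notin J$, since such $g_\alpha$ are \emph{not} among the permitted generators and a naive reduction would introduce the forbidden term $c\,g_\alpha$. The resolution is that the revlex ordering forces the cofactor $c$ into $J$, so the correction $w$ can be distributed across the first $k$ variables with coefficients in $I$ and thereby absorbed into $I\cdot P^k$, disappearing after passage to $R$. Making this absorption precise, and confirming that each reduction genuinely lowers $\LM(g)$ so that the induction is well-founded, are the points that will need the most care.
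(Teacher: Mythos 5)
Your proof is correct, but it is not the paper's argument: the two differ precisely in how they force the \Groebner{} reductions to respect $J=(x_1,\dots,x_k)$. The paper first replaces each $g_i$ by its normal form under $G$; then any monomial of $x_ig_i$ divisible by some $\mathrm{LM}(g)$, $g\in G$, must have $x_i\mid \mathrm{LM}(g)$ (otherwise $g_i$ itself would be reducible), and the revlex fact then places the reducer $g$ inside $J$ --- so your delicate Case 2 never arises and every reduction uses an allowed generator. You never normalize; instead you allow arbitrary $g_\alpha\in G$ to reduce the leading term and apply the revlex fact to the \emph{cofactor} rather than to the reducer: when $\mathrm{LM}(g_\alpha)\notin J$ the cofactor $c$ lands in $J$, so $cg_\alpha$ spreads across the tuple with coefficients in $I$ and dies in $R^k$. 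What your route buys is a transparently well-founded iteration: your measure $\mathrm{LM}(\phi(f))$ strictly drops at each step and your invariant (membership in $N=\phi^{-1}(I)$, taken modulo $I\cdot P^k$) is preserved on the nose. The paper's iteration, by contrast, is only stated for normalized tuples, and one reduction step destroys normality; restoring normal forms changes $\sum_i x_ig_i'$ by an element of $J\cdot I$, and such elements need not lie in the $P$-span of $G\cap J$ at all (take $P=\bF_2[x,y]$, $k=1$, $I=(y)$, so $J=(x)$ and $G=\{y\}$: then $xy\in I\cap J$ while $G\cap J=\emptyset$), so they can only be discarded after passing to $R^k$ --- which is exactly what your $I\cdot P^k$ term accomplishes. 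In this sense your syzygy formulation is both a genuinely different mechanism for the key step and a completion of the bookkeeping that the paper's ``we can iterate this'' leaves implicit. Two small repairs: your correction tuple should simply be $w=(c_1g_\alpha,\dots,c_kg_\alpha)$ (the trailing zeros are spurious), and the fact that $\mathrm{LM}$ of a nonzero element of $I$ is divisible by some $\mathrm{LM}(g_\alpha)$ deserves one line under the paper's definition of a \Groebner{} basis: fully reduce the element; since reductions create only strictly smaller monomials, the leading monomial can never be cancelled and must itself be reduced at some stage.
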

\begin{proof}
	Assume that $x_1g_1+\cdots+ x_kg_k\in I$. 
	By the definition of a \Groebner{} basis, we can always choose representatives $g_i$ of $\bar g_i$ such that no $g_i$ is reducible by $G$. 
	In order to show that $(\bar g_1,\dots,\bar g_k)$ is an $R$-linear combination of commutators of $\bar x_1,\dots, \bar x_k$ and $(\bar f_1,\dots,\bar f_k)$ described in the theorem, by Lemma \ref{lem:738cb7cb} it suffices to show that $x_1g_1+\cdots+x_kg_k$ is a $P$-linear combination of elements of $G$ of the form $x_1f_1+\cdots+ x_kf_k$, i.e. 
	elements of $G$ in which all monomials contain at least one of $x_1,\dots,x_k$.

	In fact, since $\red(x_1g_1+\cdots+x_kg_k, G)=0$, for some $1\le i\le k$, $x_ig_i$ is reducible by some $g\in G$. 
	Since $g_i$ is not reducible by $G$ but $x_ig_i$ is reducible, $\LM(g)$ must contain $x_i$. 
	Since $\LM(g)$ is the least monomial in $g$ ordered lexicographically, other monomials of $g$ must contain at at least one of $x_1,\dots,x_i$. 
	Therefore if we replace $x_ig_i$ with $\red_1(x_ig_i, g)$, then $x_1g_1+\cdots+x_kg_k$ becomes another polynomial of the form $x_1g_1^\prime+\cdots+x_kg_k^\prime$. 
	We can iterate this until $x_1g_1+\cdots+x_kg_k$ becomes zero. 
	Hence $x_1g_1+\cdots+x_kg_k$ is a $P$-linear combination of $g\in G$ in which all monomials contain at least one of $x_1,\dots,x_k$.
\end{proof}

By the theorem for $a_1,\dots,a_k\in R$ we can make an algorithm for finding a generating set of $\Ann(a_1,\cdots, a_n)\in R=P/I$.

\begin{Algorithm}\label{algorithm:6e571137}
	Given an ideal $I$ in $P$, $R=P/I$ and $f_1,\dots,f_k\in P$, a generating set of $\Ann(\bar f_1,\dots,\bar f_k)$ as an $R$-submodule of $R^k$ can be obtained by doing the following
	\begin{enumerate}
		\item Equip $Q=\bF_2[y_1,\dots,y_k,x_1,\dots,x_n]$ with the reversed lexicographical ordering.
		\item Compute the \Groebner{} basis $G$ of $I+(y_1-f_1,\dots,y_k-f_k)$.
		\item Find all elements $g$ of $G$ such that $\LM(g)$ contains at least one of $y_1,\dots,y_k$ and write $g$ in the form $g=y_1h_1+\cdots+y_kh_k$ where $h_i\in Q$. 
		We can do this because we are using the reversed lexicographical ordering.
		\item Replace $h_i$ with a polynomial in $x_1,\dots,x_n$ using the relations $y_1=f_1$, $\dots$, $y_k=f_k$.
		\item All images of $(h_1,\dots,h_k)$ in $R=P/I$ together with commutators of $\bar f_1$, $\dots$, $\bar f_k$ form a generating set of $\Ann(\bar f_1,\dots,\bar f_k)$ as an $R$-submodule of $R^k$.
	\end{enumerate}
\end{Algorithm}

\begin{theorem}\label{thm:c329a57a}
	If the \Groebner{} basis $G$ of $I\subset P$ with respect to some monomial ordering has the property that all the leading monomials of $g\in G$ are square free, then $R=P/I$ is nilpotent free.
\end{theorem}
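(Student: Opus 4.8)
The plan is to exploit the fact that the standard monomials --- those not divisible by $\LM(g)$ for any $g\in G$ --- form an $\bF_2$-basis of $R=P/I$, together with the Frobenius identity available in characteristic $2$. First I would represent an arbitrary nonzero class by its normal form: given $\bar f\neq 0$ in $R$, replace $f$ by $\red(f,G)$, so that $f=\sum_{i\in S}M_i$ is a nonempty sum of \emph{distinct} standard monomials $M_i$, and $\bar f\neq 0$ precisely because these standard monomials are linearly independent in $R$.

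The key observation to isolate is a closure property of standard monomials under taking powers. I claim that if every $\LM(g)$ is square free and $M$ is a standard monomial, then $M^k$ is standard for all $k\ge 1$. Indeed, suppose some $\LM(g)=x_{i_1}\cdots x_{i_r}$ (square free, hence with distinct variables) divides $M^k$. Then each $x_{i_j}$ occurs with positive exponent in $M^k$, hence with positive exponent in $M$, so $\LM(g)$ already divides $M$, contradicting that $M$ is standard. Thus each $M_i^k$ remains a standard monomial.

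Next I would feed this into the characteristic-$2$ computation of powers. Over $\bF_2$ all cross terms vanish, so
\[
f^{2^j}=\Bigl(\sum_{i\in S}M_i\Bigr)^{2^j}=\sum_{i\in S}M_i^{2^j}
\]
for every $j\ge 0$. Since squaring is injective on monomials the $M_i^{2^j}$ are pairwise distinct, and by the closure property each is a standard monomial. Hence $f^{2^j}$ is a nonempty $\bF_2$-linear combination of distinct standard monomials, and therefore $\bar f^{2^j}\neq 0$ in $R$. As the exponents $2^j$ are cofinal among all positive integers, no power of $\bar f$ can vanish, so $R$ is nilpotent free.

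The argument is short, and the only place requiring care --- the main obstacle --- is the interplay between the two hypotheses: it is precisely the square-freeness of the leading monomials that makes standard monomials closed under squaring, and it is precisely the Frobenius collapse $(\sum M_i)^2=\sum M_i^2$ over $\bF_2$ that turns this closure into nonvanishing of all $2$-power products. Neither ingredient alone suffices, and the write-up should make clear that working over $\bF_2$ (rather than a general field, where one would instead invoke that a radical initial ideal forces $I$ to be radical) is what keeps the computation elementary.
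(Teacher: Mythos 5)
Your proof is correct and follows essentially the same route as the paper's: both rest on the observation that square-freeness of the leading monomials makes the standard monomials closed under taking powers, combined with the Frobenius identity $(\sum M_i)^{2^j}=\sum M_i^{2^j}$ over $\bF_2$. You merely spell out the final step (distinctness of the powers, cofinality of $2^j$ among all exponents) that the paper leaves implicit in its closing sentence.
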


\begin{proof}
	By the properties of \Groebner{} bases, the set of all monomials not reducible by $G$ forms a basis for $P/I$. 
	If all the leading monomials are square free, we show that this basis is closed under the squaring map.

	In fact, given a square free monomial $\alpha=x_{i_1}\cdots x_{i_k}$ ($i_1<\cdots<i_k$) in $P$, another monomial $\beta=x_1^{e_1}x_2^{e_2}\cdots$ is not divisible by $\alpha$ if and only if $\beta^2$ is not divisible by $\alpha$. 
	This is because
	$$\alpha|\beta\iff e_{i_j}>0 ~(1\le j\le k) \iff 2e_{i_j}>0 ~(1\le j\le k) \iff \alpha|\beta^2.$$

	Therefore $R$ is nilpotent free since we have a basis closed under the squaring map.
\end{proof}

\subsection{Polynomial differential graded algebras}
Note that the differential graded algebra $X$ is also a polynomial algebra. 
The following proposition will help us calculate the homology of these kinds of algebras.

\begin{proposition}\label{prop:07de5358}
	Assume that $A$ is a commutative differential graded algebra over $\bF_2$ and $c\in A$ is a cycle. 
	Consider $B=A[x]$ as a differential graded algebra which extends $A$ with $dx=c$.
	
	If $[c]=0$ in $HA$, then $HB\iso HA\otimes \bF_2[\tilde x]$ where $\tilde x$ corresponds to $x+a$ where $da=c$ in $A$. 
	
	If $[c]\neq 0$ in $HA$, assume that the ideal 
	$$\Ann_{HA}([c])=\{y\in HA:y[c]=0\}$$
	of $HA$ is generated by $y_1,\dots, y_n$ ($n=0$ if the ideal is zero).
	If we filter $B$ by
	$$F_pB=\{ax^i: a\in A,~i\le p\},$$
	then the associated graded algebra $E^0HB$ can be represented by
	$$HA\otimes \bF_2[b, g_1,\dots, g_n]/\sim$$
	where the relations are given by $[c]=0$ and
	\begin{enumerate}
		\myitem{(i)}\label{item:df8723cf} if $a_1y_1+\cdots+a_ny_n=0$ in $HA$ for $a_i\in HA$ then 
		$$a_1g_1+\cdots+a_ng_n=0.$$
		\myitem{(ii)}\label{item:62c9c1cc} $g_ig_j=by_iy_j$.
	\end{enumerate}
\end{proposition}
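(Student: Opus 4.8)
The plan is to handle the degenerate case $[c]=0$ by a change of variable and the main case $[c]\neq0$ by running the spectral sequence of the $x$-power filtration $F_pB$. In the case $[c]=0$, I would choose $a\in A$ with $da=c$ and set $\tilde x=x+a$; working over $\bF_2$ we get $d\tilde x=c+c=0$, so $\tilde x$ is a cycle, $B=A[\tilde x]$ is the tensor DGA $A\otimes\bF_2[\tilde x]$ with trivial differential on the polynomial factor, and the K\"unneth theorem gives $HB\iso HA\otimes\bF_2[\tilde x]$ as claimed.

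For the main case I would first observe that over $\bF_2$ the Leibniz rule gives $d(x^i)=cx^{i-1}$ for $i$ odd and $d(x^i)=0$ for $i$ even; in particular $dx^2=0$, and $d$ lowers the filtration $F_pB$ by at most one. Consequently the associated spectral sequence has $E^0=A[x]$ with $d_0=d_A$ acting on coefficients, so $E^1=HA[x]$, and the induced $d_1$ is multiplication by $[c]$ on the odd powers of $x$ and zero on the even powers. Since $d$ drops filtration by at most one, $d_r=0$ for $r\ge2$ and the sequence collapses at $E^2=E^\infty$ (the filtration is finite in each internal degree, so there is no convergence issue). Computing the homology of the resulting complex
$$\cdots\xrightarrow{\,\cdot[c]\,}HA\,x^{2}\xrightarrow{\,0\,}HA\,x^{1}\xrightarrow{\,\cdot[c]\,}HA\,x^{0}\to0$$
gives $E^\infty_{2k}=HA/([c])$ on the even powers and $E^\infty_{2k+1}=\Ann_{HA}([c])$ on the odd powers.

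Next I would produce explicit cycles realizing the generators. The class $b=[x^2]$ is legitimate because $dx^2=0$, and $b^k$ represents $E^\infty_{2k}$. For each generator $y_i$ of $\Ann_{HA}([c])$ I would choose a representing cycle $a_i$ and, using $[a_i][c]=0$, an element $e_i$ with $de_i=a_ic$; then $g_i=a_ix+e_i$ satisfies $dg_i=a_ic+a_ic=0$, lies in filtration one, and maps to $y_i$ in $E^\infty_1$. The three families of relations are then immediate in the associated graded: $[c]=0$ holds already in $HB$ since $c=dx$ is a boundary; relation (i) holds because $g_i$ represents $y_i$ in $E^\infty_1$, so every $HA$-linear relation among the $y_i$ descends to one among the $g_i$; and relation (ii) holds because the filtration-two part of $g_ig_j$ is $a_ia_jx^2$, whose class in $E^\infty_2=HA/([c])$ is $y_iy_jb$.

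The main obstacle is the final step, completeness: showing that $[c]=0$ together with (i) and (ii) generate \emph{all} the relations, i.e. that the evident surjection from $HA\otimes\bF_2[b,g_1,\dots,g_n]/\sim$ onto $E^0HB$ is an isomorphism. Here I would use (ii) to reduce every monomial to the form $HA\cdot b^k$ or $HA\cdot b^kg_i$, so that the presented algebra splits into an even part spanned by the $HA\cdot b^k$ subject to $[c]=0$ and an odd part generated over $HA[b]$ by $g_1,\dots,g_n$ subject to (i), and then compare these with the computed $E^\infty$ filtration degree by filtration degree. The even part agrees with $\bigoplus_k\big(HA/([c])\big)b^k$ immediately. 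For the odd part the key point is that the $HA$-module presented by the generators $g_i$ and the relations (i) is exactly $\Ann_{HA}([c])$, because (i) imposes precisely the module relations among the $y_i$; in particular, applying (i) to the trivial relation $[c]y_i=0$ already forces $[c]g_i=0$, so no relation beyond those in (i) is needed. The delicate aspect throughout is that these identities are asserted in the associated graded $E^0HB$ rather than in $HB$ itself, which is exactly why the statement is phrased in terms of $E^0HB$.
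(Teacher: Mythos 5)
You follow essentially the same route as the paper: filter $B$ by powers of $x$, identify $E_1\cong HA\otimes\bF_2[x]$ with $d_1$ acting as multiplication by $[c]$ on odd powers of $x$, read off $E_2$ (even part $HA/([c])$, odd part $\Ann_{HA}([c])$), and match $E_\infty = E^0HB$ with the algebra presented by $b=[x^2]$ and the $g_i$. Your $[c]=0$ case (change of variable plus K\"unneth) is a harmless variant of the paper's observation that $x+a$ is a permanent cycle with no extension problems, and your completeness discussion --- reducing monomials by (ii), noting that (i) applied to the trivial relation $[c]y_i=0$ forces $[c]g_i=0$, and identifying the odd part with the $HA$-module presented by the $g_i$ modulo (i), which is exactly $\Ann_{HA}([c])$ --- is a more explicit version of what the paper asserts in one sentence.

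There is, however, one genuinely faulty step: the claim that ``since $d$ drops filtration by at most one, $d_r=0$ for $r\ge 2$.'' That implication is false for filtered complexes in general; higher differentials come from zig-zags, not from the filtration shift of $d$ alone. A minimal counterexample over $\bF_2$: take $x_1$ (homological degree $1$, filtration $2$), $y_1$ (degree $1$, filtration $1$), $x_0$ (degree $0$, filtration $1$), $y_0$ (degree $0$, filtration $0$), with $dx_1=x_0+y_0$ and $dy_1=x_0$; here $d$ drops filtration by at most one, yet $d_2[x_1]=[y_0]\neq 0$, since $d(x_1+y_1)=y_0$. (Your $B$ is even a bicomplex, $d=d_0+d_{-1}$, and bicomplex spectral sequences also admit arbitrarily long differentials.) The collapse at $E_2$ is true in this situation, but it must be proved the way the paper proves it --- and the ingredients are already in your next paragraph: every class of $E_2$ is represented by an honest cycle of $B$, namely $\alpha x^{2k}$ with $\alpha$ an $A$-cycle in even filtration, and $\sum_j\beta_j g_j x^{2k}$ with $g_j=a_jx+e_j$, $de_j=a_jc$, in odd filtration (using that the $y_j$ generate $\Ann_{HA}([c])$ over $HA$); hence every class is a permanent cycle and $E_2=E_\infty$. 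So your ``explicit cycles realizing the generators'' should be promoted from an afterthought to the actual degeneration argument, and the filtration-shift claim deleted; with that reordering your proof is complete and coincides with the paper's.
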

\begin{proof}
	Note that $x$ is in filtration $1$ and $dx=c$ is in filtration $0$.
	Hence 
	$$E_1\iso HA\otimes \bF_2[x]$$
	with $dx=[c]$. 
	
	If $[c]=0$, then $E_1=E_\infty$ because $x$ is a permanent cycle represented by $x+a$ for some $a\in A$ such that $da=c$. 
	There are no extensions since there are no relations involving $x$. 
	Hence $HB\iso HA\otimes \bF_2[\tilde x]$.
	
	If $[c]\neq 0$, noting that $b=[x^2]$ is a permanent cycle, the set of elements in $E_2=HE_1$ in even filtrations is isomorphic to
	$$\bigoplus x^{2i}HA/([c])$$
	while the set of elements in odd filtrations is isomorphic to
	$$\bigoplus x^{2i-1}\Ann_{HA}([c]).$$
	The multiplication by $b=[x^2]$ will map elements in filtration $p$ isomorphically onto elements in filtration $p+2$. 
	Both filtrations give modules over $HA$ and the module structure of $x\Ann_{HA}([c])$ (elements in filtration $1$) is precisely given by \ref{item:df8723cf} with $g_i=[xy_i]$. 
	Relations in \ref{item:62c9c1cc} are direct consequences of $xy_i\cdot xy_j=x^2\cdot y_i\cdot y_j$ in $E_1$.
	The spectral sequence collapses in $E_2$ because the $g_i=[xy_i]$ are represented by cycles $xy_i+a_i\in B$ where $da_i=cy_i$ in $A$. 
	Therefore the $g_i$ are all permanent cycles.
\end{proof}

\begin{remark}
	The proposition does not solve the extension problem for computing $HB$. 
	However, it constrains the number of relations we have to deal with, which is very important for our computation of $HX_7$ in the next section.
\end{remark}

\begin{remark}
	A generating set of $(r_1,\dots,r_n)\in (HA)^n$ in (1) in the proposition can be obtained by Algorithm \ref{algorithm:6e571137}.
\end{remark}

\subsection{The computation of \mybm{$HX_7$}{HX7}}\label{sec:11e59387}
In this section, we are going to compute $HX_7$ by an inductive method using Proposition \ref{prop:07de5358}. 
We will see that Conjectures \ref{conj:5f7d758} and \ref{conj:2a224128} hold in $HX_7\subset HX$.

It is helpful to see that $X$ has a lot of symmetries. 
These will be useful in our induction.

\begin{definition}
	For $0\le m<n$, let $X[m, n]$ denote the sub-DGA of $X$
	$$X[m, n]=\bF_2[R_{ij}: m\le i<j\le n].$$
	Note that $X_n=X[0,n]$. 
	Let $X_{n, k}=\bF_2[R_{0i}: i\le k]\otimes X[1,n]$. This is also a sub-DGA of $X$.
\end{definition}

\begin{proposition}
	The map
	$$r: X\to X[m, n]$$
	given by
	$$r(R_{ij})=\begin{cases}
		R_{ij}, & \text{ if } m\le i<j\le n\\
		0, & \text{otherwise}
	\end{cases}$$
	is a retraction of DGAs. 
	Therefore the homomorphism in homology $HX[m, n]\to HX$ is injective.
\end{proposition}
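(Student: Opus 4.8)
The plan is to check directly that $r$ is a morphism of differential graded algebras which restricts to the identity on the sub-DGA $X[m,n]$, and then to invoke the elementary fact that a retraction induces an injection on homology. Since $X$ is the free (polynomial) algebra on the generators $R_{ij}$, the assignment $R_{ij}\mapsto r(R_{ij})$ extends uniquely to an algebra homomorphism, so the only substantive point is that $r$ is a chain map, i.e. that $r\circ d=d\circ r$.

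First I would reduce the chain-map identity to the generators. Because $r$ is an algebra map and $d$ is a derivation, it is enough to verify the identity on generators: if $r(dx)=d(r(x))$ and $r(dy)=d(r(y))$, then $r(d(xy))=r(dx)r(y)+r(x)r(dy)=d(r(x))r(y)+r(x)d(r(y))=d(r(x)r(y))=d(r(xy))$, so the identity propagates to all products. Using the differential (\ref{eq:99758f58}), we have $dR_{ij}=\sum_{k=i+1}^{j-1}R_{ik}R_{kj}$, hence $r(dR_{ij})=\sum_{k=i+1}^{j-1}r(R_{ik})r(R_{kj})$.

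The verification then splits into cases. If $m\le i<j\le n$, then for every $k$ with $i<k<j$ one also has $m\le i<k\le n$ and $m\le k<j\le n$, so both factors survive, giving $r(R_{ik})=R_{ik}$ and $r(R_{kj})=R_{kj}$; thus $r(dR_{ij})=dR_{ij}=d(r(R_{ij}))$. If instead $(i,j)$ fails this condition, then since $i<j$ always holds, either $i<m$ or $j>n$. In the first case $r(R_{ik})=0$ for all $k$ because $i<m$, and in the second case $r(R_{kj})=0$ for all $k$ because $j>n$; either way every summand of $r(dR_{ij})$ vanishes, so $r(dR_{ij})=0=d(0)=d(r(R_{ij}))$. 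This shows $r$ is a DGA map, and $r\circ\iota=\mathrm{id}$ on $X[m,n]$ is immediate from the definition, where $\iota\colon X[m,n]\hookrightarrow X$ denotes the inclusion (a DGA map since $X[m,n]$ is a sub-DGA).

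Finally, passing to homology gives $r_*\circ\iota_*=(r\circ\iota)_*=\mathrm{id}$ on $HX[m,n]$, so $\iota_*\colon HX[m,n]\to HX$ has a left inverse and is therefore injective. The only real work lies in the middle case analysis, and even there the crux is the single observation that any surviving cross term $R_{ik}R_{kj}$ in $dR_{ij}$ would force both $m\le i$ and $j\le n$, i.e. $(i,j)$ back into range; this is exactly what guarantees $r(dR_{ij})=0$ whenever $r(R_{ij})=0$, and there is no obstacle beyond this bookkeeping.
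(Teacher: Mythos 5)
Your proof is correct: reducing the chain-map identity to generators via the derivation property, the two-case index check, and the standard ``retraction induces a split injection on homology'' conclusion are exactly the routine verification this statement requires. The paper states this proposition without proof, treating it as straightforward, so your argument supplies precisely the omitted details and matches the intended reasoning.
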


In addition to Proposition \ref{prop:8ac1afe}, we have another property of symmetries in $X$.
\begin{proposition}\label{prop:c143b281}
	The translation map
	$$f_k: X[m, n]\to X[m+k, n+k]$$
	$$R_{ij}\mapsto R_{i+k, j+k}$$
	is an isomorphism between differential algebras.
	Therefore $$HX[m, n]\iso HX[m+k,n+k]$$ as algebras.
\end{proposition}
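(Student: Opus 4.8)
The plan is to prove the stronger assertion that $f_k$ is an isomorphism of differential graded algebras; the claimed isomorphism $HX[m,n]\iso HX[m+k,n+k]$ then follows formally, since any isomorphism of DGAs induces an algebra isomorphism on homology. Concretely I would establish, in order, that $f_k$ is a well-defined homomorphism of graded algebras, that it is bijective, and that it is a chain map.

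First I would observe that $X[m,n]$ is the polynomial $\bF_2$-algebra on the generators $R_{ij}$ with $m\le i<j\le n$, so that any assignment of these generators to elements of $X[m+k,n+k]$ extends uniquely to an algebra homomorphism. Since the condition $m\le i<j\le n$ is equivalent to $m+k\le i+k<j+k\le n+k$, the rule $R_{ij}\mapsto R_{i+k,j+k}$ carries the generating set of the source bijectively onto that of the target. Hence $f_k$ is a well-defined algebra homomorphism, and its two-sided inverse is visibly the homomorphism $f_{-k}$ given by $R_{ij}\mapsto R_{i-k,j-k}$, so $f_k$ is an algebra isomorphism.

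Next I would verify the chain-map identity $f_k\circ d=d\circ f_k$, for which it suffices to check agreement on the generators $R_{ij}$. The reduction to generators is legitimate because both $\Phi=f_k\circ d$ and $\Psi=d\circ f_k$ satisfy the same twisted Leibniz rule $\Phi(ab)=\Phi(a)f_k(b)+f_k(a)\Phi(b)$, using that $f_k$ is multiplicative, that $d$ is a derivation, and that we work over $\bF_2$ so that no signs intervene; agreement on generators then propagates to all monomials by induction on length and to all of $X[m,n]$ by linearity. On a generator, using $dR_{ij}=\sum_{\ell=i+1}^{j-1}R_{i\ell}R_{\ell j}$ from (\ref{eq:99758f58}), I would compute
\[
f_k(dR_{ij})=\sum_{\ell=i+1}^{j-1}R_{i+k,\ell+k}R_{\ell+k,j+k},
\]
and, reindexing the differential of $R_{i+k,j+k}$ by $\ell'=\ell+k$,
\[
d(f_k R_{ij})=dR_{i+k,j+k}=\sum_{\ell'=i+k+1}^{j+k-1}R_{i+k,\ell'}R_{\ell',j+k},
\]
which is the same sum. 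This computation is where the entire argument lives: the differential is translation invariant because its indexing depends only on the nesting order of the indices, not on their absolute values.

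Having established that $f_k$ is a DGA isomorphism with inverse $f_{-k}$, I would conclude that the induced map $HX[m,n]\to HX[m+k,n+k]$ is an algebra isomorphism. As for difficulty, there is essentially no obstacle of substance here — this is a routine verification. The one point I would be careful to state explicitly, rather than wave past, is the reindexing bookkeeping in the chain-map step together with the remark that checking the chain-map property on generators is valid precisely because $d$ is a derivation and $f_k$ is an algebra map.
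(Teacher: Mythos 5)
Your proof is correct and is exactly the routine verification the paper has in mind: the paper states this proposition without proof (just as it calls the analogous reflection map of Proposition \ref{prop:8ac1afe} ``straightforward''), and the essential point you isolate --- translation invariance of the formula $dR_{ij}=\sum_{i<\ell<j}R_{i\ell}R_{\ell j}$, checked on generators and propagated by the derivation property --- is the whole content. One pedantic remark: $f_k$ multiplies the internal degree $t$ by $2^k$ rather than preserving it (while preserving $s$ and $u$), which is why the paper speaks of an isomorphism of \emph{differential algebras} rather than of differential \emph{graded} algebras and elsewhere notes the degree shift; this does not affect your argument or the conclusion.
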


\begin{remark}\label{rmk:41c96856}
	The map $f_k$ is actually the same as the squaring operation $(Sq^0)^k$. 
	Here $Sq^0$ is a power operation in the May spectral sequence (See \cite{Nakamura72}).
\end{remark}

Our strategy to compute $HX_7$ is to show that Conjecture \ref{conj:5f7d758} holds in $HX_n$ for $n=1,2,\dots,7$ inductively. 
For $m<n$, if we can prove that Conjecture \ref{conj:5f7d758} on $HX[1,n]$ implies Conjecture \ref{conj:5f7d758} on $HX[0,n]=HX_n$, then by ignoring all $R_{ij}$ with $j>m$ in the proof, we can obtain a proof of the fact that Conjecture \ref{conj:5f7d758} on $HX[1,m]$ implies Conjecture \ref{conj:5f7d758} on $HX[0,m]=HX_m$. 
Moreover, by Proposition \ref{prop:c143b281}, we have
$$HX[1,n]\iso HX[0,n-1]=HX_{n-1}.$$

Therefore the statement
\begin{equation}\label{eq:dcca769e}
	\text{Conjecture \ref{conj:5f7d758} holds on $HX_6$ $\Longrightarrow$ Conjecture \ref{conj:5f7d758} holds on $HX_7$}
\end{equation}
implies the statement
\begin{center}
	Conjecture \ref{conj:5f7d758} holds on $HX_{k-1}$ $\Longrightarrow$ Conjecture \ref{conj:5f7d758} holds on $HX_k$
\end{center}
for $2\le k\le 5$. 
Since Conjecture \ref{conj:5f7d758} holds in $HX_1=\bF_2[h_0]$, it suffices to prove the statement (\ref{eq:dcca769e}).

Now we have our assumption on $HX_6\iso HX[1, 7]$. 
See Appendix \ref{app:3e79348e} for a list of generators and relations we generate for $HX[1,7]$ according to Conjecture \ref{conj:5f7d758}.

We are going to compute $HX[1,7]=HX_{7,0}$, $HX_{7,1}$, $\dots$, $HX_{7,7}=HX_7$ one by one. 
Note that $X_{7,i}=X_{7,i-1}\otimes \bF_2[R_{0i}]$. 
We apply Proposition \ref{prop:07de5358} to the case where $A=X_{7,i-1}$, $B=X_{7,i}$, $x=R_{0i}$ and $c=\sum_{j=1}^{i-1} R_{0j}R_{ji}$ to obtain the homology $HX_{7,i}$ from $HX_{7,i-1}$.

Recall that Proposition \ref{prop:07de5358} does not solve the extension problems for us.
I managed to solve all of the extensions via many different approaches, including pure guesses, and to check them with the aid of a computer by realizing the relations as boundaries of chains. 

Appendix \ref{app:cf9b8884}-\ref{app:97651fc5} lists the generators and relations of $HX_{7,1},\dots,HX_{7,7}$ computed by the author. 
In these charts, the relations are grouped into two parts. 
Part \ref{item:df8723cf} corresponds to relations \ref{item:df8723cf} in Proposition \ref{prop:07de5358} and Part \ref{item:62c9c1cc} corresponds to relations \ref{item:62c9c1cc} in Proposition \ref{prop:07de5358}. 
For Part \ref{item:df8723cf}, the author put the extension part of the relations on the right-hand side of the equations.

Appendix \ref{app:7b101ee1} reorganizes the relations of $HX_7=HX_{7,7}$ in the form of \Groebner{} bases. 
We can see that all of the leading monomials are square free. 
Hence Conjecture \ref{conj:2a224128} holds in $HX_7$ by Theorem \ref{thm:c329a57a}.

Appendix \ref{app:35b02630} lists the relations of $HX_7$ according to Conjecture \ref{conj:5f7d758}. 
It has been checked by the computer that these relations indeed generate the same \Groebner{} basis as that in Appendix \ref{app:7b101ee1}. 
Hence we see that Conjecture \ref{conj:5f7d758} indeed holds in $HX_7$.

Combining the results above proves Theorem \ref{thm:a26989b6}.

\subsection{A localization of the May spectral sequence}\label{sec:a82fc999}
One of the useful tools to compute the May spectral sequence is the Adams vanishing theorem.
\begin{theorem}[Adams \cite{Adams58}]\label{thm:35b90bcf}
	$\Ext_{\sA}^{s,t}(\bF_2,\bF_2)=0$ if $t-s<q(s)$ where the function $q$ is given by
	$$\begin{array}{lll}
		q(4k) &\hspace{-0.2cm}=\hspace{-0.2cm}& 8k-1;\\
		q(4k+1) &\hspace{-0.2cm}=\hspace{-0.2cm}& 8k+1;\\
		q(4k+2) &\hspace{-0.2cm}=\hspace{-0.2cm}& 8k+2;\\
		q(4k+3) &\hspace{-0.2cm}=\hspace{-0.2cm}& 8k+3.
	\end{array}$$
\end{theorem}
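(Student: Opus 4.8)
This is the classical Adams vanishing theorem, so the aim of any proof is to pin down the slope-$\tfrac12$ edge of $\Ext_{\sA}(\bF_2,\bF_2)$ together with its mod-$4$ correction term. Since the statement is unconditional in every degree, I would \emph{not} route the argument through the May $E_2$ page $HX$. It is true that $E_\infty$ is a subquotient of $E_2$, so a vanishing region for $HX$ would pass verbatim to $\Ext$; but establishing the edge on $HX$ itself would require the global description of $HX$ that is still only conjectural (Conjecture \ref{conj:2cc43131} and Conjecture \ref{conj:5f7d758}), and in any case $HX$ is genuinely larger than $\Ext$ near the edge. For example $R_{01}^{s-1}R_{12}$ is a nonzero cycle of stem $1$ in every homological degree $s$, and it is only the relation $h_0h_1=0$, coming from $R_{01}R_{12}=dR_{02}$, that forces its class to vanish in $HX$ for $s\ge 2$. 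So I would instead compute $\Ext$ directly with the lambda algebra $\Lambda$, whose admissible monomials $\lambda_{i_1}\cdots\lambda_{i_s}$ (with $2i_j\ge i_{j+1}$) form a basis, whose homological degree is $s$, whose stem is $i_1+\cdots+i_s$, and whose homology is $\Ext_{\sA}(\bF_2,\bF_2)$.

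The plan is a lower bound on the stems of nonzero homology classes. First I would isolate the $h_0$-tower: the monomials $\lambda_0^s$ are permanent cycles of stem $0$ representing $h_0^s$ and account for all of stem $0$, so the theorem must be read for positive stems $t-s>0$. For positive stems I would bound $i_1+\cdots+i_s$ from below over admissible monomials capable of carrying a class, and show the minimum is exactly $q(s)$. The slope-$\tfrac12$ behaviour is forced by admissibility $2i_j\ge i_{j+1}$: small total stem makes the indices taper, capping the number of low-index factors and giving a lower bound linear in $s$ with leading coefficient $2$. To reach the sharp constant and propagate it to all $s$ I would use the Adams periodicity operator $P$, of bidegree stem $+8$ and filtration $+4$: because $4/8=\tfrac12$ this operator carries the edge to the edge, and its period-$4$ behaviour in $s$ is exactly what produces the four cases $8k-1,\,8k+1,\,8k+2,\,8k+3$ of $q$. (The algebraic $Sq^0$ of Remark \ref{rmk:41c96856} refines this picture on the generators $h_i=[\lambda_{2^i-1}]$.)

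The step I expect to be the real obstacle is that counting admissible monomials is not enough: a short admissible monomial may simply fail to be a $d$-cycle, or may be a boundary, so to get the sharp constant one must control the Adem-type lambda-algebra differential well enough to certify that \emph{every} admissible monomial of positive stem strictly below $q(s)$ is either a non-cycle or is hit by a boundary, while exhibiting genuine surviving classes exactly on the line $t-s=q(s)$ (such as the top of the $h_0$-tower on $h_3$ in $s=4$, $t-s=7$). The surviving edge classes are not the naive products, so this requires the periodicity isomorphism near the edge, and establishing that $P$ acts injectively in the relevant range is the delicate homological heart of the argument, essentially Adams' periodicity theorem. As an independent check in low filtrations I would compare the resulting line against the explicit May $E_2$ of this paper, where Theorem \ref{thm:a26989b6} determines $HX$, hence an upper bound for $\Ext$, through a large range.
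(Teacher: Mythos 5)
The first thing to note is that the paper does not prove this statement at all: Theorem \ref{thm:35b90bcf} is imported as an external result of Adams \cite{Adams58}, and Section \ref{sec:a82fc999} uses it in the opposite direction --- the vanishing line is taken as input to force differentials in the May spectral sequence (via the localization $h_0^{-1}HX\Rightarrow \bF_2[h_0^{\pm 1}]$), not derived there. So there is no internal proof to compare yours against; what you are proposing is a proof of Adams' theorem itself, and it has to be judged on its own terms.

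On those terms there is a genuine gap, plus one incorrect claim. The incorrect claim: admissibility in the lambda algebra forces no vanishing line whatsoever. The condition $2i_j\ge i_{j+1}$ permits arbitrary decreases of indices, so $\lambda_n\lambda_0^{s-1}$ is admissible for every $n\ge 0$ and every $s$; admissible monomials exist in every bidegree with positive stem, in particular far below $t-s=q(s)$. Hence ``the slope-$\tfrac12$ behaviour is forced by admissibility'' is false: the entire vanishing phenomenon is a statement about the homology of $\Lambda$, i.e.\ about the differential. You concede this in your ``real obstacle'' paragraph, but at that point the proposal reduces to ``certify that everything below the line dies, using Adams periodicity,'' which is exactly the theorem to be proved, deferred to ``essentially Adams' periodicity theorem.'' The inductive mechanism that actually does the killing --- in Adams' argument a comparison of a minimal $\sA$-resolution with resolutions over the subalgebra generated by $Sq^1$, with degree control propagated through the syzygies --- is never supplied, and no statement in this paper (including Theorem \ref{thm:a26989b6}, which only bounds $\Ext$ from above by $HX$ in a range) can substitute for it. Your negative point about routing through $HX$ is correct and worth keeping: by the localization theorem of Section \ref{sec:a82fc999}, $h_0^Nb_{02}\neq 0$ in $HX$ for all $N$, so the May $E_2$ page genuinely violates the vanishing line and only higher differentials restore it. But your chosen illustration $R_{01}^{s-1}R_{12}$ argues the wrong way, since its homology class is \emph{zero} in $HX$ for $s\ge 2$; the $h_0$-towers on the $b_{0j}$ are the examples that make your point.
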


Note that May \cite{May64} and Tangora \cite{Tangora} both used this theorem to compute some differentials in the May spectral sequence. 
This is based on the fact that all the infinite $h_0$-structure lines in the May spectral sequence have to be truncated by some differentials in order for the vanishing line to appear in the $E_\infty$ page. 
One of the examples is the first nontrivial $d_6$ differential 
$$d_6(x)=h_0^5y$$
where
$$x=h_0b_{02}^3b_{03}h_0(1),~y=h_4b_{02}^2h_0(1)+h_0^3b_{02}b_{13}$$
in $E_6$. 
Here $h_0^iy\neq 0$ for all $i\ge 0$ and $x$ is the only thing that can truncate this infinite $h_0$-structure line supported by $y$. 
By computing the filtration degrees this differential is $d_6$.

These infinite $h_0$-structure lines inherit structures from the May spectral sequence and form another spectral sequence which converges to zero in positive stems because of Theorem \ref{thm:35b90bcf}. 
A better way to process this information is to invert $h_0$ in the May spectral sequence and study the localized spectral sequence which converges to $\bF_2[h_0^{\pm 1}]$. 
The following theorem shows the structure of the $E_2$ page of the localized May spectral sequence. 
What is surprising is that it contains a subalgebra $HX[2,\infty]$ which is isomorphic to the original $E_2\iso HX$ with a shift in degree $t$.
\begin{theorem}
	$$h_0^{-1}HX\iso\bF_2[h_0^{\pm 1}, b_{0j}: j\ge 2]\otimes HX[2, \infty]$$
\end{theorem}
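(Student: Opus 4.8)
The plan is to lift the localization to the chain level and then read off a tensor-product decomposition of differential graded algebras. Since $R_{01}$ is a cycle (there is no index strictly between $0$ and $1$, so $dR_{01}=0$), inverting it is exact and localization commutes with homology:
\[
h_0^{-1}HX \iso H\bigl(X[R_{01}^{-1}]\bigr).
\]
It therefore suffices to compute the homology of the localized DGA $L := X[R_{01}^{-1}] = \bF_2[R_{01}^{\pm1},\, R_{ij}:(i,j)\neq(0,1)]$, and the goal is to recognize $L$ as a tensor product whose factors have the three predicted homologies.

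Next I would perform an invertible change of polynomial generators that uses up the $R_{1j}$ against the $R_{0j}$ for $j\ge 2$. For $j\ge 2$ set $u_j := R_{01}^{-1}R_{0j}$. Because $d(R_{01}^{-1})=0$ over $\bF_2$, the formula $dR_{0j}=R_{01}R_{1j}+\sum_{2\le k<j}R_{0k}R_{kj}$ gives
\[
du_j \;=\; R_{1j} + \sum_{2\le k<j} u_k R_{kj}.
\]
Define $w_j := du_j$. The substitution $R_{0j}\mapsto R_{01}u_j$ and $R_{1j}\mapsto w_j+\sum_{2\le k<j}u_kR_{kj}$ is triangular in $j$, hence invertible, so $\{R_{01}^{\pm1}\}\cup\{u_j,w_j:j\ge2\}\cup\{R_{ij}:2\le i<j\}$ is a free generating set for $L$. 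In these coordinates the differential is determined by $dR_{01}=0$, $du_j=w_j$, $dw_j=d^2u_j=0$, and, for $2\le i<j$, $dR_{ij}=\sum_{i<k<j}R_{ik}R_{kj}$, this last formula involving only generators with both indices at least $2$.

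The crucial consequence is that $L$ splits as a tensor product of DGAs,
\[
L \iso \bF_2[R_{01}^{\pm1}] \otimes \Bigl(\bigotimes_{j\ge2}\bF_2[u_j,w_j]\Bigr)\otimes X[2,\infty],
\]
with $d=0$ on the first factor, $du_j=w_j$ on each middle factor, and the intrinsic differential of $X[2,\infty]$ on the last. Applying the Künneth theorem over $\bF_2$ (valid since the complex is locally finite) reduces the computation to the three factors. A short calculation shows $(\bF_2[u_j,w_j],\,du_j=w_j)$ has homology $\bF_2[u_j^2]$: the cycles are the monomials $u_j^aw_j^b$ with $a$ even, and each such monomial with $b\ge1$ is the boundary of $u_j^{a-1}w_j^{b-1}\cdot w_j = d(u_j^{a-1}w_j^{b})$ up to the char-$2$ coefficient, so the surviving classes are exactly the $u_j^{2i}$. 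Since $u_j^2=R_{01}^{-2}R_{0j}^2$ represents $h_0^{-2}b_{0j}$ and $h_0$ is invertible, $\bF_2[h_0^{\pm1},u_j^2]=\bF_2[h_0^{\pm1},b_{0j}]$. Combining the factors yields
\[
h_0^{-1}HX \iso \bF_2[h_0^{\pm1}] \otimes \bF_2[b_{0j}:j\ge2] \otimes HX[2,\infty],
\]
which is the assertion.

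The hard part, and the step deserving the most care, is justifying the tensor splitting: one must verify that the change of variables is a genuine free substitution (triangularity in $j$) and, more importantly, that after substitution the differential of each family lands inside its own factor — in particular that the generators $R_{ij}$ with $i\ge2$ are left completely untouched and that the $w_j$ absorb every occurrence of $R_{1j}$. Once this bookkeeping is pinned down, the relation $dw_j=0$ is automatic and one never needs to compute $dR_{1j}$ directly. A secondary point to confirm is the legitimacy of the infinite Künneth decomposition, which follows from the internal $(s,t,u)$-grading making every graded piece finite-dimensional.
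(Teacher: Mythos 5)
Your proof is correct, but it takes a genuinely different route from the paper's. The paper first replaces localization by the quotient $X/(R_{01}-1)$ (so $h_0^{-1}HX\iso \bF_2[h_0^{\pm 1}]\otimes H(X/(R_{01}-1))$), and then runs an induction over $m$, adjoining the polynomial generators $R_{1m}$ and $R_{0m}$ one at a time to the intermediate algebras $Y_m$ and invoking its general Proposition on polynomial DGA extensions (filtration spectral sequence plus an annihilator computation: $dR_{1m}$ becomes a boundary, so adjoining $R_{1m}$ contributes a free factor $\bF_2[e_{0m}]$, and then $dR_{0m}=e_{0m}$ with trivial annihilator kills $e_{0m}$ and leaves $\bF_2[b_{0m}]$). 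You instead stay at the chain level with the localization $X[R_{01}^{-1}]$, perform the triangular change of variables $u_j=R_{01}^{-1}R_{0j}$, $w_j=du_j$ (note your $w_j$ is exactly the paper's cycle $e_{0j}$ divided by the unit $R_{01}$), observe that this splits the localized DGA as $\bF_2[R_{01}^{\pm 1}]\otimes\bigl(\bigotimes_{j\ge 2}\bF_2[u_j,w_j]\bigr)\otimes X[2,\infty]$, and finish with K\"unneth and the elementary computation $H(\bF_2[u_j,w_j],du_j=w_j)=\bF_2[u_j^2]$. What your argument buys is self-containedness: no spectral sequence, no annihilator ideals, no extension problems (which the paper's Proposition explicitly does not resolve in general and which must be seen to be vacuous here), and moreover an explicit multiplicative splitting at the cochain level. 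What the paper's argument buys is uniformity of method: it exercises the same Proposition \ref{prop:07de5358} that drives the whole $HX_7$ computation, and it exhibits the intermediate algebras $HY_m$, which is in the spirit of the rest of Section \ref{sec:7c98bbbc}. One small slip to fix in your write-up: for $a$ even and $b\ge 1$ the monomial $u_j^aw_j^b$ is the boundary of $u_j^{a+1}w_j^{b-1}$ (since $d(u_j^{a+1}w_j^{b-1})=u_j^{a}w_j^{b}$ in characteristic $2$), not of $u_j^{a-1}w_j^{b}$; the conclusion $\bF_2[u_j^2]$ is unaffected, and $\bF_2[h_0^{\pm 1},u_j^2:j\ge 2]=\bF_2[h_0^{\pm 1},b_{0j}:j\ge 2]$ since $b_{0j}=h_0^2u_j^2$.
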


\begin{proof}
	Note that as a differential algebra,
	\begin{equation*}
		h_0^{-1}HX\iso \bF_2[h_0^{\pm 1}]\otimes H(X/(R_{01}-1)),
	\end{equation*}
	since $h_0$ is represented by $R_{01}$.
	It suffices to show that
	\begin{equation}\label{eq:7abf37f4}
		H(X/(R_{01}-1))\iso \bF_2[b_{0j}:j\ge 2]\otimes HX[2,\infty].
	\end{equation}
	
	Let $$Y_m=X[2,\infty]\otimes \bF_2[R_{0j}, R_{1j}:j\le m]/(R_{01}-1).$$
	Observe that $$X\iso\colim\limits_{m} Y_m \hspace{10pt}\text{and}\hspace{10pt}  Y_m\iso Y_{m-1}\otimes \bF_2[R_{0m}, R_{1m}].$$
	Now it suffices to show by induction that for all $m$
	$$HY_m\iso \bF_2[b_{0j}:2\le j\le m]\otimes HX[2,\infty].$$
	
	The claim is trivial when $m=0,1$.

	Assume it is true for $Y_{m-1}$. 
	First we consider $Y_{m-1}\otimes \bF_2[R_{1m}]$. 
	Note that $dR_{1m}$ is a boundary in $HY_{m-1}$ since
	$$d(e_{0m})=d(R_{01}R_{1m}+R_{02}R_{2m}+\cdots+R_{0,m-1}R_{m-1,m})=0$$
	which implies
	$$d(R_{1m})=d(R_{02}R_{2m}+\cdots+R_{0,m-1}R_{m-1,m}).$$
	By Proposition \ref{prop:07de5358} we have
	$$H(Y_{m-1}\otimes \bF_2[R_{1m}])\iso HY_{m-1}\otimes \bF_2[e_{0m}].$$

	Now we consider $Y_m=Y_{m-1}\otimes \bF_2[R_{1m}]\otimes \bF_2[R_{0m}]$. 
	Note that $d R_{0m}=e_{0m}$ and $\Ann(e_{0m})$ is trivial in $HX[2,\infty]\otimes \bF_2[e_{0m}]$. 
	Therefore by Proposition \ref{prop:07de5358},
	$$HY_m\iso HY_{m-1}\otimes \bF_2[b_{0m}]\iso HX[2,\infty]\otimes \bF_2[b_{0j},2\le j\le m].$$
	Hence the induction is complete.
\end{proof}

By the Adams vanishing theorem on the $E_2$ page of the Adams spectral sequence we know that
$$h_0^{-1}\Ext_{\sA}^{*,*}(\bF_2,\bF_2)=\bF_2[h_0^{\pm 1}].$$
Hence after inverting $h_0$ in the May spectral sequence we get a spectral sequence with
$$E_2=h_0^{-1}HX\Longrightarrow \bF_2[h_0^{\pm 1}].$$
By the theorem above this is the same as
$$\bF_2[h_0^{\pm 1}, b_{0j}: j\ge 2]\otimes HX[2, \infty]\Longrightarrow \bF_2[h_0^{\pm 1}]$$.

Note that $HX[2,\infty]$ is isomorphic to $HX$ with a shift of degrees. 
Therefore the following composition is an embedding
$$\varphi: HX\fto{~~(Sq^0)^2~~}HX\fto{\hspace{1cm}}h_0^{-1}HX$$
where the second map is the localization. 
Since the operation $Sq^0$ (see Remark \ref{rmk:41c96856}) commutes with all $d_r$ differentials in the May spectral sequence we have a comparison map
\begin{equation}\label{eq:19bb24a5}
	\xymatrix{
		HX\ar@{=>}[r]\ar[d]_{\varphi} & \Ext_{\sA}^{*,*}(\bF_2,\bF_2)\ar[d]\\
		h_0^{-1}HX\ar@{=>}[r] & \bF_2[h_0^{\pm 1}]
	}
\end{equation}
The bottom spectral sequence has an advantage in calculation since all elements in positive stems have to be killed by differentials. 
We intend to use the bottom spectral sequence to aid in computing the top.
Interestingly, computations in low degrees lead us to the following conjecture.

\begin{conjecture}\label{conj:f911bd1c}
	The localized spectral sequence
	$$E_2=h_0^{-1}HX\Longrightarrow \bF_2[h_0^{\pm 1}]$$
	is isomorphic to a sub-spectral sequence 
	\begin{equation}\label{eq:6c4d5358}
		E_2=\bF_2[\frac{b_{0j}}{h_0^2}: j\ge 2]\otimes HX[2,\infty]\Longrightarrow \bF_2
	\end{equation}
	 tensored with $\bF_2[h_0^{\pm 1}]$.
\end{conjecture}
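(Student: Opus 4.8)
The plan is to upgrade the structure theorem for $h_0^{-1}HX$ proved just above from an isomorphism of $E_2$ pages to an isomorphism of spectral sequences. Write $\beta_{0j}=b_{0j}/h_0^2$ and $C=\bF_2[\beta_{0j}:j\ge 2]\otimes HX[2,\infty]$. Since $h_0$ is invertible, trading the polynomial generators $b_{0j}$ for $\beta_{0j}$ in $\bF_2[h_0^{\pm1},b_{0j}]$ is an algebra isomorphism, so the structure theorem reads
$$h_0^{-1}HX\iso \bF_2[h_0^{\pm1}]\otimes C,$$
which is the desired splitting on $E_2$. Because $d_3(h_0)=0$ and $h_0$ is a unit, multiplication by $h_0^{\pm1}$ is an isomorphism commuting with every differential; hence each page of the localized spectral sequence is a free $\bF_2[h_0^{\pm1}]$-module and every $d_{2r-1}$ is $\bF_2[h_0^{\pm1}]$-linear. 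The entire problem then reduces to showing that, in the $C$-basis, every differential has its matrix defined over $\bF_2$ rather than over $\bF_2[h_0^{\pm1}]$, i.e. that $C$ is closed under all differentials; convergence of the $C$-factor to $\bF_2$ will follow from the abutment.

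First I would compute $d_3$ on the generators of $C$. The relation $dR_{02}=R_{01}R_{12}$ gives $h_0h_1=0$ in $HX$, so $h_1=0$ in $h_0^{-1}HX$. Dividing the known formulas $d_3(b_{02})=h_1^3+h_0^2h_2$ and $d_3(b_{0j})=h_1b_{1j}+b_{0,j-1}h_{j+1}$ (for $j>2$) by $h_0^2$ and using $h_1=0$ yields
$$d_3(\beta_{02})=h_2,\qquad d_3(\beta_{0j})=\beta_{0,j-1}h_{j+1}\quad(j\ge 3),$$
each of which lies in $C$ and carries no power of $h_0$. On the other tensor factor, the map $\varphi$ of diagram (\ref{eq:19bb24a5}) is a map of spectral sequences whose image is exactly $HX[2,\infty]$; as $Sq^0$ commutes with all differentials, $HX[2,\infty]$ is a sub-spectral sequence and is preserved by every $d_{2r-1}$. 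Since $d_3$ is a derivation and $C$ is generated as an algebra by the $\beta_{0j}$ together with $HX[2,\infty]$, we get $d_3(C)\subset C$ over $\bF_2$, so $E_5\iso \bF_2[h_0^{\pm1}]\otimes H(C,d_3)$ and the splitting survives past $d_3$.

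The inductive step is where the real difficulty lies. Assuming $E_{2r-1}\iso \bF_2[h_0^{\pm1}]\otimes C_{2r-1}$ with $C_{2r-1}$ closed under $d_{2r-1}$ over $\bF_2$, one has $E_{2r+1}\iso \bF_2[h_0^{\pm1}]\otimes H(C_{2r-1},d_{2r-1})$, and the task is to verify that $d_{2r+1}$ again has its matrix over $\bF_2$ in this basis—equivalently, that no higher differential carries a class of the $C$-factor into a positive power of $h_0$. The $HX[2,\infty]$ part is automatically safe, being a sub-spectral sequence, so the issue is confined to the classes built from the $\beta_{0j}$. The trigrading alone does not settle it: for a fixed value of $(t-s,u)$ the internal $s$-grading of $C$ is not determined, so a priori $d_{2r+1}$ could mix different $h_0$-weights. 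Ruling this out is the main obstacle, and is presumably why the statement is only a conjecture.

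The route I expect to work is to recognize the $d_3$-tower $d_3(\beta_{02})=h_2$, $d_3(\beta_{0j})=\beta_{0,j-1}h_{j+1}$ as an iterated application of Proposition \ref{prop:07de5358} (taking $c=h_{j+1}$ at each stage), thereby computing $H(C,d_3)$ explicitly as a copy of $HX$-type data plus an acyclic remainder, and then to run an induction on $j$ parallel to the computation of $HX_7$ in Section \ref{sec:11e59387}, transporting the known differentials by $Sq^0$-equivariance. Once $H(C,d_3)$ and its higher differentials are under control, the convergence of the $C$-factor to $\bF_2$ is immediate from the ambient convergence $h_0^{-1}HX\Longrightarrow \bF_2[h_0^{\pm1}]$: the factorization forces $E_\infty\iso \bF_2[h_0^{\pm1}]\otimes C_\infty$ to be the associated graded of the abutment $\bF_2[h_0^{\pm1}]$, and hence $C_\infty\iso \bF_2$.
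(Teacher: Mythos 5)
You are attempting a statement that the paper itself does not prove: this is Conjecture \ref{conj:f911bd1c}, and the author says explicitly that he ``cannot yet prove this.'' The only evidence the paper offers is indirect: a theorem constructing a second spectral sequence (from a weight filtration on the cobar resolution $\tilde C(\sA_*)$) which has the same $E_2$ page as (\ref{eq:6c4d5358}) and also converges to $\bF_2$, but which is not shown to be isomorphic to the localized May spectral sequence. So there is no proof in the paper to compare against, and your text, read as a proof, has a genuine gap --- one you yourself flag.

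Concretely, your reductions are fine as far as they go: the splitting $h_0^{-1}HX\iso\bF_2[h_0^{\pm1}]\otimes C$ is the paper's structure theorem; $\bF_2[h_0^{\pm1}]$-linearity of all differentials follows because $h_0$ is an invertible permanent cycle; and the computation $d_3(\beta_{02})=h_2$, $d_3(\beta_{0j})=\beta_{0,j-1}h_{j+1}$ is correct, using that $h_1=0$ after inverting $h_0$. But the entire content of the conjecture is the step you leave open: that for every $r$ the $\bF_2$-form $C_{2r-1}$ can be chosen so that $d_{2r-1}$ has matrix over $\bF_2$, i.e.\ no differential carries a $C$-class to a nonzero $h_0$-multiple of another. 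As you note, the trigrading does not force this, and $Sq^0$-equivariance only protects the $HX[2,\infty]$ tensor factor, not classes involving the $\beta_{0j}$. Your proposed mechanism --- iterating Proposition \ref{prop:07de5358} along the tower $d_3\beta_{0j}=\beta_{0,j-1}h_{j+1}$ and then inducting as in Section \ref{sec:11e59387} --- would at best compute $H(C,d_3)$, i.e.\ the next page of the putative sub-spectral sequence; it says nothing about whether $d_5,d_7,\dots$ preserve it. The closing convergence argument is also circular: asserting that ``the factorization forces $E_\infty\iso\bF_2[h_0^{\pm1}]\otimes C_\infty$'' presupposes that the splitting persists to $E_\infty$, which is precisely what must be proved. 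In short, you have correctly located the obstruction but not removed it; the statement remains a conjecture both in the paper and after your proposal.
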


This conjecture will rule out many possible choices of differentials in the spectral sequence starting with $E_2=h_0^{-1}HX$. Although the author cannot yet prove this, there is another spectral sequence with the same $E_2$ and $E_\infty$ as those in (\ref{eq:6c4d5358}).

\begin{theorem}
	Consider the cobar resolution $\tilde C(\sA_*)$ of $\bF_2$ over $\sA_*$ where $\tilde C_s(\sA_*)$ consists of elements $[a_1|\cdots|a_s]a$ and
	$$d[a_1|\cdots|a_s]a=\sum_i\sum [a_1|\cdots|a_i^\prime|a_i^\pprime|\cdots|a_s]a + \sum [a_1|\cdots|a_i^\prime|a_i^\pprime|\cdots|a_s|\epsilon(a^\prime)]a^\pprime.$$

	There is a filtration on $\tilde C(\sA_*)$ such the resulting spectral sequence has an $E_2$ page isomorphic to the $E_2$ in (\ref{eq:6c4d5358})
	with a degree shift in $t$, and it also converges to $\bF_2$.
\end{theorem}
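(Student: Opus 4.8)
The plan is to put the Ravenel weight filtration of Definition~\ref{def:d05cc3a5} on $\tilde C(\sA_*)$, extended to the trailing coefficient by $w([a_1|\cdots|a_s]a)=w(a_1)+\cdots+w(a_s)+w(a)$, and then to read off the two assertions---convergence and the shape of $E_2$---separately.

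Convergence is the easy half. Since $\tilde C(\sA_*)$ is a resolution of $\bF_2$ it is acyclic, with $H(\tilde C(\sA_*))\iso \bF_2$, and the weight filtration is exhaustive and, in each internal degree, bounded, so the associated spectral sequence converges to $\bF_2$. This is exactly the point of using the resolution rather than the cobar complex: the trailing coefficient $a$ supplies a contracting homotopy that eventually annihilates everything, which is the algebraic shadow of the Adams vanishing statement $h_0^{-1}\Ext_{\sA}(\bF_2,\bF_2)=\bF_2[h_0^{\pm1}]$ used above.

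For the $E_2$ page I would compute the associated graded exactly as in Proposition~\ref{prop:619ea206} and Proposition~\ref{prop:1afacc52}. Passing to $d_0$-homology, the bar part contributes $X=\bF_2[R_{ij}]$ while the associated graded of the coefficient $\sA_*$ is the exterior algebra $E^0_R\sA_*$ on the primitives $\tilde R_{ij}$; the absorption term $\sum[\cdots|\epsilon(a^\prime)]a^\pprime$ provides a twisting differential pairing a coefficient class with the corresponding $R_{ij}$. The filtration must be tuned---recording the indices $\{0,1\}$ separately from the indices $\ge 2$, just as $R_{01}$ is singled out in the localization theorem---so that this Koszul pairing is \emph{staggered}: the coefficient cancels the boundary generators $R_{0j}$ and $R_{1j}$ against their partners, leaving only the squares $b_{0j}=[R_{0j}^2]$ normalized by $h_0^{-2}$, while the subalgebra $X[2,\infty]$ survives untouched and its co-Koszul differential computes $HX[2,\infty]$. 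Running the induction of Proposition~\ref{prop:07de5358} in the $Y_m$-style of the localization theorem then identifies $E_2\iso \bF_2[b_{0j}/h_0^2:j\ge 2]\otimes HX[2,\infty]$, and the translation isomorphism $f_2=(Sq^0)^2$ of Proposition~\ref{prop:c143b281} and Remark~\ref{rmk:41c96856} accounts for the promised degree shift in $t$.

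The hard part will be making the filtration precise enough that the associated graded splits cleanly into the \emph{$HX[2,\infty]$ factor} and the \emph{$\bF_2[b_{0j}/h_0^2]$ factor} without the Koszul cancellation collapsing the page all the way to $\bF_2$ prematurely, and then solving the resulting extension problems in the $E_2$ identification. This is precisely the delicate step that, in the localization theorem, was carried out by explicit guesswork and verified by computer; here one must reproduce it in the resolution setting, where the trailing coefficient plays the role that inverting $h_0$ played before.
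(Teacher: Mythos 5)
Your proposed filtration is the one place where this proof cannot be waved through, and as written it fails. Extending $w$ unchanged to the trailing coefficient makes the contraction term of the resolution differential \emph{filtration-preserving}: for example $d([\,]\xi_n)=\sum_{k=0}^{n-1}[\xi_{n-k}^{2^k}]\xi_k$ contains the $k=0$ summand $[\xi_n]\xi_0=[\xi_n]1$, whose weight $w(\xi_n)+w(1)=2n-1$ equals $w([\,]\xi_n)=2n-1$. Hence the contraction survives into $d_0$, and the $E_0$ page (with $d_0$) is precisely the cobar resolution of the associated graded Hopf algebra $E^0_R\sA_*$, which is acyclic; so $E_1\iso\bF_2$ concentrated in degree zero and the spectral sequence degenerates there. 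It does converge to $\bF_2$, but with $E_2=\bF_2$, not with the $E_2$ of (\ref{eq:6c4d5358}). The ``tuning'' you defer to the end is therefore not bookkeeping to be settled by extension problems or by re-running the induction of Proposition \ref{prop:07de5358}: it is the entire content of the theorem, and your sketch of it (recording the indices $\{0,1\}$ separately from the indices $\ge 2$) is not the mechanism that works.

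The paper resolves exactly this difficulty by a precise and small modification that your proposal is missing: the trailing coefficient is weighted by a \emph{different} linear function, $w^\prime(\xi_1^{r_1}\cdots\xi_k^{r_k})=\sum_{i}2ir_i$, so $w^\prime(\xi_j)=2j$ rather than $2j-1$, and one filters $\tilde C_s(\sA_*)=C_s(\sA_*)\otimes\sA_*$ by $w(a_1)+\cdots+w(a_s)+w^\prime(a)$. With this choice every summand of $d([\,]\xi_n)$ has weight exactly $2n-1=w^\prime([\,]\xi_n)-1$, so the contraction is pushed off $d_0$ entirely; by Proposition \ref{prop:619ea206} one gets $E_1\iso X\otimes\sA_*$, with the contraction reappearing only as the twisting term in $d_1(\xi_j)=R_{0j}+\sum_k\xi_kR_{kj}$, $d_1(R_{ij})=\sum_kR_{ik}R_{kj}$. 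The second idea you are missing is the identification of this twisted DGA with $X/(R_{01}-1)$ via $R_{ij}\otimes 1\mapsto R_{i+1,j+1}$, $1\otimes\xi_j\mapsto R_{0,j+1}$ (this reindexing, a single translation rather than your $(Sq^0)^2$, is also what produces the degree shift in $t$); consequently $E_2\iso H(X/(R_{01}-1))$, which is already identified with $\bF_2[b_{0j}:j\ge 2]\otimes HX[2,\infty]$ by (\ref{eq:7abf37f4}), so no new induction and no new extension problems arise. Your convergence argument (acyclicity of the resolution plus boundedness of the filtration in each bidegree) is fine and agrees with the paper.
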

\begin{proof}
	We continue the use of Ravenel's filtration in Section \ref{sec:b0f0886a}. 
	Consider the weight function $w$ on $\sA_*$ and $C(\sA_*)$ in Definition \ref{def:826d0f73}. 
	We define another linear function $w^\prime$ on $\sA_*$ given by
	$$w^\prime(\xi_1^{r_1}\cdots \xi_k^{r_k})=\sum_{i=1}^k 2ir_i.$$
	We can define a weight function $w$ on $\tilde C_s(\sA_*)=C_s(\sA_*)\otimes \sA_*$ by
	$$w[a_1|\cdots|a_s]a=w(a_1)+\cdots +w(a_s)+w^\prime(a).$$

	Note that
	$$d([]\xi_n)=\sum_{k=0}^{n-1} [\xi_{n-k}^{2^k}]\xi_k$$
	and
	$$w([]\xi_n)=2n > w([\xi_{n-k}^{2^k}]\xi_k)=2k+2(n-k)-1=2n-1.$$
	Therefore on the $E_0$ page $d_0([]\xi_n)=0$. 
	Hence by Proposition \ref{prop:619ea206}, the $E_1$-term is isomorphic to $X\otimes \sA_*$. 
	The $d_1$ differentials are given by
	$$d_1(R_{ij})=\sum_k R_{ik}R_{kj},$$
	$$d_1(\xi_j)=R_{0j}+\sum_k \xi_kR_{kj}.$$
	
	By (\ref{eq:7abf37f4}) it suffices to show that
	$$X\otimes \sA_*\iso X/(R_{01}-1)$$
	as differential algebras. 
	In fact, it is not hard to check that the following map gives the isomorphism.
	$$X\otimes \sA_*\longrightarrow X/(R_{01}-1)$$
	$$R_{ij}\otimes 1\longmapsto R_{i+1,j+1}$$
	$$1\otimes\xi_j\longmapsto R_{0, j+1}.$$
\end{proof}

In contrast to the comparison map in (\ref{eq:19bb24a5}) we now build another comparison
$$\xymatrix{
	HX\ar@{=>}[r]\ar[d]_{\varphi} & \Ext_{\sA}^{*,*}(\bF_2,\bF_2)\ar[d]\\
	\bF_2[\frac{b_{0j}}{h_0^2}: j\ge 2]\otimes HX[2,\infty]\ar@{=>}[r] & \bF_2
}$$
using the composition of the map of complexes $C(\sA_*)\to \tilde C(\sA_*)$ and the operation $Sq^0$. 
The map $\varphi$ is again an embedding. 
A stronger version of Conjecture \ref{conj:f911bd1c} is that (\ref{eq:6c4d5358}) is isomorphic to the bottom spectral sequence above.

The localization map and other comparison maps with compositions of $(Sq^0)^i$ yield different indeterminacies for computing the May spectral sequence. 
The author has been collaborating with the computer and feeding these data into the program to obtain higher differentials in the May spectral sequence.

\appendix

\section{Charts}\label{app:667432e1}
There is a new symbol in the following charts. 
Note that for each indecomposable $h_i(S^\prime)=h_{S, T}\in \sH$,
$$\sum_{j=0}^{i-1} R_{0j}R_{S-\{i\}+\{j\}, T}$$ 
is a cycle in $HX_{7, i}$. 
We let $r_i(S^\prime)=r_{S,T}$ denote the homology class of this cycle in $HX_{7, i}$.
\subsection{\mybm{$HX[1,7]$}{HX[1,7]}}\label{app:3e79348e}
\subsubsection*{Generators}\hspace{1pt}

$h_i$, $1\le i\le 6$

$h_i(1)$, $1\le i\le 4$

$h_i(1, 3)$, $h_i(1, 2)$, $1\le i\le 2$

$b_{ij}$, $1\le i<i+2\le j\le 7$.

\subsubsection*{Relations}\hspace{1pt}

$h_1h_2 = 0$

$h_2h_3 = 0$

$h_3b_{13} = h_1h_1(1)$

$h_3h_4 = 0$

$h_3h_1(1) = h_1b_{24}$

$h_4h_1(1) = 0$

$b_{13}b_{24} = h_2^2b_{14} + h_1(1)^2$

$h_1h_2(1) = 0$

$h_4b_{24} = h_2h_2(1)$

$h_4h_5 = 0$

$h_2(1)b_{13} = h_2h_4b_{14}$

$h_4h_2(1) = h_2b_{35}$

$h_1(1)h_2(1) = 0$

$b_{13}b_{35} = h_1^2b_{25} + h_4^2b_{14}$

$h_1(1)b_{35} = h_1h_3b_{25}$

$h_5h_2(1) = 0$

$b_{24}b_{35} = h_3^2b_{25} + h_2(1)^2$

$h_2h_3(1) = 0$

$h_5b_{35} = h_3h_3(1)$

$h_3(1)b_{13} = h_1h_1(1, 3)$

$h_1(1)h_3(1) = h_3h_1(1, 3)$

$h_3h_1(1, 3) = h_1h_5b_{25}$

$h_3(1)b_{24} = h_3h_5b_{25}$

$h_5h_6 = 0$

$h_1(1)h_1(1, 3) = h_2^2h_5b_{15} + h_5b_{13}b_{25}$

$h_3(1)b_{14} = h_1h_1(1, 2) + h_3h_5b_{15}$

$h_5h_3(1) = h_3b_{46}$

$h_1(1, 3)b_{24} = h_2^2h_1(1, 2) + h_5h_1(1)b_{25}$

$h_2(1)h_3(1) = 0$

$h_1(1, 2)b_{13} = h_5h_1(1)b_{15} + h_1(1, 3)b_{14}$

$h_1(1)b_{46} = h_5h_1(1, 3)$

$h_1(1)h_1(1, 2) = h_5b_{24}b_{15} + h_5b_{14}b_{25}$

$h_2(1)h_1(1, 3) = h_2h_4h_1(1, 2)$

$b_{24}b_{46} = h_2^2b_{36} + h_5^2b_{25}$

%$\bullet h_3h_5b_{14}b_{25} = h_1h_1(1, 2)b_{24} + h_3h_5b_{24}b_{15}$

$b_{46}b_{14} = h_1^2b_{26} + h_5^2b_{15} + b_{13}b_{36}$

$h_1(1, 3)b_{35} = h_1h_3(1)b_{25} + h_4^2h_1(1, 2)$

$h_1(1)b_{36} = h_1h_3b_{26} + h_5h_1(1, 2)$

$h_2(1)b_{46} = h_2h_4b_{36}$

$h_6h_3(1) = 0$

%$\bullet h_3h_5h_1(1, 2) = h_1h_3^2b_{26} + h_1b_{24}b_{36}$

$b_{35}b_{46} = h_4^2b_{36} + h_3(1)^2$

$h_6h_1(1, 3) = 0$

$h_3h_4(1) = 0$

$h_3(1)h_1(1, 3) = h_1h_4^2b_{26} + h_1b_{46}b_{25}$

$h_6b_{46} = h_4h_4(1)$

$h_1(1)h_4(1) = 0$

$h_6h_1(1, 2) = 0$

$b_{13}b_{46}b_{25} = h_2^2h_4^2b_{16} + h_2^2b_{46}b_{15} + h_4^2b_{13}b_{26} + h_1(1, 3)^2$

$h_1h_2(1, 3) = 0$

$h_4(1)b_{24} = h_2h_2(1, 3)$

$h_3(1)h_1(1, 2) = h_1b_{35}b_{26} + h_1b_{25}b_{36}$

$h_2(1, 3)b_{13} = h_2h_4(1)b_{14}$

$b_{13}b_{25}b_{36} = h_1^2b_{25}b_{26} + h_2^2b_{35}b_{16} + h_2^2b_{36}b_{15} + h_4^2b_{14}b_{26} + h_1(1, 3)h_1(1, 2)$

$h_2(1)h_4(1) = h_4h_2(1, 3)$

$h_4h_2(1, 3) = h_2h_6b_{36}$

$h_1(1)h_2(1, 3) = 0$

%$\bullet h_6b_{13}b_{36} = h_1^2h_6b_{26} + h_4h_4(1)b_{14}$

$h_4(1)b_{35} = h_4h_6b_{36}$

$h_1(1, 2)b_{46} = h_1h_3(1)b_{26} + h_1(1, 3)b_{36}$

$b_{14}b_{25}b_{36} = h_3^2b_{15}b_{26} + h_2(1)^2b_{16} + h_1(1, 2)^2 + b_{24}b_{36}b_{15} + b_{35}b_{14}b_{26}$

$h_1h_2(1, 2) = 0$

$h_4(1)b_{25} = h_2h_2(1, 2) + h_4h_6b_{26}$

$h_2(1)h_2(1, 3) = h_3^2h_6b_{26} + h_6b_{24}b_{36}$

%$\bullet h_1h_6b_{24}b_{36} = h_1h_3^2h_6b_{26}$

$h_6h_4(1) = h_4b_{57}$

$h_2(1, 2)b_{13} = h_2h_4h_6b_{16} + h_2h_4(1)b_{15}$

$h_2(1, 3)b_{35} = h_3^2h_2(1, 2) + h_6h_2(1)b_{36}$

$h_1(1)h_2(1, 2) = 0$

$h_3(1)h_4(1) = 0$

$h_2(1, 2)b_{24} = h_6h_2(1)b_{26} + h_2(1, 3)b_{25}$

$h_2(1, 2)b_{14} = h_6h_2(1)b_{16} + h_2(1, 3)b_{15}$

$h_4(1)h_1(1, 3) = 0$

$h_2(1)b_{57} = h_6h_2(1, 3)$

$h_2(1)h_2(1, 2) = h_6b_{35}b_{26} + h_6b_{25}b_{36}$

%$\bullet h_1h_6b_{25}b_{36} = h_1h_6b_{35}b_{26}$

$h_3(1)h_2(1, 3) = h_3h_5h_2(1, 2)$

$b_{35}b_{57} = h_3^2b_{47} + h_6^2b_{36}$

$h_4(1)h_1(1, 2) = 0$

$h_1(1, 3)h_2(1, 3) = 0$

%$\bullet h_4h_6b_{25}b_{36} = h_2h_2(1, 2)b_{35} + h_4h_6b_{35}b_{26}$

%$\bullet h_2(1, 3)b_{14}b_{25} = h_6h_2(1)b_{24}b_{16} + h_6h_2(1)b_{14}b_{26} + h_2(1, 3)b_{24}b_{15}$

$b_{57}b_{25} = h_2^2b_{37} + h_6^2b_{26} + b_{24}b_{47}$

$b_{14}b_{47} = h_1^2b_{27} + h_6^2b_{16} + b_{13}b_{37} + b_{57}b_{15}$

$h_2(1)b_{47} = h_2h_4b_{37} + h_6h_2(1, 2)$

$h_2(1, 3)b_{46} = h_2h_4(1)b_{36} + h_5^2h_2(1, 2)$

$h_2(1, 3)h_1(1, 2) = 0$

$h_3(1)b_{57} = h_3h_5b_{47}$

$h_1(1, 3)b_{57} = h_5h_1(1)b_{47}$

%$\bullet h_4h_6h_2(1, 2) = h_2h_4^2b_{37} + h_2b_{35}b_{47}$

$h_1(1, 3)h_2(1, 2) = 0$

$b_{46}b_{57} = h_5^2b_{47} + h_4(1)^2$

$h_1(1, 2)b_{57} = h_1h_3h_5b_{27} + h_5h_1(1)b_{37}$

$h_1(1, 2)h_2(1, 2) = 0$

$h_4(1)h_2(1, 3) = h_2h_5^2b_{37} + h_2b_{57}b_{36}$

%$\bullet b_{13}b_{57}b_{36} = h_1^2h_5^2b_{27} + h_1^2b_{57}b_{26} + h_5^2b_{13}b_{37} + h_4(1)^2b_{14}$

$h_1(1, 2)b_{47} = h_1h_3(1)b_{27} + h_1(1, 3)b_{37}$

$b_{24}b_{57}b_{36} = h_3^2h_5^2b_{27} + h_3^2b_{57}b_{26} + h_5^2b_{24}b_{37} + h_2(1, 3)^2$

$h_4(1)h_2(1, 2) = h_2b_{46}b_{37} + h_2b_{36}b_{47}$

%$\bullet b_{13}b_{36}b_{47} = h_1^2b_{46}b_{27} + h_1^2b_{47}b_{26} + h_4^2b_{57}b_{16} + h_4(1)^2b_{15} + b_{13}b_{46}b_{37}$

$b_{24}b_{36}b_{47} = h_2^2b_{36}b_{37} + h_3^2b_{46}b_{27} + h_3^2b_{47}b_{26} + h_5^2b_{25}b_{37} + h_2(1, 3)h_2(1, 2)$

$h_2(1, 2)b_{57} = h_2h_4(1)b_{37} + h_2(1, 3)b_{47}$

$b_{25}b_{36}b_{47} = h_4^2b_{26}b_{37} + h_3(1)^2b_{27} + h_2(1, 2)^2 + b_{35}b_{47}b_{26} + b_{46}b_{25}b_{37}$

%$\bullet h_1(1, 3)b_{36}b_{47} = h_1h_3(1)b_{46}b_{27} + h_1h_3(1)b_{47}b_{26} + h_1(1, 3)b_{46}b_{37}$

\subsection{\mybm{$HX_{7, 1}$}{HX71}}\label{app:cf9b8884}
It is obvious that $HX_{7, 1}=HX[1, 7]\otimes \bF_2[h_0]$.

\subsection{\mybm{$HX_{7, 2}$}{HX72}}
Consider $d R_{02}=R_{01}R_{12}$ whose homology class is $r_1=h_0h_1$ in $HX_{7, 1}$. 
We have
$$\Ann_{HX_{7, 1}}(r_1)=(h_2, h_2(1), h_2(1, 3), h_2(1, 2))$$
obtained by Algorithm \ref{algorithm:6e571137}. 
Apply Proposition \ref{prop:07de5358} on $X_{7, 2}=X_{7, 1}\otimes\bF_2[R_{02}]$. [Explain Part (i) and Part (ii)]
The $E_2=E_\infty$ page is generated by $R_{02}h_2$, $R_{02}h_2(1)$, $R_{02}h_2(1, 3)$, $R_{02}h_2(1, 2)$ and $R_{02}^2$ which are represented by $r_2$, $r_2(1)$, $r_2(1, 3)$, $r_2(1, 2)$ and $b_{02}$ in $HX_{7, 2}$ respectively. 
In addition to relations in $HX_{7, 1}$, the new relations in $HX_{7, 2}$ are $r_1=0$ and
\subsubsection*{Part \ref{item:df8723cf}}
\footnote{Part \ref{item:df8723cf} corresponds to relations \ref{item:df8723cf} in Proposition \ref{prop:07de5358} and Part \ref{item:62c9c1cc} corresponds to relations \ref{item:62c9c1cc} in Proposition \ref{prop:07de5358}. The hidden extensions for Part \ref{item:df8723cf} are placed on the right-hand side of the equations. Note that there are no nontrivial extensions in this case.}

$r_2h_1=0$,

$r_2h_3=0$,

$r_2(1)h_1=0$,

$r_2(1)h_1(1)=0$,

$r_2(1)h_5=0$,

$r_2h_3(1)=0$,

$r_2(1)h_3(1)=0$,

$r_2(1, 3)h_1=0$,

$r_2(1, 3)h_1(1)=0$,

$r_2(1, 2)h_1=0$,

$r_2(1, 2)h_1(1)=0$,

$r_2(1, 3)h_1(1, 3)=0$,

$r_2(1, 3)h_1(1, 2)=0$,

$r_2(1, 2)h_1(1, 3)=0$,

$r_2(1, 2)h_1(1, 2)=0$,

$r_2(1)h_2+r_2h_2(1)=0$,

$r_2(1)b_{13}+r_2h_4b_{14}=0$,

$r_2(1)h_4+r_2b_{35}=0$,

$r_2(1)h_1(1, 3)+r_2h_4h_1(1, 2)=0$,

$r_2(1)b_{46}+r_2h_4b_{36}=0$,

$r_2(1, 3)h_2+r_2h_2(1, 3)=0$,

$r_2(1, 3)b_{13}+r_2h_4(1)b_{14}=0$,

$r_2(1, 3)h_4+r_2(1)h_4(1)=0$,

$r_2(1)h_4(1)+r_2h_6b_{36}=0$,

$r_2(1, 2)h_2+r_2h_2(1, 2)=0$,

$r_2(1, 3)h_2(1)+r_2(1)h_2(1, 3)=0$,

$r_2(1, 3)h_6+r_2(1)b_{57}=0$,

$r_2(1, 2)h_2(1)+r_2(1)h_2(1, 2)=0$,

$r_2(1, 2)h_3h_5+r_2(1, 3)h_3(1)=0$,

$r_2(1, 2)h_2(1, 3)+r_2(1, 3)h_2(1, 2)=0$,

$r_2(1, 2)b_{13}+r_2h_4h_6b_{16}+r_2h_4(1)b_{15}=0$,

$r_2(1, 2)h_3^2+r_2(1)h_6b_{36}+r_2(1, 3)b_{35}=0$,

$r_2(1, 2)b_{24}+r_2(1, 3)b_{25}+r_2(1)h_6b_{26}=0$,

$r_2(1, 2)b_{14}+r_2(1, 3)b_{15}+r_2(1)h_6b_{16}=0$,

$r_2(1, 2)h_5^2+r_2h_4(1)b_{36}+r_2(1, 3)b_{46}=0$,

$r_2(1, 2)h_6+r_2(1)b_{47}+r_2h_4b_{37}=0$,

$r_2(1, 3)h_4(1)+r_2h_5^2b_{37}+r_2b_{57}b_{36}=0$,

$r_2(1, 2)h_4(1)+r_2b_{36}b_{47}+r_2b_{46}b_{37}=0$,

$r_2(1, 2)b_{57}+r_2h_4(1)b_{37}+r_2(1, 3)b_{47}=0$,

\subsubsection*{Part \ref{item:62c9c1cc}}\hspace{1pt}

$r_{2, 3}r_{23, 45}=b_{01}b_{1, 4}h_{4, 5}+b_{02}b_{2, 4}h_{4, 5}$,

$r_{2, 3}r_{235, 467}=b_{01}b_{1, 4}h_{45, 67}+b_{02}b_{2, 4}h_{45, 67}$,

$r_{2, 3}r_{234, 567}=b_{01}b_{1, 5}h_{45, 67}+b_{02}b_{2, 5}h_{45, 67}+b_{01}b_{1, 6}h_{46, 57}+b_{02}b_{2, 6}h_{46, 57}$,

$r_{23, 45}r_{235, 467}=b_{01}b_{13, 46}h_{6, 7}+b_{02}b_{23, 46}h_{6, 7}$,

$r_{23, 45}r_{234, 567}=b_{01}b_{13, 56}h_{6, 7}+b_{02}b_{23, 56}h_{6, 7}$,

$r_{235, 467}r_{234, 567}=b_{01}b_{135, 567}+b_{02}b_{235, 567}$,

$r_{2, 3}r_{2, 3}=b_{01}b_{1, 3}+b_{02}b_{2, 3}$,

$r_{23, 45}r_{23, 45}=b_{01}b_{13, 45}+b_{02}b_{23, 45}$,

$r_{235, 467}r_{235, 467}=b_{01}b_{135, 467}+b_{02}b_{235, 467}$,

$r_{234, 567}r_{234, 567}=b_{01}b_{134, 567}+b_{02}b_{234, 567}$.

\vspace{6pt}

\subsection{\mybm{$HX_{7, 3}$}{HX73}}
Consider $d R_{03}=R_{01}R_{13}+R_{02}R_{23}$ whose homology class is $r_2$ in $HX_{7, 2}$. 
We have
$$\Ann_{HX_{7, 2}}(r_2)=(h_1, h_3, h_3(1)).$$
Apply Proposition \ref{prop:07de5358} on $X_{7, 3}=X_{7, 2}\otimes\bF_2[R_{03}]$. 
The $E_2=E_\infty$ page is generated by $R_{03}h_1$, $R_{03}h_3$, $R_{03}h_3(1)$ and $R_{03}^2$ which are represented by $h_0(1)$, $r_3$, $r_3(1)$ and $b_{03}$ in $HX_{7, 3}$ respectively. 
In addition to relations in $HX_{7, 1}$, the new relations in $HX_{7, 3}$ are $r_2=0$ and
\subsubsection*{Part \ref{item:df8723cf}}\hspace{1pt}

$h_0(1)h_0=b_{02}h_2$,

$h_0(1)h_2=h_0b_{13}$,

$r_3h_2=h_0h_1(1)$,

$r_3h_4=0$,

$h_0(1)h_2(1)=h_0b_{14}h_4$,

$h_0(1)r_2(1)=0$,

$r_3(1)h_2=h_0h_1(1,3)$,

$r_3(1)h_2(1)=h_0h_4h_1(1,2)$,

$r_3(1)r_2(1)=0$,

$r_3(1)h_6=0$,

$r_3h_4(1)=0$,

$h_0(1)h_2(1, 3)=h_0b_{14}h_4(1)$,

$h_0(1)r_2(1, 3)=0$,

$h_0(1)h_2(1, 2)=h_0b_{15}h_4(1) + h_0b_{16}h_4h_6$,

$h_0(1)r_2(1, 2)=0$,

$r_3(1)h_4(1)=0$,

$r_3h_1+h_0(1)h_3=0$,

$r_3b_{13}+h_0(1)h_1(1)=0$,

$r_3h_1(1)+h_0(1)b_{24}=h_0h_2b_{14}$,

$r_3(1)h_1+h_0(1)h_3(1)=0$,

$r_3(1)h_3+r_3h_3(1)=0$,

$r_3(1)b_{13}+h_0(1)h_1(1, 3)=0$,

$r_3(1)h_1(1)+h_0(1)h_5b_{25}=h_0h_2h_5b_{15}$,

$r_3h_1(1, 3)+h_0(1)h_5b_{25}=h_0h_2h_5b_{15}$,

$r_3(1)b_{24}+r_3h_5b_{25}=h_0h_2h_1(1,2)$,

$r_3(1)h_5+r_3b_{46}=0$,

$r_3(1)h_2(1, 3)+r_3h_5h_2(1, 2)=0$,

$r_3(1)r_2(1, 3)+r_3h_5r_2(1, 2)=0$,

$r_3(1)b_{57}+r_3h_5b_{47}=0$,

$r_3(1)b_{14}+r_3h_5b_{15}+h_0(1)h_1(1, 2)=0$,

$r_3(1)h_1(1, 3)+h_0(1)b_{46}b_{25}+h_0(1)h_4^2b_{26}=h_0h_2b_{14,56}$,

$r_3(1)h_1(1, 2)+h_0(1)b_{25}b_{36}+h_0(1)b_{35}b_{26}=h_0h_2b_{13,56}$,

\subsubsection*{Part \ref{item:62c9c1cc}}\hspace{1pt}

$h_{01, 23}r_{3, 4}=b_{0, 2}h_{12, 34}+b_{0, 3}h_{13, 24}$,

$h_{01, 23}r_{34, 56}=b_{0, 2}h_{124, 356}+b_{0, 3}h_{134, 256}$,

$r_{3, 4}r_{34, 56}=b_{01}b_{1, 5}h_{5, 6}+b_{02}b_{2, 5}h_{5, 6}+b_{03}b_{3, 5}h_{5, 6}$,

$h_{01, 23}h_{01, 23}=b_{01, 23}$,

$r_{3, 4}r_{3, 4}=b_{01}b_{1, 4}+b_{02}b_{2, 4}+b_{03}b_{3, 4}$,

$r_{34, 56}r_{34, 56}=b_{01}b_{14, 56}+b_{02}b_{24, 56}+b_{03}b_{34, 56}$.

\vspace{6pt}

\subsection{\mybm{$HX_{7, 4}$}{HX74}}
Consider $d R_{04}=R_{01}R_{14}+R_{02}R_{24}+R_{03}R_{34}$ whose homology class is $r_3$ in $HX_{7, 3}$. 
We have
$$\Ann_{HX_{7, 3}}(r_3)=(h_4, h_4(1)).$$
Apply Proposition \ref{prop:07de5358} on $X_{7, 4}=X_{7, 3}\otimes\bF_2[R_{04}]$. 
The $E_2=E_\infty$ page is generated by $R_{04}h_4$, $R_{04}h_4(1)$ and $R_{04}^2$ which are represented by $r_4$, $r_4(1)$ and $b_{04}$ in $HX_{7, 4}$ respectively. 
In addition to relations in $HX_{7, 3}$, the new relations in $HX_{7, 4}$ are $r_3=0$ and
\subsubsection*{Part \ref{item:df8723cf}}\hspace{1pt}

$r_4h_3=r_2(1)$,

$r_4h_1(1)=0$,

$r_4h_5=0$,

$r_4(1)h_3=r_2(1,3)$,

$r_4(1)h_1(1)=0$,

$r_4(1)h_3(1)=r_2(1, 2)h_5$,

$r_4(1)r_3(1)=0$,

$r_4(1)h_1(1, 3)=0$,

$r_4(1)h_1(1, 2)=0$,

$r_4(1)r_2(1, 2)=h_3(b_{01}b_{14,67}+b_{02}b_{24,67}+b_{03}b_{34,67})$,

$r_4(1)h_4+r_4h_4(1)=0$,

$r_4(1)h_2(1)+r_4h_2(1, 3)=0$,

$r_4(1)b_{35}+r_4h_6b_{36}=r_2(1, 2)h_3$,

$r_4(1)h_1b_{25}+r_4h_1h_6b_{26}=0$,

$r_4(1)h_6+r_4b_{57}=0$,

$r_4(1)b_{02}b_{25}+r_4h_6b_{02}b_{26}+r_4(1)h_0^2b_{15}+r_4h_0^2h_6b_{16}=r_2(1, 2)b_{03}h_3$,

$r_4(1)h_0(1)b_{25}+r_4(1)h_0h_2b_{15}+r_4h_6h_0(1)b_{26}+r_4h_0h_2h_6b_{16}=0$,

$r_4(1)b_{13}b_{25}+r_4h_6b_{13}b_{26}+r_4h_2^2h_6b_{16}+r_4(1)h_2^2b_{15}=0$,

$r_4(1)b_{14}b_{25}+r_4h_6b_{14}b_{26}+r_4h_6b_{24}b_{16}+r_4(1)b_{24}b_{15}=0$,

\subsubsection*{Part \ref{item:62c9c1cc}}\hspace{1pt}

$r_{4, 5}r_{45, 67}=b_{01}b_{1, 6}h_{6, 7}+b_{02}b_{2, 6}h_{6, 7}+b_{03}b_{3, 6}h_{6, 7}+b_{04}b_{4, 6}h_{6, 7}$,

$r_{4, 5}r_{4, 5}=b_{01}b_{1, 5}+b_{02}b_{2, 5}+b_{03}b_{3, 5}+b_{04}b_{4, 5}$,

$r_{45, 67}r_{45, 67}=b_{01}b_{15, 67}+b_{02}b_{25, 67}+b_{03}b_{35, 67}+b_{04}b_{45, 67}$.

\vspace{6pt}

\subsection{\mybm{$HX_{7, 5}$}{HX75}}
Consider $d R_{05}=\sum_{i=1}^r R_{0i}R_{i5}$ whose homology class is $r_4$ in $HX_{7, 4}$. 
We have
$$\Ann_{HX_{7, 4}}(r_4)=(h_1h_3, h_1(1), h_5).$$
Apply Proposition \ref{prop:07de5358} on $X_{7, 5}=X_{7, 4}\otimes\bF_2[R_{05}]$. 
The $E_2=E_\infty$ page is generated by $R_{05}h_1h_3$, $R_{05}h_1(1)$, $R_{05}h_5$ and $R_{05}^2$ which are represented by $h_0(1, 3)$, $h_0(1, 2)$, $r_5$ and $b_{05}$ in $HX_{7, 5}$ respectively. 
In addition to relations in $HX_{7, 4}$, the new relations in $HX_{7, 5}$ are $r_4=0$ and
\subsubsection*{Part \ref{item:df8723cf}}\hspace{1pt}

$h_0(1, 3)h_0=b_{02}h_2(1)$,

$h_0(1, 3)h_2=h_0b_{14}h_4$,

$h_0(1, 3)h_0(1)=b_{01,24}h_4$,

$h_0(1, 2)h_0=b_{03}h_2(1)+b_{04}h_2h_4$,

$h_0(1, 3)h_4=h_0(1)b_{35}$,

$h_0(1, 2)h_0(1)=b_{01,34}h_4$,

$h_0(1, 2)h_4=h_0h_2b_{15}+h_0(1)b_{25}$,

$h_0(1, 3)h_2(1)=h_0b_{13, 45}$,

$r_5h_4=r_3(1)$,

$h_0(1, 2)h_2(1)=h_0b_{12,45}$,

$r_5h_2(1)=h_0h_1(1, 2)$,

$r_5h_6=0$,

$h_0(1, 3)h_4(1)=h_0(1)b_{36}h_6$,

$h_0(1, 2)h_4(1)=h_0h_2b_{16}h_6+h_0(1)b_{26}h_6$,

$h_0(1, 3)r_4(1)=0$,

$h_0(1, 3)h_2(1, 3)=h_0b_{13,46}h_6$,

$h_0(1, 2)r_4(1)=0$,

$h_0(1, 2)h_2(1, 3)=h_0b_{12,46}h_6$,

$h_0(1, 3)h_2(1, 2)=h_0b_{13,56}h_6$,

$h_0(1, 3)r_2(1, 2)=0$,

$h_0(1, 2)h_2(1, 2)=h_0b_{12,56}h_6$,

$h_0(1, 2)r_2(1, 2)=0$,

$h_0(1, 2)h_1^2+h_0(1, 3)b_{13}=h_0(1)h_4b_{14}$,

$h_0(1, 2)b_{02}+h_0(1, 3)b_{03}=h_0(1)h_4b_{04}$,

$h_0(1, 2)h_1h_3+h_0(1, 3)h_1(1)=0$,

$h_0(1, 2)h_3^2+h_0(1, 3)b_{24}=h_0h_2(1)b_{14}$,

$r_5h_1h_3+h_0(1, 3)h_5=0$,

$r_5h_1(1)+h_0(1, 2)h_5=0$,

$h_0(1, 2)b_{35}+h_0(1, 3)b_{25}=h_0h_2(1)b_{15}$,

$r_5h_1b_{35}+h_0(1, 3)h_3(1)=0$,

$r_5h_1b_{25}+h_0(1, 2)h_3(1)=0$,

$h_0(1, 2)h_1h_3(1)+h_0(1, 3)h_1(1, 3)=h_0(1)h_4h_1(1,2)$,

$r_5h_1h_3(1)+h_0(1, 3)b_{46}=h_0(1)h_4b_{36}$,

$r_5h_1(1, 3)+h_0(1, 2)b_{46}=h_0(1)h_4b_{26}+h_0h_2h_4b_{16}$,

$r_5b_{13}b_{25}+r_5h_2^2b_{15}+h_0(1, 2)h_1(1, 3)=0$,

$r_5b_{35}b_{14}+h_0(1, 3)h_1(1, 2)+r_5h_3^2b_{15}=0$,

$r_5b_{14}b_{25}+h_0(1, 2)h_1(1, 2)+r_5b_{24}b_{15}=0$,

$r_5h_1(1, 2)+h_0(1, 2)b_{36}+h_0(1, 3)b_{26}=h_0h_2(1)b_{16}$,

\subsubsection*{Part \ref{item:62c9c1cc}}\hspace{1pt}

$h_{013, 245}h_{012, 345}=b_{013, 345}$,

$h_{013, 245}r_{5, 6}=b_{0, 2}h_{123, 456}+b_{0, 4}h_{134, 256}+b_{0, 5}h_{135, 246}$,

$h_{012, 345}r_{5, 6}=b_{0, 3}h_{123, 456}+b_{0, 4}h_{124, 356}+b_{0, 5}h_{125, 346}$,

$h_{013, 245}h_{013, 245}=b_{013, 245}$,

$h_{012, 345}h_{012, 345}=b_{012, 345}$,

$r_{5, 6}r_{5, 6}=b_{01}b_{1, 6}+b_{02}b_{2, 6}+b_{03}b_{3, 6}+b_{04}b_{4, 6}+b_{05}b_{5, 6}$.

\vspace{6pt}

\subsection{\mybm{$HX_{7, 6}$}{HX76}}
Consider $d R_{06}=\sum_{i=1}^r R_{0i}R_{i6}$ whose homology class is $r_5$ in $HX_{7, 5}$. 
We have
$$\Ann_{HX_{7, 5}}(r_5)=(h_6).$$
Apply Proposition \ref{prop:07de5358} on $X_{7, 6}=X_{7, 5}\otimes\bF_2[R_{06}]$. 
The $E_2=E_\infty$ page is generated by $R_{06}h_6$ and $R_{06}^2$ which are represented by $r_6$ and $b_{02}$ in $HX_{7, 6}$ respectively. 
In addition to relations in $HX_{7, 5}$, the new relations in $HX_{7, 6}$ are $r_5=0$ and
\subsubsection*{Part \ref{item:df8723cf}}\hspace{1pt}

$r_6h_5=r_4(1)$,

$r_6h_3(1)=r_2(1, 2)$,

$r_6h_1(1, 3)=0$,

$r_6h_1(1, 2)=0$,

\subsubsection*{Part \ref{item:62c9c1cc}}\hspace{1pt}

$r_{6, 7}r_{6, 7}=b_{01}b_{1, 7}+b_{02}b_{2, 7}+b_{03}b_{3, 7}+b_{04}b_{4, 7}+b_{05}b_{5, 7}+b_{06}b_{6, 7}$.

\vspace{6pt}

\subsection{\mybm{$HX_{7, 7}$}{HX77}}\label{app:97651fc5}
Consider $d R_{07}=\sum_{i=1}^r R_{0i}R_{i7}$ whose homology class is $r_6$ in $HX_{7, 6}$. 
We have
$$\Ann_{HX_{7, 6}}(r_6)=(h_1h_3h_5, h_1(1)h_5, h_1h_3(1), h_1(1, 3), h_1(1, 2)).$$
Apply Proposition \ref{prop:07de5358} on $X_{7, 7}=X_{7, 6}\otimes\bF_2[R_{07}]$. 
The $E_2=E_\infty$ page is generated by $R_{07}h_1h_3h_5$, $R_{07}h_1(1)h_5$, $R_{07}h_1h_3(1)$, $R_{07}h_1(1, 3)$, $R_{07}h_1(1, 2)$ and $R_{07}^2$ which are represented by $h_0(1, 3, 5)$, $h_0(1, 2, 5)$, $h_0(1, 3, 4)$, $h_0(1, 2, 4)$, $h_0(1, 2, 3)$ and $b_{07}$ in $HX_{7, 7}$ respectively. 
In addition to relations in $HX_{7, 6}$, the new relations in $HX_{7, 7}$ are $r_6=0$ and
\subsubsection*{Part \ref{item:df8723cf}}\hspace{1pt}

$h_0(1, 3, 5)h_0=b_{02}h_2(1, 3)$,

$h_0(1, 3, 5)h_2=h_0(1)h_2(1, 3)$,

$h_0(1, 3, 5)h_0(1)=b_{01, 24}h_4(1)$,

$h_0(1, 2, 5)h_0=b_{03}h_2(1, 3) + b_{04}h_2h_4(1)$,

$h_0(1, 3, 5)h_4=h_0(1, 3)h_4(1)$,

$h_0(1, 2, 5)h_0(1)=b_{01, 34}h_4(1)$,

$h_0(1, 2, 5)h_4=h_0(1, 2)h_4(1)$,

$h_0(1, 3, 4)h_0=b_{02}h_2(1, 2)$,

$h_0(1, 3, 5)h_2(1)=h_0(1, 3)h_2(1, 3)$,

$h_0(1, 3, 4)h_2=h_0(1)h_2(1, 2)$,

$h_0(1, 3, 5)h_0(1, 3)=b_{013,246}h_6$,

$h_0(1, 3, 4)h_0(1)=b_{01,25}h_4(1) + b_{01,26}h_4h_6$,

$h_0(1, 2, 4)h_0=b_{03}h_2(1, 2) + b_{05}h_2h_4(1) + b_{06}h_2h_4h_6$,

$h_0(1, 2, 5)h_2(1)=h_0(1, 2)h_2(1, 3)$,

$h_0(1, 3, 5)h_0(1, 2)=b_{012,246}h_6$,

$h_0(1, 2, 5)h_0(1, 3)=b_{013,346}h_6$,

$h_0(1, 2, 4)h_0(1)=b_{01,35}h_4(1) + b_{01,36}h_4h_6$,

$h_0(1, 2, 5)h_0(1, 2)=b_{012,346}h_6$,

$h_0(1, 2, 3)h_0=b_{0, 4}h_2(1, 2)+b_{0, 5}h_2(1, 3)+b_{0, 6}h_2(1)h_6$,

$h_0(1, 3, 5)h_6=h_0(1, 3)b_{57}$,

$h_0(1, 2, 3)h_0(1)=b_{01,45}h_4(1)+b_{01,46}h_4h_6$,

$h_0(1, 3, 4)h_2(1)=h_0(1, 3)h_2(1, 2)$,

$h_0(1, 2, 5)h_6=h_0(1, 2)b_{57}$,

$h_0(1, 3, 4)h_0(1, 3)=b_{013,256}h_6$,

$h_0(1, 3, 4)h_0(1, 2)=b_{012,256}h_6$,

$h_0(1, 2, 4)h_0(1, 3)=b_{013,356}h_6$,

$h_0(1, 2, 4)h_0(1, 2)=b_{012,356}h_6$,

$h_0(1, 3, 4)h_6=h_0(1)h_4b_{3, 7}+h_0(1, 3)b_{4, 7}$,

$h_0(1, 2, 3)h_0(1, 3)=b_{013,456}h_6$,

$h_0(1, 2, 4)h_6=h_0h_2h_4b_{17} + h_0(1)h_4b_{27} + h_0(1, 2)b_{47}$,

$h_0(1, 2, 3)h_0(1, 2)=b_{012,456}h_6$,

$h_0(1, 2, 3)h_6=h_0h_2(1)b_{17} + h_0(1, 3)b_{27} + h_0(1, 2)b_{37}$,

$h_0(1, 3, 5)h_4(1)=h_0(1)b_{35,67}$,

$h_0(1, 2, 5)h_4(1)=h_0h_2b_{15,67} + h_0(1)b_{25,67}$,

$h_0(1, 3, 5)h_2(1, 3)=h_0b_{135,467}$,

$h_0(1, 2, 5)h_2(1, 3)=h_0b_{125,467}$,

$h_0(1, 3, 4)h_4(1)=h_0(1)b_{34,67}$,

$h_0(1, 2, 4)h_4(1)=h_0h_2b_{14,67} + h_0(1)b_{24,67}$,

$h_0(1, 3, 4)h_2(1, 3)=h_0b_{134,467}$,

$h_0(1, 3, 5)h_2(1, 2)=h_0b_{135,567}$,

$h_0(1, 2, 3)h_4(1)=h_0h_2b_{13,67} + h_0(1)b_{23,67}$,

$h_0(1, 2, 5)h_2(1, 2)=h_0b_{125,567}$,

$h_0(1, 2, 4)h_2(1, 3)=h_0b_{124,467}$,

$h_0(1, 2, 3)h_2(1, 3)=h_0b_{123,467}$,

$h_0(1, 3, 4)h_2(1, 2)=h_0b_{134,567}$,

$h_0(1, 2, 4)h_2(1, 2)=h_0b_{124,567}$,

$h_0(1, 2, 3)h_2(1, 2)=h_0b_{123,567}$,

$h_0(1, 2, 5)h_1^2+h_0(1, 3, 5)b_{13}=h_0(1)h_4(1)b_{14}$,

$h_0(1, 2, 5)b_{02}+h_0(1, 3, 5)b_{03}=h_0(1)h_4(1)b_{04}$,

$h_0(1, 2, 5)h_1h_3+h_0(1, 3, 5)h_1(1)=0$,

$h_0(1, 2, 5)h_3^2+h_0(1, 3, 5)b_{24}=h_0h_2(1, 3)b_{14}$,

$h_0(1, 2, 4)h_1^2+h_0(1, 3, 4)b_{13}=h_0(1)h_4(1)b_{15} + h_0(1)h_4h_6b_{16}$,

$h_0(1, 2, 4)b_{02}+h_0(1, 3, 4)b_{03}=h_0(1)h_4(1)b_{05} + h_0(1)h_4h_6b_{06}$,

$h_0(1, 3, 4)h_3^2+h_0(1, 3, 5)b_{35}=h_0(1, 3)h_6b_{36}$,

$h_0(1, 2, 4)h_1h_3+h_0(1, 3, 4)h_1(1)=0$,

$h_0(1, 3, 4)b_{24}+h_0(1, 3, 5)b_{25}=h_0(1, 3)h_6b_{26}$,

$h_0(1, 2, 5)b_{35}+h_0(1, 3, 5)b_{25}=h_0h_2(1, 3)b_{15}$,

$h_0(1, 2, 4)h_3^2+h_0(1, 2, 5)b_{35}=h_0(1, 2)h_6b_{36}$,

$h_0(1, 3, 4)h_3h_5+h_0(1, 3, 5)h_3(1)=0$,

$h_0(1, 2, 3)h_2h_4+h_0(1, 2, 4)h_2(1)=h_0(1, 2)h_2(1, 2)$,

$h_0(1, 2, 4)h_3h_5+h_0(1, 2, 5)h_3(1)=0$,

$h_0(1, 2, 4)h_1h_3h_5+h_0(1, 3, 5)h_1(1, 3)=0$,

$h_0(1, 2, 4)h_5h_1(1)+h_0(1, 2, 5)h_1(1, 3)=0$,

$h_0(1, 3, 4)h_5^2+h_0(1, 3, 5)b_{46}=h_0(1)h_4(1)b_{36}$,

$h_0(1, 2, 3)h_1h_3h_5+h_0(1, 3, 5)h_1(1, 2)=0$,

$h_0(1, 2, 4)h_5^2+h_0(1, 2, 5)b_{46}=h_0(1)h_4(1)b_{26} + h_0h_2h_4(1)b_{16}$,

$h_0(1, 2, 3)h_5h_1(1)+h_0(1, 2, 5)h_1(1, 2)=0$,

$h_0(1, 2, 4)h_1h_3(1)+h_0(1, 3, 4)h_1(1, 3)=0$,

$h_0(1, 2, 3)h_1h_3(1)+h_0(1, 3, 4)h_1(1, 2)=0$,

$h_0(1, 2, 3)h_1(1, 3)+h_0(1, 2, 4)h_1(1, 2)=0$,

$h_0(1, 3, 4)b_{57}+h_0(1, 3, 5)b_{47}=h_0(1)h_4(1)b_{37}$,

$h_0(1, 2, 4)b_{57}+h_0(1, 2, 5)b_{47}=h_0(1)h_2(1)b_{27} + h_0h_2h_4(1)b_{17}$,

$h_0(1, 2, 3)h_1^2+h_0(1, 3, 4)b_{14}+h_0(1, 3, 5)b_{15}=h_0(1, 3)h_6b_{16}$,

$h_0(1, 2, 3)b_{02}+h_0(1, 3, 4)b_{04}+h_0(1, 3, 5)b_{05}=h_0(1, 3)h_6b_{06}$,

$h_0(1, 2, 3)h_2^2+h_0(1, 2, 4)b_{24}+h_0(1, 2, 5)b_{25}=h_0(1, 2)h_6b_{26}$,

$h_0(1, 2, 3)b_{13}+h_0(1, 2, 4)b_{14}+h_0(1, 2, 5)b_{15}=h_0(1, 2)h_6b_{16}$,

$h_0(1, 2, 3)b_{03}+h_0(1, 2, 5)b_{05}+h_0(1, 2, 4)b_{04}=h_0(1, 2)h_6b_{06}$,

$h_0(1, 2, 3)h_4^2+h_0(1, 2, 4)b_{35}+h_0(1, 3, 4)b_{25}=h_0h_2(1, 2)b_{15}$,

$h_0(1, 2, 3)h_5^2+h_0(1, 2, 5)b_{36}+h_0(1, 3, 5)b_{26}=h_0h_2(1, 3)b_{16}$,

$h_0(1, 2, 3)b_{46}+h_0(1, 2, 4)b_{36}+h_0(1, 3, 4)b_{26}=h_0h_2(1, 2)b_{16}$,

$h_0(1, 2, 3)b_{57}+h_0(1, 3, 5)b_{27}+h_0(1, 2, 5)b_{37}=h_0h_2(1, 3)b_{17}$,

$h_0(1, 2, 3)b_{47}+h_0(1, 2, 4)b_{37}+h_0(1, 3, 4)b_{27}=h_0h_2(1, 2)b_{17}$,

\subsubsection*{Part \ref{item:62c9c1cc}}\hspace{1pt}

$h_{0135, 2467}h_{0134, 2567}=b_{0135, 2567}$,

$h_{0135, 2467}h_{0125, 3467}=b_{0135, 3467}$,

$h_{0135, 2467}h_{0124, 3567}=b_{0135, 3567}$,

$h_{0135, 2467}h_{0123, 4567}=b_{0135, 4567}$,

$h_{0134, 2567}h_{0125, 3467}=b_{0134, 3467}$,

$h_{0134, 2567}h_{0124, 3567}=b_{0134, 3567}$,

$h_{0134, 2567}h_{0123, 4567}=b_{0134, 4567}$,

$h_{0125, 3467}h_{0124, 3567}=b_{0125, 3567}$,

$h_{0125, 3467}h_{0123, 4567}=b_{0125, 4567}$,

$h_{0124, 3567}h_{0123, 4567}=b_{0124, 4567}$,

$h_{0135, 2467}h_{0135, 2467}=b_{0135, 2467}$,

$h_{0134, 2567}h_{0134, 2567}=b_{0134, 2567}$,

$h_{0125, 3467}h_{0125, 3467}=b_{0125, 3467}$,

$h_{0124, 3567}h_{0124, 3567}=b_{0124, 3567}$,

$h_{0123, 4567}h_{0123, 4567}=b_{0123, 4567}$.

\vspace{6pt}

\setlength{\parskip}{2pt plus 2pt minus 1pt}
\subsection{A \Groebner{} basis of \mybm{$HX_7$}{HX7}}\label{app:7b101ee1}

\subsubsection*{Monomial ordering}\hspace{1pt}

The monomial ordering we use here is the reversed lexicographical ordering by the sequence of the following generators

\begin{tabular}{|l|l|l|}\hline
    name & degree $(s, t, v)$ & range of $i$\\\hline
    $h_i$ & $(1, 2^i, 1)$ & $0\le i\le 6$\\\hline
    $h_i(1)$ & $(2, 9\cdot 2^i, 4)$ & $0\le i\le 4$\\\hline
    $h_i(1, 3)$ & $(3, 41\cdot 2^i, 7)$ & $0\le i\le 2$\\\hline
    $h_i(1, 2)$ & $(3, 49\cdot 2^i, 9)$ & $0\le i\le 2$\\\hline
    $h_0(1, 3, 5)$ & $(4, 169, 10)$ & \\\hline
    $h_0(1, 2, 5)$ & $(4, 177, 12)$ & \\\hline
    $h_0(1, 3, 4)$ & $(4, 201, 12)$ & \\\hline
    $h_0(1, 2, 4)$ & $(4, 209, 14)$ & \\\hline
    $h_0(1, 2, 3)$ & $(4, 225, 16)$ & \\\hline
    $b_{ij}$ & $(2, 2(2^j-2^i), 2(j-i))$ & $0\le i\le j-2<j\le 7$\\\hline
\end{tabular}

Here $b_{ij}$ is ordered first by $j-i$ and then by $i$.\vspace{6pt}

\subsubsection*{\Groebner{} basis}\hspace{1pt}\footnote{An element $g$ of the \Groebner{} basis here is presented in the form $\LM(g)=g-\LM(g)$}

$h_0h_1 = 0$

$h_1h_2 = 0$

$h_2b_{02} = h_0h_0(1)$

$h_2h_3 = 0$

$h_2h_0(1) = h_0b_{13}$

$h_3h_0(1) = 0$

$b_{02}b_{13} = h_1^2b_{03} + h_0(1)^2$

$h_0h_1(1) = 0$

$h_3b_{13} = h_1h_1(1)$

$h_3h_4 = 0$

$h_1(1)b_{02} = h_1h_3b_{03}$

$h_3h_1(1) = h_1b_{24}$

$h_0(1)h_1(1) = 0$

$b_{02}b_{24} = h_0^2b_{14} + h_3^2b_{03}$

$h_0(1)b_{24} = h_0h_2b_{14}$

$h_4h_1(1) = 0$

$b_{13}b_{24} = h_2^2b_{14} + h_1(1)^2$

$h_1h_2(1) = 0$

$h_4b_{24} = h_2h_2(1)$

$h_2(1)b_{02} = h_0h_0(1, 3)$

$h_0(1)h_2(1) = h_0h_4b_{14}$

$h_2h_0(1, 3) = h_0h_4b_{14}$

$h_4h_5 = 0$

$h_2(1)b_{13} = h_2h_4b_{14}$

$h_2(1)b_{03} = h_0h_0(1, 2) + h_2h_4b_{04}$

$h_0(1)h_0(1, 3) = h_1^2h_4b_{04} + h_4b_{02}b_{14}$

$h_4h_2(1) = h_2b_{35}$

$h_0(1, 3)b_{13} = h_1^2h_0(1, 2) + h_4h_0(1)b_{14}$

$h_1(1)h_2(1) = 0$

$h_0(1, 2)b_{02} = h_4h_0(1)b_{04} + h_0(1, 3)b_{03}$

$h_0(1)b_{35} = h_4h_0(1, 3)$

$h_0(1)h_0(1, 2) = h_4b_{13}b_{04} + h_4b_{03}b_{14}$

$h_1(1)h_0(1, 3) = h_1h_3h_0(1, 2)$

$b_{13}b_{35} = h_1^2b_{25} + h_4^2b_{14}$

$h_2h_4b_{03}b_{14} = h_0h_0(1, 2)b_{13} + h_2h_4b_{13}b_{04}$

$b_{35}b_{03} = h_0^2b_{15} + h_4^2b_{04} + b_{02}b_{25}$

$h_0(1, 3)b_{24} = h_0h_2(1)b_{14} + h_3^2h_0(1, 2)$

$h_0(1)b_{25} = h_0h_2b_{15} + h_4h_0(1, 2)$

$h_1(1)b_{35} = h_1h_3b_{25}$

$h_5h_2(1) = 0$

$h_2h_4h_0(1, 2) = h_0h_2^2b_{15} + h_0b_{13}b_{25}$

$b_{24}b_{35} = h_3^2b_{25} + h_2(1)^2$

$h_5h_0(1, 3) = 0$

$h_2h_3(1) = 0$

$h_2(1)h_0(1, 3) = h_0h_3^2b_{15} + h_0b_{35}b_{14}$

$h_5b_{35} = h_3h_3(1)$

$h_0(1)h_3(1) = 0$

$h_5h_0(1, 2) = 0$

$b_{02}b_{35}b_{14} = h_1^2h_3^2b_{05} + h_1^2b_{35}b_{04} + h_3^2b_{02}b_{15} + h_0(1, 3)^2$

$h_0h_1(1, 3) = 0$

$h_3(1)b_{13} = h_1h_1(1, 3)$

$h_2(1)h_0(1, 2) = h_0b_{24}b_{15} + h_0b_{14}b_{25}$

$h_1(1, 3)b_{02} = h_1h_3(1)b_{03}$

$h_1(1)h_3(1) = h_1h_5b_{25}$

$b_{02}b_{14}b_{25} = h_0^2b_{14}b_{15} + h_1^2b_{24}b_{05} + h_1^2b_{25}b_{04} + h_3^2b_{03}b_{15} + h_0(1, 3)h_0(1, 2)$

$h_3h_1(1, 3) = h_1h_5b_{25}$

$h_0(1)h_1(1, 3) = 0$

$h_5b_{02}b_{25} = h_0^2h_5b_{15} + h_3h_3(1)b_{03}$

$h_5h_6 = 0$

$h_3(1)b_{24} = h_3h_5b_{25}$

$h_0(1, 2)b_{35} = h_0h_2(1)b_{15} + h_0(1, 3)b_{25}$

$b_{03}b_{14}b_{25} = h_2^2b_{04}b_{15} + h_1(1)^2b_{05} + h_0(1, 2)^2 + b_{13}b_{25}b_{04} + b_{24}b_{03}b_{15}$

$h_0h_1(1, 2) = 0$

$h_3(1)b_{14} = h_1h_1(1, 2) + h_3h_5b_{15}$

$h_1(1)h_1(1, 3) = h_2^2h_5b_{15} + h_5b_{13}b_{25}$

$h_0h_5b_{13}b_{25} = h_0h_2^2h_5b_{15}$

$h_1(1, 2)b_{02} = h_1h_3h_5b_{05} + h_1h_3(1)b_{04}$

$h_5h_3(1) = h_3b_{46}$

$h_1(1, 3)b_{24} = h_2^2h_1(1, 2) + h_5h_1(1)b_{25}$

$h_0(1)h_1(1, 2) = 0$

$h_2(1)h_3(1) = 0$

$h_1(1, 2)b_{13} = h_5h_1(1)b_{15} + h_1(1, 3)b_{14}$

$h_1(1, 2)b_{03} = h_5h_1(1)b_{05} + h_1(1, 3)b_{04}$

$h_3(1)h_0(1, 3) = 0$

$h_1(1)b_{46} = h_5h_1(1, 3)$

$h_1(1)h_1(1, 2) = h_5b_{24}b_{15} + h_5b_{14}b_{25}$

$h_0h_5b_{14}b_{25} = h_0h_5b_{24}b_{15}$

$h_2(1)h_1(1, 3) = h_2h_4h_1(1, 2)$

$b_{24}b_{46} = h_2^2b_{36} + h_5^2b_{25}$

$h_3(1)h_0(1, 2) = 0$

$h_0(1, 3)h_1(1, 3) = 0$

$h_1(1, 3)b_{03}b_{14} = h_5h_1(1)b_{13}b_{05} + h_5h_1(1)b_{03}b_{15} + h_1(1, 3)b_{13}b_{04}$

$h_3h_5b_{14}b_{25} = h_1h_1(1, 2)b_{24} + h_3h_5b_{24}b_{15}$

$b_{46}b_{14} = h_1^2b_{26} + h_5^2b_{15} + b_{13}b_{36}$

$b_{03}b_{36} = h_0^2b_{16} + h_5^2b_{05} + b_{02}b_{26} + b_{46}b_{04}$

$h_1(1, 3)b_{35} = h_1h_3(1)b_{25} + h_4^2h_1(1, 2)$

$h_1(1)b_{36} = h_1h_3b_{26} + h_5h_1(1, 2)$

$h_1(1, 3)h_0(1, 2) = 0$

$h_2(1)b_{46} = h_2h_4b_{36}$

$h_6h_3(1) = 0$

$h_0(1, 3)b_{46} = h_4h_0(1)b_{36}$

$h_3h_5h_1(1, 2) = h_1h_3^2b_{26} + h_1b_{24}b_{36}$

$h_0(1, 3)h_1(1, 2) = 0$

$b_{35}b_{46} = h_4^2b_{36} + h_3(1)^2$

$h_0(1, 2)b_{46} = h_0h_2h_4b_{16} + h_4h_0(1)b_{26}$

$h_6h_1(1, 3) = 0$

$h_0(1, 2)h_1(1, 2) = 0$

$h_3h_4(1) = 0$

$h_3(1)h_1(1, 3) = h_1h_4^2b_{26} + h_1b_{46}b_{25}$

$b_{02}b_{46}b_{25} = h_0^2h_4^2b_{16} + h_0^2b_{46}b_{15} + h_4^2b_{02}b_{26} + h_3(1)^2b_{03}$

$h_6b_{46} = h_4h_4(1)$

$h_0(1, 2)b_{36} = h_0h_2(1)b_{16} + h_0(1, 3)b_{26}$

$h_1(1)h_4(1) = 0$

$h_6h_1(1, 2) = 0$

$b_{13}b_{46}b_{25} = h_2^2h_4^2b_{16} + h_2^2b_{46}b_{15} + h_4^2b_{13}b_{26} + h_1(1, 3)^2$

$h_1h_2(1, 3) = 0$

$h_4(1)b_{24} = h_2h_2(1, 3)$

$h_2(1, 3)b_{02} = h_0h_0(1, 3, 5)$

$h_3(1)h_1(1, 2) = h_1b_{35}b_{26} + h_1b_{25}b_{36}$

$h_0(1)h_2(1, 3) = h_0h_4(1)b_{14}$

$h_2h_0(1, 3, 5) = h_0h_4(1)b_{14}$

$b_{02}b_{25}b_{36} = h_0^2b_{35}b_{16} + h_0^2b_{36}b_{15} + h_3^2b_{46}b_{05} + h_3(1)^2b_{04} + b_{02}b_{35}b_{26}$

$h_2(1, 3)b_{13} = h_2h_4(1)b_{14}$

$h_2(1, 3)b_{03} = h_0h_0(1, 2, 5) + h_2h_4(1)b_{04}$

$h_4(1)b_{02}b_{14} = h_1^2h_4(1)b_{04} + h_0(1)h_0(1, 3, 5)$

$h_2(1)h_4(1) = h_2h_6b_{36}$

$b_{13}b_{25}b_{36} = h_1^2b_{25}b_{26} + h_2^2b_{35}b_{16} + h_2^2b_{36}b_{15} + h_4^2b_{14}b_{26} + h_1(1, 3)h_1(1, 2)$

$h_4h_2(1, 3) = h_2h_6b_{36}$

$h_0(1, 3, 5)b_{13} = h_1^2h_0(1, 2, 5) + h_0(1)h_4(1)b_{14}$

$h_1(1)h_2(1, 3) = 0$

$h_0(1, 2, 5)b_{02} = h_0(1)h_4(1)b_{04} + h_0(1, 3, 5)b_{03}$

$h_4(1)h_0(1, 3) = h_4h_0(1, 3, 5)$

$h_6h_0(1)b_{36} = h_4h_0(1, 3, 5)$

$h_4(1)b_{03}b_{14} = h_0(1)h_0(1, 2, 5) + h_4(1)b_{13}b_{04}$

$h_1(1)h_0(1, 3, 5) = h_1h_3h_0(1, 2, 5)$

$h_6b_{13}b_{36} = h_1^2h_6b_{26} + h_4h_4(1)b_{14}$

$h_4(1)b_{35} = h_4h_6b_{36}$

$h_0(1, 3, 5)b_{24} = h_0h_2(1, 3)b_{14} + h_3^2h_0(1, 2, 5)$

$h_4(1)h_0(1, 2) = h_4h_0(1, 2, 5)$

$h_6h_0(1)b_{26} = h_0h_2h_6b_{16} + h_4h_0(1, 2, 5)$

$h_1(1, 2)b_{46} = h_1h_3(1)b_{26} + h_1(1, 3)b_{36}$

$b_{14}b_{25}b_{36} = h_3^2b_{15}b_{26} + h_2(1)^2b_{16} + h_1(1, 2)^2 + b_{24}b_{36}b_{15} + b_{35}b_{14}b_{26}$

$h_2h_4h_0(1, 2, 5) = h_0h_2^2h_6b_{16} + h_0h_6b_{13}b_{26}$

$h_1h_2(1, 2) = 0$

$h_4(1)b_{25} = h_2h_2(1, 2) + h_4h_6b_{26}$

$h_2(1)h_2(1, 3) = h_3^2h_6b_{26} + h_6b_{24}b_{36}$

$h_1h_6b_{24}b_{36} = h_1h_3^2h_6b_{26}$

$h_2(1, 2)b_{02} = h_0h_0(1, 3, 4)$

$h_0(1)h_2(1, 2) = h_0h_4h_6b_{16} + h_0h_4(1)b_{15}$

$h_0(1, 3)h_2(1, 3) = h_0h_3^2h_6b_{16} + h_0h_6b_{14}b_{36}$

$h_2h_0(1, 3, 4) = h_0h_4h_6b_{16} + h_0h_4(1)b_{15}$

$h_2(1)h_0(1, 3, 5) = h_0h_3^2h_6b_{16} + h_0h_6b_{14}b_{36}$

$h_2(1, 2)b_{13} = h_2h_4h_6b_{16} + h_2h_4(1)b_{15}$

$h_6h_4(1) = h_4b_{57}$

$h_0(1, 3)b_{25}b_{36} = h_0h_2(1)b_{35}b_{16} + h_0h_2(1)b_{36}b_{15} + h_0(1, 3)b_{35}b_{26}$

$h_2(1, 2)b_{03} = h_0h_0(1, 2, 4) + h_2h_4h_6b_{06} + h_2h_4(1)b_{05}$

$h_4(1)b_{02}b_{15} = h_1^2h_4h_6b_{06} + h_1^2h_4(1)b_{05} + h_4h_6b_{02}b_{16} + h_0(1)h_0(1, 3, 4)$

$h_0(1, 3)h_0(1, 3, 5) = h_1^2h_3^2h_6b_{06} + h_1^2h_6b_{36}b_{04} + h_3^2h_6b_{02}b_{16} + h_6b_{02}b_{14}b_{36}$

$h_2(1, 3)b_{35} = h_3^2h_2(1, 2) + h_6h_2(1)b_{36}$

$h_0(1, 3, 4)b_{13} = h_1^2h_0(1, 2, 4) + h_4h_6h_0(1)b_{16} + h_0(1)h_4(1)b_{15}$

$h_2(1, 3)h_0(1, 2) = h_0h_6b_{24}b_{16} + h_0h_6b_{14}b_{26}$

$h_2(1)h_0(1, 2, 5) = h_0h_6b_{24}b_{16} + h_0h_6b_{14}b_{26}$

$h_1(1)h_2(1, 2) = 0$

$h_0(1, 2, 4)b_{02} = h_4h_6h_0(1)b_{06} + h_0(1)h_4(1)b_{05} + h_0(1, 3, 4)b_{03}$

$h_3(1)h_4(1) = 0$

$h_0(1, 3, 5)b_{35} = h_3^2h_0(1, 3, 4) + h_6h_0(1, 3)b_{36}$

$h_0(1, 2)h_0(1, 3, 5) = h_0^2h_6b_{14}b_{16} + h_1^2h_6b_{24}b_{06} + h_1^2h_6b_{04}b_{26} + h_3^2h_6b_{03}b_{16} + h_6b_{02}b_{14}b_{26}$

$h_4(1)b_{03}b_{15} = h_4h_6b_{13}b_{06} + h_4h_6b_{03}b_{16} + h_0(1)h_0(1, 2, 4) + h_4(1)b_{13}b_{05}$

$h_0(1, 3)h_0(1, 2, 5) = h_0^2h_6b_{14}b_{16} + h_1^2h_6b_{24}b_{06} + h_1^2h_6b_{04}b_{26} + h_3^2h_6b_{03}b_{16} + h_6b_{02}b_{14}b_{26}$

$h_1(1)h_0(1, 3, 4) = h_1h_3h_0(1, 2, 4)$

$h_2(1, 2)b_{24} = h_6h_2(1)b_{26} + h_2(1, 3)b_{25}$

$h_2(1, 2)b_{14} = h_6h_2(1)b_{16} + h_2(1, 3)b_{15}$

$h_0(1, 3, 4)b_{24} = h_0h_6h_2(1)b_{16} + h_0h_2(1, 3)b_{15} + h_3^2h_0(1, 2, 4)$

$h_0(1, 2, 5)b_{35} = h_0h_6h_2(1)b_{16} + h_3^2h_0(1, 2, 4) + h_6h_0(1, 3)b_{26}$

$h_0(1, 3, 5)b_{25} = h_0h_6h_2(1)b_{16} + h_0h_2(1, 3)b_{15} + h_3^2h_0(1, 2, 4) + h_6h_0(1, 3)b_{26}$

$h_4(1)h_1(1, 3) = 0$

$h_2(1, 2)b_{04} = h_0h_0(1, 2, 3) + h_6h_2(1)b_{06} + h_2(1, 3)b_{05}$

$h_0(1, 2)h_0(1, 2, 5) = h_2^2h_6b_{04}b_{16} + h_6h_1(1)^2b_{06} + h_6b_{13}b_{04}b_{26} + h_6b_{24}b_{03}b_{16} + h_6b_{03}b_{14}b_{26}$

$h_2(1)b_{57} = h_6h_2(1, 3)$

$h_0(1, 3, 4)b_{14} = h_1^2h_0(1, 2, 3) + h_6h_0(1, 3)b_{16} + h_0(1, 3, 5)b_{15}$

$h_0(1, 2, 3)b_{02} = h_6h_0(1, 3)b_{06} + h_0(1, 3, 5)b_{05} + h_0(1, 3, 4)b_{04}$

$h_2(1)h_2(1, 2) = h_6b_{35}b_{26} + h_6b_{25}b_{36}$

$h_0(1, 2, 4)b_{24} = h_2^2h_0(1, 2, 3) + h_6h_0(1, 2)b_{26} + h_0(1, 2, 5)b_{25}$

$h_0(1, 3)b_{57} = h_6h_0(1, 3, 5)$

$h_1h_6b_{25}b_{36} = h_1h_6b_{35}b_{26}$

$h_4(1)b_{04}b_{15} = h_4h_6b_{14}b_{06} + h_4h_6b_{04}b_{16} + h_0(1)h_0(1, 2, 3) + h_4(1)b_{14}b_{05}$

$h_3(1)h_2(1, 3) = h_3h_5h_2(1, 2)$

$h_0(1, 2, 3)b_{13} = h_6h_0(1, 2)b_{16} + h_0(1, 2, 5)b_{15} + h_0(1, 2, 4)b_{14}$

$h_0(1, 3)h_2(1, 2) = h_0h_6b_{35}b_{16} + h_0h_6b_{36}b_{15}$

$h_2(1)h_0(1, 3, 4) = h_0h_6b_{35}b_{16} + h_0h_6b_{36}b_{15}$

$h_0(1, 2, 3)b_{03} = h_6h_0(1, 2)b_{06} + h_0(1, 2, 5)b_{05} + h_0(1, 2, 4)b_{04}$

$b_{35}b_{57} = h_3^2b_{47} + h_6^2b_{36}$

$h_0(1, 2)b_{57} = h_6h_0(1, 2, 5)$

$h_3(1)h_0(1, 3, 5) = h_3h_5h_0(1, 3, 4)$

$h_6h_0(1)h_0(1, 3, 5) = h_1^2h_4b_{57}b_{04} + h_4b_{02}b_{57}b_{14}$

$h_4(1)h_1(1, 2) = 0$

$h_0(1, 3)h_0(1, 3, 4) = h_1^2h_6b_{35}b_{06} + h_1^2h_6b_{36}b_{05} + h_6b_{02}b_{35}b_{16} + h_6b_{02}b_{36}b_{15}$

$h_3h_0(1, 2, 4)b_{14} = h_1h_1(1)h_0(1, 2, 3) + h_3h_6h_0(1, 2)b_{16} + h_3h_0(1, 2, 5)b_{15}$

$h_0(1, 2, 5)b_{13}b_{25} = h_2^2h_6h_0(1, 2)b_{16} + h_2^2h_0(1, 2, 5)b_{15} + h_6h_0(1, 2)b_{13}b_{26} + h_1(1)^2h_0(1, 2, 4)$

$h_0(1, 2)h_2(1, 2) = h_0h_6b_{25}b_{16} + h_0h_6b_{15}b_{26}$

$h_2(1)h_0(1, 2, 4) = h_0h_6b_{25}b_{16} + h_0h_6b_{15}b_{26} + h_2h_4h_0(1, 2, 3)$

$h_1(1, 3)h_2(1, 3) = 0$

$h_2(1, 3)b_{14}b_{25} = h_6h_2(1)b_{24}b_{16} + h_6h_2(1)b_{14}b_{26} + h_2(1, 3)b_{24}b_{15}$

$h_4h_6b_{25}b_{36} = h_2h_2(1, 2)b_{35} + h_4h_6b_{35}b_{26}$

$b_{57}b_{25} = h_2^2b_{37} + h_6^2b_{26} + b_{24}b_{47}$

$h_4h_0(1, 2, 5)b_{25} = h_0h_2h_6b_{25}b_{16} + h_0h_2h_6b_{15}b_{26} + h_4h_6h_0(1, 2)b_{26}$

$h_3(1)h_0(1, 2, 5) = h_3h_5h_0(1, 2, 4)$

$h_6h_0(1)h_0(1, 2, 5) = h_4b_{13}b_{57}b_{04} + h_4b_{57}b_{03}b_{14}$

$h_2(1, 3)b_{25}b_{04} = h_0h_0(1, 2, 3)b_{24} + h_6h_2(1)b_{24}b_{06} + h_6h_2(1)b_{04}b_{26} + h_2(1, 3)b_{24}b_{05}$

$h_0(1, 2)h_0(1, 3, 4) = h_1^2h_6b_{25}b_{06} + h_1^2h_6b_{26}b_{05} + h_6b_{02}b_{25}b_{16} + h_6b_{02}b_{15}b_{26}$

$h_0(1, 3)h_0(1, 2, 4) = h_1^2h_6b_{25}b_{06} + h_1^2h_6b_{26}b_{05} + h_4h_0(1)h_0(1, 2, 3) + h_6b_{02}b_{25}b_{16} + h_6b_{02}b_{15}b_{26}$

$h_0(1, 2, 4)b_{03}b_{14} = h_6h_0(1, 2)b_{13}b_{06} + h_6h_0(1, 2)b_{03}b_{16} + h_0(1, 2, 5)b_{13}b_{05} + h_0(1, 2, 5)b_{03}b_{15} + h_0(1, 2, 4)b_{13}b_{04}$

$h_1(1, 3)h_0(1, 3, 5) = h_1h_3h_5h_0(1, 2, 4)$

$b_{14}b_{47} = h_1^2b_{27} + h_6^2b_{16} + b_{13}b_{37} + b_{57}b_{15}$

$h_2(1, 3)b_{04}b_{15} = h_0h_0(1, 2, 3)b_{14} + h_6h_2(1)b_{14}b_{06} + h_6h_2(1)b_{04}b_{16} + h_2(1, 3)b_{14}b_{05}$

$b_{47}b_{04} = h_0^2b_{17} + h_6^2b_{06} + b_{02}b_{27} + b_{57}b_{05} + b_{03}b_{37}$

$h_0(1, 2, 4)b_{35} = h_0h_2(1, 2)b_{15} + h_4^2h_0(1, 2, 3) + h_0(1, 3, 4)b_{25}$

$h_0(1, 2)h_0(1, 2, 4) = h_2^2h_6b_{15}b_{06} + h_2^2h_6b_{05}b_{16} + h_6b_{13}b_{25}b_{06} + h_6b_{13}b_{26}b_{05} + h_6b_{03}b_{25}b_{16} + h_6b_{03}b_{15}b_{26}$

$h_1(1, 3)h_0(1, 2, 5) = h_5h_1(1)h_0(1, 2, 4)$

$h_2(1, 3)b_{46} = h_2h_4(1)b_{36} + h_5^2h_2(1, 2)$

$h_2(1)b_{47} = h_2h_4b_{37} + h_6h_2(1, 2)$

$h_0(1, 2, 5)b_{14}b_{25} = h_6h_0(1, 2)b_{24}b_{16} + h_6h_0(1, 2)b_{14}b_{26} + h_1(1)^2h_0(1, 2, 3) + h_0(1, 2, 5)b_{24}b_{15}$

$h_6b_{25}b_{36}b_{04} = h_0h_2(1)h_0(1, 2, 3) + h_3^2h_6b_{26}b_{05} + h_6h_2(1)^2b_{06} + h_6b_{24}b_{36}b_{05} + h_6b_{35}b_{04}b_{26}$

$h_2(1, 3)h_1(1, 2) = 0$

$h_3(1)b_{57} = h_3h_5b_{47}$

$h_0(1, 3, 5)b_{46} = h_5^2h_0(1, 3, 4) + h_0(1)h_4(1)b_{36}$

$h_0(1, 3)b_{47} = h_4h_0(1)b_{37} + h_6h_0(1, 3, 4)$

$h_0(1, 3)h_0(1, 2, 3) = h_3^2h_6b_{15}b_{06} + h_3^2h_6b_{05}b_{16} + h_6b_{35}b_{14}b_{06} + h_6b_{35}b_{04}b_{16} + h_6b_{14}b_{36}b_{05} + h_6b_{36}b_{04}b_{15}$

$h_1(1, 2)h_0(1, 3, 5) = h_1h_3h_5h_0(1, 2, 3)$

$h_2h_6b_{36}b_{04}b_{15} = h_0h_4h_0(1, 2, 3)b_{14} + h_2h_6b_{35}b_{14}b_{06} + h_2h_6b_{35}b_{04}b_{16} + h_2h_6b_{14}b_{36}b_{05}$

$h_0(1, 2, 5)b_{46} = h_0h_2h_4(1)b_{16} + h_5^2h_0(1, 2, 4) + h_0(1)h_4(1)b_{26}$

$h_0(1, 2)b_{47} = h_0h_2h_4b_{17} + h_4h_0(1)b_{27} + h_6h_0(1, 2, 4)$

$h_6h_0(1)h_0(1, 3, 4) = h_1^2h_4h_6^2b_{06} + h_1^2h_4b_{57}b_{05} + h_4h_6^2b_{02}b_{16} + h_4b_{02}b_{57}b_{15}$

$h_1(1, 3)b_{57} = h_5h_1(1)b_{47}$

$h_0(1, 2)h_0(1, 2, 3) = h_6b_{24}b_{15}b_{06} + h_6b_{24}b_{05}b_{16} + h_6b_{14}b_{25}b_{06} + h_6b_{14}b_{26}b_{05} + h_6b_{25}b_{04}b_{16} + h_6b_{04}b_{15}b_{26}$

$h_1(1, 2)h_0(1, 2, 5) = h_5h_1(1)h_0(1, 2, 3)$

$h_4h_6h_2(1, 2) = h_2h_4^2b_{37} + h_2b_{35}b_{47}$

$h_2h_4h_6b_{03}b_{15}b_{26} = h_0h_2^2h_0(1, 2, 4)b_{15} + h_0h_0(1, 2, 4)b_{13}b_{25} + h_2^3h_4h_6b_{15}b_{06} + h_2^3h_4h_6b_{05}b_{16} + h_2h_4h_6b_{13}b_{25}b_{06} + h_2h_4h_6b_{13}b_{26}b_{05} + h_2h_4h_6b_{03}b_{25}b_{16}$

$h_1(1, 3)h_2(1, 2) = 0$

$b_{03}b_{14}b_{37} = h_0^2b_{14}b_{17} + h_1^2b_{04}b_{27} + h_6^2b_{14}b_{06} + h_6^2b_{04}b_{16} + b_{02}b_{14}b_{27} + b_{13}b_{04}b_{37} + b_{57}b_{14}b_{05} + b_{57}b_{04}b_{15}$

$h_6h_0(1)h_0(1, 2, 4) = h_4h_6^2b_{13}b_{06} + h_4h_6^2b_{03}b_{16} + h_4b_{13}b_{57}b_{05} + h_4b_{57}b_{03}b_{15}$

$h_1(1, 3)h_0(1, 3, 4) = h_1h_3(1)h_0(1, 2, 4)$

$h_2h_4b_{57}b_{03}b_{15} = h_0h_6h_0(1, 2, 4)b_{13} + h_2h_4h_6^2b_{13}b_{06} + h_2h_4h_6^2b_{03}b_{16} + h_2h_4b_{13}b_{57}b_{05}$

$b_{46}b_{57} = h_5^2b_{47} + h_4(1)^2$

$h_5h_0(1, 3, 4)b_{25} = h_0h_5h_2(1, 2)b_{15} + h_3h_3(1)h_0(1, 2, 4)$

$h_0(1, 2, 5)b_{36} = h_0h_2(1, 3)b_{16} + h_5^2h_0(1, 2, 3) + h_0(1, 3, 5)b_{26}$

$h_0(1, 2)b_{37} = h_0h_2(1)b_{17} + h_6h_0(1, 2, 3) + h_0(1, 3)b_{27}$

$h_6b_{24}b_{36}b_{04}b_{15} = h_0h_2(1)h_0(1, 2, 3)b_{14} + h_3^2h_6b_{14}b_{26}b_{05} + h_3^2h_6b_{04}b_{15}b_{26} + h_6h_2(1)^2b_{14}b_{06} + h_6h_2(1)^2b_{04}b_{16} + h_6b_{24}b_{14}b_{36}b_{05}$

$h_1(1, 2)b_{57} = h_1h_3h_5b_{27} + h_5h_1(1)b_{37}$

$h_2h_4h_6h_0(1, 2, 4) = h_0h_2^2h_4^2b_{17} + h_0h_2^2b_{47}b_{15} + h_0h_4^2b_{13}b_{27} + h_0b_{13}b_{25}b_{47}$

$h_2h_4h_6b_{04}b_{15}b_{26} = h_0h_2^2h_0(1, 2, 3)b_{15} + h_0h_6h_0(1, 2)b_{25}b_{16} + h_0h_0(1, 2, 5)b_{25}b_{15} + h_0h_0(1, 2, 4)b_{14}b_{25} + h_2^2h_6h_2(1)b_{15}b_{06} + h_2^2h_6h_2(1)b_{05}b_{16} + h_2h_4h_6b_{14}b_{25}b_{06} + h_2h_4h_6b_{14}b_{26}b_{05} + h_2h_4h_6b_{25}b_{04}b_{16}$

$h_1(1, 2)h_2(1, 2) = 0$

$h_0(1, 3)b_{03}b_{37} = h_0^2h_0(1, 3)b_{17} + h_4h_0(1)b_{04}b_{37} + h_6^2h_0(1, 3)b_{06} + h_6h_0(1, 3, 5)b_{05} + h_6h_0(1, 3, 4)b_{04} + h_0(1, 3)b_{02}b_{27}$

$h_6h_0(1)h_0(1, 2, 3) = h_4h_6^2b_{14}b_{06} + h_4h_6^2b_{04}b_{16} + h_4b_{57}b_{14}b_{05} + h_4b_{57}b_{04}b_{15}$

$h_1(1, 2)h_0(1, 3, 4) = h_1h_3(1)h_0(1, 2, 3)$

$h_6h_2(1)b_{36}b_{04}b_{15} = h_0h_3^2h_0(1, 2, 3)b_{15} + h_0h_0(1, 2, 3)b_{35}b_{14} + h_3^2h_6h_2(1)b_{15}b_{06} + h_3^2h_6h_2(1)b_{05}b_{16} + h_6h_2(1)b_{35}b_{14}b_{06} + h_6h_2(1)b_{35}b_{04}b_{16} + h_6h_2(1)b_{14}b_{36}b_{05}$

$h_2h_4b_{57}b_{04}b_{15} = h_0h_6^2h_0(1, 2)b_{16} + h_0h_6h_0(1, 2, 5)b_{15} + h_0h_6h_0(1, 2, 4)b_{14} + h_2h_4h_6^2b_{14}b_{06} + h_2h_4h_6^2b_{04}b_{16} + h_2h_4b_{57}b_{14}b_{05}$

$h_1(1, 2)h_0(1, 2, 4) = h_1(1, 3)h_0(1, 2, 3)$

$h_4(1)h_2(1, 3) = h_2h_5^2b_{37} + h_2b_{57}b_{36}$

$h_2h_4h_6h_0(1, 2, 3) = h_0h_2^2b_{35}b_{17} + h_0h_2^2b_{15}b_{37} + h_0h_4^2b_{14}b_{27} + h_0b_{13}b_{25}b_{37}$

$h_6h_2(1)b_{04}b_{15}b_{26} = h_0h_0(1, 2, 3)b_{24}b_{15} + h_0h_0(1, 2, 3)b_{14}b_{25} + h_6h_2(1)b_{24}b_{15}b_{06} + h_6h_2(1)b_{24}b_{05}b_{16} + h_6h_2(1)b_{14}b_{25}b_{06} + h_6h_2(1)b_{14}b_{26}b_{05} + h_6h_2(1)b_{25}b_{04}b_{16}$

$h_4(1)h_0(1, 3, 5) = h_5^2h_0(1)b_{37} + h_0(1)b_{57}b_{36}$

$b_{13}b_{57}b_{36} = h_1^2h_5^2b_{27} + h_1^2b_{57}b_{26} + h_5^2b_{13}b_{37} + h_4(1)^2b_{14}$

$h_0(1, 2, 3)b_{46} = h_0h_2(1, 2)b_{16} + h_0(1, 3, 4)b_{26} + h_0(1, 2, 4)b_{36}$

$h_4(1)h_0(1, 2, 5) = h_0h_2h_5^2b_{17} + h_0h_2b_{57}b_{16} + h_5^2h_0(1)b_{27} + h_0(1)b_{57}b_{26}$

$h_1(1, 2)b_{47} = h_1h_3(1)b_{27} + h_1(1, 3)b_{37}$

$h_6h_2(1)h_0(1, 2, 3) = h_0h_3^2b_{15}b_{27} + h_0h_2(1)^2b_{17} + h_0b_{24}b_{15}b_{37} + h_0b_{35}b_{14}b_{27} + h_0b_{14}b_{25}b_{37}$

$h_2h_6b_{35}b_{04}b_{15}b_{26} = h_0h_2h_2(1)h_0(1, 2, 3)b_{15} + h_0h_4h_0(1, 2, 3)b_{14}b_{25} + h_2h_6h_2(1)^2b_{15}b_{06} + h_2h_6h_2(1)^2b_{05}b_{16} + h_2h_6b_{35}b_{14}b_{25}b_{06} + h_2h_6b_{35}b_{14}b_{26}b_{05} + h_2h_6b_{35}b_{25}b_{04}b_{16}$

$b_{24}b_{57}b_{36} = h_3^2h_5^2b_{27} + h_3^2b_{57}b_{26} + h_5^2b_{24}b_{37} + h_2(1, 3)^2$

$h_2(1, 3)h_0(1, 3, 5) = h_0h_3^2h_5^2b_{17} + h_0h_3^2b_{57}b_{16} + h_0h_5^2b_{14}b_{37} + h_0b_{57}b_{14}b_{36}$

$h_0(1, 3)b_{25}b_{37} = h_0h_2(1)b_{35}b_{17} + h_0h_2(1)b_{15}b_{37} + h_6h_0(1, 2, 3)b_{35} + h_0(1, 3)b_{35}b_{27}$

$b_{02}b_{57}b_{14}b_{36} = h_1^2h_3^2h_5^2b_{07} + h_1^2h_3^2b_{57}b_{06} + h_1^2h_5^2b_{04}b_{37} + h_1^2b_{57}b_{36}b_{04} + h_3^2h_5^2b_{02}b_{17} + h_3^2b_{02}b_{57}b_{16} + h_5^2b_{02}b_{14}b_{37} + h_0(1, 3, 5)^2$

$h_4(1)h_2(1, 2) = h_2b_{46}b_{37} + h_2b_{36}b_{47}$

$h_2h_6h_0(1, 2, 3)b_{35} = h_0h_2h_2(1)b_{35}b_{17} + h_0h_2h_2(1)b_{15}b_{37} + h_0h_4b_{35}b_{14}b_{27} + h_0h_4b_{14}b_{25}b_{37}$

$h_2(1, 3)h_0(1, 2, 5) = h_0h_5^2b_{24}b_{17} + h_0h_5^2b_{14}b_{27} + h_0b_{24}b_{57}b_{16} + h_0b_{57}b_{14}b_{26}$

$h_4(1)h_0(1, 3, 4) = h_0(1)b_{46}b_{37} + h_0(1)b_{36}b_{47}$

$b_{02}b_{57}b_{14}b_{26} = h_0^2h_5^2b_{14}b_{17} + h_0^2b_{57}b_{14}b_{16} + h_1^2h_5^2b_{24}b_{07} + h_1^2h_5^2b_{04}b_{27} + h_1^2b_{24}b_{57}b_{06} + h_1^2b_{57}b_{04}b_{26} + h_3^2h_5^2b_{03}b_{17} + h_3^2b_{57}b_{03}b_{16} + h_5^2b_{02}b_{14}b_{27} + h_0(1, 3, 5)h_0(1, 2, 5)$

$b_{13}b_{36}b_{47} = h_1^2b_{46}b_{27} + h_1^2b_{47}b_{26} + h_4^2b_{57}b_{16} + h_4(1)^2b_{15} + b_{13}b_{46}b_{37}$

$h_0(1, 3, 4)b_{46}b_{25} = h_0h_4^2h_2(1, 2)b_{16} + h_0h_2(1, 2)b_{46}b_{15} + h_4^2h_0(1, 3, 4)b_{26} + h_3(1)^2h_0(1, 2, 4)$

$h_4(1)h_0(1, 2, 4) = h_0h_2b_{46}b_{17} + h_0h_2b_{47}b_{16} + h_0(1)b_{46}b_{27} + h_0(1)b_{47}b_{26}$

$b_{57}b_{03}b_{14}b_{26} = h_2^2h_5^2b_{04}b_{17} + h_2^2b_{57}b_{04}b_{16} + h_5^2h_1(1)^2b_{07} + h_5^2b_{13}b_{04}b_{27} + h_5^2b_{24}b_{03}b_{17} + h_5^2b_{03}b_{14}b_{27} + h_1(1)^2b_{57}b_{06} + h_0(1, 2, 5)^2 + b_{13}b_{57}b_{04}b_{26} + b_{24}b_{57}b_{03}b_{16}$

$b_{24}b_{36}b_{47} = h_2^2b_{36}b_{37} + h_3^2b_{46}b_{27} + h_3^2b_{47}b_{26} + h_5^2b_{25}b_{37} + h_2(1, 3)h_2(1, 2)$

$h_2(1, 2)h_0(1, 3, 5) = h_0h_3^2b_{46}b_{17} + h_0h_3^2b_{47}b_{16} + h_0h_5^2b_{15}b_{37} + h_0h_6^2b_{36}b_{16} + h_0b_{57}b_{36}b_{15}$

$h_2(1, 3)h_0(1, 3, 4) = h_0h_3^2b_{46}b_{17} + h_0h_3^2b_{47}b_{16} + h_0h_5^2b_{15}b_{37} + h_0h_6^2b_{36}b_{16} + h_0b_{57}b_{36}b_{15}$

$h_0(1, 3, 4)b_{25}b_{36} = h_0h_2(1, 2)b_{35}b_{16} + h_0h_2(1, 2)b_{36}b_{15} + h_3(1)^2h_0(1, 2, 3) + h_0(1, 3, 4)b_{35}b_{26}$

$h_4(1)h_0(1, 2, 3) = h_0h_2b_{36}b_{17} + h_0h_2b_{37}b_{16} + h_0(1)b_{36}b_{27} + h_0(1)b_{26}b_{37}$

$b_{02}b_{57}b_{36}b_{15} = h_1^2h_3^2b_{46}b_{07} + h_1^2h_3^2b_{47}b_{06} + h_1^2h_5^2b_{37}b_{05} + h_1^2h_6^2b_{36}b_{06} + h_1^2b_{57}b_{36}b_{05} + h_3^2b_{02}b_{46}b_{17} + h_3^2b_{02}b_{47}b_{16} + h_5^2b_{02}b_{15}b_{37} + h_6^2b_{02}b_{36}b_{16} + h_0(1, 3, 5)h_0(1, 3, 4)$

$h_2(1, 2)h_0(1, 2, 5) = h_0h_2^2b_{37}b_{16} + h_0h_5^2b_{25}b_{17} + h_0h_5^2b_{15}b_{27} + h_0h_6^2b_{26}b_{16} + h_0b_{24}b_{47}b_{16} + h_0b_{57}b_{15}b_{26}$

$h_2(1, 3)h_0(1, 2, 4) = h_0h_2^2b_{36}b_{17} + h_0h_5^2b_{25}b_{17} + h_0h_5^2b_{15}b_{27} + h_0h_6^2b_{26}b_{16} + h_0b_{13}b_{36}b_{27} + h_0b_{13}b_{26}b_{37} + h_0b_{24}b_{47}b_{16} + h_0b_{57}b_{15}b_{26}$

$b_{02}b_{57}b_{15}b_{26} = h_0^2h_5^2b_{15}b_{17} + h_0^2h_6^2b_{16}^2 + h_0^2b_{13}b_{36}b_{17} + h_0^2b_{13}b_{37}b_{16} + h_0^2b_{57}b_{15}b_{16} + h_1^2h_5^2b_{25}b_{07} + h_1^2h_5^2b_{05}b_{27} + h_1^2h_6^2b_{26}b_{06} + h_1^2b_{24}b_{47}b_{06} + h_1^2b_{57}b_{26}b_{05} + h_3^2b_{46}b_{03}b_{17} + h_3^2b_{03}b_{47}b_{16} + h_5^2b_{02}b_{15}b_{27} + h_6^2b_{02}b_{26}b_{16} + h_0(1)^2b_{36}b_{27} + h_0(1)^2b_{26}b_{37} + h_0(1, 3, 5)h_0(1, 2, 4)$

$h_0(1, 2, 5)h_0(1, 3, 4) = h_0^2b_{13}b_{36}b_{17} + h_0^2b_{13}b_{37}b_{16} + h_0(1)^2b_{36}b_{27} + h_0(1)^2b_{26}b_{37} + h_0(1, 3, 5)h_0(1, 2, 4)$

$h_6h_0(1, 2, 4)b_{36} = h_0h_2h_4b_{36}b_{17} + h_0h_2h_4b_{37}b_{16} + h_0h_6h_2(1, 2)b_{16} + h_4h_0(1)b_{36}b_{27} + h_4h_0(1)b_{26}b_{37} + h_6h_0(1, 3, 4)b_{26}$

$b_{57}b_{03}b_{15}b_{26} = h_2^2h_5^2b_{15}b_{07} + h_2^2h_5^2b_{05}b_{17} + h_2^2h_6^2b_{16}b_{06} + h_2^2b_{57}b_{05}b_{16} + h_2^2b_{03}b_{37}b_{16} + h_5^2b_{13}b_{25}b_{07} + h_5^2b_{13}b_{05}b_{27} + h_5^2b_{03}b_{25}b_{17} + h_5^2b_{03}b_{15}b_{27} + h_6^2b_{13}b_{26}b_{06} + h_6^2b_{03}b_{26}b_{16} + h_1(1)^2b_{47}b_{06} + h_0(1, 2, 5)h_0(1, 2, 4) + b_{13}b_{57}b_{26}b_{05} + b_{24}b_{03}b_{47}b_{16}$

$h_2(1, 2)b_{57} = h_2h_4(1)b_{37} + h_2(1, 3)b_{47}$

$h_2(1, 3)h_0(1, 2, 3) = h_0h_3^2b_{26}b_{17} + h_0h_3^2b_{16}b_{27} + h_0b_{24}b_{36}b_{17} + h_0b_{24}b_{37}b_{16} + h_0b_{14}b_{36}b_{27} + h_0b_{14}b_{26}b_{37}$

$b_{25}b_{36}b_{47} = h_4^2b_{26}b_{37} + h_3(1)^2b_{27} + h_2(1, 2)^2 + b_{35}b_{47}b_{26} + b_{46}b_{25}b_{37}$

$h_0(1, 3, 4)b_{57} = h_0(1)h_4(1)b_{37} + h_0(1, 3, 5)b_{47}$

$b_{57}b_{36}b_{04}b_{15} = h_0^2h_3^2b_{16}b_{17} + h_1^2h_3^2b_{26}b_{07} + h_1^2h_3^2b_{27}b_{06} + h_1^2b_{24}b_{36}b_{07} + h_1^2b_{24}b_{37}b_{06} + h_3^2h_5^2b_{05}b_{17} + h_3^2b_{02}b_{16}b_{27} + h_3^2b_{46}b_{04}b_{17} + h_3^2b_{03}b_{37}b_{16} + h_5^2b_{14}b_{37}b_{05} + h_5^2b_{04}b_{15}b_{37} + h_6^2b_{14}b_{36}b_{06} + h_6^2b_{36}b_{04}b_{16} + h_0(1, 3, 5)h_0(1, 2, 3) + b_{57}b_{14}b_{36}b_{05}$

$b_{02}b_{14}b_{26}b_{37} = h_0^2h_3^2b_{16}b_{17} + h_0^2b_{14}b_{36}b_{17} + h_0^2b_{14}b_{37}b_{16} + h_1^2h_3^2b_{26}b_{07} + h_1^2h_3^2b_{27}b_{06} + h_1^2b_{24}b_{36}b_{07} + h_1^2b_{24}b_{37}b_{06} + h_1^2b_{36}b_{04}b_{27} + h_1^2b_{04}b_{26}b_{37} + h_3^2h_5^2b_{05}b_{17} + h_3^2b_{02}b_{16}b_{27} + h_3^2b_{46}b_{04}b_{17} + h_3^2b_{03}b_{37}b_{16} + h_0(1, 3, 5)h_0(1, 2, 3) + b_{02}b_{14}b_{36}b_{27}$

$h_2(1, 2)h_0(1, 3, 4) = h_0h_4^2b_{37}b_{16} + h_0h_3(1)^2b_{17} + h_0b_{35}b_{47}b_{16} + h_0b_{46}b_{15}b_{37} + h_0b_{36}b_{47}b_{15}$

$h_0(1, 3)b_{26}b_{37} = h_0h_2(1)b_{36}b_{17} + h_0h_2(1)b_{37}b_{16} + h_6h_0(1, 2, 3)b_{36} + h_0(1, 3)b_{36}b_{27}$

$h_0(1, 2, 4)b_{57} = h_0h_2h_4(1)b_{17} + h_0(1)h_4(1)b_{27} + h_0(1, 2, 5)b_{47}$

$b_{02}b_{36}b_{47}b_{15} = h_1^2h_4^2b_{37}b_{06} + h_1^2h_3(1)^2b_{07} + h_1^2b_{35}b_{47}b_{06} + h_1^2b_{46}b_{37}b_{05} + h_1^2b_{36}b_{47}b_{05} + h_4^2b_{02}b_{37}b_{16} + h_3(1)^2b_{02}b_{17} + h_0(1, 3, 4)^2 + b_{02}b_{35}b_{47}b_{16} + b_{02}b_{46}b_{15}b_{37}$

$b_{57}b_{04}b_{15}b_{26} = h_0^2b_{24}b_{16}b_{17} + h_0^2b_{14}b_{16}b_{27} + h_1^2b_{24}b_{27}b_{06} + h_2^2b_{04}b_{37}b_{16} + h_3^2b_{03}b_{16}b_{27} + h_5^2b_{24}b_{15}b_{07} + h_5^2b_{24}b_{05}b_{17} + h_5^2b_{14}b_{25}b_{07} + h_5^2b_{14}b_{05}b_{27} + h_5^2b_{25}b_{04}b_{17} + h_5^2b_{04}b_{15}b_{27} + h_6^2b_{14}b_{26}b_{06} + h_6^2b_{04}b_{26}b_{16} + h_1(1)^2b_{37}b_{06} + h_0(1, 2, 5)h_0(1, 2, 3) + b_{24}b_{03}b_{37}b_{16} + b_{57}b_{14}b_{26}b_{05}$

$h_2h_6h_0(1, 2, 3)b_{36} = h_0h_2h_2(1)b_{36}b_{17} + h_0h_2h_2(1)b_{37}b_{16} + h_0h_4b_{14}b_{36}b_{27} + h_0h_4b_{14}b_{26}b_{37}$

$h_2(1, 2)h_0(1, 2, 4) = h_0h_4^2b_{26}b_{17} + h_0h_4^2b_{16}b_{27} + h_0b_{46}b_{25}b_{17} + h_0b_{46}b_{15}b_{27} + h_0b_{25}b_{47}b_{16} + h_0b_{47}b_{15}b_{26}$

$h_0(1, 3, 5)h_0(1, 2, 4)b_{14} = h_0^2h_6^2b_{14}b_{16}^2 + h_0^2b_{13}b_{14}b_{36}b_{17} + h_0^2b_{13}b_{14}b_{37}b_{16} + h_1^2h_6^2b_{24}b_{16}b_{06} + h_1^2h_6^2b_{04}b_{26}b_{16} + h_1^2h_0(1, 2, 5)h_0(1, 2, 3) + h_3^2h_6^2b_{03}b_{16}^2 + h_6^2b_{02}b_{14}b_{26}b_{16} + h_0(1)^2b_{14}b_{36}b_{27} + h_0(1)^2b_{14}b_{26}b_{37} + h_0(1, 3, 5)h_0(1, 2, 5)b_{15}$

$b_{02}b_{47}b_{15}b_{26} = h_0^2h_4^2b_{16}b_{17} + h_0^2b_{46}b_{15}b_{17} + h_1^2h_4^2b_{26}b_{07} + h_1^2h_4^2b_{27}b_{06} + h_1^2b_{46}b_{25}b_{07} + h_1^2b_{46}b_{05}b_{27} + h_1^2b_{25}b_{47}b_{06} + h_1^2b_{47}b_{26}b_{05} + h_4^2b_{02}b_{16}b_{27} + h_3(1)^2b_{03}b_{17} + h_0(1, 3, 4)h_0(1, 2, 4) + b_{02}b_{46}b_{15}b_{27} + b_{02}b_{25}b_{47}b_{16}$

$h_0(1, 2, 3)b_{57} = h_0h_2(1, 3)b_{17} + h_0(1, 3, 5)b_{27} + h_0(1, 2, 5)b_{37}$

$h_1(1, 3)b_{36}b_{47} = h_1h_3(1)b_{46}b_{27} + h_1h_3(1)b_{47}b_{26} + h_1(1, 3)b_{46}b_{37}$

$b_{03}b_{47}b_{15}b_{26} = h_2^2h_4^2b_{06}b_{17} + h_2^2b_{46}b_{05}b_{17} + h_2^2b_{47}b_{15}b_{06} + h_2^2b_{47}b_{05}b_{16} + h_4^2b_{13}b_{27}b_{06} + h_4^2b_{03}b_{26}b_{17} + h_4^2b_{03}b_{16}b_{27} + h_1(1, 3)^2b_{07} + h_0(1, 2, 4)^2 + b_{13}b_{46}b_{05}b_{27} + b_{13}b_{25}b_{47}b_{06} + b_{13}b_{47}b_{26}b_{05} + b_{46}b_{03}b_{25}b_{17} + b_{46}b_{03}b_{15}b_{27} + b_{03}b_{25}b_{47}b_{16}$

$h_2(1, 2)h_0(1, 2, 3) = h_0b_{35}b_{26}b_{17} + h_0b_{35}b_{16}b_{27} + h_0b_{25}b_{36}b_{17} + h_0b_{25}b_{37}b_{16} + h_0b_{36}b_{15}b_{27} + h_0b_{15}b_{26}b_{37}$

$h_6h_0(1, 2, 3)b_{24}b_{36} = h_0h_3^2h_2(1)b_{26}b_{17} + h_0h_3^2h_2(1)b_{16}b_{27} + h_0h_2(1)b_{24}b_{36}b_{17} + h_0h_2(1)b_{24}b_{37}b_{16} + h_0h_2(1)b_{14}b_{36}b_{27} + h_0h_2(1)b_{14}b_{26}b_{37} + h_3^2h_6h_0(1, 2, 3)b_{26}$

$b_{02}b_{15}b_{26}b_{37} = h_0^2b_{35}b_{16}b_{17} + h_0^2b_{36}b_{15}b_{17} + h_1^2b_{35}b_{26}b_{07} + h_1^2b_{35}b_{27}b_{06} + h_1^2b_{25}b_{36}b_{07} + h_1^2b_{25}b_{37}b_{06} + h_1^2b_{36}b_{05}b_{27} + h_1^2b_{26}b_{37}b_{05} + h_3^2b_{46}b_{05}b_{17} + h_3(1)^2b_{04}b_{17} + h_0(1, 3, 4)h_0(1, 2, 3) + b_{02}b_{35}b_{16}b_{27} + b_{02}b_{25}b_{37}b_{16} + b_{02}b_{36}b_{15}b_{27}$

$b_{03}b_{15}b_{26}b_{37} = h_0^2b_{25}b_{16}b_{17} + h_0^2b_{15}b_{26}b_{17} + h_1^2b_{25}b_{27}b_{06} + h_2^2b_{35}b_{06}b_{17} + h_2^2b_{36}b_{05}b_{17} + h_2^2b_{15}b_{37}b_{06} + h_2^2b_{37}b_{05}b_{16} + h_4^2b_{14}b_{27}b_{06} + h_4^2b_{04}b_{26}b_{17} + h_4^2b_{04}b_{16}b_{27} + h_5^2b_{25}b_{05}b_{17} + h_5^2b_{15}b_{05}b_{27} + h_1(1, 3)h_1(1, 2)b_{07} + h_0(1, 2, 4)h_0(1, 2, 3) + b_{02}b_{25}b_{16}b_{27} + b_{02}b_{15}b_{26}b_{27} + b_{13}b_{25}b_{37}b_{06} + b_{13}b_{36}b_{05}b_{27} + b_{13}b_{26}b_{37}b_{05} + b_{46}b_{25}b_{04}b_{17} + b_{46}b_{04}b_{15}b_{27} + b_{03}b_{25}b_{37}b_{16}$

$h_0(1, 2, 3)b_{47} = h_0h_2(1, 2)b_{17} + h_0(1, 3, 4)b_{27} + h_0(1, 2, 4)b_{37}$

$b_{04}b_{15}b_{26}b_{37} = h_3^2b_{15}b_{27}b_{06} + h_3^2b_{26}b_{05}b_{17} + h_3^2b_{05}b_{16}b_{27} + h_2(1)^2b_{06}b_{17} + h_1(1, 2)^2b_{07} + h_0(1, 2, 3)^2 + b_{24}b_{36}b_{05}b_{17} + b_{24}b_{15}b_{37}b_{06} + b_{24}b_{37}b_{05}b_{16} + b_{35}b_{14}b_{27}b_{06} + b_{35}b_{04}b_{26}b_{17} + b_{35}b_{04}b_{16}b_{27} + b_{14}b_{25}b_{37}b_{06} + b_{14}b_{36}b_{05}b_{27} + b_{14}b_{26}b_{37}b_{05} + b_{25}b_{36}b_{04}b_{17} + b_{25}b_{04}b_{37}b_{16} + b_{36}b_{04}b_{15}b_{27}$

$h_6h_0(1, 2, 3)b_{25}b_{36} = h_0h_2(1)b_{35}b_{26}b_{17} + h_0h_2(1)b_{35}b_{16}b_{27} + h_0h_2(1)b_{25}b_{36}b_{17} + h_0h_2(1)b_{25}b_{37}b_{16} + h_0h_2(1)b_{36}b_{15}b_{27} + h_0h_2(1)b_{15}b_{26}b_{37} + h_6h_0(1, 2, 3)b_{35}b_{26}$

$h_0(1, 3, 5)h_0(1, 2, 4)b_{36} = h_0^2h_3^2b_{46}b_{16}b_{17} + h_0^2h_3^2b_{47}b_{16}^2 + h_0^2h_5^2b_{15}b_{37}b_{16} + h_0^2h_6^2b_{36}b_{16}^2 + h_0^2b_{13}b_{36}^2b_{17} + h_0^2b_{13}b_{36}b_{37}b_{16} + h_0^2b_{57}b_{36}b_{15}b_{16} + h_5^2h_0(1, 3, 4)h_0(1, 2, 3) + h_0(1)^2b_{36}^2b_{27} + h_0(1)^2b_{36}b_{26}b_{37} + h_0(1, 3, 5)h_0(1, 3, 4)b_{26}$

$h_0(1, 2, 4)b_{36}b_{47} = h_0h_2(1, 2)b_{46}b_{17} + h_0h_2(1, 2)b_{47}b_{16} + h_0(1, 3, 4)b_{46}b_{27} + h_0(1, 3, 4)b_{47}b_{26} + h_0(1, 2, 4)b_{46}b_{37}$

\subsection{Relations of \mybm{$HX_7$}{HX7} organized by patterns}\label{app:35b02630}
This section is coordinating with Conjecture \ref{conj:5f7d758} and Theorem \ref{thm:bc7c165}.
\subsubsection*{Relations \ref{rel:8d29c79f}}\hspace{6pt}

$h_0^2b_{14} + h_3^2b_{03} + b_{02}b_{24}=0$

$h_0^2b_{15} + h_4^2b_{04} + b_{02}b_{25} + b_{35}b_{03}=0$

$h_0^2b_{16} + h_5^2b_{05} + b_{02}b_{26} + b_{46}b_{04} + b_{03}b_{36}=0$

$h_0^2b_{17} + h_6^2b_{06} + b_{02}b_{27} + b_{57}b_{05} + b_{03}b_{37} + b_{47}b_{04}=0$

$h_0^2b_{18} + h_7^2b_{07} + b_{02}b_{28} + b_{68}b_{06} + b_{03}b_{38} + b_{58}b_{05} + b_{04}b_{48}=0$

\subsubsection*{Relations \ref{rel:2762c538}}\hspace{6pt}

$h_0h_1=0$

$h_3h_0(1)=0$

$h_0h_1(1)=0$

$h_0(1)h_1(1)=0$

$h_5h_0(1, 3)=0$

$h_0(1)h_3(1)=0$

$h_5h_0(1, 2)=0$

$h_0h_1(1, 3)=0$

$h_0(1)h_1(1, 3)=0$

$h_0h_1(1, 2)=0$

$h_0(1)h_1(1, 2)=0$

$h_3(1)h_0(1, 3)=0$

$h_3(1)h_0(1, 2)=0$

$h_0(1, 3)h_1(1, 3)=0$

$h_1(1, 3)h_0(1, 2)=0$

$h_0(1, 3)h_1(1, 2)=0$

$h_0(1, 2)h_1(1, 2)=0$

$h_7h_0(1, 3, 5)=0$

$h_7h_0(1, 2, 5)=0$

$h_5(1)h_0(1, 3)=0$

$h_7h_0(1, 3, 4)=0$

$h_0(1)h_3(1, 3)=0$

$h_5(1)h_0(1, 2)=0$

$h_7h_0(1, 2, 4)=0$

$h_0h_1(1, 3, 5)=0$

$h_0(1)h_1(1, 3, 5)=0$

$h_7h_0(1, 2, 3)=0$

$h_0h_1(1, 2, 5)=0$

$h_0(1)h_1(1, 2, 5)=0$

$h_0(1, 3)h_3(1, 3)=0$

$h_3(1, 3)h_0(1, 2)=0$

$h_0(1, 3)h_1(1, 3, 5)=0$

$h_0(1, 2)h_1(1, 3, 5)=0$

$h_0(1, 3)h_1(1, 2, 5)=0$

$h_0(1)h_3(1, 2)=0$

$h_0h_1(1, 3, 4)=0$

$h_0(1, 2)h_1(1, 2, 5)=0$

$h_0(1)h_1(1, 3, 4)=0$

$h_0h_1(1, 2, 4)=0$

$h_0(1)h_1(1, 2, 4)=0$

$h_0(1, 3)h_3(1, 2)=0$

$h_0(1, 2)h_3(1, 2)=0$

$h_0(1, 3)h_1(1, 3, 4)=0$

$h_0h_1(1, 2, 3)=0$

$h_0(1, 2)h_1(1, 3, 4)=0$

$h_5(1)h_0(1, 3, 5)=0$

$h_0(1)h_1(1, 2, 3)=0$

$h_0(1, 3)h_1(1, 2, 4)=0$

$h_5(1)h_0(1, 2, 5)=0$

$h_0(1, 2)h_1(1, 2, 4)=0$

$h_5(1)h_0(1, 3, 4)=0$

$h_0(1, 3)h_1(1, 2, 3)=0$

$h_3(1, 3)h_0(1, 3, 5)=0$

$h_5(1)h_0(1, 2, 4)=0$

$h_0(1, 2)h_1(1, 2, 3)=0$

$h_3(1, 3)h_0(1, 2, 5)=0$

$h_0(1, 3, 5)h_1(1, 3, 5)=0$

$h_5(1)h_0(1, 2, 3)=0$

$h_1(1, 3, 5)h_0(1, 2, 5)=0$

$h_0(1, 3, 5)h_1(1, 2, 5)=0$

$h_3(1, 3)h_0(1, 3, 4)=0$

$h_0(1, 2, 5)h_1(1, 2, 5)=0$

$h_3(1, 3)h_0(1, 2, 4)=0$

$h_1(1, 3, 5)h_0(1, 3, 4)=0$

$h_1(1, 3, 5)h_0(1, 2, 4)=0$

$h_3(1, 3)h_0(1, 2, 3)=0$

$h_1(1, 2, 5)h_0(1, 3, 4)=0$

$h_3(1, 2)h_0(1, 3, 5)=0$

$h_1(1, 2, 5)h_0(1, 2, 4)=0$

$h_1(1, 3, 5)h_0(1, 2, 3)=0$

$h_3(1, 2)h_0(1, 2, 5)=0$

$h_0(1, 3, 5)h_1(1, 3, 4)=0$

$h_0(1, 2, 5)h_1(1, 3, 4)=0$

$h_1(1, 2, 5)h_0(1, 2, 3)=0$

$h_0(1, 3, 5)h_1(1, 2, 4)=0$

$h_3(1, 2)h_0(1, 3, 4)=0$

$h_0(1, 2, 5)h_1(1, 2, 4)=0$

$h_3(1, 2)h_0(1, 2, 4)=0$

$h_0(1, 3, 4)h_1(1, 3, 4)=0$

$h_1(1, 3, 4)h_0(1, 2, 4)=0$

$h_3(1, 2)h_0(1, 2, 3)=0$

$h_0(1, 3, 5)h_1(1, 2, 3)=0$

$h_0(1, 3, 4)h_1(1, 2, 4)=0$

$h_0(1, 2, 5)h_1(1, 2, 3)=0$

$h_0(1, 2, 4)h_1(1, 2, 4)=0$

$h_1(1, 3, 4)h_0(1, 2, 3)=0$

$h_1(1, 2, 4)h_0(1, 2, 3)=0$

$h_0(1, 3, 4)h_1(1, 2, 3)=0$

$h_0(1, 2, 4)h_1(1, 2, 3)=0$

$h_0(1, 2, 3)h_1(1, 2, 3)=0$

\subsubsection*{Relations \ref{rel:b0a9481d}}\hspace{6pt}

$h_0h_2b_{14} + h_0(1)b_{24}=0$

$h_0h_2(1)b_{14} + h_3^2h_0(1, 2) + h_0(1, 3)b_{24}=0$

$h_0h_2(1)b_{15} + h_0(1, 3)b_{25} + h_0(1, 2)b_{35}=0$

$h_4h_0(1)b_{36} + h_0(1, 3)b_{46}=0$

$h_0h_2h_4b_{16} + h_4h_0(1)b_{26} + h_0(1, 2)b_{46}=0$

$h_0h_2(1)b_{16} + h_0(1, 3)b_{26} + h_0(1, 2)b_{36}=0$

$h_0h_2(1, 3)b_{14} + h_3^2h_0(1, 2, 5) + h_0(1, 3, 5)b_{24}=0$

$h_0h_2(1, 2)b_{14} + h_3^2h_0(1, 2, 4) + h_0(1, 3, 4)b_{24}=0$

$h_0h_2(1, 3)b_{15} + h_0(1, 3, 5)b_{25} + h_0(1, 2, 5)b_{35}=0$

$h_0h_2(1, 2)b_{15} + h_4^2h_0(1, 2, 3) + h_0(1, 3, 4)b_{25} + h_0(1, 2, 4)b_{35}=0$

$h_5^2h_0(1, 3, 4) + h_0(1)h_4(1)b_{36} + h_0(1, 3, 5)b_{46}=0$

$h_0h_2h_4(1)b_{16} + h_5^2h_0(1, 2, 4) + h_0(1)h_4(1)b_{26} + h_0(1, 2, 5)b_{46}=0$

$h_0h_2(1, 3)b_{16} + h_5^2h_0(1, 2, 3) + h_0(1, 3, 5)b_{26} + h_0(1, 2, 5)b_{36}=0$

$h_0h_2(1, 2)b_{16} + h_0(1, 3, 4)b_{26} + h_0(1, 2, 4)b_{36} + h_0(1, 2, 3)b_{46}=0$

$h_0(1)h_4(1)b_{37} + h_0(1, 3, 5)b_{47} + h_0(1, 3, 4)b_{57}=0$

$h_0h_2h_4(1)b_{17} + h_0(1)h_4(1)b_{27} + h_0(1, 2, 5)b_{47} + h_0(1, 2, 4)b_{57}=0$

$h_0h_2(1, 3)b_{17} + h_0(1, 3, 5)b_{27} + h_0(1, 2, 5)b_{37} + h_0(1, 2, 3)b_{57}=0$

$h_0h_2(1, 2)b_{17} + h_0(1, 3, 4)b_{27} + h_0(1, 2, 4)b_{37} + h_0(1, 2, 3)b_{47}=0$

$h_6h_0(1, 3)b_{58} + h_0(1, 3, 5)b_{68}=0$

$h_6h_0(1, 2)b_{58} + h_0(1, 2, 5)b_{68}=0$

$h_4h_6h_0(1)b_{38} + h_6h_0(1, 3)b_{48} + h_0(1, 3, 4)b_{68}=0$

$h_0h_2h_4h_6b_{18} + h_4h_6h_0(1)b_{28} + h_6h_0(1, 2)b_{48} + h_0(1, 2, 4)b_{68}=0$

$h_0h_6h_2(1)b_{18} + h_6h_0(1, 3)b_{28} + h_6h_0(1, 2)b_{38} + h_0(1, 2, 3)b_{68}=0$

$h_0(1)h_4(1)b_{38} + h_0(1, 3, 5)b_{48} + h_0(1, 3, 4)b_{58}=0$

$h_0h_2h_4(1)b_{18} + h_0(1)h_4(1)b_{28} + h_0(1, 2, 5)b_{48} + h_0(1, 2, 4)b_{58}=0$

$h_0h_2(1, 3)b_{18} + h_0(1, 3, 5)b_{28} + h_0(1, 2, 5)b_{38} + h_0(1, 2, 3)b_{58}=0$

$h_0h_2(1, 2)b_{18} + h_0(1, 3, 4)b_{28} + h_0(1, 2, 4)b_{38} + h_0(1, 2, 3)b_{48}=0$

\subsubsection*{Relations \ref{rel:c4698d4e}}\hspace{6pt}

$h_1h_3b_{03} + h_1(1)b_{02}=0$

$h_1^2h_0(1, 2) + h_4h_0(1)b_{14} + h_0(1, 3)b_{13}=0$

$h_4h_0(1)b_{04} + h_0(1, 3)b_{03} + h_0(1, 2)b_{02}=0$

$h_1h_3(1)b_{03} + h_1(1, 3)b_{02}=0$

$h_1h_3h_5b_{05} + h_1h_3(1)b_{04} + h_1(1, 2)b_{02}=0$

$h_5h_1(1)b_{05} + h_1(1, 3)b_{04} + h_1(1, 2)b_{03}=0$

$h_1^2h_0(1, 2, 5) + h_0(1)h_4(1)b_{14} + h_0(1, 3, 5)b_{13}=0$

$h_0(1)h_4(1)b_{04} + h_0(1, 3, 5)b_{03} + h_0(1, 2, 5)b_{02}=0$

$h_1^2h_0(1, 2, 4) + h_4h_6h_0(1)b_{16} + h_0(1)h_4(1)b_{15} + h_0(1, 3, 4)b_{13}=0$

$h_4h_6h_0(1)b_{06} + h_0(1)h_4(1)b_{05} + h_0(1, 3, 4)b_{03} + h_0(1, 2, 4)b_{02}=0$

$h_3^2h_0(1, 3, 4) + h_6h_0(1, 3)b_{36} + h_0(1, 3, 5)b_{35}=0$

$h_6h_0(1, 3)b_{26} + h_0(1, 3, 5)b_{25} + h_0(1, 3, 4)b_{24}=0$

$h_1^2h_0(1, 2, 3) + h_6h_0(1, 3)b_{16} + h_0(1, 3, 5)b_{15} + h_0(1, 3, 4)b_{14}=0$

$h_6h_0(1, 3)b_{06} + h_0(1, 3, 5)b_{05} + h_0(1, 3, 4)b_{04} + h_0(1, 2, 3)b_{02}=0$

$h_2^2h_0(1, 2, 3) + h_6h_0(1, 2)b_{26} + h_0(1, 2, 5)b_{25} + h_0(1, 2, 4)b_{24}=0$

$h_6h_0(1, 2)b_{16} + h_0(1, 2, 5)b_{15} + h_0(1, 2, 4)b_{14} + h_0(1, 2, 3)b_{13}=0$

$h_6h_0(1, 2)b_{06} + h_0(1, 2, 5)b_{05} + h_0(1, 2, 4)b_{04} + h_0(1, 2, 3)b_{03}=0$

$h_1h_3(1, 3)b_{03} + h_1(1, 3, 5)b_{02}=0$

$h_1h_3h_5(1)b_{05} + h_1h_3(1, 3)b_{04} + h_1(1, 2, 5)b_{02}=0$

$h_1(1)h_5(1)b_{05} + h_1(1, 3, 5)b_{04} + h_1(1, 2, 5)b_{03}=0$

$h_1h_3(1, 2)b_{03} + h_1(1, 3, 4)b_{02}=0$

$h_1h_3h_5h_7b_{07} + h_1h_3h_5(1)b_{06} + h_1h_3(1, 2)b_{04} + h_1(1, 2, 4)b_{02}=0$

$h_5h_7h_1(1)b_{07} + h_1(1)h_5(1)b_{06} + h_1(1, 3, 4)b_{04} + h_1(1, 2, 4)b_{03}=0$

$h_1h_7h_3(1)b_{07} + h_1h_3(1, 3)b_{06} + h_1h_3(1, 2)b_{05} + h_1(1, 2, 3)b_{02}=0$

$h_7h_1(1, 3)b_{07} + h_1(1, 3, 5)b_{06} + h_1(1, 3, 4)b_{05} + h_1(1, 2, 3)b_{03}=0$

$h_7h_1(1, 2)b_{07} + h_1(1, 2, 5)b_{06} + h_1(1, 2, 4)b_{05} + h_1(1, 2, 3)b_{04}=0$

\subsubsection*{Relations \ref{rel:a078c1d9}}\hspace{6pt}

$h_2h_0(1, 3) + h_0(1)h_2(1)=0$

$h_2h_0(1, 3, 5) + h_0(1)h_2(1, 3)=0$

$h_4h_0(1, 3, 5) + h_4(1)h_0(1, 3)=0$

$h_4h_0(1, 2, 5) + h_4(1)h_0(1, 2)=0$

$h_2h_0(1, 3, 4) + h_0(1)h_2(1, 2)=0$

$h_2(1)h_0(1, 3, 5) + h_0(1, 3)h_2(1, 3)=0$

$h_2(1)h_0(1, 2, 5) + h_2(1, 3)h_0(1, 2)=0$

$h_2(1)h_0(1, 3, 4) + h_0(1, 3)h_2(1, 2)=0$

$h_2h_4h_0(1, 2, 3) + h_2(1)h_0(1, 2, 4) + h_0(1, 2)h_2(1, 2)=0$

$h_2(1, 3)h_0(1, 3, 4) + h_2(1, 2)h_0(1, 3, 5)=0$

$h_2h_4(1)h_0(1, 2, 3) + h_2(1, 3)h_0(1, 2, 4) + h_2(1, 2)h_0(1, 2, 5)=0$

\subsubsection*{Relations \ref{rel:51b194a6}}\hspace{6pt}

$h_0(1, 3)h_0(1, 2, 5) + h_0(1, 2)h_0(1, 3, 5)=0$

$h_4h_0(1)h_0(1, 2, 3) + h_0(1, 3)h_0(1, 2, 4) + h_0(1, 2)h_0(1, 3, 4)=0$

\subsubsection*{Relations \ref{rel:e7460c84}}\hspace{6pt}

$h_0h_0(1) + h_2b_{02}=0$

$h_0b_{13} + h_2h_0(1)=0$

$h_1^2b_{03} + h_0(1)^2 + b_{02}b_{13}=0$

$h_0h_0(1, 3) + h_2(1)b_{02}=0$

$h_0h_4b_{14} + h_0(1)h_2(1)=0$

$h_0h_0(1, 2) + h_2h_4b_{04} + h_2(1)b_{03}=0$

$h_1^2h_4b_{04} + h_4b_{02}b_{14} + h_0(1)h_0(1, 3)=0$

$h_4h_0(1, 3) + h_0(1)b_{35}=0$

$h_4b_{13}b_{04} + h_4b_{03}b_{14} + h_0(1)h_0(1, 2)=0$

$h_0h_2b_{15} + h_4h_0(1, 2) + h_0(1)b_{25}=0$

$h_0h_3^2b_{15} + h_0b_{35}b_{14} + h_2(1)h_0(1, 3)=0$

$h_1^2h_3^2b_{05} + h_1^2b_{35}b_{04} + h_3^2b_{02}b_{15} + h_0(1, 3)^2 + b_{02}b_{35}b_{14}=0$

$h_0b_{24}b_{15} + h_0b_{14}b_{25} + h_2(1)h_0(1, 2)=0$

$h_3^2b_{13}b_{05} + h_3^2b_{03}b_{15} + h_0(1, 3)h_0(1, 2) + b_{13}b_{35}b_{04} + b_{35}b_{03}b_{14}=0$

$h_2^2b_{14}b_{05} + h_2^2b_{04}b_{15} + h_0(1, 2)^2 + b_{13}b_{24}b_{05} + b_{13}b_{25}b_{04} + b_{24}b_{03}b_{15} + b_{03}b_{14}b_{25}=0$

$h_0h_0(1, 3, 5) + h_2(1, 3)b_{02}=0$

$h_0h_4(1)b_{14} + h_0(1)h_2(1, 3)=0$

$h_0h_0(1, 2, 5) + h_2h_4(1)b_{04} + h_2(1, 3)b_{03}=0$

$h_1^2h_4(1)b_{04} + h_0(1)h_0(1, 3, 5) + h_4(1)b_{02}b_{14}=0$

$h_6h_0(1)b_{36} + h_4(1)h_0(1, 3)=0$

$h_0(1)h_0(1, 2, 5) + h_4(1)b_{13}b_{04} + h_4(1)b_{03}b_{14}=0$

$h_0h_2h_6b_{16} + h_6h_0(1)b_{26} + h_4(1)h_0(1, 2)=0$

$h_0h_0(1, 3, 4) + h_2(1, 2)b_{02}=0$

$h_0h_4h_6b_{16} + h_0h_4(1)b_{15} + h_0(1)h_2(1, 2)=0$

$h_0h_3^2h_6b_{16} + h_0h_6b_{14}b_{36} + h_0(1, 3)h_2(1, 3)=0$

$h_0h_0(1, 2, 4) + h_2h_4h_6b_{06} + h_2h_4(1)b_{05} + h_2(1, 2)b_{03}=0$

$h_1^2h_4h_6b_{06} + h_1^2h_4(1)b_{05} + h_4h_6b_{02}b_{16} + h_0(1)h_0(1, 3, 4) + h_4(1)b_{02}b_{15}=0$

$h_1^2h_3^2h_6b_{06} + h_1^2h_6b_{36}b_{04} + h_3^2h_6b_{02}b_{16} + h_6b_{02}b_{14}b_{36} + h_0(1, 3)h_0(1, 3, 5)=0$

$h_0h_6b_{24}b_{16} + h_0h_6b_{14}b_{26} + h_2(1, 3)h_0(1, 2)=0$

$h_4h_6b_{13}b_{06} + h_4h_6b_{03}b_{16} + h_0(1)h_0(1, 2, 4) + h_4(1)b_{13}b_{05} + h_4(1)b_{03}b_{15}=0$

$h_3^2h_6b_{13}b_{06} + h_3^2h_6b_{03}b_{16} + h_6b_{13}b_{36}b_{04} + h_6b_{03}b_{14}b_{36} + h_0(1, 3)h_0(1, 2, 5)=0$

$h_0h_0(1, 2, 3) + h_6h_2(1)b_{06} + h_2(1, 3)b_{05} + h_2(1, 2)b_{04}=0$

$h_2^2h_6b_{14}b_{06} + h_2^2h_6b_{04}b_{16} + h_6b_{13}b_{24}b_{06} + h_6b_{13}b_{04}b_{26} + h_6b_{24}b_{03}b_{16} + h_6b_{03}b_{14}b_{26} + h_0(1, 2)h_0(1, 2, 5)=0$

$h_6h_0(1, 3, 5) + h_0(1, 3)b_{57}=0$

$h_4h_6b_{14}b_{06} + h_4h_6b_{04}b_{16} + h_0(1)h_0(1, 2, 3) + h_4(1)b_{14}b_{05} + h_4(1)b_{04}b_{15}=0$

$h_0h_6b_{35}b_{16} + h_0h_6b_{36}b_{15} + h_0(1, 3)h_2(1, 2)=0$

$h_6h_0(1, 2, 5) + h_0(1, 2)b_{57}=0$

$h_1^2h_6b_{35}b_{06} + h_1^2h_6b_{36}b_{05} + h_6b_{02}b_{35}b_{16} + h_6b_{02}b_{36}b_{15} + h_0(1, 3)h_0(1, 3, 4)=0$

$h_0h_6b_{25}b_{16} + h_0h_6b_{15}b_{26} + h_0(1, 2)h_2(1, 2)=0$

$h_6b_{13}b_{35}b_{06} + h_6b_{13}b_{36}b_{05} + h_6b_{35}b_{03}b_{16} + h_6b_{03}b_{36}b_{15} + h_0(1, 3)h_0(1, 2, 4)=0$

$h_2^2h_6b_{15}b_{06} + h_2^2h_6b_{05}b_{16} + h_6b_{13}b_{25}b_{06} + h_6b_{13}b_{26}b_{05} + h_6b_{03}b_{25}b_{16} + h_6b_{03}b_{15}b_{26} + h_0(1, 2)h_0(1, 2, 4)=0$

$h_4h_0(1)b_{37} + h_6h_0(1, 3, 4) + h_0(1, 3)b_{47}=0$

$h_3^2h_6b_{15}b_{06} + h_3^2h_6b_{05}b_{16} + h_6b_{35}b_{14}b_{06} + h_6b_{35}b_{04}b_{16} + h_6b_{14}b_{36}b_{05} + h_6b_{36}b_{04}b_{15} + h_0(1, 3)h_0(1, 2, 3)=0$

$h_0h_2h_4b_{17} + h_4h_0(1)b_{27} + h_6h_0(1, 2, 4) + h_0(1, 2)b_{47}=0$

$h_6b_{24}b_{15}b_{06} + h_6b_{24}b_{05}b_{16} + h_6b_{14}b_{25}b_{06} + h_6b_{14}b_{26}b_{05} + h_6b_{25}b_{04}b_{16} + h_6b_{04}b_{15}b_{26} + h_0(1, 2)h_0(1, 2, 3)=0$

$h_0h_2(1)b_{17} + h_6h_0(1, 2, 3) + h_0(1, 3)b_{27} + h_0(1, 2)b_{37}=0$

$h_5^2h_0(1)b_{37} + h_0(1)b_{57}b_{36} + h_4(1)h_0(1, 3, 5)=0$

$h_0h_2h_5^2b_{17} + h_0h_2b_{57}b_{16} + h_5^2h_0(1)b_{27} + h_0(1)b_{57}b_{26} + h_4(1)h_0(1, 2, 5)=0$

$h_0h_3^2h_5^2b_{17} + h_0h_3^2b_{57}b_{16} + h_0h_5^2b_{14}b_{37} + h_0b_{57}b_{14}b_{36} + h_2(1, 3)h_0(1, 3, 5)=0$

$h_1^2h_3^2h_5^2b_{07} + h_1^2h_3^2b_{57}b_{06} + h_1^2h_5^2b_{04}b_{37} + h_1^2b_{57}b_{36}b_{04} + h_3^2h_5^2b_{02}b_{17} + h_3^2b_{02}b_{57}b_{16} + h_5^2b_{02}b_{14}b_{37} + h_0(1, 3, 5)^2 + b_{02}b_{57}b_{14}b_{36}=0$

$h_0h_5^2b_{24}b_{17} + h_0h_5^2b_{14}b_{27} + h_0b_{24}b_{57}b_{16} + h_0b_{57}b_{14}b_{26} + h_2(1, 3)h_0(1, 2, 5)=0$

$h_0(1)b_{46}b_{37} + h_0(1)b_{36}b_{47} + h_4(1)h_0(1, 3, 4)=0$

$h_3^2h_5^2b_{13}b_{07} + h_3^2h_5^2b_{03}b_{17} + h_3^2b_{13}b_{57}b_{06} + h_3^2b_{57}b_{03}b_{16} + h_5^2b_{13}b_{04}b_{37} + h_5^2b_{03}b_{14}b_{37} + h_0(1, 3, 5)h_0(1, 2, 5) + b_{13}b_{57}b_{36}b_{04} + b_{57}b_{03}b_{14}b_{36}=0$

$h_0h_2b_{46}b_{17} + h_0h_2b_{47}b_{16} + h_0(1)b_{46}b_{27} + h_0(1)b_{47}b_{26} + h_4(1)h_0(1, 2, 4)=0$

$h_2^2h_5^2b_{14}b_{07} + h_2^2h_5^2b_{04}b_{17} + h_2^2b_{57}b_{14}b_{06} + h_2^2b_{57}b_{04}b_{16} + h_5^2b_{13}b_{24}b_{07} + h_5^2b_{13}b_{04}b_{27} + h_5^2b_{24}b_{03}b_{17} + h_5^2b_{03}b_{14}b_{27} + h_0(1, 2, 5)^2 + b_{13}b_{24}b_{57}b_{06} + b_{13}b_{57}b_{04}b_{26} + b_{24}b_{57}b_{03}b_{16} + b_{57}b_{03}b_{14}b_{26}=0$

$h_0h_5^2b_{35}b_{17} + h_0h_5^2b_{15}b_{37} + h_0b_{35}b_{57}b_{16} + h_0b_{57}b_{36}b_{15} + h_2(1, 2)h_0(1, 3, 5)=0$

$h_0h_2b_{36}b_{17} + h_0h_2b_{37}b_{16} + h_0(1)b_{36}b_{27} + h_0(1)b_{26}b_{37} + h_4(1)h_0(1, 2, 3)=0$

$h_1^2h_5^2b_{35}b_{07} + h_1^2h_5^2b_{37}b_{05} + h_1^2b_{35}b_{57}b_{06} + h_1^2b_{57}b_{36}b_{05} + h_5^2b_{02}b_{35}b_{17} + h_5^2b_{02}b_{15}b_{37} + h_0(1, 3, 5)h_0(1, 3, 4) + b_{02}b_{35}b_{57}b_{16} + b_{02}b_{57}b_{36}b_{15}=0$

$h_0h_5^2b_{25}b_{17} + h_0h_5^2b_{15}b_{27} + h_0b_{57}b_{25}b_{16} + h_0b_{57}b_{15}b_{26} + h_2(1, 2)h_0(1, 2, 5)=0$

$h_5^2b_{13}b_{35}b_{07} + h_5^2b_{13}b_{37}b_{05} + h_5^2b_{35}b_{03}b_{17} + h_5^2b_{03}b_{15}b_{37} + h_0(1, 3, 5)h_0(1, 2, 4) + b_{13}b_{35}b_{57}b_{06} + b_{13}b_{57}b_{36}b_{05} + b_{35}b_{57}b_{03}b_{16} + b_{57}b_{03}b_{36}b_{15}=0$

$h_3^2b_{13}b_{46}b_{07} + h_3^2b_{13}b_{47}b_{06} + h_3^2b_{46}b_{03}b_{17} + h_3^2b_{03}b_{47}b_{16} + h_0(1, 2, 5)h_0(1, 3, 4) + b_{13}b_{46}b_{04}b_{37} + b_{13}b_{36}b_{47}b_{04} + b_{46}b_{03}b_{14}b_{37} + b_{03}b_{14}b_{36}b_{47}=0$

$h_2^2h_5^2b_{15}b_{07} + h_2^2h_5^2b_{05}b_{17} + h_2^2b_{57}b_{15}b_{06} + h_2^2b_{57}b_{05}b_{16} + h_5^2b_{13}b_{25}b_{07} + h_5^2b_{13}b_{05}b_{27} + h_5^2b_{03}b_{25}b_{17} + h_5^2b_{03}b_{15}b_{27} + h_0(1, 2, 5)h_0(1, 2, 4) + b_{13}b_{57}b_{25}b_{06} + b_{13}b_{57}b_{26}b_{05} + b_{57}b_{03}b_{25}b_{16} + b_{57}b_{03}b_{15}b_{26}=0$

$h_0h_3^2b_{26}b_{17} + h_0h_3^2b_{16}b_{27} + h_0b_{24}b_{36}b_{17} + h_0b_{24}b_{37}b_{16} + h_0b_{14}b_{36}b_{27} + h_0b_{14}b_{26}b_{37} + h_2(1, 3)h_0(1, 2, 3)=0$

$h_3^2h_5^2b_{15}b_{07} + h_3^2h_5^2b_{05}b_{17} + h_3^2b_{57}b_{15}b_{06} + h_3^2b_{57}b_{05}b_{16} + h_5^2b_{35}b_{14}b_{07} + h_5^2b_{35}b_{04}b_{17} + h_5^2b_{14}b_{37}b_{05} + h_5^2b_{04}b_{15}b_{37} + h_0(1, 3, 5)h_0(1, 2, 3) + b_{35}b_{57}b_{14}b_{06} + b_{35}b_{57}b_{04}b_{16} + b_{57}b_{14}b_{36}b_{05} + b_{57}b_{36}b_{04}b_{15}=0$

$h_0h_4^2b_{36}b_{17} + h_0h_4^2b_{37}b_{16} + h_0b_{35}b_{46}b_{17} + h_0b_{35}b_{47}b_{16} + h_0b_{46}b_{15}b_{37} + h_0b_{36}b_{47}b_{15} + h_2(1, 2)h_0(1, 3, 4)=0$

$h_1^2h_4^2b_{36}b_{07} + h_1^2h_4^2b_{37}b_{06} + h_1^2b_{35}b_{46}b_{07} + h_1^2b_{35}b_{47}b_{06} + h_1^2b_{46}b_{37}b_{05} + h_1^2b_{36}b_{47}b_{05} + h_4^2b_{02}b_{36}b_{17} + h_4^2b_{02}b_{37}b_{16} + h_0(1, 3, 4)^2 + b_{02}b_{35}b_{46}b_{17} + b_{02}b_{35}b_{47}b_{16} + b_{02}b_{46}b_{15}b_{37} + b_{02}b_{36}b_{47}b_{15}=0$

$h_5^2b_{24}b_{15}b_{07} + h_5^2b_{24}b_{05}b_{17} + h_5^2b_{14}b_{25}b_{07} + h_5^2b_{14}b_{05}b_{27} + h_5^2b_{25}b_{04}b_{17} + h_5^2b_{04}b_{15}b_{27} + h_0(1, 2, 5)h_0(1, 2, 3) + b_{24}b_{57}b_{15}b_{06} + b_{24}b_{57}b_{05}b_{16} + b_{57}b_{14}b_{25}b_{06} + b_{57}b_{14}b_{26}b_{05} + b_{57}b_{25}b_{04}b_{16} + b_{57}b_{04}b_{15}b_{26}=0$

$h_0h_4^2b_{26}b_{17} + h_0h_4^2b_{16}b_{27} + h_0b_{46}b_{25}b_{17} + h_0b_{46}b_{15}b_{27} + h_0b_{25}b_{47}b_{16} + h_0b_{47}b_{15}b_{26} + h_2(1, 2)h_0(1, 2, 4)=0$

$h_4^2b_{13}b_{36}b_{07} + h_4^2b_{13}b_{37}b_{06} + h_4^2b_{03}b_{36}b_{17} + h_4^2b_{03}b_{37}b_{16} + h_0(1, 3, 4)h_0(1, 2, 4) + b_{13}b_{35}b_{46}b_{07} + b_{13}b_{35}b_{47}b_{06} + b_{13}b_{46}b_{37}b_{05} + b_{13}b_{36}b_{47}b_{05} + b_{35}b_{46}b_{03}b_{17} + b_{35}b_{03}b_{47}b_{16} + b_{46}b_{03}b_{15}b_{37} + b_{03}b_{36}b_{47}b_{15}=0$

$h_2^2h_4^2b_{16}b_{07} + h_2^2h_4^2b_{06}b_{17} + h_2^2b_{46}b_{15}b_{07} + h_2^2b_{46}b_{05}b_{17} + h_2^2b_{47}b_{15}b_{06} + h_2^2b_{47}b_{05}b_{16} + h_4^2b_{13}b_{26}b_{07} + h_4^2b_{13}b_{27}b_{06} + h_4^2b_{03}b_{26}b_{17} + h_4^2b_{03}b_{16}b_{27} + h_0(1, 2, 4)^2 + b_{13}b_{46}b_{25}b_{07} + b_{13}b_{46}b_{05}b_{27} + b_{13}b_{25}b_{47}b_{06} + b_{13}b_{47}b_{26}b_{05} + b_{46}b_{03}b_{25}b_{17} + b_{46}b_{03}b_{15}b_{27} + b_{03}b_{25}b_{47}b_{16} + b_{03}b_{47}b_{15}b_{26}=0$

$h_0b_{35}b_{26}b_{17} + h_0b_{35}b_{16}b_{27} + h_0b_{25}b_{36}b_{17} + h_0b_{25}b_{37}b_{16} + h_0b_{36}b_{15}b_{27} + h_0b_{15}b_{26}b_{37} + h_2(1, 2)h_0(1, 2, 3)=0$

$h_3^2h_4^2b_{16}b_{07} + h_3^2h_4^2b_{06}b_{17} + h_3^2b_{46}b_{15}b_{07} + h_3^2b_{46}b_{05}b_{17} + h_3^2b_{47}b_{15}b_{06} + h_3^2b_{47}b_{05}b_{16} + h_4^2b_{14}b_{36}b_{07} + h_4^2b_{14}b_{37}b_{06} + h_4^2b_{36}b_{04}b_{17} + h_4^2b_{04}b_{37}b_{16} + h_0(1, 3, 4)h_0(1, 2, 3) + b_{35}b_{46}b_{14}b_{07} + b_{35}b_{46}b_{04}b_{17} + b_{35}b_{14}b_{47}b_{06} + b_{35}b_{47}b_{04}b_{16} + b_{46}b_{14}b_{37}b_{05} + b_{46}b_{04}b_{15}b_{37} + b_{14}b_{36}b_{47}b_{05} + b_{36}b_{47}b_{04}b_{15}=0$

$h_4^2b_{24}b_{16}b_{07} + h_4^2b_{24}b_{06}b_{17} + h_4^2b_{14}b_{26}b_{07} + h_4^2b_{14}b_{27}b_{06} + h_4^2b_{04}b_{26}b_{17} + h_4^2b_{04}b_{16}b_{27} + h_0(1, 2, 4)h_0(1, 2, 3) + b_{24}b_{46}b_{15}b_{07} + b_{24}b_{46}b_{05}b_{17} + b_{24}b_{47}b_{15}b_{06} + b_{24}b_{47}b_{05}b_{16} + b_{46}b_{14}b_{25}b_{07} + b_{46}b_{14}b_{05}b_{27} + b_{46}b_{25}b_{04}b_{17} + b_{46}b_{04}b_{15}b_{27} + b_{14}b_{25}b_{47}b_{06} + b_{14}b_{47}b_{26}b_{05} + b_{25}b_{47}b_{04}b_{16} + b_{47}b_{04}b_{15}b_{26}=0$

$h_3^2b_{25}b_{16}b_{07} + h_3^2b_{25}b_{06}b_{17} + h_3^2b_{15}b_{26}b_{07} + h_3^2b_{15}b_{27}b_{06} + h_3^2b_{26}b_{05}b_{17} + h_3^2b_{05}b_{16}b_{27} + h_0(1, 2, 3)^2 + b_{24}b_{35}b_{16}b_{07} + b_{24}b_{35}b_{06}b_{17} + b_{24}b_{36}b_{15}b_{07} + b_{24}b_{36}b_{05}b_{17} + b_{24}b_{15}b_{37}b_{06} + b_{24}b_{37}b_{05}b_{16} + b_{35}b_{14}b_{26}b_{07} + b_{35}b_{14}b_{27}b_{06} + b_{35}b_{04}b_{26}b_{17} + b_{35}b_{04}b_{16}b_{27} + b_{14}b_{25}b_{36}b_{07} + b_{14}b_{25}b_{37}b_{06} + b_{14}b_{36}b_{05}b_{27} + b_{14}b_{26}b_{37}b_{05} + b_{25}b_{36}b_{04}b_{17} + b_{25}b_{04}b_{37}b_{16} + b_{36}b_{04}b_{15}b_{27} + b_{04}b_{15}b_{26}b_{37}=0$

\subsubsection*{Relations \ref{rel:80bb77fd}}\hspace{6pt}

$h_1h_3h_0(1, 2) + h_1(1)h_0(1, 3)=0$

$h_1h_3h_0(1, 2, 5) + h_1(1)h_0(1, 3, 5)=0$

$h_1h_3h_0(1, 2, 4) + h_1(1)h_0(1, 3, 4)=0$

$h_3h_5h_0(1, 3, 4) + h_3(1)h_0(1, 3, 5)=0$

$h_3h_5h_0(1, 2, 4) + h_3(1)h_0(1, 2, 5)=0$

$h_1h_3h_5h_0(1, 2, 4) + h_1(1, 3)h_0(1, 3, 5)=0$

$h_5h_1(1)h_0(1, 2, 4) + h_1(1, 3)h_0(1, 2, 5)=0$

$h_1h_3h_5h_0(1, 2, 3) + h_1(1, 2)h_0(1, 3, 5)=0$

$h_5h_1(1)h_0(1, 2, 3) + h_1(1, 2)h_0(1, 2, 5)=0$

$h_1h_3(1)h_0(1, 2, 4) + h_1(1, 3)h_0(1, 3, 4)=0$

$h_1h_3(1)h_0(1, 2, 3) + h_1(1, 2)h_0(1, 3, 4)=0$

$h_1(1, 3)h_0(1, 2, 3) + h_1(1, 2)h_0(1, 2, 4)=0$
\vspace{6pt}

\makebibliography


\begin{thebibliography}{10}

\bibitem{Adams58}
J.~Frank Adams.
\newblock {\em Stable homotopy theory}, volume 1961 of {\em Second revised
  edition. Lectures delivered at the University of California at Berkeley}.
\newblock Springer-Verlag, Berlin-New York, 1966.

\bibitem{GM74}
V.~K. A.~M. Gugenheim and J.~Peter May.
\newblock {\em On the theory and applications of differential torsion
  products}.
\newblock American Mathematical Society, Providence, R.I., 1974.
\newblock Memoirs of the American Mathematical Society, No. 142.

\bibitem{HHR}
M.~A. Hill, M.~J. Hopkins, and D.~C. Ravenel.
\newblock On the nonexistence of elements of {K}ervaire invariant one.
\newblock {\em Ann. of Math. (2)}, 184(1):1--262, 2016.

\bibitem{IWX20}
Daniel~C. Isaksen, Guozhen Wang, and Zhouli Xu.
\newblock Stable homotopy groups of spheres.
\newblock {\em Proc. Natl. Acad. Sci. USA}, 117(40):24757--24763, 2020.

\bibitem{May64}
J.~Peter May.
\newblock {\em The cohomology of restricted lie algebras and of hopf algebras:
  application to the steenrod algebra}.
\newblock ProQuest LLC, Ann Arbor, MI, 1964.
\newblock Thesis (Ph.D.)--Princeton University.

\bibitem{May69}
J.~Peter May.
\newblock Matric {M}assey products.
\newblock {\em J. Algebra}, 12:533--568, 1969.

\bibitem{MM}
John~W. Milnor and John~C. Moore.
\newblock On the structure of {H}opf algebras.
\newblock {\em Ann. of Math. (2)}, 81:211--264, 1965.

\bibitem{Nakamura72}
Osamu Nakamura.
\newblock On the squaring operations in the {M}ay spectral sequence.
\newblock {\em Mem. Fac. Sci. Kyushu Univ. Ser. A}, 26(2):293--308, 1972.

\bibitem{Nassau}
Christian Nassau.
\newblock www.nullhomotopie.de.

\bibitem{Priddy70-2}
Stewart~B. Priddy.
\newblock Koszul resolutions.
\newblock {\em Trans. Amer. Math. Soc.}, 152:39--60, 1970.

\bibitem{Priddy70}
Stewart~B. Priddy.
\newblock Koszul resolutions and the {S}teenrod algebra.
\newblock {\em Bull. Amer. Math. Soc.}, 76:834--839, 1970.

\bibitem{Ravenel}
Douglas~C. Ravenel.
\newblock {\em Complex cobordism and stable homotopy groups of spheres}, volume
  121 of {\em Pure and Applied Mathematics}.
\newblock Academic Press, Inc., Orlando, FL, 1986.

\bibitem{Bruner97}
R.~R. Runner.
\newblock The cohomology of the mod 2 steenrod algebra: {A} computer
  calculation.
\newblock {\em Wayne State University Research Report}, 37, 1997.

\bibitem{Tangora}
Martin~C. Tangora.
\newblock On the cohomology of the {S}teenrod algebra.
\newblock {\em Math. Z.}, 116:18--64, 1970.

\bibitem{Wang}
Guozhen Wang.
\newblock github.com/pouiyter/morestablestems.

\end{thebibliography}
\end{document}